\newtheorem{theorem}{Theorem}[section]
\newtheorem{proposition}[theorem]{Proposition}
\newtheorem{lemma}[theorem]{Lemma}
\newtheorem{corollary}[theorem]{Corollary}
\theoremstyle{definition}
\newtheorem{definition}[theorem]{Definition}
\newtheorem{remark}[theorem]{Remark}
\renewcommand{\theenumi}{\roman{enumi}}
\renewcommand{\labelenumi}{{\rm (\theenumi)}}
\title{Loewner chains and evolution families \\ on parallel slit half-planes}
\author{Takuya Murayama%
\footnote{The present affiliation is Faculty of Mathematics, Kyushu University, 744 Motooka, Nishi-ku, Fukuoka 819-0395, Japan. %
E-mail address: \texttt{murayama@math.kyushu-u.ac.jp}}}
\affil{\small Department of Mathematics, Kyoto University, Kyoto 606-8502, Japan. \\%
Research Fellow of Japan Society for the Promotion of Science.} 
\date{}
\begin{document}

\maketitle

\begin{abstract}
In this paper,
we define and study Loewner chains and evolution families
on finitely multiply-connected domains in the complex plane.
These chains and families consist of conformal mappings
on parallel slit half-planes and
have one and two ``time'' parameters, respectively.
By analogy with the case of simply connected domains,
we develop a general theory of Loewner chains and evolution families
on multiply connected domains and,
in particular,
prove that they obey the chordal Komatu--Loewner differential equations
driven by measure-valued processes.
Our method involves Brownian motion with darning,
as do some recent studies.

\medskip\noindent
\textit{MSC} (2020): Primary~60H30; Secondary~30C20, 60J70, 60J67

\noindent
\textit{Key words}:
Loewner chain,
evolution family,
Komatu--Loewner equation,
Brownian motion with darning,
SLE
\end{abstract}

\section{Introduction}
\label{sec:intro}

In geometric function theory,
\emph{Loewner chains} and \emph{evolution families}
have been known as useful tools
to study the property of univalent (i.e., holomorphic and injective) functions
on the unit disk $\mathbb{D}$ in the complex plane $\mathbb{C}$.
They are families of univalent functions with ``time'' parameter(s)
and obey the \emph{Loewner differential equations}.
(Some classical references are available on Loewner's method,
for example, Pommerenke~\cite[Chapter~6]{Po75}.)
A famous application of this equation
is the proof of Bieberbach's conjecture given
by de Branges~\cite{dB85} in 1985.
In 2000,
Schramm~\cite{Sc00} employed the Loewner equation
on the upper half-plane
$\mathbb{H} := \{\, z \in \mathbb{C} \mathrel{;} \Im z > 0 \,\}$
to define the \emph{stochastic (Schramm--)Loewner evolution}
(abbreviated as SLE),
which describes phase interfaces
in the scaling limits of random planar lattice models in statistical physics.
After his work,
many applications of Loewner's method have been found in probability theory.

In this paper,
we present natural definitions
of Loewner chains and evolution families
\emph{on finitely multiply-connected domains} in $\mathbb{C}$
and develop a general theory on these families.
In particular, we deduce their evolution equations,
\emph{Komatu--Loewner differential equations}.
Our results will thus extend classical ones
in terms of the connectivity of underlying domains.
Let us first review the relevant studies that motivate our work.

Let $D \subset \mathbb{H}$ be a domain
whose complement $\mathbb{H} \setminus D$ is the union
of $N (\geq 1)$ disjoint line segments parallel to the real axis.
(We assume that none of the segments reduces to a point.)
Such a domain is called a \emph{parallel slit half-plane}.
It is known that any non-degenerate%
\footnote{A multiply connected domain $E$
is said to be non-degenerate
if none of the connected components of $\hat{\mathbb{C}} \setminus E$
reduces to a point.
Here, $\hat{\mathbb{C}}$ is the Riemann sphere.}
$(N+1)$-connected domain can be mapped conformally
onto some parallel slit half-plane with $N$ slits;
see Courant~\cite[Theorem~2.3]{Co50} for example.
For this reason,
we choose parallel slit half-planes
as a canonical form of multiply connected domains in this paper.

The Komatu--Loewner equation was studied
for one-parameter families of \emph{slit-mappings}
in a series of papers~\cite{BF04, BF06, BF08} written by Bauer and Friedrich.
By definition, a slit%
\footnote{This ``slit'' is different from a parallel ``slit'' of $D$.}
of $D$ is a simple curve
$\gamma \colon [0, T) \to \overline{D}$
with $\gamma(0) \in \partial \mathbb{H}$ and $\gamma(0, T) \subset D$.
For each $t \in (0, T)$,
there exists a unique conformal mapping $g_t$
that maps the $(N+1)$-connected domain $D \setminus \gamma(0, t]$
onto a parallel slit half-plane $D_t$
under the hydrodynamic normalization
\begin{equation} \label{eq:intro_hyd_norm}
g_t(z) = z + \frac{a_t}{z} + o\left(z^{-1}\right)
\quad (z \to \infty)
\quad \text{with}\ a_t > 0.
\end{equation}
Such a mapping $g_t$ is called a slit-mapping
(or the mapping-out function of the set $\gamma(0, t]$), and
the coefficient $a_t$ of $z^{-1}$ is called
the \emph{half-plane capacity} of
$\gamma(0,t]$ in $D$.
The function $t \mapsto a_t$ turns out to be increasing and continuous,
and hence, we may assume $a_t = 2t$ by reparametrization.
For such a family $(g_t)_{t \in [0, T)}$,
Bauer and Friedrich derived
the chordal Komatu--Loewner equation%
\footnote{As we adopt the notation
of Chen, Fukushima and Rohde~\cite{CFR16} and of the author~\cite{Mu19spa},
the kernel $\Psi_{D_t}$ in \eqref{eq:intro_1KL} differs
from the original $\Psi_t$ in \cite[Eq.~(18)]{BF08}
by a multiplicative constant $2\pi$.}%
~\cite[Theorem~3.1]{BF08}
\begin{equation} \label{eq:intro_1KL}
\frac{\partial g_t(z)}{\partial t}
= - 2\pi \Psi_{D_t}(g_t(z), \xi(t)),
\quad z \in D.
\end{equation}
Here,
$\xi(t) := \lim_{z \to \gamma(t)} g_t(z) (\in \partial \mathbb{H})$
is the image of the tip $\gamma(t)$
and called the \emph{driving function}.

The kernel $\Psi_{D_t}(z, \xi)$
in the right-hand side of \eqref{eq:intro_1KL}
is given as follows~\cite[Section~2.2]{BF08}:
for a parallel slit half-plane $D$,
let $G_D(z, \zeta)$ be ($\pi^{-1}$ times) the Green function,
$\Phi_D(z) = (\varphi_D^{(1)}(z), \ldots, \varphi_D^{(N)}(z))$
be the harmonic basis, and
$\bm{A}_D = (a_{ij})_{i,j=1, \ldots, N}$ be
the matrix of the periods $a_{ij}$ of $\varphi_D^{(j)}(z)$.
For each fixed $\xi \in \partial \mathbb{H}$, the harmonic function
\begin{equation} \label{eq:def_BMD_Poisson}
z \mapsto K^{\ast}_D(z, \xi)
:= -\frac{1}{2} \frac{\partial}{\partial \bm{n}_{\xi}} G_D(z, \xi)
	- \Phi_D(z) \bm{A}_D^{-1} \frac{\partial}{\partial \bm{n}_{\xi}}
		\Phi_D(\xi)^{\mathrm{tr}}
\end{equation}
is free from periods, and
there exists a unique holomorphic function $\Psi_D(z, \xi)$
with $\Im \Psi_D = K^{\ast}_D$ and
$\lim_{z \to \infty} \Psi_D(z, \xi) = 0$.
Here, $\bm{n}_{\xi}$ is the unit normal of $\partial \mathbb{H}$ at $\xi$
pointing downward.
This is a classical way to construct a conformal mapping
from a given domain onto a parallel slit domain
(cf.\ Section~5, Chapter~6 of Ahlfors~\cite{Ah79}).
Indeed, $\Psi_D(\mathord{\cdot}, \xi)$ is a conformal mapping
from $D$ onto another parallel slit half-plane
that sends $\infty$ to $0$ and has the Laurent expansion of the form
$\Psi_D(z,\xi)=-\pi/(z-\xi)+O(1)$ at the point $\xi$.
Since $K^\ast_D(z,\xi)$ vanishes if $z \in \partial\mathbb{H}\setminus\{\xi\}$, the function $z \mapsto \Psi_D(z, \xi)$ is extended holomorphically across $\partial\mathbb{H}\setminus\{\xi\}$ by the Schwarz reflection principle.

It is important to us
that $K^{\ast}_D$ has an probabilistic interpretation.
Lawler~\cite{La06} identified $K^{\ast}_D$
with the Poisson kernel
of the \emph{excursion reflected Brownian motion}
(ERBM for short) on $D$.
Motivated by his study,
Chen, Fukushima and Rohde~\cite{CFR16} identified
$K^{\ast}_D$ with the Poisson kernel
of the \emph{Brownian motion with darning}%
\footnote{ERBM and BMD are essentially the same; see \cite[Remark~2.2]{CFR16}.}
(BMD for brevity) for $D$.
Let us recall how the latter identification is done.

For a parallel slit half-plane $D$
with parallel slits $C_1, \ldots, C_N$,
let $D^{\ast} := D \cup \{ c^{\ast}_1, \ldots, c^{\ast}_N \}$
be the quotient space of $\mathbb{H}$
with each $C_j$ regarded as a single point $c^{\ast}_j$.
The Lebesgue measure $m_{D^{\ast}}$ is naturally defined on $D^{\ast}$
so that it does not charge $\{ c^{\ast}_1, \ldots, c^{\ast}_N \}$.
The BMD on $D^{\ast}$ is an $m_{D^{\ast}}$-symmetric diffusion process
$Z^{\ast} = ((Z^{\ast}_t)_{t \geq 0}, (\mathbb{P}^{\ast}_z)_{z \in D^{\ast}})$
with the following properties:
\begin{itemize}
\item
the part process of $Z^{\ast}$ in $D$ is
the absorbed Brownian motion on $D$;

\item
$Z^{\ast}$ admits no killings on $\{ c^{\ast}_1, \ldots, c^{\ast}_N \}$.
\end{itemize}
By \cite[Lemma~5.1]{CFR16},
(a Borel-measurable version of) the $0$-order resolvent kernel of $Z^{\ast}$
coincides with the generalized Green function
\begin{equation} \label{eq:BMD_Green}
G^{\ast}_D(z, w)
= G_D(z, w) + 2 \Phi_D(z) \bm{A}_D^{-1} \Phi_D(w)^{\mathrm{tr}}.
\end{equation}
We call it the Green function of $Z^{\ast}$ as well.
We have
\begin{equation} \label{eq:Green_Poisson}
K^{\ast}_D(z, \xi)
= -\frac{1}{2} \frac{\partial}{\partial \bm{n}_{\xi}} G^{\ast}_D(z, \xi)
\end{equation}
by definition,
and the identity
\begin{equation} \label{eq:BMD_Poi_bd_value}
\mathbb{E}^{\ast}_z[g(Z^{\ast}_{\zeta^{\ast}-})]
= \int_{\mathbb{R}} K^{\ast}_D(z, \xi) g(\xi) \, d\xi,
\quad g \in C_b(\mathbb{R}),
\end{equation}
holds by \cite[Lemma~5.2]{CFR16}.
Here, we have identified $\partial \mathbb{H}$ with $\mathbb{R}$.
$\zeta^{\ast}$ denotes the lifetime of $Z^{\ast}$.
For these reasons,
we call $K^{\ast}_D$ the Poisson kernel of $Z^{\ast}$.
Accordingly, we call $\Psi_D$ the complex Poisson kernel of BMD.

Now, we compare the above-mentioned studies
on the Komatu--Loewner equation~\eqref{eq:intro_1KL}
with a similar line of research
on the Loewner differential equation
\begin{equation} \label{eq:intro_Loewner}
\frac{\partial g_t(z)}{\partial t}
= \frac{2}{g_t(z) - \xi(t)},
\quad z \in \mathbb{H},
\end{equation}
for a family $(g_t)_{t \geq 0}$ of slit-mappings of $\mathbb{H}$.
In this simply-connected case,
$g_t$ can be replaced by a more general mapping.
Such replacement leads us to
the concepts of Loewner chains and evolution families.

Roughly speaking, a Loewner chain
on $\mathbb{H}$ is
a family of univalent functions
$f_t \colon \mathbb{H} \hookrightarrow \mathbb{H}$, $t \in [0, T]$,
whose images $f_t(\mathbb{H})$ are expanding ``continuously'' in $t$.
Moreover, each $f_t$ fixes the point at infinity and obeys normalization conditions at $\infty$ slightly more general than \eqref{eq:intro_hyd_norm};
see (H.\ref{ass:hydro}) and (H.\ref{ass:res}) in Section~\ref{sec:assumptions} for the precise conditions.
For such $(f_t)_{t \in [0, T]}$,
the family of the composites
$\phi_{t, s} := f_t^{-1} \circ f_s$, $0 \leq s \leq t \leq T$,
is called an evolution family on $\mathbb{H}$.
For each fixed $s \in [0, T)$, the evolution family obeys the equation
\begin{equation} \label{eq:intro_GB}
\frac{\partial \phi_{t, s}(z)}{\partial t}
= - \dot{a}_t \int_{\mathbb{R}} \frac{\nu_t(d\xi)}{\phi_{t, s}(z) - \xi},
\quad z \in \mathbb{H}\ \text{and a.e.}\ t \in [s, T].
\end{equation}
Here, $\nu_t$ is a Borel probability measure on $\mathbb{R}$.
This equation was derived in some papers;
see I.\ A.\ Aleksandrov, S.\ T.\ Aleksandrov and Sobolev~\cite{AAS83},
Goryainov and Ba~\cite{GB92},
and Bauer~\cite{Ba05}.
From \eqref{eq:intro_GB},
one
can also derive the differential equation for $(f_t)_{t \in [0, T]}$.

\eqref{eq:intro_GB} is related to \eqref{eq:intro_Loewner} as follows:
let $(g_t)_{t \in [0, T]}$ be a family of slit-mappings.
Then $f_t := g_{T-t}^{-1}$, $t \in [0, T]$, form a Loewner chain, and
$\phi_{t, s} := g_{T-t} \circ g_{T-s}^{-1}$, $0 \leq s \leq t \leq T$,
form an evolution family.
\eqref{eq:intro_Loewner} implies that
$(\phi_{t, s})_{0 \leq s \leq t \leq T}$ satisfies \eqref{eq:intro_GB}
with $a_t = 2t$ and $\nu_t = \delta_{\xi(T-t)}$.
Here, $\delta_{\xi(T-t)}$ is the Dirac delta measure supported at $\xi(T-t)$.
Thus, we can say that \eqref{eq:intro_GB}
is a (time-reversed) version of \eqref{eq:intro_Loewner}
with the driving function $t \mapsto \xi(T-t)$
replaced by the \emph{measure-valued} driving process $t \mapsto \nu_t$.

As the Loewner equation~\eqref{eq:intro_Loewner} for slit-mappings
is essential to SLE theory,
the equation~\eqref{eq:intro_GB} driven by a measure-valued process
also has applications to probability theory.
For example,
del Monaco and Schlei{\ss}inger~\cite{dMS16}
studied the $n \to \infty$ limit of
\emph{$n$-multiple SLE}~\cite{BBK05, KL07}.
In their paper,
$\xi_1(t), \ldots, \xi_n(t)$
are the Dyson's non-colliding Brownian motions
(which is well known in random matrix theory),
$\nu_t$ is given as the $n \to \infty$ limit
of the empirical measures
$\nu^{(n)}_t := n^{-1} \sum_{k=1}^n \delta_{\xi_k(t)}$, and
$\nu^{(n)}_t \to \nu_t$ in \eqref{eq:intro_GB}
implies the convergence of the corresponding Loewner chains.
(See also del Monaco, Hotta and Schlei{\ss}inger~\cite{dMHS18} and
Hotta and Katori~\cite{HK18}.)
Moreover, a version of \eqref{eq:intro_GB} on $\mathbb{D}$
has appeared
in the study of \emph{Laplacian growth models}.
See, e.g., Johansson Viklund, Sola and Turner~\cite{JVST12}
and Miller and Sheffield~\cite{MS16}.

In contrast to the applications in the simply-connected case,
no studies have been done
on Loewner chains, evolution families, and measure-valued driving processes
associated with the Komatu--Loewner equation
with regard to conformal mappings on $(N+1)$-connected domains with $N \geq 2$.
(The case $N=1$ is special and has been studied on annuli;
see Remark~\ref{rem:annuli}.)
Our study will thus make a contribution to filling the lack of such a study.

Finally, we make some remarks
on the idea and techniques in this paper.
We owe our basic idea to Goryainov and Ba~\cite{GB92}.
They derived \eqref{eq:intro_GB},
using the integral representation
$f(z) = z - \int_{\mathbb{R}} (z - \xi)^{-1} \, \mu(d\xi)$
for a proper class of holomorphic functions $f$ \cite[Lemma~1]{GB92}.
In our case,
$-(z - \xi)^{-1}$ will be replaced by $\pi \Psi_D(z, \xi)$
(see \eqref{eq:int_rep}).
However,
this replacement is far from straightforward.
The difference comes from the dependence of $\Psi_D(z, \xi)$
on $\xi$ and on $D$.
In order to derive both the integral representation
and desired differential equation,
one has to show
that this dependence is controllable in an appropriate sense.
(For the kernel $(z - \xi)^{-1}$, this is obvious.)
Indeed, such dependence was studied systematically
in the derivation of \eqref{eq:intro_1KL}
by Chen, Fukushima and Rohde~\cite{CFR16}.
Since we treat a measure-valued process $\nu_t(d\xi)$
rather than a continuous function $\xi(t)$,
we need to strengthen their results
so that some estimates about $\Psi_D(z, \xi)$
are uniform with respect to $\xi \in \partial \mathbb{H}$.
We shall do this,
combining methods of geometric function theory,
probability theory, and functional analysis.

The rest of this paper is organized as follows.
In Section~\ref{sec:results},
we state basic assumptions,
the definitions of Loewner chains and of evolution families,
and the main results of this paper.
The main results consist of three parts:
deriving the Komatu--Loewner equations
(Theorem~\ref{thm:result_KL_EF} and Corollary~\ref{cor:result_KL_LC}),
constructing evolution families by solving the Komatu--Loewner equation
(Theorems~\ref{thm:result_unbdd} and \ref{thm:result_bdd}), and
deducing an integral representation formula for conformal mappings
(Theorem~\ref{thm:int_rep}).
The third part is itself a main tool to prove the former two.
The proof of these results are given
in Sections~\ref{sec:Poisson} through \ref{sec:proof3}.
In Section~\ref{sec:Poisson},
we provide some properties of BMD complex Poisson kernel $\Psi_D(z, \xi)$
that are required in various places of this paper.
Section~\ref{sec:proof1} is devoted to the proof of Theorem~\ref{thm:int_rep}.
Section~\ref{sec:continuation} gives a preliminary to
Sections~\ref{sec:proof2} and \ref{sec:proof3}.
In order to describe precisely the behavior of a conformal mapping $f(z)$
as $z$ is near a parallel slit of a slit half-plane,
we perform the analytic continuation of $f(z)$ across the slit
and introduce corresponding symbols.
In Section~\ref{sec:proof2},
we prove Theorem~\ref{thm:result_KL_EF} and Corollary~\ref{cor:result_KL_LC}.
In Section~\ref{sec:proof3},
we prove Theorems~\ref{thm:result_unbdd} and \ref{thm:result_bdd}.
Section~\ref{sec:application} involves two applications of our results.
We deduce the Komatu--Loewner equations
for the family of the mapping-out functions of \emph{hulls with local growth}
in Sections~\ref{sec:local_growth} and \ref{sec:RSLC}
and
for the family of \emph{multiple-slit} mappings
in Section~\ref{sec:multiple_slits}.
Finally, we conclude this paper with some remarks for future works
in Section~\ref{sec:conclusion}.
After the body ends,
we have four appendices.
In Appendix~\ref{sec:KL_slit},
we deduce the evolution equation for the parallel slits
of the family $(D_t)_{t \in [0, T)}$ associated with an evolution family.
(Solutions to this equation are studied in Section~\ref{sec:proof3}.)
Appendices~\ref{sec:at_infinity} and \ref{sec:parameter} collects
some consequences from the assumptions
formulated in Section~\ref{sec:assumptions}.
In Appendix~\ref{sec:top_on_meas},
we recall the definition and basic properties of vague and weak convergences for finite Borel measures.
They will play a role when we treat measure-valued driving processes.

\section{Main results}
\label{sec:results}

\subsection{Assumptions}
\label{sec:assumptions}

We use the symbols
\begin{align*}
\mathbb{H}_{\eta}
&:= \{\, z \in \mathbb{C} \mathrel{;} \Im z > \eta \,\},
&\eta \geq 0, \\
\triangle_{\theta}
&:= \{\, z \in \mathbb{C} \mathrel{;} \theta < \arg z < \pi - \theta \,\},
&\theta \in (0, \pi/2).
\end{align*}
The upper half-plane $\mathbb{H}_0$ is written simply as $\mathbb{H}$.

In what follows,
we state three assumptions~(H.\ref{ass:hydro})--(H.\ref{ass:hull})
on univalent functions
and one assumption~$\mathrm{(Lip)}_F$
on one-parameter families of holomorphic functions.
We shall work under these assumptions throughout this paper.

The assumptions~(H.\ref{ass:hydro}) and (H.\ref{ass:res})
concern the behavior of functions around the point at infinity.
Let $f$ be a univalent function whose domain of definition contains
a half-plane $\mathbb{H}_{\eta_0}$, $\eta_0 \geq 0$.
The function $f$ is said to be
\emph{hydrodynamically normalized at infinity} if
\begin{enumerate}
\renewcommand{\theenumi}{\arabic{enumi}}
\renewcommand{\labelenumi}{{\rm (H.\theenumi)}}

\item \label{ass:hydro}
$\displaystyle
\lim_{\substack{z \to \infty \\ z \in \mathbb{H}_{\eta}}} (f(z) - z) = 0$
for some $\eta \geq \eta_0$.
\end{enumerate}
Whenever we say that $f$ satisfies (H.\ref{ass:hydro}),
we implicitly assume that the domain of $f$ contains some half-plane.
In addition, suppose that
\begin{enumerate}
\setcounter{enumi}{1}
\renewcommand{\theenumi}{\arabic{enumi}}
\renewcommand{\labelenumi}{{\rm (H.\theenumi)}}

\item \label{ass:res}
there exists $c \in \mathbb{C}$ such that
\[
\lim_{\substack{z \to \infty \\ z \in \triangle_{\theta}}} z (f(z) - z) = - c
\quad \text{for any}\ \theta \in (0, \pi/2).
\]
\end{enumerate}
Then we call $c$ the \emph{angular residue} of $f$ at infinity.
These properties are preserved
by taking the inverse or composition of functions;
see Appendix~\ref{sec:at_infinity}.

Let $D_1$ and $D_2$ be parallel slit half-planes
and $f \colon D_1 \to D_2$ be a univalent function.
The following assumption says
that each slit of $D_1$ is ``mapped'' onto a slit of $D_2$ by $f$:
\begin{enumerate}
\setcounter{enumi}{2}
\renewcommand{\theenumi}{\arabic{enumi}}
\renewcommand{\labelenumi}{{\rm (H.\theenumi)}}

\item \label{ass:hull}
the complement $F = D_2 \setminus f(D_1)$ of the image $f(D_1)$ is
an $\mathbb{H}$-hull.
\end{enumerate}
Here, $F$ is called an $\mathbb{H}$-hull
if it is relatively closed in $\mathbb{H}$ and
if $\mathbb{H} \setminus F$ is a simply connected domain.
Although $\mathbb{H}$-hulls are assumed to be bounded in some cases,
we do not assume the boundedness here.
If (H.\ref{ass:hydro}) and (H.\ref{ass:res}) as well as (H.\ref{ass:hull}) hold,
then the angular residue $c$ of $f$ at infinity is non-negative by Theorem~\ref{thm:int_rep} below.
When we focus on the $\mathbb{H}$-hull $F$ rather than the function $f$,
we call the \emph{(BMD) half-plane capacity}%
\footnote{This definition is well-defined because,
for a given $\mathbb{H}$-hull $F \subset D_2$,
a corresponding conformal mapping
$f \colon D_1 \to D_2 \setminus F$
with (H.\ref{ass:hydro}) and (H.\ref{ass:res}) is unique
by Corollary~\ref{cor:uniqueness}.}
of $F$ in $D_2$
and denote it by $\operatorname{hcap}^{D_2}(F)$.
(In this case,
$f^{-1} \colon D_2 \setminus F \to D_1$ is also called
the \emph{mapping-out function} of $F$,
as previously mentioned in Section~\ref{sec:intro}.)

Finally, we state the assumption $\mathrm{(Lip)}_F$
on one-parameter families of holomorphic functions.
Let $I$ be an interval,
$F \colon I \to \mathbb{R}$ be a non-decreasing continuous function,
and $f_t$ be a holomorphic function on a Riemann surface $X$ for each $t \in I$.
The following condition says that
the family $(f_t)_{t \in I}$ is Lipschitz continuous with respect to $F(t)$
locally uniformly in $p \in X$:
\begin{itemize}

\item[$\mathrm{(Lip)}_F$]
For any compact subset $K$ of $X$,
there exists a constant $L_K$ such that
\[
\sup_{p \in K} \lvert f_t(p) - f_s(p) \rvert \leq L_K (F(t) - F(s))
\quad \text{for} \ (s, t) \in I^2 \ \text{with} \ s < t.
\]

\end{itemize}

A series of results related to this condition is presented
in Appendix~\ref{sec:parameter}.
Here we give minimum prerequisites for stating our results precisely in Section~\ref{sec:derivation}.
Let $m_F$ be a unique non-atomic Radon measure on $I$
that satisfies $m_F((s, t]) = F(t) - F(s)$.
Under $\mathrm{(Lip)}_F$,
the function $t \mapsto f_t(p)$
is of finite variation on $I$ for each $p \in X$ and
hence induces a complex measure $\kappa_p$
on every compact subinterval of $I$
that is absolutely continuous with respect to $m_F$.
The limit
\[
\tilde{\partial}^F_t f_t(p)
:= \lim_{\delta \downarrow 0}
\frac{f_{t+\delta}(p) - f_{t-\delta}(p)}{F(t+\delta) - F(t-\delta)}
= \lim_{\delta \downarrow 0}
\frac{\kappa_p((t-\delta, t+\delta))}{m_F((t-\delta, t+\delta))}
\]
exists for $m_F$-a.e.\ $t \in I$ and
is a version of the Radon--Nikodym derivative $d\kappa_p/dm_F$.
Although the set of $t$ for which $\tilde{\partial}^F_t f_t(p)$ does not exist depends on $p \in X$,
Proposition~\ref{prop:ae_diff} ensures that
$\{\, t \in I \mathrel{;}
\tilde{\partial}^F_t f_t(p) \ \text{does not exist for some}\ p \in X \,\}$
is $m_F$-negligible.

\subsection{Komatu--Loewner equations for evolution families and for Loewner chains}
\label{sec:derivation}

From this point,
the symbol $I$ stands for an interval $[0, T)$ or $[0, T]$ 
with $T\in(0,\infty]$.
If $T=\infty$, then $I$ is always understood as the half-line $[0,\infty)$; we do not consider the case $\infty\in I$.
We define
\[
I^2_{\leq} := \{\, (s, t) \in I^2 \mathrel{;} s \leq t \,\},
\quad
I^3_{\leq} := \{\, (s, t, u) \in I^3 \mathrel{;} s \leq t \leq u \,\}
\]
and the same symbols with $\leq$ replaced by $<$ in an obvious manner.

\begin{definition} \label{dfn:evol_family}
Let $D_t$, $t \in I$, be parallel slit half-planes.
We say that a two-parameter family of univalent functions
$\phi_{t, s} \colon D_s \to D_t$, $(s, t) \in I^2_{\leq}$,
with (H.\ref{ass:hydro})--(H.\ref{ass:hull})
is a \emph{(chordal) evolution family over $(D_t)_{t \in I}$}
if the following hold:
\begin{enumerate}
\renewcommand{\theenumi}{\arabic{enumi}}
\renewcommand{\labelenumi}{{\rm (EF.\theenumi)}}

\item \label{cond:EFnorm}
$\phi_{t, t}$ is the identity mapping for each $t \in I$;

\item \label{cond:EFcomp}
$\phi_{u, s} = \phi_{u, t} \circ \phi_{t, s}$ holds on $D_s$ for each $(s, t, u) \in I^3_{\leq}$;

\item \label{cond:EFcont}
the angular residue $\lambda(t)$ of $\phi_{t, 0}$ at infinity is continuous in $t \in I$.
\end{enumerate}
\end{definition}

$\lambda(t)$ is non-decreasing 
by Lemma~\ref{lem:evol_family}~\eqref{lem:EFhcap},
and the one-parameter family $(\phi_{t, t_0})_{t \in I \cap [t_0, T]}$ enjoys
$\mathrm{(Lip)}_{\lambda}$ on $D_{t_0}$
for each fixed $t_0 \in I$
by Lemma~\ref{lem:EFineq}.

We now state the first part of our results.
For a topological space $X$,
let $\mathcal{B}(X)$ and $\mathcal{B}^m(X)$ be
the Borel $\sigma$-algebra of $X$
and its completion with respect to a measure $m$, respectively.
Let $\mathcal{P}(\mathbb{R})$ be the set of Borel probability measures on $\mathbb{R}$.
We can consider the vague or weak topology on $\mathcal{P}(\mathbb{R})$,
but either of them gives rise to the same Borel $\sigma$-algebra by Corollary~\ref{cor:Borel_on_M}.
See Appendix~\ref{sec:top_on_meas} for the definition and properties of vague and weak topologies.

\begin{theorem} \label{thm:result_KL_EF}
Let $(\phi_{t, s})_{(s, t) \in I^2_{\leq}}$ be an evolution family over $(D_t)_{t \in I}$.
There exist an $m_{\lambda}$-null set $N_0$ and
a measurable mapping $t \mapsto \nu_t$
from $(I, \mathcal{B}^{m_{\lambda}}(I))$ to
$(\mathcal{P}(\mathbb{R}), \mathcal{B}(\mathcal{P}(\mathbb{R})))$
such that,
for each fixed $t_0 \in [0, T)$,
the Komatu--Loewner equation
\begin{equation} \label{eq:result_KL_EF}
\tilde{\partial}^{\lambda}_t \phi_{t, t_0}(z)
= \pi \int_{\mathbb{R}} \Psi_{D_t}(\phi_{t, t_0}(z), \xi) \, \nu_t(d\xi),
\quad z \in D_{t_0},
\end{equation}
holds for any $t \in (t_0, T) \setminus N_0$.
Moreover, such a mapping $t \mapsto \nu_t$ is unique on $(0, T) \setminus N_0$.
\end{theorem}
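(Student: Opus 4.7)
The plan is to adapt the Goryainov--Ba strategy to the multiply connected setting by combining the integral representation (Theorem~\ref{thm:int_rep}) with the semigroup law (EF.\ref{cond:EFcomp}) and passing to the limit via vague compactness. The three main ingredients will be: the integral representation of each infinitesimal increment $\phi_{t+h,t}$; the Lipschitz regularity $(\mathrm{Lip})_\lambda$ enjoyed by the evolution family, which guarantees almost-everywhere differentiability; and uniform estimates on $\Psi_D(z,\xi)$ in $\xi\in\partial\mathbb{H}$ from Section~\ref{sec:Poisson} that allow limit exchange.

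First I would apply Theorem~\ref{thm:int_rep} to each $\phi_{t+h,t}\colon D_t\to D_{t+h}$; by Lemma~\ref{lem:evol_family} this map satisfies (H.\ref{ass:hydro})--(H.\ref{ass:hull}) with half-plane capacity $\lambda(t+h)-\lambda(t)$, so a unique finite Borel measure $\mu_{t,t+h}$ on $\mathbb{R}$ of that total mass represents $\phi_{t+h,t}$ via the kernel $\pi\Psi$. Set $\tilde\nu_{t,h}:=\mu_{t,t+h}/(\lambda(t+h)-\lambda(t))\in\mathcal{M}_{\leq 1}(\mathbb{R})$. For fixed $t_0\in I$ and $z\in D_{t_0}$, the semigroup relation $\phi_{t+h,t_0}=\phi_{t+h,t}\circ\phi_{t,t_0}$ applied at $w(t):=\phi_{t,t_0}(z)$ yields
\[
\frac{\phi_{t+h,t_0}(z)-\phi_{t,t_0}(z)}{\lambda(t+h)-\lambda(t)}=\pi\int_{\mathbb{R}}K_{t,h}(w(t),\xi)\,\tilde\nu_{t,h}(d\xi),
\]
where $K_{t,h}$ is the kernel supplied by Theorem~\ref{thm:int_rep} (either $\Psi_{D_t}(w,\xi)$ or $\Psi_{D_{t+h}}(\phi_{t+h,t}(w),\xi)$ depending on the precise form), both tending to $\Psi_{D_t}(w(t),\xi)$ as $h\to 0^+$.

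Next, by $(\mathrm{Lip})_\lambda$ (Lemma~\ref{lem:EFineq}) and Proposition~\ref{prop:ae_diff}, the left-hand side converges to $\tilde\partial^\lambda_t\phi_{t,t_0}(z)$ at every $t$ outside an $m_\lambda$-null set, simultaneously for all $z\in D_{t_0}$. On the right, vague compactness of $\mathcal{M}_{\leq 1}(\mathbb{R})$ lets me extract a subsequence $h_k\downarrow 0$ along which $\tilde\nu_{t,h_k}$ converges vaguely to some $\tilde\nu_t$. Using the uniform estimate
\[
\sup_{\xi\in\mathbb{R}}\bigl|K_{t,h}(w(t),\xi)-\Psi_{D_t}(w(t),\xi)\bigr|\xrightarrow[h\to 0^+]{}0
\]
and the fact that $\Psi_{D_t}(w(t),\cdot)\in C_0(\mathbb{R})$ (both established in Section~\ref{sec:Poisson}), the integrals converge and yield the KL identity $\tilde\partial^\lambda_t\phi_{t,t_0}(z)=\pi\int\Psi_{D_t}(\phi_{t,t_0}(z),\xi)\,\tilde\nu_t(d\xi)$ for the subsequential limit.

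To upgrade this to the full statement I need (i) uniqueness of the subsequential limit $\tilde\nu_t$, (ii) independence of $\nu_t$ from $t_0$ and uniformity of the null set, and (iii) measurability of $t\mapsto\nu_t$. Part~(i) and the uniqueness clause in the theorem both reduce to a separation property: the family $\{\Psi_{D_t}(w,\cdot):w\in D_t\}\subset C_0(\mathbb{R})$ is rich enough that a measure in $\mathcal{M}_{\leq 1}(\mathbb{R})$ is determined by its integrals against this family (another consequence of the BMD Poisson kernel analysis in Section~\ref{sec:Poisson}). For~(ii), the crucial observation is that $\tilde\nu_{t,h}$ is constructed from $\phi_{t+h,t}$ alone, independently of $t_0$; hence a single null set $N_0$ suffices, and for $t\notin N_0$ the KL equation holds for every admissible $t_0<t$ via the semigroup identity above. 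Part~(iii) follows because $t\mapsto\mu_{t,t+h}$ is Borel measurable for each $h>0$ (by uniqueness in Theorem~\ref{thm:int_rep} together with the joint continuity of the evolution family), and the $m_\lambda$-a.e.\ vague limit then gives a $\mathcal{B}^{m_\lambda}(I)$-measurable selection. The main obstacle, foreshadowed in Section~\ref{sec:intro}, is establishing the uniform-in-$\xi$ continuity and the separation properties of $\Psi_D(z,\xi)$ as the slit configuration of $D$ varies; this strengthening of the pointwise kernel estimates of Chen--Fukushima--Rohde~\cite{CFR16} is precisely what makes both the limit exchange and the uniqueness argument go through.
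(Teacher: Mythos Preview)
Your proposal is essentially the paper's own proof (Theorem~\ref{thm:EFmain} in Section~\ref{sec:proof2_EF}): integral representation via Theorem~\ref{thm:int_rep}, vague compactness of $\mathcal{M}_{\leq 1}(\mathbb{R})$, limit exchange using the kernel estimates of Section~\ref{sec:Poisson} (in particular $\Psi_{D_t}(w,\cdot)\in C_\infty(\mathbb{R})$ from Lemma~\ref{lem:Poi_Koebe} and the uniform continuity in Proposition~\ref{prop:Poi_cpt}), and uniqueness of $\nu_t$ via Lemma~\ref{lem:CpBMDInt}\eqref{lem:cPoi_uniq}. The only places where the paper is more explicit are the use of symmetric increments $\phi_{t+\delta,t-\delta}$ (so the kernel appearing is unambiguously $\Psi_{D_{t-\delta}}$, and Proposition~\ref{prop:Poi_cpt} is genuinely needed to pass to $\Psi_{D_t}$) and the consistency $N_{t_0}=N_0\cap(t_0,T)$ of the exceptional null sets across different $t_0$, which is not a consequence of $\tilde\nu_{t,h}$ being $t_0$-independent but is handled in Lemma~\ref{lem:NullConsis} by combining the semigroup law with Proposition~\ref{prop:ae_diff}\eqref{prop:Lip_aeDiff}.
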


Since $(\phi_{t,t_0})_{t \in I \cap [t_0,T]}$ enjoys $\mathrm{(Lip)}_{\lambda}$ as mentioned above,
we can integrate both sides of \eqref{eq:result_KL_EF} by the measure $m_\lambda$ to get the integral equation equivalent to the differential equation \eqref{eq:result_KL_EF}.
Here, we remind the reader that, also in the usual theory of ordinary differential equations (ODEs for short),
it is important that a solution to an ODE is an absolutely continuous function by the very definition and, due to this absolute continuity, the ODE is equivalent to its integrated form.

The equation~\eqref{eq:result_KL_EF} for evolution families
is easily transferred to the one for Loewner chains,
whose definition is given as follows:

\begin{definition} \label{dfn:LoewnerChain}
Let $D$ and $D_t$, $t \in I$, be parallel slit half-planes.
We say that a family of univalent functions $f_t \colon D_t \to D$, $t \in I$,
with (H.\ref{ass:hydro})--(H.\ref{ass:hull}) is a
\emph{(chordal) Loewner chain over $(D_t)_{t \in I}$ with codomain $D$}
if the following hold:
\begin{enumerate}
\renewcommand{\theenumi}{\arabic{enumi}}
\renewcommand{\labelenumi}{{\rm (LC.\theenumi)}}

\item \label{cond:LCsub}
$f_s(D_s) \subset f_t(D_t)$ holds for each $(s, t) \in I^2_{\leq}$;

\item \label{cond:LCcont}
the angular residue $\ell(t)$ of $f_t$ is continuous in $t \in I$.
\end{enumerate}
\end{definition}

$\phi_{t, s} := f_t^{-1} \circ f_s$, $(s, t) \in I^2_{\leq}$,
is an evolution family, and
the angular residue of $\phi_{t, 0}$ is $\ell(0) - \ell(t)$
(Proposition~\ref{prop:EF_LCrel}~\eqref{prop:LCtoEF}).
Hence, for a fixed $t_0 \in I$,
the family $(f_t^{-1})_{t \in I \cap [t_0, T]}$ of inverse functions
satisfies $\mathrm{(Lip)}_{\ell}$ on $f_{t_0}(D_{t_0})$.
Substituting $\phi_{t, t_0}(w) = (f_t^{-1} \circ f_{t_0})(w)$
with $w = f_{t_0}^{-1}(z)$ into \eqref{eq:result_KL_EF},
we have the following:

\begin{corollary} \label{cor:result_KL_LC}
Let $(f_t)_{t \in I}$ be a Loewner chain over $(D_t)_{t \in I}$
with some codomain.
There exists an $m_{\ell}$-null set $N_0$ and
a $\mathcal{B}^{m_\ell}(I)/\mathcal{B}(\mathcal{P}(\mathbb{R}))$-measurable mapping
$t \mapsto \nu_t$
such that, for each fixed $t_0 \in [0, T)$,
the equation
\begin{equation} \label{eq:result_KL_LC}
\tilde{\partial}^{\ell}_t (f_t^{-1})(z)
= - \pi \int_{\mathbb{R}} \Psi_{D_t}(f_t^{-1}(z), \xi) \, \nu_t(d\xi),
\quad z \in f_{t_0}(D_{t_0}),
\end{equation}
holds for any $t \in [t_0, T) \setminus N_0$.
Moreover, such a mapping $t \mapsto \nu_t$ is unique on $(0, T) \setminus N_0$.
\end{corollary}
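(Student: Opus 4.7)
The plan is to deduce the corollary from Theorem~\ref{thm:result_KL_EF} applied to the companion evolution family, using the substitution already indicated in the statement. By Proposition~\ref{prop:EF_LCrel}, the Loewner chain $(f_t)_{t \in I}$ induces the evolution family $\phi_{t,s} := f_t^{-1} \circ f_s$, $(s,t) \in I^2_{\leq}$, over the same base $(D_t)_{t \in I}$, and the angular residue of $\phi_{t,0}$ equals $\lambda(t) = \ell(0) - \ell(t)$. Applying Theorem~\ref{thm:result_KL_EF} to $(\phi_{t,s})$ then yields an $m_\lambda$-null set $N_0$ together with a $(\mathcal{B}^{m_\lambda}(I), \mathcal{B}(\mathcal{M}_{\leq 1}(\mathbb{R})))$-measurable mapping $t \mapsto \nu_t$ satisfying
\[
\tilde{\partial}^\lambda_t \phi_{t,t_0}(w) = \pi \int_{\mathbb{R}} \Psi_{D_t}(\phi_{t,t_0}(w), \xi) \, \nu_t(d\xi), \qquad w \in D_{t_0},
\]
for every $t_0 \in [0, T)$ and every $t \in (t_0, T) \setminus N_0$.

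Next I would fix $t_0 \in [0, T)$ and $z \in f_{t_0}(D_{t_0})$ and substitute $w := f_{t_0}^{-1}(z) \in D_{t_0}$ into the above equation. The identity $\phi_{t,t_0} = f_t^{-1} \circ f_{t_0}$ gives $\phi_{t,t_0}(w) = f_t^{-1}(z)$, so the integrand becomes $\Psi_{D_t}(f_t^{-1}(z), \xi)$ as required by~\eqref{eq:result_KL_LC}. To rewrite the left-hand side, I observe that $\lambda = \ell(0) - \ell$; hence the non-atomic Radon measures $m_\lambda$ and $m_{-\ell}$ coincide (so that the $m_\lambda$- and $m_\ell$-completions of $\mathcal{B}(I)$ and their respective null sets agree), while the symmetric-difference quotient defining $\tilde{\partial}^\lambda_t$ differs from its $\ell$-analogue only by an overall sign, giving $\tilde{\partial}^\lambda_t f_t^{-1}(z) = -\tilde{\partial}^\ell_t f_t^{-1}(z)$ wherever either side exists. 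Inserting these identifications into the displayed equation produces~\eqref{eq:result_KL_LC}. The boundary point $t = t_0$ (for $t_0 > 0$) is absorbed by starting the evolution family at any $t_0' < t_0$ and applying the same substitution, which yields the same limiting equation at $t = t_0$ whenever $t_0 \notin N_0$.

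Uniqueness of $(\nu_t)$ on $(0, T) \setminus N_0$ is inherited directly from the uniqueness clause of Theorem~\ref{thm:result_KL_EF}, since $(f_t^{-1})_{t \in I}$ and $(\phi_{t, 0})_{t \in I}$ determine one another by pre-composition with the fixed map $f_0$. The only nontrivial bookkeeping anticipated in the argument is the consistent treatment of the sign relation between $\lambda$ and $\ell$ and the accompanying identification of their measure-theoretic and differentiation structures; once this is pinned down, the statement follows by a direct substitution with no new analytic input.
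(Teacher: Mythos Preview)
Your proposal is correct and follows essentially the same route as the paper: form the evolution family $\phi_{t,s}=f_t^{-1}\circ f_s$ via Proposition~\ref{prop:EF_LCrel}, apply Theorem~\ref{thm:result_KL_EF}, and substitute $w=f_{t_0}^{-1}(z)$, using $\lambda(t)=\ell(0)-\ell(t)$ to convert the $\lambda$-derivative into the $\ell$-derivative with a sign change. Your explicit handling of the endpoint $t=t_0$ and of the identification $m_\lambda=m_{-\ell}$ is a bit more careful than the paper's terse one-line derivation, but the argument is the same.
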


\eqref{eq:result_KL_LC} can be further translated into
the partial differential equation for $f_t(z)$
by differentiating the identity $f_t(f_t^{-1}(z)) = z$ in $t$.
We omit the detail.

\subsection{Evolution families generated by Komatu--Loewner equation}
\label{sec:solution}

The second part of our results asserts that the solutions to \eqref{eq:result_KL_EF} form a chordal evolution family.
To apply the usual ODE theory,
our assertion is restricted to the equation~\eqref{eq:result_KL_EF} with $\lambda(t) = 2t$ only,
but this restriction is not essential; with a suitable modification, we can make a similar argument using any manner of parametrization, as is just stated after Theorem~\ref{thm:result_KL_EF} above.
In the following theorem, we denote the one-dimensional Lebesgue measure by $\mathbf{Leb}$.

\begin{theorem} \label{thm:result_unbdd}
Let $(\nu_t)_{t \geq 0}$ be a $\mathcal{B}[0,\infty)^{\mathbf{Leb}}/\mathcal{B}(\mathcal{P}(\mathbb{R}))$-measurable process and
$D$ be a parallel slit half-plane.
Then there exist a family of parallel slit half-planes $(D_t)_{t\ge 0}$ with $D_0=D$
and chordal evolution family
$(\phi_{t, s})_{(s, t) \in [0,\infty)^2_{\leq}}$
over this family such that
\begin{enumerate}
\item \label{item:2nd_result_2}
the associated angular residues satisfy $\lambda(t)=2t$ for $t\ge 0$, and
\item \label{item:2nd_result_3}
for each fixed $t_0\ge 0$, the Komatu-Loewner equation
\begin{equation} \label{eq:result_hcapKL}
\frac{\partial \phi_{t, t_0}(z)}{\partial t}
= 2\pi \int_{\mathbb{R}} \Psi_{D_t}(\phi_{t, t_0}(z), \xi) \, \nu_t(d\xi),
\quad z \in D_{t_0},
\end{equation}
holds for $\mathbf{Leb}$-a.e.\ $t\in[t_0,\infty)$.
\end{enumerate}
This evolution family is determined uniquely once $(\nu_t)_{t \geq 0}$, $D$, and $(D_t)_{t\ge 0}$ are given.
\end{theorem}

We should emphasize that this theorem does not assert the uniqueness of the family $(D_t)_{t\ge 0}$ of parallel slit half-planes for given $(\nu_t)_{t \geq 0}$ and $D$.
The above evolution family $(\phi_{t,s})_{(s,t)\in[0,\infty)^2_{\le}}$ may not be unique for given $(\nu_t)_{t \geq 0}$ and $D$ in this sense.
The next theorem gives a sufficient condition for the uniqueness.

\begin{theorem} \label{thm:result_bdd}
Suppose that in Theorem~\ref{thm:result_unbdd}
there exists $a_t>0$ for every $t>0$ such that
$\bigcup_{0 \leq s \leq t} \operatorname{supp} \nu_s \subset [-a_t, a_t]$.
Then there exist a unique family of parallel slit half-planes $(D_t)_{t \in [0, \infty)}$ with $D_0=D$ and unique evolution family $(\phi_{t, s})_{(s, t) \in [0, \infty)^2_{\leq}}$ over it
that enjoy the conclusions \eqref{item:2nd_result_2} and \eqref{item:2nd_result_3} of Theorem~\ref{thm:result_unbdd}.
\end{theorem}

\subsection{Integral representation of conformal mappings}
\label{sec:representation}

The proof of Theorem~\ref{thm:result_KL_EF}
is based on the following formula,
which is a multiply-connected version of
Lemma~1 of Goryainov and Ba~\cite{GB92}:

\begin{theorem} \label{thm:int_rep}
Let $D_1$ and $D_2$ be parallel slit half-planes
and $f \colon D_1 \to D_2$ be a conformal mapping with {\rm (H.\ref{ass:hull})}.
Then
$f$ satisfies {\rm (H.\ref{ass:hydro})} and {\rm (H.\ref{ass:res})}
if and only if
there exists a finite Borel measure $\mu$ on $\mathbb{R}$ such that
\begin{equation} \label{eq:int_rep}
f(z) = z + \pi \int_{\mathbb{R}} \Psi_{D_1}(z, \xi) \, \mu(d\xi),
\quad z \in D_1.
\end{equation}
If one of these conditions holds,
the limit
$\Im f(x) := \lim_{y \downarrow 0} \Im f(x + iy)$
exists for Lebesgue a.e.\ $x \in \mathbb{R}$,
the measure $\mu$ is uniquely given by
$\mu(d\xi) = \pi^{-1} \Im f(\xi) \, d\xi$, and
the angular residue of $f$ at infinity is $\mu(\mathbb{R})$.
\end{theorem}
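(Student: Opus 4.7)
The overall approach will parallel the Goryainov--Ba derivation of \cite[Lemma~1]{GB92}, with the simply-connected Cauchy kernel $-(z-\xi)^{-1}$ replaced by $\pi\Psi_{D_1}(z,\xi)$ and the Poisson representation on $\mathbb{H}$ replaced by the BMD Poisson representation on $D_1^{\ast}$. The crucial point is that $\Im\Psi_{D_1}(z,\xi) = K^{\ast}_{D_1}(z,\xi)$ by \eqref{eq:Green_Poisson}, so the imaginary part of $f(z)-z$ can be reached probabilistically via the BMD $Z^{\ast}$, and the full holomorphic side then comes for free from Cauchy--Riemann.

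For the necessity direction, suppose (H.\ref{ass:hydro})--(H.\ref{ass:hull}) hold. Since $f$ maps $D_1$ into $\mathbb{H}$, $\Im f$ is a positive harmonic function on $D_1$ with non-tangential boundary values $\Im f(\xi)\geq 0$ a.e.\ on $\mathbb{R}$ by Fatou's theorem (applicable because the parallel slits lie at positive heights, so $D_1$ looks locally like $\mathbb{H}$ near each $x\in\mathbb{R}$). Set $u(z) := \Im f(z) - \Im z$. By (H.\ref{ass:hydro}) one has $u(z)\to 0$ at infinity, and $u$ is bounded on $D_1$ (the complement of $f(D_1)$ in the Riemann sphere is bounded in $\mathbb{C}$). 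Crucially, since $f$ and $z\mapsto z$ are both single-valued holomorphic on $D_1$, the flux of $u$ around each parallel slit $C_j$ vanishes, so $u$ extends to a bounded $Z^{\ast}$-harmonic function on $D_1^{\ast}$. A standard approximation extending the identity $\mathbb{E}^{\ast}_z[g(Z^{\ast}_{\zeta^{\ast}-})] = \int K^{\ast}_{D_1}(z,\xi)g(\xi)\,d\xi$ from $g\in C_b(\mathbb{R})$ to the bounded measurable trace $g = \Im f|_{\mathbb{R}}$ then yields
\[
\Im f(z) - \Im z = \int_{\mathbb{R}} K^{\ast}_{D_1}(z,\xi)\,\Im f(\xi)\,d\xi.
\]
Setting $\mu(d\xi) := \pi^{-1}\Im f(\xi)\,d\xi$ and using $K^{\ast}_{D_1} = \Im\Psi_{D_1}$, I conclude that $f(z) - z - \pi\int\Psi_{D_1}(z,\xi)\,\mu(d\xi)$ is holomorphic on $D_1$ with identically zero imaginary part, hence a real constant; (H.\ref{ass:hydro}) together with $\Psi_{D_1}(\cdot,\xi)\to 0$ at infinity force this constant to be $0$.

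For the sufficiency direction, and for pinning down the angular residue, I would use the asymptotic expansion $\pi\Psi_{D_1}(z,\xi) = -z^{-1} + O(z^{-2})$ as $z\to\infty$, locally uniform in $\xi$ and admitting a global majorant permitting dominated convergence---both to be established in Section~\ref{sec:Poisson} as basic properties of the BMD complex Poisson kernel. This expansion immediately gives $f(z)-z\to 0$ and $z(f(z)-z)\to -\mu(\mathbb{R})$ in every cone $\triangle_{\theta}$, yielding (H.\ref{ass:hydro}) and (H.\ref{ass:res}) with angular residue $\mu(\mathbb{R})$. Applied to the representation obtained in the necessity direction, the same expansion forces $\mu(\mathbb{R})$ to equal the finite angular residue $c$, so $\mu$ is a finite measure. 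Uniqueness of $\mu$ and the formula $\mu(d\xi) = \pi^{-1}\Im f(\xi)\,d\xi$ then follow by taking non-tangential boundary values on $\mathbb{R}$ of \eqref{eq:int_rep} and using that $K^{\ast}_{D_1}(x+iy,\xi)$ exhibits the classical Poisson-kernel singularity as $y\downarrow 0$ for $x$ bounded away from the horizontal projections of the slits.

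The principal obstacle is the extension of the BMD Poisson identity from $g\in C_b(\mathbb{R})$ to the only a.e.-defined bounded trace $\Im f|_{\mathbb{R}}$, together with the companion requirement of uniform-in-$\xi$ control of $\Psi_{D_1}(z,\xi)$ needed for every limit at infinity and for the boundary behaviour near the slits. Both rest on refined estimates of $\Psi_{D_1}(z,\xi)$ which, as emphasised in the introduction, must strengthen the results of \cite{CFR16} and will be developed in Section~\ref{sec:Poisson}.
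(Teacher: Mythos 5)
There is a genuine gap at the central step of your necessity direction. You claim that ``a standard approximation'' extends the identity $\mathbb{E}^{\ast}_z[g(Z^{\ast}_{\zeta^{\ast}-})]=\int_{\mathbb{R}}K^{\ast}_{D_1}(z,\xi)g(\xi)\,d\xi$ from $g\in C_b(\mathbb{R})$ to the a.e.-defined trace $g=\Im f|_{\mathbb{R}}$ and thereby obtain $\Im f(z)-\Im z=\int K^{\ast}_{D_1}(z,\xi)\Im f(\xi)\,d\xi$. But this conflates two different statements. Extending the exit-distribution formula to bounded Borel $g$ is indeed routine; what is not routine is the assertion that the bounded BMD-harmonic function $u=\Im f-\Im z$ is recovered as the harmonic extension of that trace, i.e.\ that $\lim_{t\nearrow\zeta^{\ast}}u(Z^{\ast}_t)$ (which exists a.s.\ by bounded martingale convergence) coincides with the \emph{vertical} Fatou limit $\Im f(Z^{\ast}_{\zeta^{\ast}-})$. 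That identification is a relative Fatou/Doob-type theorem for BMD, and the paper explicitly flags it as a separate, unproven route (Remark~\ref{rem:Doob}, citing Doob's work on limits along Brownian paths versus limits along vertical lines). The paper avoids it entirely: it represents $\Im f(x+iy)-y$ over the truncated domain $D_1\cap\mathbb{H}_{\eta}$, where the boundary data on $\partial\mathbb{H}_{\eta}$ are genuinely continuous and vanish at infinity, and then lets $\eta\downarrow 0$, justifying the interchange of limit and integral via the uniform-in-$\xi$ kernel convergence of Proposition~\ref{prop:Poi_cpt} together with Scheff\'e's lemma; this is precisely how \eqref{eq:im_int_rep} is obtained. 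Without either this truncation argument or a proved BMD Fatou theorem, your derivation of the imaginary-part representation does not go through.

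Two secondary points. First, for $u$ to descend to a BMD-harmonic function on $D_1^{\ast}$ you need not only the vanishing of the flux around each slit (automatic for the imaginary part of a single-valued holomorphic function) but also that $u$ is \emph{constant} on each slit, which is where (H.\ref{ass:hull}) enters via the boundary correspondence between slits; your sketch omits this. Second, your order of reasoning for finiteness of $\mu$ is slightly circular: the dominated-convergence computation of $z(f(z)-z)$ presupposes $\mu(\mathbb{R})<\infty$, so finiteness must be extracted first, as the paper does in \eqref{eq:tot_mass} by applying Fatou's lemma to the positive kernel $K^{\ast}_{D_1}$ together with (H.\ref{ass:res}). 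The sufficiency direction and the uniqueness/inversion part of your proposal do follow the paper's route (Proposition~\ref{prop:Poi_deriv_infty}, Lemma~\ref{lem:boundedness}, and a Stieltjes-type inversion as in Lemma~\ref{lem:CpBMDInt}), granted that the uniform estimates you defer to Section~\ref{sec:Poisson} are the ones actually proved there rather than a full $O(z^{-2})$ expansion uniform in $\xi$.
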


We shall denote the measure $\mu(\mathord{\cdot})$ in Theorem~\ref{thm:int_rep}
by $\mu_f(\mathord{\cdot})$ or $\mu(f; \mathord{\cdot})$.

\section{Analysis of BMD complex Poisson kernel}
\label{sec:Poisson}

In this section,
the BMD Poisson kernel $K^{\ast}_D(z, \xi)$
and its complexification $\Psi_D(z, \xi)$
are studied.
In particular, we need to take a close look at the dependence of $\Psi_D$ on $D$,
as already mentioned in Section~\ref{sec:intro}.
We begin this section by introducing the notation employed for this purpose.

\subsection{Notation}
\label{sec:notation}

Let us first confirm a general notation.
For a metric space $(X, d)$,
the open and closed balls with center $a \in X$ and radius $r > 0$
are designated as $B_X(a, r)$ and $\Bar{B}_X(a, r)$, respectively.
The subscript $X$ will be dropped
if it is clear from the context which metric space we are thinking of.
We denote the Euclidean distance by $d^{\text{Eucl}}$.

We define quantities characterizing a fixed parallel slit half-plane $D$.
Set
\begin{align*}
\eta_D
&:=\inf\{\, \Im z \mathrel{;} z\in\mathbb{H}\setminus D \,\}=d^{\text{Eucl}}(\partial\mathbb{H}, \mathbb{H}\setminus D), \\
r^{\text{out}}_D
&:=\sup\{\, \lvert z \rvert \mathrel{;} z\in\mathbb{H} \setminus D \,\}, \\
r^{\text{in}}_D
&:=\inf\{\, \lvert z \rvert \mathrel{;} z\in\mathbb{H} \setminus D \,\}=d^{\text{Eucl}}(0, \mathbb{H}\setminus D).
\end{align*}
Note that $\mathbb{H}\setminus D$ is the union of the slits of $D$.
By definition, $\eta_D\le r^{\text{in}}_D<r^{\text{out}}_D$ and
$\mathbb{H}\setminus D\subset \overline{\mathbb{H}_{\eta_D}}\cap\Bar{\mathbb{A}}(0; r^{\text{in}}_D, r^{\text{out}}_D)$.
Here, $\mathbb{H}_{\eta}:=\{\,z\mathrel{;}\Im z>\eta\,\}$ and
$\mathbb{A}(a;r,R):=\{\,z\mathrel{;}r<\lvert z-a \rvert<R\,\}$.

We next introduce concepts utilized to describe the variation of slit half-planes.
Let $N \geq 1$.
We define an open subset $\mathbf{Slit}$ of $\mathbb{R}^{3N}$
as the totality of vectors
\[
\bm{s}
= (y_1, y_2, \ldots, y_N,
	\ x^{\ell}_1, x^{\ell}_2, \ldots, x^{\ell}_N,
	\ x^r_1, x^r_2 \ldots, x^r_N)
\]
with the following properties:
$y_j > 0$ and $x^{\ell}_j < x^r_j$ hold for every $j = 1, \ldots, N$, and,
if $y_j = y_k$ for some $j \neq k$,
then $x^r_j < x^{\ell}_k$ or $x^r_k < x^{\ell}_j$ holds.
Such a vector represents the endpoints of parallel slits.
To be precise, let
\[
C_j(\bm{s}) := \{\, z = x + i y_j \mathrel{;} x^{\ell}_j \leq x \leq x^r_j \,\},
\quad
D(\bm{s}) := \mathbb{H} \setminus \bigcup_{j=1}^N C_j(\bm{s}).
\]
Then $D(\bm{s})$ is a parallel slit half-plane
with the $N$ slits
whose left and right endpoints are
$z^{\ell}_j := x^{\ell}_j + i y_j$ and $z^r_j := x^r_j + i y_j$,
respectively.
We put $\Psi_{\bm{s}}(z, \xi) := \Psi_{D(\bm{s})}(z, \xi)$.

The set $\mathbf{Slit}$ is endowed with the distance
\[
d_{\mathbf{Slit}}(\bm{s}, \tilde{\bm{s}})
:= \max_{1 \leq j \leq N} \left(
\lvert z^{\ell}_j - \tilde{z}^{\ell}_j \rvert + \lvert z^r_j - \tilde{z}^r_j \rvert
\right).
\]
It is clear that $\eta_{D(\bm{s})}$, $r^{\text{out}}_{D(\bm{s})}$ and $r^{\text{in}}_{D(\bm{s})}$ are continuous functions of $\bm{s}\in\mathbf{Slit}$.

\subsection{Asymptotic behavior as $z \to \infty$}
\label{sec:residue}

We study the asymptotic behavior of the function $z\Psi_D(z,\xi)$ as $z\to\infty$ in this subsection.
The results will be used to handle the angular residue at infinity of a conformal mapping expressed by some integral involving $\Psi_D$.

\begin{proposition} \label{prop:Poi_deriv_infty}
Let $D$ be a parallel slit half-plane.
The identity
\begin{equation} \label{eq:Poi_deriv_infty}
\lim_{z \to \infty} z \Psi_D(z, \xi)
= - \frac{1}{\pi}
\end{equation}
holds for any $\xi \in \partial \mathbb{H}$.
\end{proposition}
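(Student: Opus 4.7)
The plan is to apply Schwarz reflection to extend $\Psi_D(\cdot,\xi)$ across $\partial\mathbb{H}$ to a meromorphic function on $\mathbb{C}$ minus the slits and their mirror images, and then to read off the coefficient of $1/z$ at infinity via a residue calculation in which the contribution from each slit cancels against that of its reflection.

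The reflection step rests on the fact that $\Im\Psi_D(\cdot,\xi) = K^{\ast}_D(\cdot,\xi)$ vanishes on $\partial\mathbb{H}\setminus\{\xi\}$, so $\Psi_D(\cdot,\xi)$ is real there. Moreover, the principal part of $\Psi_D(\cdot,\xi)$ at $\xi$ is $-1/(\pi(z-\xi))$; this follows from the decomposition $K^{\ast}_D = -\tfrac{1}{2}\partial_{\bm{n}_\xi}G_D - \Phi_D\bm{A}_D^{-1}\partial_{\bm{n}_\xi}\Phi_D^{\mathrm{tr}}$ together with the boundedness of $\Phi_D$ and the fact that the Dirichlet Poisson kernel $-\tfrac{1}{2}\partial_{\bm{n}_\xi}G_D$ shares the local singularity of the upper-half-plane Poisson kernel at $\xi\in\mathbb{R}$. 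Schwarz reflection therefore produces a meromorphic extension satisfying $\Psi_D(\overline{z},\xi) = \overline{\Psi_D(z,\xi)}$ on $\mathbb{C}\setminus\bigcup_{j=1}^N(C_j\cup C_j^-)$, where $C_j^- := \{\overline{z} : z\in C_j\}$, with a single simple pole at $\xi$ of residue $-1/\pi$ and vanishing at infinity.

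Since the extended function vanishes at $\infty$, its Laurent series there reads $\Psi_D(z,\xi) = a_{-1}(\xi)/z + O(|z|^{-2})$ with $a_{-1}(\xi) = \lim_{z\to\infty}z\Psi_D(z,\xi)$, the quantity we want to identify. Taking $R$ so large that $|z|=R$ encloses $\xi$ together with all slits and their reflections, term-by-term integration yields $\oint_{|z|=R}\Psi_D(z,\xi)\,dz = 2\pi i\,a_{-1}(\xi)$ (counterclockwise orientation). On the other hand, the residue theorem on $\{|z|<R\}$ minus the slits, their reflections, and a small disk around $\xi$ gives
\[
\oint_{|z|=R} \Psi_D(z,\xi)\,dz = 2\pi i\cdot\bigl(-\tfrac{1}{\pi}\bigr) + \sum_{j=1}^N\Bigl(\oint_{\sigma_j} + \oint_{\sigma_j^-}\Bigr)\Psi_D(z,\xi)\,dz,
\]
where $\sigma_j,\sigma_j^-$ are small counterclockwise loops around $C_j$ and $C_j^-$. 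The crucial observation is that $K^{\ast}_D(\cdot,\xi)$ descends to a continuous function on $D^{\ast}$, which forces $\Im\Psi_D$ to take a single constant value on both sides of each $C_j$; hence the boundary jump $\Psi_D^+-\Psi_D^-$ along $C_j$ is real, and shrinking $\sigma_j$ onto $C_j$ shows $\oint_{\sigma_j}\Psi_D\,dz \in \mathbb{R}$. Using the Schwarz symmetry $\Psi_D(\overline{z},\xi) = \overline{\Psi_D(z,\xi)}$, a short calculation gives $\oint_{\sigma_j^-}\Psi_D\,dz = -\overline{\oint_{\sigma_j}\Psi_D\,dz}$, which for real values equals $-\oint_{\sigma_j}\Psi_D\,dz$. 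Summing, the slit contributions cancel pairwise, leaving $2\pi i\,a_{-1}(\xi) = -2i$, i.e., $a_{-1}(\xi) = -1/\pi$.

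The main technical point to handle carefully is the boundary regularity of $\Psi_D(\cdot,\xi)$ on each $C_j$---namely, that it admits well-defined two-sided traces $\Psi_D^\pm$ and that $\oint_{\sigma_j}\Psi_D\,dz$ really collapses to $\int_{C_j}(\Psi_D^- - \Psi_D^+)\,dx$ in the limit. This follows from local Schwarz reflection of $\Psi_D \mp ik_j$ across the two sides of $C_j$ (using that $\Im\Psi_D$ is constant along $C_j^\pm$), but deserves to be recorded explicitly. Tracking orientations in the multiply-connected residue formula is likewise routine but non-trivial bookkeeping.
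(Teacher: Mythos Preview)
Your argument is correct and takes a genuinely different route from the paper. The paper's proof is essentially three lines: since $\Psi_D(-1/z,\xi)$ has a removable zero at $0$, the limit $\lim_{z\to\infty} z\Psi_D(z,\xi)$ exists; it is real because $\Psi_D$ is real along $\partial\mathbb{H}\setminus\{\xi\}$; hence it equals $-\lim_{y\to\infty} y K^{\ast}_D(iy,\xi) = -1/\pi$, the last identity being quoted from \cite[Eq.~(A.23)]{CF18}. You instead reflect across $\partial\mathbb{H}$, identify the $1/z$-coefficient via a large-circle contour integral, and apply the residue theorem, showing that the slit contributions cancel pairwise against their mirrors (each slit integral is real because $\Im\Psi_D$ takes a single constant value on both sides of $C_j$, and the reflected integral is minus its complex conjugate). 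The trade-off: the paper outsources the asymptotic of $K^{\ast}_D$ at infinity to an external reference and obtains a very short proof, whereas yours is longer but self-contained, replacing that input with the local fact that $\Psi_D$ has residue $-1/\pi$ at $\xi$. The regularity issue you flag at the slit endpoints is harmless: $\Psi_D(\cdot,\xi)$ is a conformal map onto a parallel slit half-plane and therefore extends continuously to both sides of each $C_j$ including the endpoints, so the shrinking endpoint-arc contributions vanish by boundedness.
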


\begin{proof}
Recall that, for a fixed $\xi\in\partial\mathbb{H}$, the function $z\mapsto \Psi_D(z,\xi)$ on $D$ is extended holomorphically across $\partial\mathbb{H}\setminus\{\xi\}$ and sends $\infty$ to $0$.
Hence
the holomorphic function $\Psi_D(-z^{-1}, \xi)$ has a zero at $z = 0$,
and the limit
$\lim_{z \to \infty} z \Psi_D(z, \xi)
= - \lim_{z \to 0} z^{-1} \Psi_D(-z^{-1}, \xi)$
exists.
Moreover,
\[
\lim_{z \to \infty} z \Psi_D(z, \xi)
= \lim_{\substack{x \to \infty \\ x \in \mathbb{R}}} x \Psi_D(x, \xi)
\in \mathbb{R}.
\]
Thus, by \cite[Eq.~(A.23)]{CF18} we have
\[
\lim_{z \to \infty} z \Psi_D(z, \xi)
= \lim_{\substack{y \uparrow \infty \\ y \in \mathbb{R}}} \Re (iy \Psi_D(iy, \xi))
= - \lim_{\substack{y \uparrow \infty \\ y \in \mathbb{R}}} y K^{\ast}_D(iy, \xi)
= -\frac{1}{\pi}.
\qedhere
\]
\end{proof}

The rate of convergence in \eqref{eq:Poi_deriv_infty}
may depend on $\xi$ and $D$,
but we have a certain uniform boundedness shown in the next lemma.

\begin{lemma} \label{lem:boundedness}
Let $\Gamma$ be a compact subset of $\mathbf{Slit}$ and $\theta \in (0, \pi/2)$.
Then there exists a constant $L=L(\Gamma,\theta)>0$ such that
\begin{equation} \label{eq:Poi_bound}
\sup\{\,\lvert z \Psi_{\bm{s}}(z,\xi)\rvert \mathrel{;} \bm{s}\in\Gamma,\ z\in\triangle_\theta\setminus B(0,L),\ \xi\in\partial\mathbb{H}\,\}
< \infty.
\end{equation}
\end{lemma}

To prove this lemma, we use the following two facts:

\begin{lemma}[Chen, Fukushima and Rohde~{\cite[Eq.\ (9.26)]{CFR16}}]
\label{lem:Lip_period}
For a parallel slit half-plane $D$,
let $\bm{A}_D$ be the period matrix in \eqref{eq:def_BMD_Poisson}.
Then each component of $\bm{A}_{D(\bm{s})}^{-1}$ is a continous function of $\bm{s}\in\mathbf{Slit}$.
\end{lemma}

\begin{lemma}[Lawler~{\cite[Eq.\ (2.12)]{La05}}]
\label{lem:hit_to_half-circle}
Let
$Z^{\mathbb{H}}=((Z^{\mathbb{H}}_t)_{t\ge 0}, (\mathbb{P}^{\mathbb{H}}_z)_{z\in\mathbb{H}})$
be the absorbed Brownian motion in $\mathbb{H}$.
Then there exists a (measurable) function $\epsilon(z)$
on $\mathbb{H}\setminus\overline{\mathbb{D}}$
with $\epsilon(z)=O(\lvert z\rvert^{-1})$, $z\to\infty$,
such that
\[
\mathbb{P}^{\mathbb{H}}_z(\sigma_{\overline{\mathbb{D}} \cap \mathbb{H}} < \infty)
=\frac{2}{\pi}\frac{\Im z}{\lvert z \rvert^2}(1+\epsilon(z)),
\quad z\in\mathbb{H}\setminus\overline{\mathbb{D}}.
\]
Here, the symbol $\sigma_A$ stands for the first hitting time of $Z^{\mathbb{H}}$ to a Borel set $A$.
\end{lemma}

\begin{proof}[Proof of Lemma~\ref{lem:boundedness}]
Let
\[
K_{\mathbb{H}}(z, \xi) := - \frac{1}{\pi} \Im \left( \frac{1}{z - \xi} \right)
= \frac{1}{\pi} \frac{y}{(x - \xi)^2 + y^2},
\quad z = x + iy \in \mathbb{H},\ \xi \in \mathbb{R}.
\]
This is the Poisson kernel of the absorbed Brownian motion $Z^{\mathbb{H}}$.
Correspondingly, we define the complex Poisson kernel for $\mathbb{H}$
(i.e., the Cauchy kernel)
$\Psi_{\mathbb{H}}(z, \xi) := - \{\pi (z - \xi)\}^{-1}$.
Since
\[
\sup_{\xi \in \partial \mathbb{H}}
\lvert z \Psi_{\mathbb{H}}(z, \xi) \rvert
= \frac{\lvert z \rvert}{\pi \Im z}
< \frac{1}{\pi \sin \theta},
\quad z \in \triangle_{\theta},
\]
it suffices to show that there exists $L>0$ such that
\begin{equation} \label{eq:dis_bound}
\sup\{\,\lvert z(\Psi_{\bm{s}}(z,\xi)-\Psi_{\mathbb{H}}(z,\xi))\rvert\mathrel{;}\bm{s}\in\Gamma,\ z\in\triangle_\theta\setminus B(0,L),\ \xi\in\partial\mathbb{H}\,\}<\infty.
\end{equation}

In what follows, we fix $\bm{s}\in\Gamma$ and $\xi\in\partial\mathbb{H}$ and
put $D=D(\bm{s})$ for notational simplicity.
Let $f(z):=\Psi_D(z,\xi)-\Psi_{\mathbb{H}}(z,\xi)$.
We first consider the imaginary part
\[
v(z):=\Im f(z)
=K^{\ast}_D(z,\xi)-K_{\mathbb{H}}(z,\xi).
\]
By the strong Markov property of $Z^{\mathbb{H}}$,
the Green function of $D$ is written as
\begin{equation} \label{eq:SMP_BM}
G_D(z, w)
=G_{\mathbb{H}}(z,w)-\mathbb{E}^{\mathbb{H}}_z \left[G_{\mathbb{H}}(Z^{\mathbb{H}}_{\sigma_{\mathbb{H} \setminus D}},w) \mathrel{;} \sigma_{\mathbb{H}\setminus D}<\infty \right].
\end{equation}
Using \eqref{eq:BMD_Green}, \eqref{eq:Green_Poisson} and \eqref{eq:SMP_BM} we have
\begin{equation} \label{eq:Poi_discrepancy}
v(z)
= - \mathbb{E}^{\mathbb{H}}_z \left[
	K_{\mathbb{H}}(Z^{\mathbb{H}}_{\sigma_{\mathbb{H} \setminus D}}, \xi)
	\mathrel{;} \sigma_{\mathbb{H} \setminus D}<\infty
	\right]
	+ \Phi_D(z) \bm{A}_D^{-1}
	\frac{\partial}{\partial \bm{n}_{\xi}} \Phi_D(\xi)^{\mathrm{tr}}.
\end{equation}
The expectation in \eqref{eq:Poi_discrepancy} enjoys
\begin{align}
\mathbb{E}^{\mathbb{H}}_z \left[
	K_{\mathbb{H}}(Z^{\mathbb{H}}_{\sigma_{\mathbb{H} \setminus D}}, \xi)
	\mathrel{;} \sigma_{\mathbb{H} \setminus D}<\infty
	\right]
&\leq \mathbb{E}^{\mathbb{H}}_z \left[
	\left( \pi \Im Z^{\mathbb{H}}_{\sigma_{\mathbb{H} \setminus D}} \right)^{-1}
	\mathrel{;} \sigma_{\mathbb{H} \setminus D} < \infty
	\right] \notag \\
& \leq \frac{1}{\pi \eta_D}
	\mathbb{P}^{\mathbb{H}}_z(\sigma_{\mathbb{H} \setminus D}<\infty).
	\label{eq:killing_est}
\end{align}
As for the second term in the right-hand side of \eqref{eq:Poi_discrepancy},
the harmonic basis has a probabilistic expression
\[
\Phi_D(z) = (\varphi_D^{(j)}(z))_{j=1}^N,
\quad
\varphi_D^{(j)}(z) = \mathbb{P}^{\mathbb{H}}_z
	(Z^{\mathbb{H}}_{\sigma_{\mathbb{H} \setminus D}} \in C_j).
\]
Thus, giving suitable estimates on the hitting probabilities of Brownian motion, we can give an upper bound of $v_\xi(z)$.

Here is a simple estimate, called the gambler's ruin estimate.
Let $\eta(\Gamma):=\min_{\bm{s}\in\Gamma}\eta_{D(\bm{s})}$.
For all $z\in\mathbb{H}$ with $0<\Im z<\eta(\Gamma)$, we then have
\begin{equation} \label{eq:hrm_est}
0 \leq \varphi_D^{(j)}(z)
\leq \mathbb{P}^{\mathbb{H}}_z(\sigma_{\mathbb{H} \setminus D} < \infty)
\leq \mathbb{P}^{\mathbb{H}}_z(
\sigma_{\{ w \mathrel{;} \Im w = \eta_D \}} < \infty
)
\leq \frac{\Im z}{\eta(\Gamma)}.
\end{equation}
Hence
\begin{equation} \label{eq:deriv_hrm_est}
0 < - \frac{\partial}{\partial \bm{n}_{\xi}} \varphi^{(j)}(\xi) \leq \frac{1}{\eta(\Gamma)}.
\end{equation}
Lemma~\ref{lem:hit_to_half-circle} gives another estimate. Let $r(\Gamma):=\max_{\bm{s}\in\Gamma}r^{\text{\rm out}}_{D(\bm{s})}$.
For $z\in\mathbb{H}\setminus \bar{B}(0,r(\Gamma))$, we have
\begin{align}
\mathbb{P}^{\mathbb{H}}_z(\sigma_{\mathbb{H} \setminus D}<\infty)
&\leq \mathbb{P}^{\mathbb{H}}_{z/r(\Gamma)}
	(\sigma_{\overline{\mathbb{D}} \cap \mathbb{H}} < \infty)
	\notag \\
&= \frac{2r(\Gamma)}{\pi} \frac{\Im z}{\lvert z \rvert^2}
	\left( 1 + \epsilon\left(\frac{z}{r(\Gamma)}\right) \right).
	\label{eq:hit_large_circle}
\end{align}
\eqref{eq:Poi_discrepancy}--\eqref{eq:hit_large_circle} and Lemma~\ref{lem:Lip_period} yield an upper bound
\begin{equation} \label{eq:im_unif_est}
\lvert v(z) \rvert \leq c_1(\Gamma) \frac{\Im z}{\lvert z \rvert^2} \left( 1 + \epsilon\left(\frac{z}{r(\Gamma)}\right) \right),
\quad z\in\mathbb{H}\setminus \bar{B}(0,r(\Gamma)),
\end{equation}
with some constant $c_1(\Gamma)$ depending only on $\Gamma$, not on each $\bm{s}\in\Gamma$ or $\xi\in\partial\mathbb{H}$.

We now use the Schwarz (or Nevanlinna--Pick) formula to recover $f(z)$ from $v(z)$.
We know that $f(\infty)=0$.
In addition, by \eqref{eq:im_unif_est} there exists a constant $c_2(\Gamma)$ such that
\begin{equation} \label{eq:im_int_on_2r}
\lvert v(u+2ir(\Gamma)) \rvert
\le \frac{c_2(\Gamma)}{u^2+4r(\Gamma)^2},
\quad u\in\mathbb{R}.
\end{equation}
Hence $\int_{\mathbb{R}}\lvert v(u+2ir(\Gamma))/(u-i) \rvert\,du<\infty$.
By these properties, we can apply a version of the Schwarz formula given by del Monaco and Gumenyuk~\cite[Proposition~2.2]{dMG16} to the shifted function $\mathbb{H}\ni z\mapsto f(z+2ir(\Gamma))$,
which yields
\begin{equation} \label{eq:dMG_Schwarz}
f(z)=\frac{1}{\pi}\int_{\mathbb{R}}\frac{v(u+2ir(\Gamma))}{u+2ir(\Gamma)-z}\, du,
\quad z\in\mathbb{H}_{2r(\Gamma)}.
\end{equation}

Let $L:=(2r(\Gamma)+1)/\sin\theta$.
Then we have
\begin{equation} \label{eq:frac_max}
\left\lvert \frac{z}{u+2ir(\Gamma)-z} \right\rvert
\le \frac{\lvert z \rvert}{\lvert z \rvert\sin\theta-2r(\Gamma)}
\le L,
\quad z\in \triangle_\theta\setminus B(0,L).
\end{equation}
Finally \eqref{eq:im_int_on_2r}--\eqref{eq:frac_max} implies that
\[
\lvert zf(z) \rvert
\le \frac{1}{\pi}\int_{\mathbb{R}}\left\lvert \frac{zv(u+2ir(\Gamma))}{u+2ir(\Gamma)-z} \right\rvert\, du
\le \frac{c_2(\Gamma)L}{\pi}\int_{\mathbb{R}}\frac{1}{u^2+4r(\Gamma)^2}\,du
\]
for all $z\in \triangle_\theta\setminus B(0,L)$,
which proves the desired conclusion~\eqref{eq:dis_bound}.
\end{proof}

\subsection{Dependence on domain variation}
\label{sec:dependence}

In this subsection we prove the following:

\begin{proposition} \label{prop:Poi_cpt}
Let $\bm{s}_0 \in \mathbf{Slit}$ and $z_0 \in D(\bm{s}_0)$ be fixed.
Then
\begin{equation} \label{eq:UnifConv}
\lim_{\substack{\bm{s} \to \bm{s}_0 \\ z \to z_0}}
	\sup_{\xi \in \partial \mathbb{H}}
	\lvert \Psi_{\bm{s}}(z, \xi) - \Psi_{\bm{s}_0}(z_0, \xi) \rvert
= 0.
\end{equation}
\end{proposition}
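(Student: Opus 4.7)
The plan is to decompose $\Psi_{\bm{s}}(z, \xi) = \Psi_{\mathbb{H}}(z, \xi) + \widetilde{\Psi}_{\bm{s}}(z, \xi)$ with $\widetilde{\Psi}_{\bm{s}} := \Psi_{\bm{s}} - \Psi_{\mathbb{H}}$, as in the proof of Lemma~\ref{lem:boundedness}, and to handle the two summands separately. The $\Psi_{\mathbb{H}}$-piece is immediate: $\Im z_0 > 0$ gives a neighborhood $V$ of $z_0$ contained in some $\mathbb{H}_\eta$, whence $\sup_{\xi \in \mathbb{R}} |\Psi_{\mathbb{H}}(z, \xi) - \Psi_{\mathbb{H}}(z_0, \xi)| \leq |z - z_0|/(\pi \eta^2)$. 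The task reduces to the analogous statement for the discrepancy $\widetilde{\Psi}_{\bm{s}}$, and I aim to establish it from two ingredients, for small neighborhoods $B \ni \bm{s}_0$ and $V \ni z_0$: (i) joint continuity of $(\bm{s}, z, \xi) \mapsto \widetilde{\Psi}_{\bm{s}}(z, \xi)$ on $B \times V \times \mathbb{R}$, and (ii) a uniform tail estimate, $\sup_{(\bm{s}, z) \in B \times V} |\widetilde{\Psi}_{\bm{s}}(z, \xi)| \to 0$ as $|\xi| \to \infty$. Uniform continuity on the compact subset $B \times V \times [-R, R]$ yields uniform convergence in $\xi \in [-R, R]$, and an $\varepsilon/3$ argument using (ii) upgrades this to uniform convergence over $\xi \in \mathbb{R}$.

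Both (i) and (ii) split into imaginary and real parts. For the imaginary part $v_\xi := \Im \widetilde{\Psi}_{\bm{s}}$, the explicit representation \eqref{eq:Poi_discrepancy} provides everything. Hausdorff-continuous variation of the slits $\mathbb{H} \setminus D(\bm{s})$ as $\bm{s} \to \bm{s}_0$ yields continuity, jointly in $(\bm{s}, z, \xi)$, of the hitting expectation of absorbing Brownian motion, of the harmonic basis $\Phi_{D(\bm{s})}$ and period matrix $\bm{A}_{D(\bm{s})}$, and---by Schauder-type boundary regularity along the smooth arc $\partial \mathbb{H}$---of the normal derivative $\partial_{\bm{n}_\xi}\Phi_{D(\bm{s})}(\xi)$; this furnishes (i) for $v_\xi$. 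The same representation delivers (ii) for $v_\xi$ with decay $O(|\xi|^{-2})$ uniformly on $B \times V$: $K_{\mathbb{H}}(w, \xi) = O(|\xi|^{-2})$ on the uniformly-bounded set $\bigcup_{\bm{s} \in B}(\mathbb{H} \setminus D(\bm{s}))$ where $Z^{\mathbb{H}}_\sigma$ lives, and the gambler's-ruin estimate \eqref{eq:hit_large_circle} applied at boundary points $\xi + iy$, $y \downarrow 0$, yields $\partial_{\bm{n}_\xi} \varphi_{D(\bm{s})}^{(j)}(\xi) = O(|\xi|^{-2})$ uniformly in $\bm{s} \in B$.

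The real part $u_\xi := \Re\widetilde{\Psi}_{\bm{s}}$ has no counterpart to \eqref{eq:Poi_discrepancy} and must be treated via harmonic conjugation. The singular part of $K^{\ast}_{D(\bm{s})}(z, \xi)$ at $\xi$ matches that of $K_{\mathbb{H}}$ (since locally $D(\bm{s})$ looks like $\mathbb{H}$), so the pole of $\Psi_{\bm{s}}(\cdot, \xi)$ at $\xi$ has residue $-1/\pi$ and cancels that of $\Psi_{\mathbb{H}}$; together with $v_\xi = 0$ on $\mathbb{R} \setminus \{\xi\}$, this makes $\widetilde{\Psi}_{\bm{s}}(\cdot, \xi)$ holomorphic across all of $\mathbb{R}$ by Schwarz reflection. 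The Poisson/Cauchy--Riemann argument culminating in \eqref{eq:CR} in the proof of Lemma~\ref{lem:boundedness} then transfers both (i) and (ii) from $v_\xi$ to $u_\xi$, provided that the supporting gradient estimates on $v_\xi$ are carried out uniformly on a common reference half-plane $\mathbb{H}_{y_0}$ that lies above every slit of $D(\bm{s})$ for $\bm{s} \in B$. The main obstacle will be precisely this uniform-in-$\bm{s}$ transfer: every constant appearing in the Cauchy--Riemann derivation of Lemma~\ref{lem:boundedness} must be upgraded to one that is uniform over $\bm{s} \in B$. A secondary technical point is lifting the interior convergence of $\Phi_{D(\bm{s})}$ in $\bm{s}$ to joint continuity of the boundary normal derivative $\partial_{\bm{n}_\xi}\Phi_{D(\bm{s})}(\xi)$ in $(\bm{s}, \xi)$, needed for (i) on $v_\xi$.
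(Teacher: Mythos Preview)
Your approach is sound in principle and genuinely different from the paper's. The paper does \emph{not} decompose $\Psi_{\bm{s}} = \Psi_{\mathbb{H}} + \widetilde{\Psi}_{\bm{s}}$; instead it argues by Arzel\`a--Ascoli on the compactification $\mathbb{R}\cup\{\infty\}$. It views $\xi \mapsto \Psi_{\bm{s}}(z,\xi)$ as an element of $C(\mathbb{R}\cup\{\infty\})$ (this uses the Koebe-type bound of Lemma~\ref{lem:Poi_Koebe}), and shows the family $\{\Psi_{\bm{s}}(z,\cdot) : (\bm{s},z) \text{ near } (\bm{s}_0,z_0)\}$ is relatively compact. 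Uniform boundedness and equicontinuity at $\infty$ come directly from \eqref{eq:Poi_Koebe}. For equicontinuity at a finite $\xi_0$, the paper exploits the translation identity $\Psi_{\bm{s}}(z,\xi) = \Psi_{\bm{s}-\widehat{\xi}+\widehat{\xi_0}}(z-\xi+\xi_0,\xi_0)$ to convert variation in $\xi$ into variation in $(\bm{s},z)$, which is then controlled by the Lipschitz estimate of Proposition~\ref{prop:locLip} (imported from \cite{CFR16}) together with Cauchy's estimate on $\partial_z\Psi$. Pointwise convergence (again from Proposition~\ref{prop:locLip}) then identifies the unique limit.

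What each route buys: the paper's argument is shorter and sidesteps exactly the two technical points you flagged---the uniform-in-$\bm{s}$ upgrade of the Cauchy--Riemann integrals in \eqref{eq:CR}, and the joint continuity of $\partial_{\bm{n}_\xi}\Phi_{D(\bm{s})}(\xi)$---by pushing the hard work into the cited Lipschitz result of \cite{CFR16} and the translation trick. Your approach is more self-contained and would yield explicit decay rates in $\xi$, but the boundary-regularity step for $\partial_{\bm{n}_\xi}\Phi_{D(\bm{s})}$ is real work (essentially what \cite{CFR16} proves), so in practice you would likely end up invoking similar machinery anyway.
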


In the proof of Proposition~\ref{prop:Poi_cpt},
we shall use the local Lipschitz continuity of
$\Psi_{\bm{s}}(z, \xi)$ as a function of $\bm{s}$,
which was closely examined by Chen, Fukushima and Rohde~\cite{CFR16}.

\begin{proposition}[Local Lipschitz continuity of $\Psi_{\bm{s}}$~{\cite[Theorem~9.1]{CFR16}}]
\label{prop:locLip}

Given $\bm{s}_0 \in \mathbf{Slit}$,
let $K$ be a compact subset of $D(\bm{s}_0)$ and $J$ be a bounded interval.
There exist constants $\varepsilon_{\bm{s}_0, K}, L_{\bm{s}_0, K, J} > 0$
such that
\[
K \subset D(\bm{s})
\quad \text{and} \quad
\lvert \Psi_{\bm{s}}(z, \xi) - \Psi_{\bm{s}_0}(z, \xi) \rvert
\leq L_{\bm{s}_0, K, J} \, d_{\mathbf{Slit}}(\bm{s}, \bm{s}_0)
\]
hold for any $z \in K$, $\xi \in J$,
and $\bm{s} \in B_{\mathbf{Slit}}(\bm{s}_0, \varepsilon_{\bm{s}_0, K})$.
Moreover,
$\varepsilon_{\bm{s}_0, K}$ depends only on $\bm{s}_0$ and $K$, not on $J$.
\end{proposition}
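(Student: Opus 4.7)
The plan is to reduce the Lipschitz estimate for $\Psi_{\bm s}$ to one for its imaginary part $K^*_{\bm s}=\Im\Psi_{\bm s}$, and then to decompose $K^*_{\bm s}$ into Green and harmonic-basis ingredients whose dependence on $\bm s$ can be controlled by the strong Markov property of Brownian motion. The existence of $\varepsilon_{\bm s_0,K}$ with $K\subset D(\bm s)$ for all $\bm s\in B(\bm s_0,\varepsilon_{\bm s_0,K})$ is immediate from compactness: setting $\delta:=\operatorname{dist}(K,\bigcup_j C_j(\bm s_0))>0$ and $\varepsilon_{\bm s_0,K}:=\delta/2$, any perturbation of the slit endpoints by less than $\delta/2$ keeps each slit $C_j(\bm s)$ inside the $\delta/2$-neighborhood of $C_j(\bm s_0)$, and hence disjoint from $K$. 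Since $J$ plays no role here, $\varepsilon_{\bm s_0,K}$ depends only on $\bm s_0$ and $K$.

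For the Lipschitz estimate I would start from
\[
K^*_{\bm s}(z,\xi) = -\tfrac{1}{2}\tfrac{\partial}{\partial\bm{n}_\xi}G_{D(\bm s)}(z,\xi) - \Phi_{D(\bm s)}(z)\,\bm{A}_{D(\bm s)}^{-1}\,\tfrac{\partial}{\partial\bm{n}_\xi}\Phi_{D(\bm s)}(\xi)^{\mathrm{tr}}
\]
and prove local Lipschitz continuity of each factor in $\bm s$, uniformly over $z\in K$ and $\xi\in J$. Taking $-\tfrac{1}{2}\partial/\partial\bm{n}_\xi$ of \eqref{eq:SMP_BM} gives
\[
K_{D(\bm s)}(z,\xi) = K_{\mathbb{H}}(z,\xi) - \mathbb{E}^{\mathbb{H}}_z\bigl[K_{\mathbb{H}}(Z^{\mathbb{H}}_{\sigma_{\mathbb{H}\setminus D(\bm s)}},\xi);\,\sigma_{\mathbb{H}\setminus D(\bm s)}<\infty\bigr],
\]
so the $\bm s$-dependence enters only through the exit distribution of $Z^{\mathbb{H}}$ from $D(\bm s)$, supported on $\bigcup_j C_j(\bm s)$. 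Because $\xi\in J$ remains at positive distance from these slits for $\bm s$ in a small ball around $\bm s_0$, the integrand $K_{\mathbb{H}}(\cdot,\xi)$ is smooth on the slit region uniformly in $\xi\in J$, while a coupling of Brownian paths driven by a common underlying motion (or the classical Poisson-kernel estimates on horizontal strips) shows that the hitting distribution depends Lipschitz continuously on the $2N$ endpoint parameters of $\bm s$. Applying the same analysis to the probabilistic representation $\varphi^{(j)}_{\bm s}(z)=\mathbb{P}^{\mathbb{H}}_z(Z^{\mathbb{H}}_{\sigma_{\mathbb{H}\setminus D(\bm s)}}\in C_j(\bm s))$ yields Lipschitz bounds on the harmonic basis and on its boundary normal derivative at $\xi\in J$. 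Since $\bm{A}_{\bm s}$ is Lipschitz in $\bm s$ directly from the definition of the periods, the identity $A^{-1}-B^{-1}=-A^{-1}(A-B)B^{-1}$ together with continuity of $\det\bm{A}_{\bm s}$ and its non-vanishing at $\bm s_0$ gives a Lipschitz bound on $\bm{A}_{\bm s}^{-1}$ in a neighborhood of $\bm s_0$. Assembling the estimates produces the desired bound on $K^*_{\bm s}(z,\xi)$.

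To pass from $K^*_{\bm s}$ to $\Psi_{\bm s}$, I would reuse the Cauchy–Riemann argument from the proof of Lemma~\ref{lem:boundedness}: the difference $\Psi_{\bm s}(\cdot,\xi)-\Psi_{\bm s_0}(\cdot,\xi)$ is holomorphic on a common open neighborhood $U$ of $K$ (shrinking $\varepsilon_{\bm s_0,K}$ so that $U\subset D(\bm s)\cap D(\bm s_0)$) and vanishes at infinity by the normalization of $\Psi$, so integrating the Cauchy–Riemann relations along a path from infinity to $z\in K$ promotes the Lipschitz bound on the imaginary part to one on the real part with a multiplicative constant depending only on $U$ and $K$. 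The main obstacle in this plan is carrying the normal derivative $\partial/\partial\bm{n}_\xi$ through the probabilistic representations with constants uniform in $\xi\in J$: because $\xi$ ranges over a bounded interval rather than being fixed, the Lipschitz constants of $K_{\mathbb{H}}(\cdot,\xi)$ and of its derivatives restricted to the slit region must be taken as suprema over $\xi\in J$, and this is exactly the mechanism that forces $L_{\bm s_0,K,J}$ to depend on $J$ while $\varepsilon_{\bm s_0,K}$ does not.
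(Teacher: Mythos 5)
The statement you are proving is not proved in the paper at all: it is imported verbatim as part of Theorem~9.1 of Chen, Fukushima and Rohde~\cite{CFR16}, whose proof is a substantial perturbation analysis of the Green function, harmonic basis and period matrix under variation of the slits. So the real question is whether your sketch could reconstruct that theorem, and as it stands it cannot. Your choice of $\varepsilon_{\bm{s}_0,K}$ (and its independence of $J$) is fine, and the reduction to the imaginary part $K^{\ast}_{\bm{s}}$ via the decomposition into the Green term and the harmonic-basis term is the natural starting point. The gap is in the step you dispose of in one sentence: that ``a coupling of Brownian paths driven by a common underlying motion'' shows the hitting distribution of $\bigcup_j C_j(\bm{s})$ depends Lipschitz-continuously on the $3N$ endpoint parameters. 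The naive comparison obtained by running the same Brownian path in $D(\bm{s})$ and $D(\bm{s}_0)$ bounds the discrepancy of hitting quantities by the harmonic measure of the symmetric difference of the two slit configurations, and near a slit tip this is only of order $\sqrt{\varepsilon}$ for an endpoint displacement $\varepsilon$ (the uniformizing map behaves like a square root at the tip), not of order $\varepsilon$. The Lipschitz rate asserted in Proposition~\ref{prop:locLip} only emerges from a cancellation — a path that reaches the $\varepsilon$-piece added at a tip is then overwhelmingly likely to hit the original slit as well — and making this quantitative, simultaneously for $\varphi^{(j)}_{\bm{s}}$, for the boundary derivatives $\partial\varphi^{(j)}_{\bm{s}}/\partial\bm{n}_{\xi}$, for the Green term, and uniformly in $\xi\in J$, is exactly the content of \cite{CFR16}; the paper's own Section~\ref{sec:problems} even lists a strengthening of this estimate as an open problem requiring an improvement of their interior variation method, which indicates it is not accessible by soft coupling arguments.

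The passage from $K^{\ast}_{\bm{s}}$ to $\Psi_{\bm{s}}$ is also not justified as stated. A bound $\lvert \Im(\Psi_{\bm{s}}-\Psi_{\bm{s}_0})(z,\xi)\rvert \leq L\, d_{\mathbf{Slit}}(\bm{s},\bm{s}_0)$ on a neighborhood $U$ of $K$ controls, via interior estimates, only the gradient and hence the oscillation of the real part of the difference on $K$, not its value: the additive constant is fixed by the normalization $\Psi\to 0$ at infinity, which is far from $K$. To integrate the Cauchy--Riemann relations from infinity, as in the proof of Lemma~\ref{lem:boundedness}, you would need a Lipschitz-in-$\bm{s}$ bound for the difference of the kernels with quantitative decay on an entire half-plane above the slits, uniformly in $\xi\in J$ — a difference analogue of the estimate \eqref{eq:im_unif_est} — and nothing in your sketch supplies it. Consequently the claim that the resulting multiplicative constant ``depends only on $U$ and $K$'' does not follow from what you have established.
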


In addition to Proposition~\ref{prop:locLip},
we shall use the next lemma.
Let $a \wedge b := \min \{a, b\}$.

\begin{lemma} \label{lem:Poi_Koebe}
For a parallel slit half-plane $D$, the inequality
\begin{equation} \label{eq:Poi_Koebe}
\lvert \Psi_D(z, \xi) \rvert
\leq \frac{4}{\pi} \frac{1}{\lvert z - \xi \rvert
	\wedge d^{\mathrm{Eucl}}(\xi, \mathbb{H} \setminus D)},
\quad z \in D,\ \xi \in \partial \mathbb{H},
\end{equation}
holds.
In particular,
the function $\xi \mapsto \Psi_D(z, \xi)$ belongs to
the set $C_{\infty}(\mathbb{R};\mathbb{C})$ of complex-valued continuous functions
vanishing at infinity
for each fixed $z \in D$.
\end{lemma}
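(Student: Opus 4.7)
My plan is to reduce the inequality to the classical Koebe theorems applied to a suitable renormalization of $\Psi_D(\mathord{\cdot},\xi)$. Set $r:=d^{\mathrm{Eucl}}(\xi,\mathbb{H}\setminus D)$. Because $\Im\Psi_D(\mathord{\cdot},\xi)=K^{\ast}_D(\mathord{\cdot},\xi)$ is the BMD Poisson kernel, it vanishes on $\partial\mathbb{H}\setminus\{\xi\}$, so $\Psi_D(\mathord{\cdot},\xi)$ takes real values there and I would extend it by the Schwarz reflection principle to the Schwarz double $\hat{D}:=\mathbb{C}\setminus\bigcup_{j=1}^{N}(C_j\cup\bar{C}_j)$. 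The extension is meromorphic on $\hat{D}$ with a single simple pole at $\xi$ of residue $-1/\pi$: this follows because $\Psi_D(\mathord{\cdot},\xi)-\Psi_{\mathbb{H}}(\mathord{\cdot},\xi)$ is holomorphic in a neighbourhood of $\xi$, by the same kind of decomposition of $K^{\ast}_D-K_{\mathbb{H}}$ that appears in the proof of Lemma~\ref{lem:boundedness}. The reflected map is globally univalent on $\hat{D}$ (into $\hat{\mathbb{C}}$) thanks to $\Im\Psi_D>0$ on $D$ combined with the reflection identity $\Psi_D(\bar{z},\xi)=\overline{\Psi_D(z,\xi)}$, and by the conjugation symmetry of $\hat{D}$ and the choice of $r$ we have $B(\xi,r)\subset\hat{D}$.

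I would then introduce
\[
G(\zeta):=-\frac{1}{\pi r\,\Psi_D(\xi+r\zeta,\xi)},\qquad \zeta\in\mathbb{D},
\]
with $G(0):=0$. The Laurent expansion $\Psi_D(\xi+r\zeta,\xi)=-1/(\pi r\zeta)+O(1)$ at $\zeta=0$ immediately gives $G(0)=0$ and $G'(0)=1$, while univalence of the reflected $\Psi_D$ on $B(\xi,r)\subset\hat{D}$ gives univalence of $G$ on $\mathbb{D}$. Applying the Koebe growth theorem yields
\[
|G(\zeta)|\geq\frac{|\zeta|}{(1+|\zeta|)^2}\geq\frac{|\zeta|}{4},\qquad \zeta\in\mathbb{D},
\]
which, after inversion and the substitution $z=\xi+r\zeta$, translates into $|\Psi_D(z,\xi)|\leq 4/(\pi|z-\xi|)$ for every $z\in D$ with $|z-\xi|<r$. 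For $z\in D$ with $|z-\xi|\geq r$ I would instead use the Koebe one-quarter theorem: the inclusion $G(\mathbb{D})\supset B(0,1/4)$ translates, through the defining relation of $G$, into
\[
\Psi_D\bigl(B(\xi,r)\setminus\{\xi\},\xi\bigr)\supset\bigl\{\,u\in\mathbb{C}\mathrel{;}|u|>4/(\pi r)\,\bigr\}.
\]
Injectivity of $\Psi_D$ on $\hat{D}$ then forces $|\Psi_D(z,\xi)|\leq 4/(\pi r)$ for every $z\in\hat{D}\setminus B(\xi,r)$, in particular for $z\in D$ with $|z-\xi|\geq r$. The two regimes assemble into the desired bound~\eqref{eq:Poi_Koebe}.

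The ``in particular'' assertion follows at once: for fixed $z\in D$, continuity of $\xi\mapsto\Psi_D(z,\xi)$ on $\mathbb{R}$ is inherited from smoothness of the BMD Green function $G^{\ast}_D$ in $\xi$, and decay $\Psi_D(z,\xi)\to 0$ as $|\xi|\to\infty$ is immediate from~\eqref{eq:Poi_Koebe} because both $|z-\xi|$ and $d^{\mathrm{Eucl}}(\xi,\mathbb{H}\setminus D)$ tend to infinity. The main obstacle is the structural step of setting up the Schwarz reflection rigorously, pinning down that the residue of $\Psi_D$ at $\xi$ is exactly $-1/\pi$, and verifying that the reflected mapping is globally univalent on $\hat{D}$ rather than only on each half separately; once these are in hand, the pair of Koebe estimates combine cleanly into~\eqref{eq:Poi_Koebe}.
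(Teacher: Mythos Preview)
Your proposal is correct and follows essentially the same approach as the paper: both normalize $-1/\Psi_D(\,\cdot\,,\xi)$ near the simple pole $\xi$ to a Schlicht function on $\mathbb{D}$ and then invoke Koebe's theorems. The paper differs only cosmetically---it applies the one-quarter theorem at every radius $r\in(0,d^{\mathrm{Eucl}}(\xi,\mathbb{H}\setminus D)]$ instead of pairing the growth theorem with the one-quarter theorem at the maximal radius, and it leaves the Schwarz reflection implicit by citing the Laurent expansion of $\Psi_D$ at $\xi$ from \cite{CF18} rather than setting it up directly as you do.
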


\begin{proof}
We recall again that the function $\Psi_D(z,\xi)$ of $z\in D$ is extended holomorphically across $\partial\mathbb{H}\setminus\{\xi\}$ by the Schwarz reflection.
By Chen and Fukushima~\cite[Section~6.1]{CF18},
it has the Laurent expansion
\[
\Psi_D(z, \xi)
= \Psi_{\mathbb{H}}(z, \xi) + \frac{1}{2\pi} b_{\text{BMD}}(\xi; D) + o(1)
\]
around $\xi \in \partial \mathbb{H}$.
($b_{\text{BMD}}(\xi; D)$ is called the \emph{BMD domain constant}.)

We prove \eqref{eq:Poi_Koebe} in the case $\xi = 0$ only.
The general case then follows from the horizontal translation
$\Psi_D(z, \xi) = \Psi_{D - \xi}(z - \xi, 0)$~\cite[Eq.~(3.31)]{CF18}.
Here, $D - \xi = \{\, z \mathrel{;} z + \xi \in D \,\}$.

Let
$T(z) := -1/z$.
For each $r \in (0, r^{\text{in}}_D]$, the function
$h_r(z) := (\pi r)^{-1} (T \circ \Psi_D)(rz, 0)$
is univalent on $\mathbb{D}$
and has the Taylor expansion
\begin{align*}
h_r(z)
&= \frac{1}{\pi r}
\cdot \frac{-1}{-\frac{1}{\pi r z} + \frac{1}{2 \pi} b_{\text{BMD}}(0; D) z + o(z)} \\
&= z + \frac{r b_{\text{BMD}}(0; D)}{2} z^2 + o(z^2)
\end{align*}
around the origin.
Then Koebe's one-quarter theorem implies that
$B(0 ,1/4) \subset h_r(\mathbb{D})$,
which is equivalent to
$\Psi_D(r \mathbb{D}, 0) \supset B(0, 4 (\pi r)^{-1})^c$.
Since $\Psi_D(\mathord{\cdot}, 0)$ is injective on $D$,
we finally obtain
\begin{equation} \label{eq:Poi_inclusion}
\Psi_D(D \setminus (r \mathbb{D}), 0)
\subset \overline{B(0, 4 (\pi r)^{-1})}
\end{equation}
for all $r \in (0, r^{\text{in}}_D]$,
which yields \eqref{eq:Poi_Koebe} with $\xi = 0$.
\end{proof}

\begin{remark}
The author previously~\cite{Mu19jeeq} employed
the same idea as the proof of Lemma~\ref{lem:Poi_Koebe}.
In this opportunity,
we would like to correct minor mistakes in that paper.
Compared with the above-mentioned proof,
the right-hand side of \cite[Eq.~(3.8)]{Mu19jeeq} should be
$4 / (\pi r)$, not $1 / (4 \pi r)$.
Correspondingly,
the inequality in \cite[Theorem~3.1~(ii)]{Mu19jeeq}
should be replaced by $\zeta \geq y_0^2 / 16$.
\end{remark}

We now provide a proof of Proposition~\ref{prop:Poi_cpt}.

\begin{proof}[Proof of Proposition~\ref{prop:Poi_cpt}]
The function
\[
f_{\bm{s}, z}(\xi) := \Psi_{\bm{s}}(z, \xi),
\quad \bm{s} \in \mathbf{Slit},\ z \in D(\bm{s})
\]
of $\xi$ can be regarded
as a continuous function on the one-point compactification
$\mathbb{R} \cup \{\infty\}$
by Lemma~\ref{lem:Poi_Koebe}.
For the proof of \eqref{eq:UnifConv},
it suffices to prove that
there exists $\varepsilon_0 > 0$ such that
\[
\mathcal{F}
:= \{\, f_{\bm{s}, z} \mathrel{;}
\bm{s} \in B_{\mathbf{Slit}}(\bm{s}_0, \varepsilon_0),
\ z \in B_{\mathbb{C}}(z_0, \varepsilon_0) \,\}
\]
is relatively compact in the Banach space $C(\mathbb{R} \cup \{\infty\})$
equipped with the supremum norm.
Indeed, since the pointwise convergence
$f_{\bm{s}, z}(\xi) \to f_{\bm{s}_0, z_0}(\xi)$
holds as $(\bm{s}, z) \to (\bm{s}_0, z_0)$
by Proposition~\ref{prop:locLip},
a limit point of $\mathcal{F}$ as $(\bm{s}, z) \to (\bm{s}_0, z_0)$
is unique.

Let $r > 0$ be such that
$K := \Bar{B}_{\mathbb{C}}(z_0, r) \subset D(\bm{s}_0)$.
The above $\varepsilon_0$ can be taken as
$\varepsilon_0
= 5^{-1} \min \{ r, \eta_{D(\bm{s}_0)}, \varepsilon_{\bm{s}_0, K} \}$.
Here, 
$\varepsilon_{\bm{s}_0, K}$ is given as in Proposition~\ref{prop:locLip}.
To confirm that $\mathcal{F}$ is indeed relatively compact
for this value of $\varepsilon_0$,
we can apply the Arzel\`a--Ascoli theorem.
The uniform boundedness and equicontinuity at $\infty$
of $\mathcal{F}$ are trivial by \eqref{eq:Poi_Koebe}.
Hence it remains to show the equicontinuity at each point $\xi_0 \in \mathbb{R}$.
In what follows, for $\xi \in \mathbb{R}$
let $\widehat{\xi}$ be the vector of $\mathbb{R}^{3N}$
whose first $N$ entries are zero and other $2N$ entries are $\xi$.
For $\xi \in J := (\xi_0 - \varepsilon_0, \xi_0 + \varepsilon_0)$,
we have
\begin{align}
\lvert f_{\bm{s}, z}(\xi) - f_{\bm{s}, z}(\xi_0) \rvert
&= \lvert \Psi_{\bm{s}}(z, \xi) - \Psi_{\bm{s}}(z, \xi_0) \rvert \notag \\
&\leq \lvert \Psi_{\bm{s} - \widehat{\xi} + \widehat{\xi_0}}(z - \xi + \xi_0, \xi_0)
	- \Psi_{\bm{s} - \widehat{\xi} + \widehat{\xi_0}}(z, \xi_0) \rvert \notag \\
&\phantom{=} + \lvert \Psi_{\bm{s} - \widehat{\xi} + \widehat{\xi_0}}(z, \xi_0)
	- \Psi_{\bm{s}}(z, \xi_0) \rvert \notag \\
&\leq \sup_{w \in \Bar{B}(z_0, 2\varepsilon_0)} \lvert \partial_w \Psi_{\bm{s} - \widehat{\xi} + \widehat{\xi_0}}(w, \xi_0) \rvert \lvert \xi - \xi_0 \rvert
\label{eq:equiconti} \\
&\phantom{=} + L_{\bm{s}_0, K, J}
	\, d_{\mathbf{Slit}}(\bm{s}, \bm{s} - \widehat{\xi} + \widehat{\xi_0}).
\notag
\end{align}
Here, $L_{\bm{s}_0, K, J}$ is the Lipschitz constant in Proposition~\ref{prop:locLip}.
Since the family of holomorphic functions
$w \mapsto \Psi_{\tilde{\bm{s}}}(w, \xi_0)$,
$\tilde{\bm{s}} \in B_{\mathbf{Slit}}(\bm{s}, 2\varepsilon_0)$,
is locally bounded on the disk $B_{\mathbb{C}}(z, 3\varepsilon_0)$
by \eqref{eq:Poi_Koebe},
so is $\partial_w \Psi_{\tilde{\bm{s}}}(w, \xi_0)$
by Cauchy's estimate
(Eq.~(25) in Section~2.3, Chapter~4 of Ahlfors~\cite{Ah79}).
In particular,
\[
M
:= \sup_{\substack{w \in \Bar{B}_{\mathbb{C}}(z, 2\varepsilon_0)
\\ \tilde{\bm{s}} \in B_{\mathbf{Slit}}(\bm{s}, 2\varepsilon_0)}}
\lvert \partial_w \Psi_{\tilde{\bm{s}}}(w, \xi) \rvert
< \infty.
\]
Thus, it follows from \eqref{eq:equiconti} that
\[
\lvert f_{\bm{s}, z}(\xi) - f_{\bm{s}, z}(\xi_0) \rvert \leq (L_{\bm{s}_0, K, J} + M) \lvert \xi - \xi_0 \rvert
\]
for any $\bm{s} \in B_{\mathbf{Slit}}(\bm{s}_0, \varepsilon_0)$
and $z \in B_{\mathbb{C}}(z_0, \varepsilon_0)$,
which implies the equicontinuity of $\mathcal{F}$ at $\xi_0$.
\end{proof}

\section{Proof of Theorem~\ref{thm:int_rep}}
\label{sec:proof1}

For the proof of Theorem~\ref{thm:int_rep},
the following lemma is needed:

\begin{lemma} \label{lem:CpBMDInt}
\begin{enumerate}
\item \label{lem:CpBMDhol}
The integral
$\Psi_D[\mu](z)
:= \int_{\mathbb{R}} \Psi_D(z, \xi) \, \mu(d\xi)$
defines a holomorphic function on $D$
for any finite Borel measure $\mu$ on $\mathbb{R}$.

\item \label{lem:cPoi_uniq}
Let $\mu_1$ and $\mu_2$ be two finite Borel measures on $\mathbb{R}$.
If there exists a set $A \subset D$ with an accumulation point in $D$ and such that
\begin{equation} \label{eq:cPoi_uniq}
\int_{\mathbb{R}} \Psi_D(z, \xi) \, \mu_1(d\xi)
= \int_{\mathbb{R}} \Psi_D(z, \xi) \, \mu_2(d\xi)
\end{equation}
for all $z \in A$,
then $\mu_1=\mu_2$.
\end{enumerate}
\end{lemma}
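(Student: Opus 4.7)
The plan is to settle (i) by a Morera-plus-Fubini argument, and (ii) by the identity theorem followed by a boundary-behavior analysis of the BMD Poisson kernel $K^{\ast}_D$.

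For part~(i), I would first establish a uniform bound on $\lvert \Psi_D(z, \xi) \rvert$ for $z$ in a compact subset $K \subset D$ and all $\xi \in \mathbb{R}$. Since $K$ lies at positive distance from $\mathbb{R}$, we have $\inf_{z \in K} \Im z > 0$; combined with $d^{\mathrm{Eucl}}(\mathbb{R}, \mathbb{H} \setminus D) \geq \eta_D$ and Lemma~\ref{lem:Poi_Koebe}, this yields $M_K := \sup_{z \in K,\, \xi \in \mathbb{R}} \lvert \Psi_D(z, \xi) \rvert < \infty$. Continuity of $\Psi_D[\mu]$ on $D$ then follows from the uniform-in-$\xi$ continuity in $z$ supplied by Proposition~\ref{prop:Poi_cpt}. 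For holomorphy I would apply Morera's theorem: for any triangle $T$ sitting in a compact $K \subset D$, the uniform bound validates Fubini and gives
\[
\oint_{\partial T} \Psi_D[\mu](z) \, dz
= \int_{\mathbb{R}} \oint_{\partial T} \Psi_D(z, \xi) \, dz \, \mu(d\xi) = 0,
\]
because each $\Psi_D(\mathord{\cdot}, \xi)$ is holomorphic on $D$.

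For part~(ii), I would set $\mu := \mu_1 - \mu_2$, a finite signed Borel measure. By (i), $z \mapsto \int_{\mathbb{R}} \Psi_D(z, \xi) \, \mu(d\xi)$ is holomorphic on the connected domain $D$ and vanishes on a set with an accumulation point in $D$, so the identity theorem extends the vanishing to all of $D$. Taking imaginary parts yields $\int_{\mathbb{R}} K^{\ast}_D(z, \xi) \, \mu(d\xi) = 0$ for every $z \in D$. The next step is to invoke the decomposition~\eqref{eq:Poi_discrepancy},
\[
K^{\ast}_D(z, \xi) - K_{\mathbb{H}}(z, \xi)
= -\mathbb{E}^{\mathbb{H}}_z\bigl[
K_{\mathbb{H}}(Z^{\mathbb{H}}_{\sigma_{\mathbb{H} \setminus D}}, \xi);
\sigma_{\mathbb{H} \setminus D} < \infty
\bigr]
+ \Phi_D(z) \bm{A}_D^{-1} \tfrac{\partial}{\partial \bm{n}_{\xi}} \Phi_D(\xi)^{\mathrm{tr}},
\]
combined with the estimates~\eqref{eq:killing_est}, \eqref{eq:hrm_est}, and \eqref{eq:deriv_hrm_est} from the proof of Lemma~\ref{lem:boundedness}. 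These show that each of the two summands on the right is $O(\Im z)$ uniformly in $\xi \in \mathbb{R}$ as $\Im z \downarrow 0$; consequently the classical Poisson integral $(P_y \ast \mu)(x) := \int_{\mathbb{R}} K_{\mathbb{H}}(x + iy, \xi) \, \mu(d\xi)$ obeys $\lVert P_y \ast \mu \rVert_{\infty} = O(y)$ as $y \downarrow 0$.

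Finally, for any $\phi \in C_c(\mathbb{R})$ Fubini gives $\int_{\mathbb{R}} \phi(x) (P_y \ast \mu)(x) \, dx = \int_{\mathbb{R}} (P_y \ast \phi)(\xi) \, \mu(d\xi)$. Uniform continuity of $\phi$ forces $P_y \ast \phi \to \phi$ uniformly, so the right-hand side tends to $\int \phi \, d\mu$, while the left-hand side is bounded in absolute value by $\lvert \operatorname{supp} \phi \rvert \, \lVert \phi \rVert_{\infty} \, \lVert P_y \ast \mu \rVert_{\infty} = O(y)$ and hence vanishes. Thus $\int \phi \, d\mu = 0$ for every $\phi \in C_c(\mathbb{R})$, which forces $\mu = 0$, i.e., $\mu_1 = \mu_2$. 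I expect the main obstacle to be the uniform-in-$\xi$ decay rate of the BMD correction $K^{\ast}_D - K_{\mathbb{H}}$ at the real line in part~(ii); once that uniformity is in place, the argument collapses onto classical Poisson-boundary-value theory on $\mathbb{H}$.
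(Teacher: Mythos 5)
Your argument is correct. For part~(i) you supply the Morera--Fubini details that the paper dismisses as trivial; fine. For part~(ii) the essential ingredient is the same as in the paper: after the identity theorem and taking imaginary parts, everything rests on the uniform-in-$\xi$ bound $\lvert K^{\ast}_D(x+iy,\xi) - K_{\mathbb{H}}(x+iy,\xi) \rvert = O(y)$ for $0 < y < \eta_D$, obtained from the decomposition~\eqref{eq:Poi_discrepancy} together with \eqref{eq:killing_est}--\eqref{eq:deriv_hrm_est}, exactly as in the proof of \eqref{eq:Poi_error}. Where you diverge is the endgame. The paper converts this estimate into an inversion formula~\eqref{eq:Poi_inversion} for $K^{\ast}_D$ by comparison with the classical Stieltjes inversion formula, then identifies $\mu_1$ and $\mu_2$ interval by interval (with the endpoint-mass bookkeeping that entails). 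You instead set $\mu = \mu_1 - \mu_2$, use the same estimate to get $\lVert P_y \ast \mu \rVert_{\infty} = O(y)$ for the classical Poisson extension, and finish by duality against $C_c(\mathbb{R})$ via the approximate-identity property $P_y \ast \phi \to \phi$; this bypasses the inversion formula and the atom bookkeeping entirely and is slightly shorter, at the cost of not producing the standalone statement~\eqref{eq:Poi_inversion}, which the paper records as a reusable fact about $K^{\ast}_D$. Both routes are sound; just make sure to note (as you implicitly do) that $x + iy \in D$ for every $x \in \mathbb{R}$ once $0 < y < \eta_D$, which is what lets the hypothesis $\int_{\mathbb{R}} K^{\ast}_D(x+iy,\xi)\,\mu(d\xi) = 0$ be used uniformly in $x$.
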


\begin{proof}
\eqref{lem:CpBMDhol} is trivial.
We prove \eqref{lem:cPoi_uniq} only.

Suppose that \eqref{eq:cPoi_uniq} holds for every $z \in A$
with $A$ having an accumulation point in $D$.
By the identity theorem,
\eqref{eq:cPoi_uniq} holds for all $z \in D$.
Taking its imaginary part, we have
\[
\int_{\mathbb{R}} K^{\ast}_D(z, \xi) \, \mu_1(d\xi)
= \int_{\mathbb{R}} K^{\ast}_D(z, \xi) \, \mu_2(d\xi),
\quad z \in D.
\]
Now, the following ``inversion formula'' shows that $\mu_1 = \mu_2$
through a standard measure-theoretic argument:
for any finite Borel measure $\mu$ on $\mathbb{R}$
and $a < b$,
\begin{equation} \label{eq:Poi_inversion}
\lim_{y \downarrow 0}
\int_{\mathbb{R}} \int_a^b K^{\ast}_D(x+iy, \xi) \,dx \,\mu(d\xi)
= \mu((a,b)) + \frac{\mu(\{ a \}) + \mu(\{ b \})}{2}.
\end{equation}

It remains to prove \eqref{eq:Poi_inversion}.
We recall that the identity~\eqref{eq:Poi_inversion}
with $K^{\ast}_D$ replaced by $K_{\mathbb{H}}$
is known as the Stieltjes inversion formula;
see, e.g., Section~4, Chapter~5 of Rosenblum and Rovnyak~\cite{RR94}
or Bondesson~\cite[Theorem 2.4.1]{Bo92}.
Hence, it suffices to show
\begin{equation} \label{eq:Poi_error}
\lim_{y\downarrow 0}
\int_{\mathbb{R}}
\int_a^b
\lvert K^{\ast}_D(x+iy, \xi) - K_{\mathbb{H}}(x+iy, \xi) \rvert
\,dx
\,\mu(d\xi)
= 0.
\end{equation}
In fact,
we have
\[
\lvert K^{\ast}_D(x+iy,\xi)-K_{\mathbb{H}}(x+iy,\xi) \rvert
\leq \left(
\frac{1}{\pi}
+ N \max_{1 \leq i, j \leq N} \left\lvert (\bm{A}_D^{-1})_{i j} \right\rvert
\right)
\frac{y}{(\eta_D)^2}
\]
for all
$0 < y < \eta_D = \min \{\, \Im z \mathrel{;} z \in \mathbb{H} \setminus D \,\}$ 
in the same way as we obtained
\eqref{eq:Poi_discrepancy}--\eqref{eq:deriv_hrm_est}
in the proof of Lemma~\ref{lem:boundedness}.
This yields \eqref{eq:Poi_error}.
\end{proof}

\begin{proof}[Proof of Theorem~\ref{thm:int_rep}]
Throughout this proof,
$D_1$ and $D_2$ are parallel slit half-planes,
and $f \colon D_1 \to D_2$ is a univalent function with (H.\ref{ass:hull}),
as assumed in the theorem.

We first prove that \eqref{eq:int_rep} implies (H.\ref{ass:hydro}) and (H.\ref{ass:res}).
Suppose that \eqref{eq:int_rep} holds for a finite Borel measure $\mu$.
Since \eqref{eq:Poi_Koebe} implies that
$\lvert \Psi_{D_1}(z, \xi) \rvert \leq 4 (\pi \eta_{D_1})^{-1}$
for $z \in \mathbb{H}_{\eta_{D_1}} \cap D_1$ and $\xi \in \partial \mathbb{H}$,
we have
\[
\lim_{\substack{z \to \infty \\ z \in \mathbb{H}_{\eta}}} (f(z) - z)
= \pi \lim_{\substack{z \to \infty \\ z \in \mathbb{H}_{\eta}}}
	\int_{\mathbb{R}} \Psi_{D_1}(z, \xi) \, \mu(d\xi)
= 0
\]
for a sufficiently large $\eta > \eta_{D_1}$ by the dominated convergence theorem.
Similarly,
\eqref{eq:Poi_deriv_infty} and \eqref{eq:Poi_bound} yield
\[
\lim_{\substack{z \to \infty \\ z \in \triangle_{\theta}}} z (f(z) - z)
= \pi \lim_{\substack{z \to \infty \\ z \in \triangle_{\theta}}}
	\int_{\mathbb{R}} z \Psi_{D_1}(z, \xi) \, \mu(d\xi)
	= - \mu(\mathbb{R}).
\]

Next, we prove that (H.\ref{ass:hydro}) and (H.\ref{ass:res}) imply \eqref{eq:int_rep}.
Suppose that $f$ enjoys (H.\ref{ass:hydro}) and (H.\ref{ass:res}).
Then the supremum
$M := \sup_{\xi \in \mathbb{R}} \Im f (\xi + i \eta_0)$
is finite for some $\eta_0 > 0$, and
$f$ maps
$D_1 \setminus \overline{\mathbb{H}_{\eta_0}}$
into $D_2 \setminus \overline{\mathbb{H}_M}$.
In other words,
$\Im f$ is bounded by $M$
on $D_1 \setminus \overline{\mathbb{H}_{\eta_0}}$.
By Fatou's theorem for bounded harmonic functions
(see Garnett and Marshall~\cite[Corollary~2.5]{GM05} for example),
$\Im f(\xi + i \eta)$ converges
as $\eta \downarrow 0$ for a.e.\ $\xi \in \mathbb{R}$.
Writing this limit as $\Im f(\xi)$,
we are going to deduce
\begin{equation} \label{eq:im_int_rep}
\Im f(x + iy) - y = \int_{\mathbb{R}} K^{\ast}_{D_1}(x + iy, \xi) \Im f(\xi) \, d\xi.
\end{equation}
Here, before proving this identity, let us observe that it yields \eqref{eq:int_rep}.
Put $\mu(d\xi) := \pi^{-1} \Im f(\xi) \, d\xi$.
Then we have
\begin{align}
\mu(\mathbb{R})
&= \pi \int_{\mathbb{R}}
\lim_{y \uparrow \infty} y K^{\ast}_{D_1}(iy, \xi)
\, \mu(d\xi)
\notag \\
&\leq \liminf_{y \uparrow \infty}
y \cdot \pi \int_{\mathbb{R}} K^{\ast}_{D_1}(iy, \xi) \Im f(\xi) \, d\xi
= \lim_{y \uparrow \infty} y (\Im f(iy) - y).
\label{eq:tot_mass}
\end{align}
Here,
the first equality follows from \eqref{eq:Poi_deriv_infty},
the second inequality from Fatou's lemma, and
the third equality from \eqref{eq:im_int_rep}.
Since the rightmost side of \eqref{eq:tot_mass} is finite by (H.\ref{ass:res}),
$\mu$ is a finite Borel measure on $\mathbb{R}$.
Since the real part of $f$ is uniquely determined
by the normalization~(H.\ref{ass:hydro}),
we obtain \eqref{eq:int_rep}.

We now deduce \eqref{eq:im_int_rep}.
Let
$Z^{\ast} = ((Z^{\ast}_t)_{t \geq 0}, (\mathbb{P}^{\ast}_z)_{z \in D_1^{\ast}})$
be the BMD on $D_1^{\ast}$.
Given $\eta \in (0, \eta_{D_1})$,
the function $\Im f(x + iy) - y$ is BMD-harmonic%
\footnote{For BMD harmonicity,
see \cite[Section~3]{CFR16}.
In particular,
the imaginary part $v$ of a holomorphic function
on a finitely multiply-connected domain
is BMD-harmonic
if $v$ takes a constant boundary value on each inner boundary component.}
on $D_1 \cap \mathbb{H}_{\eta}$ and
is continuous and vanishing at infinity
on $\partial \mathbb{H}_{\eta}$.
As Chen, Fukushima and Rohde derived the displayed identity below Eq.\ (6.20) in their paper \cite[p.4086]{CFR16} from the Gauss--Green formula, such a BMD-harmonic function is expressed in terms of the BMD Poisson kernel.
From a probabilistic point of view, we can get such an expression from \eqref{eq:BMD_Poi_bd_value} and the maximal value principle for BMD-harmonic functions.
In either case, the result is
\begin{align*}
\Im f(x + iy) - y
&= \mathbb{E}^{\ast}_{x+iy} \left[
	\Im f(Z^{\ast}_{\sigma_{\partial \mathbb{H}_{\eta}}})
	- \Im Z^{\ast}_{\sigma_{\partial \mathbb{H}_{\eta}}}
	\mathrel{;} \sigma_{\partial \mathbb{H}_{\eta}} < \infty
	\right] \\
&= \int_{\mathbb{R}}
	K^{\ast}_{D_1 \cap \mathbb{H}_{\eta}}(x + iy, \xi + i\eta) (\Im f(\xi + i \eta) - \eta)
	\, d\xi
\end{align*}
for $z = x + iy \in D_1 \cap \mathbb{H}_{\eta}$.
Here, $\sigma_{\partial \mathbb{H}_{\eta}}$
is the first hitting time to $\partial \mathbb{H}_{\eta}$ of $Z^{\ast}$.
Letting $\eta \to 0$, we have
\begin{equation} \label{eq:im_BMDPoi}
\Im f(x + iy) - y
= \lim_{\eta \downarrow 0} \int_{\mathbb{R}}
	K^{\ast}_{D_1 \cap \mathbb{H}_{\eta}}(x + iy, \xi + i\eta) \Im f(\xi + i \eta)
	\, d\xi
\end{equation}
for any $z = x + iy \in D_1$.
Changing the order of the limit and integral in \eqref{eq:im_BMDPoi}
will yield \eqref{eq:im_int_rep}.
To confirm that
changing the order is indeed possible,
let
$\tilde{D}_{1, \eta}
:= \{\, z \in \mathbb{H} \mathrel{;} z + i \eta \in D_1 \cap \mathbb{H}_{\eta} \,\}$.
Proposition~\ref{prop:Poi_cpt} implies that
\begin{align*}
\lim_{\eta \downarrow 0} K^{\ast}_{D_1 \cap \mathbb{H}_{\eta}}(x + iy, \xi + i\eta)
&= \lim_{\eta \downarrow 0} K^{\ast}_{\tilde{D}_{1, \eta}}(x + i (y-\eta), \xi) \\
&= K^{\ast}_{D_1}(x + iy, \xi).
\end{align*}
Since we can see that
$\int_{\mathbb{R}} K^{\ast}_{D_1 \cap \mathbb{H}_{\eta}}(z, \xi + i\eta) \, d\xi
= \mathbb{P}^{\ast}_z(Z^{\ast}_{\sigma_{\partial \mathbb{H}_{\eta}}} \in \partial \mathbb{H}_{\eta})
= \mathbb{P}^{\ast}_z(Z^{\ast}_{\zeta^{\ast}-} \in \partial \mathbb{H})
= \int_{\mathbb{R}} K^{\ast}_{D_1}(z, \xi) \, d\xi$,
Scheff\'e's lemma ensures that
\begin{equation} \label{eq:PoiL1conv}
\lim_{\eta \downarrow 0} \int_{\mathbb{R}}
	\left\lvert K^{\ast}_{D_1 \cap \mathbb{H}_{\eta}}(x + iy, \xi + i\eta)
	- K^{\ast}_{D_1}(x + iy, \xi) \right\rvert
	\, d\xi
= 0.
\end{equation}
We now decompose the integral in \eqref{eq:im_BMDPoi} as
\begin{align}
&\int_{\mathbb{R}}
	K^{\ast}_{D_1 \cap \mathbb{H}_{\eta}}(x + iy, \xi + i\eta) \Im f(\xi + i \eta)
	\, d\xi \notag \\
&= \int_{\mathbb{R}}
	\left( K^{\ast}_{D_1 \cap \mathbb{H}_{\eta}}(x + iy, \xi + i\eta)
	- K^{\ast}_{D_1}(x + iy, \xi) \right) \Im f(\xi + i \eta)
	\, d\xi \label{eq:Poi_dcmp} \\
&\phantom{=} + \int_{\mathbb{R}}
	K^{\ast}_{D_1}(x + iy, \xi) \Im f(\xi + i \eta)
	\, d\xi. \notag
\end{align}
Since $\Im f(\xi + i \eta) \leq M$
($M$ is the constant taken just before \eqref{eq:im_int_rep}),
the former integral in the right-hand side of \eqref{eq:Poi_dcmp}
converges to zero as $\eta \downarrow 0$ by \eqref{eq:PoiL1conv}.
To the latter integral
we can apply the dominated convergence theorem.
Summarizing,
we have obtained \eqref{eq:im_int_rep} from \eqref{eq:im_BMDPoi}.

The uniqueness of $\mu$ in \eqref{eq:int_rep}
follows from Lemma~\ref{lem:CpBMDInt}~\eqref{lem:cPoi_uniq}.
The remaining assertions have already been proved
in the above argument.
\end{proof}

\begin{remark}[Limit along BMD paths]
\label{rem:Doob}

We give a rough sketch
of another possible line of proof of Theorem~\ref{thm:int_rep},
which is based on probabilistic potential theory.
Let $\mathbb{P}^{\ast, \xi}_z$ be
the Doob transform of $\mathbb{P}^{\ast}_z$
by $K^{\ast}_D(\mathord{\cdot}, \xi)$,
and suppose that
$\hat{v}(\xi) := \lim_{t \to \zeta^{\ast}} \Im f(Z^{\ast}_t)$
exists $\mathbb{P}^{\ast, \xi}_z$-a.s.\ for a.e.\ $\xi \in \partial \mathbb{H}$.
The martingale convergence theorem yields
\begin{align*}
\Im f(z)
&= \mathbb{E}^{\ast}_z \left[
	\Im f(Z^{\ast}_{\sigma_{\partial \mathbb{H}_{\eta}}})
	\right] \\
&= \mathbb{E}^{\ast}_z \left[
	\hat{v}(Z^{\ast}_{\zeta^{\ast}-})
	\right]
= \int_{\mathbb{R}}
	K^{\ast}_D(z, \xi) \hat{v}(\xi) \, d\xi.
\end{align*}
Here, $\hat{v}(\xi)$ is not the limit along the vertical line
in Theorem~\ref{thm:int_rep}
but the one along BMD paths.
For absorbed Brownian motion on $\mathbb{H}$,
Doob~\cite{Do57} deeply studied
the relationship between
the limit along vertical lines or within sectors
and the one along Brownian paths.
\end{remark}

As a corollary of Theorem~\ref{thm:int_rep},
we can prove the uniqueness of a mapping-out function
of an given $\mathbb{H}$-hull
under the assumptions~(H.\ref{ass:hydro}) and (H.\ref{ass:res}).

\begin{corollary} \label{cor:uniqueness}
Let $D_1$ and $D_2$ be parallel slit half-planes with $N$ slits and
$F$ be an $\mathbb{H}$-hull in $D_2$.
A conformal mapping $f \colon D_1 \to D_2 \setminus F$
with {\rm(H.\ref{ass:hydro})} and {\rm (H.\ref{ass:res})}
is unique if it exists.
\end{corollary}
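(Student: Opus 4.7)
My plan is to reduce the uniqueness statement to showing that any conformal self-map of $D_1$ satisfying (H.\ref{ass:hydro}) and (H.\ref{ass:res}) must be the identity, and then to derive this from Theorem~\ref{thm:int_rep} combined with the non-negativity of the BMD Poisson kernel.

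First I would assume $f_1, f_2 \colon D_1 \to D_2 \setminus F$ are two mappings with the required properties and introduce $h := f_2^{-1} \circ f_1 \colon D_1 \to D_1$. Appealing to the stability of (H.\ref{ass:hydro}) and (H.\ref{ass:res}) under inversion and composition (promised by Appendix~\ref{sec:at_infinity}), $h$ inherits both properties. Condition (H.\ref{ass:hull}) for $h$, viewed as a self-map of $D_1$, holds trivially since $D_1 \setminus h(D_1) = \emptyset$ is vacuously an $\mathbb{H}$-hull. Therefore Theorem~\ref{thm:int_rep} applies (taking the theorem's $D_2$ to be $D_1$), producing a finite positive Borel measure $\mu$ on $\mathbb{R}$ such that
\[
h(z) = z + \pi \int_{\mathbb{R}} \Psi_{D_1}(z, \xi) \, \mu(d\xi), \quad z \in D_1.
\]

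Next I would take imaginary parts. Since $\Im \Psi_{D_1}(\mathord{\cdot}, \xi) = K^{\ast}_{D_1}(\mathord{\cdot}, \xi) \geq 0$ by its identification with the BMD Poisson kernel, the representation above gives $\Im h(z) \geq \Im z$ for every $z \in D_1$. The same argument applied to $h^{-1}$, which is again a conformal self-map of $D_1$ satisfying (H.\ref{ass:hydro})--(H.\ref{ass:hull}), yields $\Im h^{-1}(w) \geq \Im w$ for every $w \in D_1$; substituting $w = h(z)$ and combining the two inequalities forces $\Im h(z) = \Im z$ throughout $D_1$. Hence $h(z) - z$ is holomorphic on the connected set $D_1$ with vanishing imaginary part, so it is a real constant, which must equal $0$ by (H.\ref{ass:hydro}). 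Consequently $h$ is the identity and $f_1 = f_2$.

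The hard work is, strictly speaking, already behind us: it sits in Theorem~\ref{thm:int_rep}, whose proof occupies Section~\ref{sec:proof1}. The only additional technicality is checking that (H.\ref{ass:hydro}) and (H.\ref{ass:res}) propagate correctly through composition and inversion of univalent functions, which is precisely what Appendix~\ref{sec:at_infinity} is set up to supply. With these two ingredients in hand, the corollary reduces to the clean comparison between $h$ and $h^{-1}$ described above, so I do not anticipate any genuine obstacle.
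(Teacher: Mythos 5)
Your proposal is correct, and its skeleton coincides with the paper's proof: form $h := f_2^{-1}\circ f_1$, use Propositions~\ref{prop:inverse_norm} and \ref{prop:composite_norm} to transfer (H.\ref{ass:hydro}) and (H.\ref{ass:res}) to $h$, note that (H.\ref{ass:hull}) holds vacuously for the automorphism $h$, and invoke Theorem~\ref{thm:int_rep}. Where you diverge is in the final step. The paper concludes directly from the boundary correspondence: since $h$ is an automorphism of $D_1$, its boundary function $\Im h(\xi)$ vanishes on $\mathbb{R}$, so the measure $\mu(h;\cdot)=\pi^{-1}\Im h(\xi)\,d\xi$ is zero and $h=\mathrm{id}$. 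You instead exploit the non-negativity of $K^{\ast}_{D_1}$ to get $\Im h(z)\geq\Im z$, apply the same inequality to $h^{-1}$ (which also satisfies (H.\ref{ass:hydro})--(H.\ref{ass:hull})), and deduce $\Im h(z)=\Im z$, whence $h(z)-z$ is a real constant forced to be $0$ by (H.\ref{ass:hydro}). Both arguments are valid and equally short; yours avoids appealing to the boundary correspondence for multiply connected slit domains and runs entirely on the sign of the kernel plus the symmetry between $h$ and $h^{-1}$ (essentially the mechanism of Lemma~\ref{lem:evol_family}~(ii)), while the paper's version kills the representing measure in one stroke at the cost of citing that boundary fact.
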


\begin{proof}
Suppose that two mappings $f$ and $g$ satisfy the assumption.
Then $h := g^{-1} \circ f$ is a conformal automorphism on $D_1$
with (H.\ref{ass:hydro}) and (H.\ref{ass:res})
by Propositions~\ref{prop:inverse_norm} and \ref{prop:composite_norm}.
Theorem~\ref{thm:int_rep} asserts that
$h$ admits the integral representation~\eqref{eq:int_rep}
with $f$ there replaced by $h$.
However, the boundary function $\Im h(\xi)$ is zero
for all $\xi \in \mathbb{R}$
by the boundary correspondence.
Thus, we have $h(z)=z$ and $f=g$.
\end{proof}

\section{Analytic continuation of conformal mappings}
\label{sec:continuation}

In this section,
we consider conformal mappings between parallel slit domains
and their analytic continuation across the slits
and introduce the corresponding notation.
Such
analytic continuation will help us to treat points on the slits
as if they were interior points.
For example,
we conclude the continuity of the (endpoints of) parallel slits of $D_t$, $t \in I$,
over which an evolution family $(\phi_{t, s})_{(s, t) \in I^2_{\leq}}$ is defined,
from the continuity of $(\phi_{t, s})_{(s, t) \in I^2_{\leq}}$ in Lemma~\ref{lem:EFineq}.
We also examine the behavior of
solutions
to the Komatu--Loewner equation 
``around'' the slits in 
Section~\ref{sec:proof3_local}.

Let $E \subset \mathbb{C}$ be a simply connected domain,
$C_j$, $j=1, \ldots, N$, be disjoint horizontal slits in $E$,
and $D:=E \setminus \bigcup_{j=1}^N C_j$.
The left and right endpoints of $C_j$ are designated as
$z^{\ell}_j=x^{\ell}_j + iy_j$ and $z^r_j=x^r_j + iy_j$, respectively.
$C_j$ is a closed line segment, but
its open version is also defined by
$C^{\circ}_j := C_j \setminus \{z^{\ell}_j, z^r_j\}$.
Moreover, we put
\begin{align}
l_D
&:= \frac{1}{2} \min_{1 \leq j \leq N}
	\left( (x^r_j - x^{\ell}_j) \wedge d^{\text{Eucl}}(
	C_j, \partial E \cup {\textstyle \bigcup_{k \neq j} C_k}
	) \right), \label{eq:ell_D_ss5} \\
R_j
&:= \{\, x + iy \mathrel{;} x^{\ell}_j < x < x^r_j,\ \lvert y - y_j \rvert < l_D \,\},
\quad j = 1, \ldots, N. \notag
\end{align}

Let $\Pi_{\eta}$ denote the mirror reflection
with respect to the line $\Im z = \eta$,
i.e., $\Pi_{\eta} z := \overline{z} + 2i \eta$.
For a fixed $j = 1, \ldots, N$,
we take two disjoint sheets
$D \subset \mathbb{C}$
and
$\Pi_{y_j} D \times \{j\} \subset \mathbb{C} \times \{j\}$.
We glue the ``upper edge'' of the slit $C_j$
to the ``lower edge'' of $C_j \times \{j\}$
and the ``lower edge'' of $C_j$
to the ``upper edge'' of $C_j \times \{j\}$
as we glue two copies of $\mathbb{C} \setminus (-\infty, 0]$
along the negative half-line
to make the Riemann surface
on which the algebraic function $\sqrt{z}$ lives.
In other words,
the two ``rectangles''
\begin{align*}
R^{+}_j
&:= \{\, z \in R_j \mathrel{;} \Im z > y_j \,\}
	\cup C^{\circ}_j
	\cup \{\, (z, j) \in R_j \times \{j\} \mathrel{;} \Im z < y_j \,\} \\
R^{-}_j
&:= \{\, z \in R_j \mathrel{;} \Im z < y_j \,\}
	\cup (C^{\circ}_j \times \{j\})
	\cup \{\, (z, j) \in R_j \times \{j\} \mathrel{;} \Im z > y_j \,\}
\end{align*}
are connected sets in the glued sheets.
We call $C^{+}_j := C^{\circ}_j$ the upper edge
and $C^{-}_j := C^{\circ}_j \times \{j\}$ the lower edge
(of the original slit $C_j$).
The union
$C^{\natural}_j := C^{+}_j \cup C^{-}_j \cup \{z^{\ell}_j, z^r_j\}$
in the glued sheets is homeomorphic to a circle.
Repeating such gluing $N$ times,
we define a Riemann surface
\[
D^{\natural}
:= D \cup
\bigcup_{j=1}^N \left( C^{\natural}_j \cup (\Pi_{y_j} D \times \{j\}) \right).
\]
In the rest of this paper,
the superscript ${}^{\natural}$ suffixed to a parallel slit domain
means this gluing operation.

Let
$D_1 = E_1 \setminus \bigcup_{j=1}^N C_{1, j}$
and
$D_2 = E_2 \setminus \bigcup_{j=1}^N C_{2, j}$
be parallel slit domains as above and
$f \colon D_1 \to D_2$
be a conformal mapping
which associates the slit $C_{1, j}$ with $C_{2, j}$
for each $j = 1, \ldots, N$, respectively.
By the Schwarz reflection principle,
$f$ extends to a unique holomorphic function on $D_1^{\natural}$,
which is denoted by $f$ again,
with the relation
\begin{equation} \label{eq:refl_ss5}
f(p) = \Pi_{y_{2, j}} f(\Pi_{y_{1, j}} \operatorname{pr}(p))
\quad \text{for}\ p \in \Pi_{y_{1, j}} D \times \{j\}.
\end{equation}
Here,
the projection $\operatorname{pr}$ is defined by
$\operatorname{pr}(p) := z$
for $p = z \in \mathbb{C}$ and
for $p = (z, j) \in \mathbb{C} \times \{1, \ldots, N\}$.
The reflection principle also implies
that $f$ extends to a unique conformal mapping
$f^{\natural} \colon D_1^{\natural} \to D_2^{\natural}$
that is defined by
$f^{\natural}(z) := f(z)$ for $z \in D_1$
and by
$f^{\natural}((z, j)) := (f((z, j)), j)$ for $z \in \Pi_{y_{1, j}} D_1$.
The image of $p \in C^{\natural}_{1, j}$ by $f^{\natural}$ is then uniquely determined
by taking the limit.
The relation
$\operatorname{pr} \circ f^{\natural} = f$
holds by definition.
We distinguish these two analytic continuations of $f$
by whether the superscript ${}^{\natural}$ is suffixed or not.
A mapping without suffix takes values in the plane $\mathbb{C}$
while one with suffix takes values in the surface $D_2^{\natural}$.

\begin{remark}[Extension of equalities and inequalities]
\label{rem:continuation}
Through the analytic continuation,
equalities involving conformal mappings are also extended.
For example,
let $f$ be a conformal mapping between parallel slit half-planes $D_1$ and $D_2$
with (H.\ref{ass:hydro})--(H.\ref{ass:hull}).
The integral formula~\eqref{eq:int_rep} now reads
\[
f(p)=\operatorname{pr}(p)+\pi\int_{\mathbb{R}} \Psi_{D_1}(p,\xi)\, \mu(f;d\xi),
\quad p \in D_1^{\natural}.
\]
This equality is deduced from the identity theorem in general,
but in this case, we can also derive it by writing down the Schwarz reflection explicitly as in \eqref{eq:refl_ss5}.
Similarly, certain kind of inequalities involving conformal mappings are transformed in a predictable way under the Schwarz reflection,
as we shall see in the proof of Lemma~\ref{lem:EFineq} below.
\end{remark}

Finally, we mention a specific choice of a local coordinate
around $p \in C^{\natural}_j$
for a parallel slit domain $D = E \setminus \bigcup_{j=1}^N C_j$
for later use.
We use the same symbols as above.
If $p \in C^{\pm}_j$, then
$\left. \operatorname{pr} \right\rvert_{R^{\pm}_j} \colon R^{\pm}_j \to R_j$
gives a local coordinate around $p$.
If $p = z^{\ell}_j$,
we use the argument
$\vartheta^{\ell}_j(q)$ of a point $q$ near $p$
determined by the relation
$\exp (i \vartheta^{\ell}_j(q)) = (\operatorname{pr}(q) - z^{\ell}_j)/\lvert \operatorname{pr}(q) - z^{\ell}_j \rvert$
with
$0 < \vartheta^{\ell}_j(q) < 2\pi$ ($q \in D$) 
and $-2\pi < \vartheta^{\ell}_j(q) < 0$ ($q \in \Pi_{y_j} D \times \{j\}$)
and define a square root $\operatorname{sq}^\ell_j(q)=\sqrt{\operatorname{pr}(q) - z^{\ell}_j}$ by
\begin{equation} \label{eq:sq_ss5}
\operatorname{sq}^{\ell}_j(q)
:= \begin{cases}
0
& \text{if}\ q = z^{\ell}_j \\
\exp \left(
	\dfrac{\log \lvert \operatorname{pr}(q) -z^{\ell}_j \rvert + i \vartheta^{\ell}_j(q)}{2}
	\right)
& \text{if}\ q \neq z^{\ell}_j.
\end{cases}
\end{equation}
As one can see,
$\operatorname{sq}^{\ell}_j$ is a homeomorphism
from the ``doubled disk''
$B(z^{\ell}_j, l_D)
\cup \left( (B(z^{\ell}_j, l_D) \setminus \{z^{\ell}_j\}) \times \{j\} \right)$
onto a disk $B(0, \sqrt{l_D})$,
and hence
it is a local coordinate around $p = z^{\ell}_j$.
In more detail,
the map $\operatorname{pr}$ restricted to
$V:=\left(D\cup C_j^\natural\cup (\Pi_{y_j}D\times\{j\})\right)\cap\operatorname{pr}^{-1}(B(z^\ell_j, l_D))$
is a double-cover of $B(z^\ell_j, l_D)$ with a ramification point at $z^\ell_j$,
and therefore $q\mapsto \sqrt{\operatorname{pr}(q)-z^\ell_j}$
has a unique single-valued branch $\operatorname{sq}^\ell_j$ in $V$
mapping $V$ injectively onto $B(0, \sqrt{l_D})$ and
having positive derivative on $C_j^+\cap V \subset \mathbb{C}$.
See, e.g., Example~8.10 in Chapter~1 of Forster~\cite{For81} for such a standard construction of square roots.
A local coordinate $\operatorname{sq}^r_j$ around $p = z^r_j$
is defined similarly.

\section{Proof of Theorem~\ref{thm:result_KL_EF} and Corollary~\ref{cor:result_KL_LC}}
\label{sec:proof2}

As in Section~\ref{sec:derivation},
let $I$ be an interval $[0, T)$ or $[0, T]$.
We fix a chordal evolution family
$(\phi_{t, s})_{(s, t) \in I^2_{\leq}}$
over a family $(D_t)_{t \in I}$ of parallel slit half-planes
with $N$ slits (Definition~\ref{dfn:evol_family}).
The angular residue $\lambda(t)$ of $\phi_{t, 0}$ at infinity
is equal to $\mu(\phi_{t, 0}; \mathbb{R})$
by Theorem~\ref{thm:int_rep}.
We associate vectors
$\bm{s}(t) \in \mathbf{Slit}$, $t \in I$,
of slit endpoints
with $(\phi_{t, s})_{(s, t) \in I^2_{\leq}}$
so that $D_t = D(\bm{s}(t))$ and
$C^{\natural}_j(\bm{s}(t)) = \phi^{\natural}_{t, s}(C^{\natural}_j(\bm{s}(s)))$
hold for every $(s, t) \in I^2_{\leq}$.
The family $(\bm{s}(t))_{t \in I}$ is determined uniquely
by fixing $\bm{s}(0)$.
Although there are $N!$ vectors $\bm{s}$ such that $D(\bm{s}) = D_0$,
the choice of $\bm{s}(0)$ does not affect the subsequent argument.

\subsection{Continuity of evolution families}
\label{sec:proof2_cont}

In the definition of evolution family,
continuity is assumed only for $\lambda(t)$,
but as we shall see below,
this assumption turns out to be sufficient
for the continuity of $\phi_{t,s}$ and $\bm{s}(t)$
with respect to the parameters $s$ and $t$.

We begin with the following lemma,
which is easy but fundamental to the subsequent argument:

\begin{lemma} \label{lem:evol_family}
Let $(s, t) \in I^2_{\leq}$.

\begin{enumerate}
\item \label{lem:EFhcap}
The identity
$\mu(\phi_{t, s}; \mathbb{R}) = \lambda(t) - \lambda(s)$
holds.
In particular, $\lambda(t)$ is non-decreasing on $I$.

\item \label{lem:EFincr}
The inequality
$\Im \phi_{t, s}(z) \geq \Im z$
holds for any $z \in D_s$.
In particular, it follows that $\eta_{D_t} \geq \eta_{D_s}$.
Here,
$\eta_{D_t}
= \min \{\, \Im z \mathrel{;} z \in \mathbb{H} \setminus D_t \,\}
= \min_{1 \leq j \leq N} y_j(t)$.
\end{enumerate}
\end{lemma}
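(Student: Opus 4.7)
The plan is to derive both parts directly from the integral representation (Theorem~\ref{thm:int_rep}), combined with the composition law (EF.\ref{cond:EFcomp}) and elementary facts about analytic continuation across slits.

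For (i), I would begin by recalling that, for any of the mappings $\phi_{t,s}$ with $(s,t) \in I^2_{\leq}$, the assumption (H.\ref{ass:hull}) is satisfied by hypothesis; combined with (H.\ref{ass:hydro}) and (H.\ref{ass:res}), Theorem~\ref{thm:int_rep} then yields that the angular residue of $\phi_{t,s}$ at infinity coincides with the total mass $\mu(\phi_{t,s};\mathbb{R})$. On the other hand, the composition rule (EF.\ref{cond:EFcomp}) reads $\phi_{t,0} = \phi_{t,s} \circ \phi_{s,0}$. The angular residue is additive under composition of hydrodynamically normalized maps (this is the content of the composition proposition in Appendix~\ref{sec:at_infinity}, i.e., Proposition~\ref{prop:composite_norm}), so comparing Laurent coefficients at infinity gives $\lambda(t) = \mu(\phi_{t,s};\mathbb{R}) + \lambda(s)$, which is the identity claimed. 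Since $\mu(\phi_{t,s};\cdot)$ is a Borel measure and hence nonnegative, $\lambda(t) - \lambda(s) = \mu(\phi_{t,s};\mathbb{R}) \geq 0$ whenever $s \leq t$, proving monotonicity.

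For (ii), I would take the imaginary part of the integral representation
\[
\phi_{t,s}(z) = z + \pi \int_{\mathbb{R}} \Psi_{D_s}(z,\xi)\, \mu(\phi_{t,s}; d\xi), \quad z \in D_s,
\]
to obtain
\[
\Im \phi_{t,s}(z) = \Im z + \pi \int_{\mathbb{R}} K^{\ast}_{D_s}(z,\xi)\, \mu(\phi_{t,s}; d\xi).
\]
Because $K^{\ast}_{D_s}$ is the Poisson kernel of the BMD (and in particular nonnegative, as recorded in Section~\ref{sec:intro}) and $\mu(\phi_{t,s};\cdot)$ is a nonnegative measure, the integral on the right is $\geq 0$, whence $\Im \phi_{t,s}(z) \geq \Im z$.

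The statement $\eta_{D_t} \geq \eta_{D_s}$ then follows by transporting the inequality to the slits through the analytic continuation $\phi_{t,s}^{\natural}$ introduced in Section~\ref{sec:continuation}. For each $j$, the slit $C_j(\bm{s}(s))$ lies at imaginary height $y_j(s)$, and by the normalization of the family $(\bm{s}(t))_{t \in I}$ we have $\phi_{t,s}^{\natural}(C_j^{\natural}(\bm{s}(s))) = C_j^{\natural}(\bm{s}(t))$, whose projection to $\mathbb{C}$ sits at imaginary height $y_j(t)$. Approaching a point $p \in C_j(\bm{s}(s))$ from within $D_s$ and passing to the limit in the inequality $\Im \phi_{t,s}(z) \geq \Im z$ (which is legitimate since $\phi_{t,s}$ extends continuously to the slit boundary), one obtains $y_j(t) \geq y_j(s)$. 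Taking the minimum over $j$ yields $\eta_{D_t} \geq \eta_{D_s}$.

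The main obstacle is a minor one: ensuring that the composition rule for angular residues and the continuous extension of $\phi_{t,s}$ to the slit edges are properly imported from Appendix~\ref{sec:at_infinity} and Section~\ref{sec:continuation}. Both parts are otherwise immediate consequences of the integral representation together with the positivity of the BMD Poisson kernel.
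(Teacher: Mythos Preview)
Your proposal is correct and follows essentially the same approach as the paper: part~(i) via additivity of the angular residue under composition (Proposition~\ref{prop:composite_norm}) combined with the identification of residue and total mass from Theorem~\ref{thm:int_rep}, and part~(ii) by taking the imaginary part of the integral representation and using the nonnegativity of $K^{\ast}_{D_s}$. The paper's own proof is terser and does not spell out the passage from $\Im \phi_{t,s}(z) \geq \Im z$ to $\eta_{D_t} \geq \eta_{D_s}$; your argument via the continuous extension of $\phi_{t,s}$ to the slit edges is the natural way to fill this in.
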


\begin{proof}
\eqref{lem:EFhcap} follows from Proposition~\ref{prop:composite_norm}.
We obtain \eqref{lem:EFincr},
taking the imaginary part of \eqref{eq:int_rep}:
\[
\Im \phi_{t, s}(z)
= \Im z + \pi \int_{\mathbb{R}} K^{\ast}_{D_s}(z, \xi) \, \mu(\phi_{t, s}; d\xi)
\geq \Im z.
\]
Here, as is the case with the classical Poisson kernel \cite[Theorem~II.2.5]{GM05}, $K^\ast_{D_s} \ge 0$ holds.
This follows either from the probabilistic definition mentioned around \eqref{eq:Green_Poisson}, from the formula \eqref{eq:BMD_Poi_bd_value}, or from the fact that $\Psi_{D_s}$ is a holomorphic mapping into $\mathbb{H}$.
\end{proof}

\begin{lemma} \label{lem:EFineq}
Fix $t_0 \in [0, T)$.
For every $\eta \in (0, \eta_{D_{t_0}})$,
$p \in (D_{t_0} \cap \mathbb{H}_{\eta})^{\natural} \subset D_{t_0}^{\natural}$, and
$(s, u) \in (I \cap [t_0, T])^2_{\leq}$,
the inequality
\begin{equation} \label{eq:EFineq}
\lvert \phi_{u, t_0}(p) - \phi_{s, t_0}(p) \rvert \leq \frac{12}{\eta} (\lambda(u) - \lambda(s))
\end{equation}
holds.
In particular,
the one-parameter family $(\phi_{t, t_0})_{t \in I \cap [t_0, T]}$
satisfies $\mathrm{(Lip)}_{\lambda}$ on $D_{t_0}^{\natural}$, and
$\bm{s}(t)$ is continuous.
\end{lemma}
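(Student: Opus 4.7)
My plan is to combine the composition law (EF.\ref{cond:EFcomp}) with the integral representation in Theorem~\ref{thm:int_rep} and the Koebe-type estimate in Lemma~\ref{lem:Poi_Koebe}, handling the two sheets of $D_{t_0}^{\natural}$ separately.

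First I would prove the inequality on the main sheet, i.e.\ for $p=z \in D_{t_0} \cap \mathbb{H}_{\eta}$. Setting $w := \phi_{s, t_0}(z) \in D_s$, (EF.\ref{cond:EFcomp}) gives $\phi_{u, t_0}(z) - \phi_{s, t_0}(z) = \phi_{u, s}(w) - w$, while Lemma~\ref{lem:evol_family}~\eqref{lem:EFincr} implies $\Im w \geq \Im z \geq \eta$ and Lemma~\ref{lem:evol_family}~\eqref{lem:EFhcap} gives $\mu(\phi_{u,s}; \mathbb{R}) = \lambda(u) - \lambda(s)$. Theorem~\ref{thm:int_rep} applied to $\phi_{u,s}$ then yields
\[
\phi_{u, s}(w) - w = \pi \int_{\mathbb{R}} \Psi_{D_s}(w, \xi)\, \mu(\phi_{u,s}; d\xi).
\]
Since $|w - \xi| \geq \Im w \geq \eta$ and $d^{\mathrm{Eucl}}(\xi, \mathbb{H} \setminus D_s) \geq \eta_{D_s} \geq \eta_{D_{t_0}} > \eta$ (by Lemma~\ref{lem:evol_family}~\eqref{lem:EFincr}), Lemma~\ref{lem:Poi_Koebe} gives $|\Psi_{D_s}(w, \xi)| \leq 4/(\pi \eta)$ uniformly in $\xi$, so $|\phi_{u, t_0}(z) - \phi_{s, t_0}(z)| \leq 4(\lambda(u)-\lambda(s))/\eta$.

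Next I would handle the second sheet. For $p = (z, j)$ with $z' := \Pi_{y_j(t_0)} z \in D_{t_0} \cap \mathbb{H}_{\eta}$, the definition of the analytic continuation in Section~\ref{sec:continuation} gives $\phi_{u, t_0}(p) = \Pi_{y_j(u)} \phi_{u, t_0}(z')$ and the analogous identity with $s$, whence
\[
|\phi_{u, t_0}(p) - \phi_{s, t_0}(p)|
\leq |\phi_{u, t_0}(z') - \phi_{s, t_0}(z')| + 2|y_j(u) - y_j(s)|.
\]
The first term is bounded by $4(\lambda(u)-\lambda(s))/\eta$ by the main-sheet case. To control the height change, I pick any $z_0 \in C_j(\bm{s}(s))$; since $\phi_{u,s}$ carries $C^\natural_j(\bm{s}(s))$ onto $C^\natural_j(\bm{s}(u))$, we have $\Im \phi_{u,s}(z_0) = y_j(u)$, so $|y_j(u) - y_j(s)| \leq |\phi_{u,s}(z_0) - z_0|$. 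Approaching $z_0$ along $z_0 + i\epsilon \in D_s$ (admissible since $y_j(s) > \eta$) and using the main-sheet estimate, dominated convergence (justified by the uniform bound $|\Psi_{D_s}| \leq 4/(\pi \eta)$) gives $|\phi_{u,s}(z_0) - z_0| \leq 4(\lambda(u)-\lambda(s))/\eta$. Combining the two pieces yields the factor $4 + 2 \cdot 4 = 12$. Finally, the bound extends by continuity to the edges $C^\pm_j$ and the endpoints $z^\ell_j(t_0), z^r_j(t_0)$, all of which lie in $(D_{t_0} \cap \mathbb{H}_{\eta})^{\natural}$.

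The consequences are then straightforward: any compact $K \subset D_{t_0}^{\natural}$ is contained in $(D_{t_0} \cap \mathbb{H}_{\eta})^{\natural}$ for sufficiently small $\eta > 0$, giving $\mathrm{(Lip)}_{\lambda}$ with constant $L_K = 12/\eta$. Continuity of $\bm{s}(t)$ follows by applying \eqref{eq:EFineq} at $p = z^{\ell}_j(t_0)$ and $p = z^r_j(t_0)$ (which satisfy $\phi_{t, t_0}(z^{\bullet}_j(t_0)) = z^{\bullet}_j(t)$) together with the continuity of $\lambda$ in (EF.\ref{cond:EFcont}). The main technical point I anticipate is justifying the limiting passage from interior points down to slit points for the $\Psi_{D_s}$ bound and the boundary value of $\phi_{u,s}$, which relies on the fact that $\eta < \eta_{D_{t_0}} \leq \eta_{D_s}$ keeps all slits strictly above the cut-off height throughout the evolution.
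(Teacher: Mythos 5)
Your derivation of the inequality \eqref{eq:EFineq} is correct and rests on the same ingredients as the paper's proof ((EF.\ref{cond:EFcomp}), Theorem~\ref{thm:int_rep}, Lemma~\ref{lem:evol_family}, Lemma~\ref{lem:Poi_Koebe}); the only difference is how the reflected sheet is handled. The paper extends the representation formula itself to $D_{t_0}^{\natural}$ (Remark~\ref{rem:continuation}) and bounds the reflected kernel, writing $\Psi_{D_s}(\phi^{\natural}_{s,t_0}(p), \xi) = \overline{\Psi_{D_s}(\phi_{s,t_0}(\Pi_{y_j(t_0)}\operatorname{pr}(p)), \xi)} + 2i \Im \Psi_{D_s}(z^{\ell}_j(s), \xi)$ and estimating the two terms by $4/(\pi\eta)$ and $8/(\pi\eta)$; you instead reflect the function values and control the slit-height increment $\lvert y_j(u) - y_j(s) \rvert$ by a separate boundary-limit estimate. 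The two computations are essentially equivalent and both produce the constant $12$, and your limiting argument along $z_0 + i\epsilon$ is legitimate because $\eta < \eta_{D_{t_0}} \leq \eta_{D_s}$ keeps the Koebe-type bound uniform up to the slit.

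There is, however, a genuine error in your derivation of the last assertion: the parenthetical claim that $\phi^{\natural}_{t,t_0}(z^{\ell}_j(t_0)) = z^{\ell}_j(t)$ and $\phi^{\natural}_{t,t_0}(z^{r}_j(t_0)) = z^{r}_j(t)$ is false in general. The homeomorphism of $C^{\natural}_j(\bm{s}(t_0))$ onto $C^{\natural}_j(\bm{s}(t))$ induced by $\phi^{\natural}_{t,t_0}$ need not send branch points to branch points: an endpoint of the image slit can be the image of an interior edge point, at which the planar extension of $\phi_{t,t_0}$ has a zero of the second order (cf.\ Lemma~\ref{lem:order_preim}). This is precisely why Appendix~\ref{sec:KL_slit} introduces the moving preimages $p^{\ell}_j(t) = (\phi^{\natural}_{t,0})^{-1}(z^{\ell}_j(t))$ and tracks them via an implicit function theorem (Lemma~\ref{lem:ac_preim}) instead of taking them constant. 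The conclusion you want nevertheless follows directly from the inequality you did prove: since $C^{\natural}_j(\bm{s}(t_0)) \subset (D_{t_0}\cap\mathbb{H}_{\eta})^{\natural}$ and $C_j(\bm{s}(t)) = \operatorname{pr}\bigl(\phi^{\natural}_{t,t_0}(C^{\natural}_j(\bm{s}(t_0)))\bigr)$, the bound $\sup_{p \in C^{\natural}_j(\bm{s}(t_0))} \lvert \phi_{t,t_0}(p) - \phi_{s,t_0}(p) \rvert \leq (12/\eta)(\lambda(t)-\lambda(s))$ shows that $y_j(t)$, $x^{\ell}_j(t) = \min \Re$ and $x^{r}_j(t) = \max \Re$ taken over this compact image change by at most that amount between times $s$ and $t$, so the continuity of $\bm{s}(t)$ follows from (EF.\ref{cond:EFcont}) without identifying which point of $C^{\natural}_j(\bm{s}(t_0))$ is sent to each endpoint.
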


\begin{proof}
Let $(s, u) \in (I \cap [t_0, T])^2_{\leq}$.
By \eqref{eq:int_rep} we have
\[
\phi_{u, t_0}(p) 
= \phi_{u, s}(\phi_{s, t_0}^{\natural}(p))
= \phi_{s, t_0}(p)
	+ \pi \int_{\mathbb{R}}
		\Psi_{D_s}(\phi_{s, t_0}^{\natural}(p), \xi)
		\, \mu(\phi_{u,s}; d\xi).
\]
Then by Lemma~\ref{lem:evol_family}~\eqref{lem:EFhcap},
\begin{equation} \label{eq:EFdifference}
\phi_{u, t_0}(p) - \phi_{s, t_0}(p)
= \pi (\lambda(u)- \lambda(s)) \int_{\mathbb{R}}
\Psi_{D_s}(\phi_{s, t_0}^{\natural}(p), \xi)
\, \frac{\mu(\phi_{u,s}; d\xi)}{\mu(\phi_{u,s}; \mathbb{R})}.
\end{equation}
In this identity,
an upper bound
of $\Psi_{D_s}(\phi_{s, t_0}^{\natural}(p), \xi)$
is given as follows:
let
$p \in (D_{t_0} \cap \mathbb{H}_{\eta})^{\natural} \cap (\mathbb{C} \times \{j\})$
for some $\eta \leq \eta_{D_{t_0}}$ and $j = 1, \ldots, N$.
By definition, we have
\begin{align*}
\Psi_{D_s}(\phi_{s, t_0}^{\natural}(p), \xi)
&= \Pi_{\Im \Psi_{D_s}(z^{\ell}_j(s), \xi)} \Psi_{D_s}(\Pi_{y_j(s)} \phi_{s, t_0}(p), \xi) \\
&= \overline{\Psi_{D_s}(\phi_{s, t_0}(\Pi_{y_j(t_0)} \operatorname{pr}(p)), \xi)}
+ 2i \Im \Psi_{D_s}(z^{\ell}_j(s), \xi)
\end{align*}
and $\Pi_{y_j(t_0)} \operatorname{pr}(p) \in D_{t_0} \cap \mathbb{H}_{\eta}$.
Thus, Lemmas~\ref{lem:Poi_Koebe}
and \ref{lem:evol_family}~\eqref{lem:EFincr} yield
\begin{equation} \label{eq:Poi_Koebe_ref}
\sup_{\xi \in \mathbb{R}}
\left\lvert \Psi_{D_s}(\phi_{s, t_0}^{\natural}(p), \xi) \right\rvert
\leq \frac{12}{\pi \eta}.
\end{equation}
Obviously, this inequality is true
also for $p \in (D_{t_0} \cap \mathbb{H}_{\eta})^{\natural} \cap \mathbb{C}$.
\eqref{eq:EFineq} follows from
\eqref{eq:EFdifference} and \eqref{eq:Poi_Koebe_ref}.
\end{proof}

The preceding lemma shows
the continuity of $\phi_{t, s}$ with respect to just one parameter.
Actually, \eqref{eq:EFineq} implies even its joint continuity,
as in the next proposition.

\begin{proposition} \label{prop:JointCont}
Given $t_0 \in I$,
let $U$ be a bounded open set with $\overline{U} \subset D_{t_0}$ and
$\delta>0$ be
such that $\overline{U} \subset D_t$ for all $t \in \Bar{B}_I(t_0, \delta)$.
The trapezoid
$\{\, (s, t) \mathrel{;} s \in \Bar{B}_I(t_0, \delta),\ t \in I \cap [s, T] \,\}$
is denoted by $\mathcal{T}_{t_0, \delta}$.
Then the mapping
\[
\mathcal{T}_{t_0, \delta} \ni (s, t)
\mapsto \phi_{t, s} \in \mathrm{Hol}(U; \mathbb{C})
\]
is continuous.
Here, $\mathrm{Hol}(U; \mathbb{C})$ is
the set of holomorphic functions on $U$
endowed with the topology of locally uniform convergence.
\end{proposition}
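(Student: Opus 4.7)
The plan is to fix $(s_*, t_*) \in \mathcal{T}_{t_0, \delta}$, take an arbitrary sequence $(s_n, t_n) \to (s_*, t_*)$ in $\mathcal{T}_{t_0, \delta}$, and show $\phi_{t_n, s_n} \to \phi_{t_*, s_*}$ locally uniformly on $U$. I would argue in two stages: first establish that the family $\{\phi_{t_n, s_n}\}_n$ is locally bounded on $U$, so that Montel's theorem extracts subsequences converging in $\mathrm{Hol}(U; \mathbb{C})$; then identify any subsequential limit with $\phi_{t_*, s_*}$ via the composition law (EF.\ref{cond:EFcomp}) together with the one-variable continuity already provided by Lemma~\ref{lem:EFineq}.

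For the local boundedness, I would apply Theorem~\ref{thm:int_rep} to write
\[
\phi_{t_n, s_n}(z) - z = \pi \int_{\mathbb{R}} \Psi_{D_{s_n}}(z, \xi)\, \mu(\phi_{t_n, s_n}; d\xi),
\]
where the total mass $\mu(\phi_{t_n, s_n}; \mathbb{R}) = \lambda(t_n) - \lambda(s_n)$ stays bounded. Continuity of $\bm{s}(\cdot)$ from Lemma~\ref{lem:EFineq}, combined with Proposition~\ref{prop:Poi_cpt}, yields a uniform bound $\sup_n \sup_{z \in \overline{U},\, \xi \in \mathbb{R}} |\Psi_{D_{s_n}}(z, \xi)| < \infty$, which gives the required local boundedness.

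To identify the limit, let $\Phi$ be a locally uniform limit of some subsequence $\phi_{t_{n_k}, s_{n_k}}$. Passing to a further subsequence I may assume that either $s_{n_k} \leq s_*$ or $s_{n_k} \geq s_*$ for every $k$. The trivial diagonal case $s_* = t_*$ is handled directly from the integral representation, so I assume $s_* < t_*$. In the case $s_{n_k} \leq s_*$, (EF.\ref{cond:EFcomp}) gives
\[
\phi_{t_{n_k}, s_{n_k}}(z) = \phi_{t_{n_k}, s_*}\bigl(\phi_{s_*, s_{n_k}}(z)\bigr)
\]
for large $k$; Lemma~\ref{lem:EFineq} applied with initial time $s_{n_k}$ shows $\phi_{s_*, s_{n_k}} \to \mathrm{id}$, and the same lemma with the fixed initial time $s_*$ gives $\phi_{t_{n_k}, s_*} \to \phi_{t_*, s_*}$, both locally uniformly; composing yields $\phi_{t_{n_k}, s_{n_k}}(z) \to \phi_{t_*, s_*}(z)$. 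In the case $s_{n_k} \geq s_*$, the analogous identity $\phi_{t_{n_k}, s_*}(z) = \phi_{t_{n_k}, s_{n_k}}(w_k)$ with $w_k := \phi_{s_{n_k}, s_*}(z) \to z$ allows me to pass to the limit using locally uniform convergence of $\phi_{t_{n_k}, s_{n_k}}$ to $\Phi$ on a neighborhood of $z$. Either way $\Phi(z) = \phi_{t_*, s_*}(z)$, so uniqueness of subsequential limits yields the desired joint continuity.

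The main technical obstacle will be securing uniform control of the height parameter $\eta$ required to invoke Lemma~\ref{lem:EFineq} when the initial time itself varies with $n$ (as with $\phi_{s_*, s_{n_k}}$ above, whose initial time is $s_{n_k}$). This is resolved by using continuity of $\bm{s}(\cdot)$ to bound $\eta_{D_{s_n}}$ away from zero uniformly in $n$, after which a single choice of $\eta > 0$ works for all large $n$. The remaining step -- composing locally uniformly convergent holomorphic sequences with arguments varying in a compact set -- is a standard consequence of Cauchy's estimate applied to the limit function.
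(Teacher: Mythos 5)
Your proof is correct, but it takes a genuinely different route from the paper's. The paper also reduces to sequences and splits into the cases $t\le s_*$, $s_*\le s_n$, $s_n\le s_*\le t_n$, but it upgrades pointwise convergence to locally uniform convergence via Vitali's theorem (using the a priori bound \eqref{eq:LocBdd_EF}), and -- this is its key device -- it controls the troublesome compositions $\phi_{t_n,s_n}(\phi_{s_n,s}(z))$ versus $\phi_{t_n,s_n}(z)$ by the contraction principle for the hyperbolic metric together with the quasi-hyperbolic estimate \eqref{eq:quasi-hyp_est} (this is why it assumes $U$ convex), thereby never needing any equicontinuity of the varying family. You instead invoke Montel's theorem (local boundedness via Theorem~\ref{thm:int_rep}, Lemma~\ref{lem:evol_family}, and a uniform bound on $\Psi_{D_{s_n}}$ -- for which Lemma~\ref{lem:Poi_Koebe} alone already suffices, since $\eta_{D_{s_n}}\ge\eta_{D_0}$), extract a locally uniformly convergent subsequence, and identify its limit through the composition law (EF.\ref{cond:EFcomp}) and the one-parameter estimate of Lemma~\ref{lem:EFineq}; the locally uniform convergence of the extracted subsequence (respectively of $\phi_{t_{n_k},s_*}$ with fixed initial time) plus continuity of the limit plays exactly the role that the hyperbolic contraction argument plays in the paper. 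The trade-off: your argument is more elementary and dispenses with the convexity of $U$ and with hyperbolic geometry, at the price of a normal-families/subsequence-principle detour, whereas the paper's argument is more quantitative, giving explicit distance bounds without passing to subsequential limits of the two-parameter family. Your concluding remarks correctly dispose of the only delicate points (uniform lower bound on the height $\eta$ when the initial time varies -- which in fact follows even more simply from the monotonicity $\eta_{D_t}\ge\eta_{D_0}$ of Lemma~\ref{lem:evol_family} -- and the evaluation of locally uniformly convergent sequences at moving arguments).
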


Basically, the proof of Proposition~\ref{prop:JointCont} goes
along the line of Bracci, Contreras and Diaz-Madrigal~\cite[Proposition~3.5]{BCDM12}.
However, since our domain $D_t$ depends on $t$,
we need to modify their proof with the aid of quasi-hyperbolic distance.
Before starting the proof,
let us recall the relation between hyperbolic and quasi-hyperbolic distances briefly.
(For reference, see Sections~5 and 7, Chapter~1 of Ahlfors~\cite{Ah73}.)
Let $D$ be a proper subdomain of $\mathbb{C}$.
Through the universal covering map $\mathbb{D} \twoheadrightarrow D$,
the Poincar\'e metric $2 \lvert dz \rvert / (1 - \lvert z \rvert^2)$
on the unit disk $\mathbb{D}$
induces the \emph{hyperbolic distance}
$d^{\mathrm{Hyp}}_D(z, w)$ on $D$.
It enjoys the \emph{contraction principle}:
for a holomorphic function $f \colon D \to \tilde{D}$,
we have
$d^{\mathrm{Hyp}}_{\tilde{D}}(f(z), f(w)) \leq d^{\mathrm{Hyp}}_D(z, w)$.
In particular, $D \subset \tilde{D}$ implies
$d^{\mathrm{Hyp}}_{\tilde{D}}(z, w) \leq d^{\mathrm{Hyp}}_D(z, w)$
for $z, w \in D$.
The \emph{quasi-hyperbolic distance} on $D$ is defined by
\[
d^{\mathrm{QH}}_{D}(z, w)
:= \inf_{\gamma} \int_{\gamma} \frac{2}{\delta_D(\zeta)} \, \lvert d\zeta \rvert,
\quad \delta_D(\zeta) := d^{\mathrm{Eucl}}(\zeta, \partial D) .
\]
The infimum is taken over all (piecewise) smooth curves $\gamma$
connecting $z$ and $w$.
The quasi-hyperbolic distance dominates the hyperbolic one:
$d^{\mathrm{Hyp}}_D(z, w) \leq d^{\mathrm{QH}}_{D}(z, w)$.
We shall also use the following fact:
if $C$ is a convex subset of $D$ such that
$d^{\mathrm{Eucl}}(C, \partial D) > 0$,
then by definition,
\begin{equation} \label{eq:quasi-hyp_est}
d^{\mathrm{QH}}_{D}(z, w)
\leq \frac{2 \lvert z - w \rvert}{d^{\mathrm{Eucl}}(C, \partial D)},
\quad z, w \in C.
\end{equation}

\begin{proof}[Proof of Proposition~\ref{prop:JointCont}]
Without loss of generality,
we may and do assume that $U$ is a convex set (say, a small disk).
Let $(s_n, t_n)_{n \in \mathbb{N}}$ be
a sequence in $\mathcal{T}_{t_0, \delta}$ converging to $(s, t)$.
The goal is to show that
\begin{equation} \label{eq:UnifConv_EF}
\phi_{t_n, s_n} \to \phi_{t, s} \quad \text{locally uniformly on $U$.}
\end{equation}
We make a reduction of the problem through some steps.

First, we note that the sequence
$(\phi_{t_n, s_n})_{n \in \mathbb{N}}$
is bounded on $U$.
Indeed, putting $\tilde{\eta}_U := \min_{z \in \overline{U}} \Im z > 0$,
we observe from \eqref{eq:EFineq} that
\begin{equation} \label{eq:LocBdd_EF}
\lvert \phi_{t_n, s_n}(z) - z \rvert
= \lvert \phi_{t_n, s_n}(z) - \phi_{s_n, s_n}(z) \rvert
\leq \frac{12 (\lambda(t_n) - \lambda(s_n))}{\tilde{\eta}_U \wedge \eta_{D_0}} 
\end{equation}
for all $z \in U$.
Since
$\lambda(t_n) - \lambda(s_n) \to \lambda(t) - \lambda(s)$
as $n \to \infty$,
the right-hand side of \eqref{eq:LocBdd_EF} is bounded.
Then by Vitali's convergence theorem%
\footnote{See Chapter~7, Section~2 of Rosemblum and Rovnyak~\cite{RR94}
for example.},
\eqref{eq:UnifConv_EF} follows from the pointwise convergence
\begin{equation} \label{eq:PtConv_EF}
\phi_{t_n, s_n}(z) \to \phi_{t, s}(z) \quad \text{for each $z \in U$}.
\end{equation}
Moreover, the convergence in \eqref{eq:PtConv_EF}
can be regarded as the one with respect to $d^{\text{Hyp}}_{\mathbb{H}}$
instead of $d^{\text{Eucl}}$
because they induce the same topology on $\mathbb{H}$.

Next, we note that \eqref{eq:PtConv_EF} holds if and only if,
for each fixed $z \in U$,
any subsequence
$(t^{\prime}_n, s^{\prime}_n)_{n \in \mathbb{N}}$
of $(t_n, s_n)_{n \in \mathbb{N}}$
has a further subsequence
$(t^{\prime \prime}_n, s^{\prime \prime}_n)_{n \in \mathbb{N}}$
such that
$\phi_{t^{\prime \prime}_n, s^{\prime \prime}_n}(z) \to \phi_{t, s}(z)$.
Here, the sequence
$(t^{\prime}_n, s^{\prime}_n)_{n \in \mathbb{N}}$
necessarily has a subsequence
$(t^{\prime \prime}_n, s^{\prime \prime}_n)_{n \in \mathbb{N}}$
with one of the following properties:
(I)
$s^{\prime \prime}_n \leq t^{\prime \prime}_n \leq s$ for all $n$;
(II)
$s \leq s^{\prime \prime}_n$ for all $n$;
(III)
$s^{\prime \prime}_n \leq s \leq t^{\prime \prime}_n$ for all $n$.
Thus, it suffices to show that
$\phi_{t^{\prime \prime}_n, s^{\prime \prime}_n}(z) \to \phi_{t, s}(z)$
in the cases (I)--(III).

In what follows,
we drop the superscript ${}^{\prime \prime}$ for the simplicity of notation
and assume that $(s_n, t_n)_{n \in \mathbb{N}}$ satisfies (I), (II), or (III).

Firstly, assume (I).
Since $t \geq s \geq t_n \to t$,
we have $s = t$ in this case.
Hence by \eqref{eq:LocBdd_EF},
\[
\lvert \phi_{t_n, s_n}(z) - \phi_{t, s}(z) \rvert
= \lvert \phi_{t_n, s_n}(z) - z \rvert
\leq \frac{12 (\lambda(t_n) - \lambda(s_n))}{\pi (\tilde{\eta}_U \wedge \eta_{D_0})}
\to 0
\]
as $n \to \infty$.

Secondly, assume (II).
By the contraction principle, we have
\begin{align}
&d^{\mathrm{Hyp}}_{\mathbb{H}}(\phi_{t_n, s_n}(z), \phi_{t, s}(z)) \notag \\
&\leq d^{\mathrm{Hyp}}_{\mathbb{H}}(\phi_{t_n, s_n}(z), \phi_{t_n, s}(z))
	+ d^{\mathrm{Hyp}}_{\mathbb{H}}(\phi_{t_n, s}(z), \phi_{t, s}(z)) \notag \\
&\leq d^{\mathrm{Hyp}}_{D_{t_n}}(\phi_{t_n, s_n}(z), \phi_{t_n, s_n}(\phi_{s_n, s}(z)))
	+ d^{\mathrm{Hyp}}_{\mathbb{H}}(\phi_{t_n, s}(z), \phi_{t, s}(z)) \notag \\
&\leq d^{\mathrm{Hyp}}_{D_{s_n}}(z, \phi_{s_n, s}(z))
	+ d^{\mathrm{Hyp}}_{\mathbb{H}}(\phi_{t_n, s}(z), \phi_{t, s}(z)). \label{eq:case2_1}
\end{align}
In addition, we apply
$d^{\mathrm{Hyp}} \leq d^{\mathrm{QH}}$ and \eqref{eq:quasi-hyp_est}
to obtain
\begin{align}
d^{\mathrm{Hyp}}_{D_{s_n}}(z, \phi_{s_n, s}(z))
&\leq d^{\mathrm{QH}}_{D_{s_n}}(z, \phi_{s_n, s}(z)) \notag \\
&\leq \frac{2 \lvert z - \phi_{s_n, s}(z) \rvert}%
{d^{\mathrm{Eucl}}(\overline{U},
	\partial \mathbb{H} \cup \bigcup_{j=1}^N C_j(\bm{s}(s_n))}.
\label{eq:case2_2}
\end{align}
(Note that $\phi_{s_n, s}(z) \in U$ if $n$ is large enough.)
Substituting \eqref{eq:case2_2} into \eqref{eq:case2_1}
and then applying \eqref{eq:EFineq},
we have $d^{\mathrm{Hyp}}_{\mathbb{H}}(\phi_{t_n, s_n}(z), \phi_{t, s}(z)) \to 0$
as $n \to \infty$.

Finally, assume (III).
A calculation similar to that in \eqref{eq:case2_1} yields
\begin{gather}
d^{\mathrm{Hyp}}_{\mathbb{H}}(\phi_{t_n, s_n}(z), \phi_{t, s}(z))
\leq d^{\mathrm{Hyp}}_{D_{s}}(\phi_{s, s_n}(z), z)
	+ d^{\mathrm{Hyp}}_{\mathbb{H}}(\phi_{t_n, s}(z), \phi_{t, s}(z)),
	\label{eq:case3_1} \\
\begin{aligned}
d^{\mathrm{Hyp}}_{D_{s}}(\phi_{s, s_n}(z), z)
&\leq d^{\mathrm{QH}}_{D_{s}}(\phi_{s, s_n}(z), z)
\leq \frac{2 \lvert \phi_{s, s_n}(z) - z \rvert}%
{d^{\mathrm{Eucl}}(\overline{U},
	\partial \mathbb{H} \cup \bigcup_{j=1}^N C_j(\bm{s}(s)))} \\
&\leq \frac{24 (\lambda(t_n) - \lambda(s_n))}%
{(\tilde{\eta}_U \wedge \eta_{D_0})
d^{\mathrm{Eucl}}(\overline{U},
	\partial \mathbb{H} \cup \bigcup_{j=1}^N C_j(\bm{s}(s)))}
\to 0.
\end{aligned}
\label{eq:case3_2}
\end{gather}
\eqref{eq:case3_1} and \eqref{eq:case3_2} imply that
$d^{\mathrm{Hyp}}_{\mathbb{H}}(\phi_{t_n, s_n}(z), \phi_{t, s}(z)) \to 0$.
\end{proof}

\subsection{Proof of Theorem~\ref{thm:result_KL_EF}}
\label{sec:proof2_EF}

We move to the argument on the $\lambda(t)$-differentiability.
For $t_0 \in [0, T)$, let
\begin{equation} \label{eq:exception}
N_{t_0}
:= \bigcup_{p \in D_{t_0}^{\natural}}
	\{\, t \in (t_0, T)
	\mathrel{;} \text{$\tilde{\partial}^{\lambda}_t \phi_{t, t_0}(p)$ does not exist} \,\},
\end{equation}
which is an $m_{\lambda}$-null subset of $I$
by Lemma~\ref{lem:EFineq} and Proposition~\ref{prop:ae_diff}.

\begin{lemma} \label{lem:NullConsis}
The identity $N_{t_0} = N_0 \cap (t_0, T)$ holds for every $t_0 \in (0, T)$.
\end{lemma}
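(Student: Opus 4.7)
The plan is to exploit the composition rule (EF.\ref{cond:EFcomp}) in tandem with a normal-family argument on the glued Riemann surface $D_{t_0}^{\natural}$ of Section~\ref{sec:continuation}. From (EF.\ref{cond:EFcomp}) together with the analytic continuation across slits, one has
\[
\phi_{t, 0}(p) = \phi_{t, t_0}(\phi_{t_0, 0}^{\natural}(p)), \qquad p \in D_0^{\natural}, \ t \in I \cap [t_0, T].
\]
Because $q := \phi_{t_0, 0}^{\natural}(p) \in D_{t_0}^{\natural}$ is independent of $t$, the $\mathbb{C}$-valued maps $t \mapsto \phi_{t, 0}(p)$ and $t \mapsto \phi_{t, t_0}(q)$ are literally the same, so $\tilde{\partial}^{\lambda}_t \phi_{t, 0}(p)$ exists if and only if $\tilde{\partial}^{\lambda}_t \phi_{t, t_0}(q)$ exists. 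This already yields the easy inclusion $N_0 \cap (t_0, T) \subseteq N_{t_0}$: any $p \in D_0^{\natural}$ witnessing $t \in N_0$ supplies $q = \phi_{t_0, 0}^{\natural}(p) \in D_{t_0}^{\natural}$ witnessing $t \in N_{t_0}$.

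For the reverse inclusion $N_{t_0} \subseteq N_0 \cap (t_0, T)$, I would argue by contrapositive. Fix $t \in (t_0, T) \setminus N_0$; the equivalence above shows that $\tilde{\partial}^{\lambda}_t \phi_{t, t_0}(q)$ exists for every $q$ in the subset $U := \phi_{t_0, 0}^{\natural}(D_0^{\natural})$, which is open in $D_{t_0}^{\natural}$ because $\phi_{t_0, 0}^{\natural}$ is a conformal embedding between Riemann surfaces and thus an open map. To upgrade this partial existence to all of $D_{t_0}^{\natural}$, I would feed the symmetric difference quotients
\[
Q_{\delta}(q) := \frac{\phi_{t+\delta, t_0}(q) - \phi_{t-\delta, t_0}(q)}{\lambda(t+\delta) - \lambda(t-\delta)}, \qquad q \in D_{t_0}^{\natural},
\]
into a Vitali-type argument, where $0 < \delta < \min\{t - t_0, T - t\}$ and the functions $\phi_{t \pm \delta, t_0}$ are regarded as holomorphic on $D_{t_0}^{\natural}$ via Section~\ref{sec:continuation}. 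Lemma~\ref{lem:EFineq} gives a locally uniform bound for $\{Q_{\delta}\}$ on the connected surface $D_{t_0}^{\natural}$, so by Montel's theorem every sequence $\delta_n \downarrow 0$ has a subsequence along which $Q_{\delta_n}$ converges locally uniformly to a holomorphic function. Any two such subsequential limits agree pointwise on $U$ by assumption, hence everywhere on $D_{t_0}^{\natural}$ by the identity theorem, so the full limit $\lim_{\delta \downarrow 0} Q_{\delta}(q)$ exists at every $q \in D_{t_0}^{\natural}$. This means $t \notin N_{t_0}$, completing the proof.

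The one genuine obstacle is carrying the Vitali--Montel machinery from plane domains over to the glued surface $D_{t_0}^{\natural}$. This reduces to two facts already built into Section~\ref{sec:continuation}: $D_{t_0}^{\natural}$ is connected, because each reflected sheet is attached to the central sheet along the non-degenerate arc $C^{\natural}_j$; and $U$ is a nonempty open subset, as the image of a conformal (hence open) embedding between Riemann surfaces. No further analytic input beyond Lemma~\ref{lem:EFineq} is required.
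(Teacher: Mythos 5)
Your proof is correct and follows essentially the same route as the paper: the forward inclusion via the composition rule (EF.\ref{cond:EFcomp}) applied on $D_0^{\natural}$, and the reverse inclusion by upgrading existence of $\tilde{\partial}^{\lambda}_t\phi_{t,t_0}$ on the open image $\phi^{\natural}_{t_0,0}(D_0^{\natural})$ to all of $D_{t_0}^{\natural}$ using the $\mathrm{(Lip)}_{\lambda}$ bound from Lemma~\ref{lem:EFineq}. Your explicit Montel--identity-theorem argument is exactly the content of Proposition~\ref{prop:ae_diff}~\eqref{prop:Lip_aeDiff}, which the paper simply cites at that step (with a countable subset of the image playing the role of your open set $U$).
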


\begin{proof}
Let $t_0 \in (0, T)$ and $t \in N_0 \cap (t_0, T)$.
Then for $p \in D^{\natural}_0$ such that
$\tilde{\partial}^{\lambda}_t \phi_{t, 0}(p)$ does not exist,
we have $\phi_{t, 0}(p) = \phi_{t, t_0}(\phi^{\natural}_{t_0, 0}(p))$
by (EF.\ref{cond:EFcomp}).
Hence $t \in N_{t_0}$.

Conversely, assume that $t \in N_{t_0} \setminus N_0$.
Then $\tilde{\partial}^{\lambda}_t \phi_{t, 0}(p)$ exists
for every $p \in D_0^{\natural}$.
Using (EF.\ref{cond:EFcomp}) as above,
we see that $\tilde{\partial}^{\lambda}_t \phi_{t, t_0}(p)$ exists
for $p \in \phi^{\natural}_{t_0, 0}(D_0^{\natural}) \subset D_{t_0}^{\natural}$.
In fact, this derivative exists for any $p \in D_{t_0}^{\natural}$
because we can take any countable subset
of $\phi^{\natural}_{t_0, 0}(D_0^{\natural})$
having an accumulation point in $D_{t_0}^{\natural}$
as the set $A$ in Proposition~\ref{prop:ae_diff}~\eqref{prop:Lip_aeDiff}.
However, this implies $t \notin N_{t_0}$, a contradiction.
\end{proof}

We now derive the Komatu--Loewner differential equation for the evolution family $(\phi_{t,s})_{(s,t)\in I^2_{\le}}$,
following the line of Goryainov and Ba~\cite{GB92}.
Let $\mathcal{M}_{\le 1}(\mathbb{R})$ be the set of Borel sub-probability measures on $\mathbb{R}$, i.e., Borel measures with total mass less than or equal to one.
To state the next theorem, we need the vague topology on $\mathcal{M}_{\le 1}(\mathbb{R})$,
on which we give a summary in Appendix~\ref{sec:top_on_meas}.

\begin{theorem} \label{thm:EFmain}
For every $t \in (0, T) \setminus N_0$,
the normalized measure
$\mu(\phi_{t+\delta, t-\delta}; \mathord{\cdot})\allowbreak
/\mu(\phi_{t+\delta, t-\delta}; \mathbb{R})$
converges vaguely to a measure $\nu_t$ as $\delta \downarrow 0$.
If $\nu_t$ is defined suitably
(say, as the point mass $\delta_0$ at the origin)
on $N_0$,
then $t \mapsto \nu_t$ is a measurable mapping
from $(I, \mathcal{B}^{m_{\lambda}}(I))$ to
$(\mathcal{M}_{\leq 1}(\mathbb{R}), \mathcal{B}(\mathcal{M}_{\leq 1}(\mathbb{R})))$.
Moreover, for each fixed $t_0 \in [0, T)$,
the equation
\begin{equation} \label{eq:EF_KLeq}
\tilde{\partial}^{\lambda}_t \phi_{t, t_0}(p)
= \pi \int_{\mathbb{R}} \Psi_{D_t}(\phi_{t, t_0}^{\natural}(p), \xi) \, \nu_t(d\xi)
\end{equation}
holds for any $t \in (t_0, T) \setminus N_{t_0}$ and $p \in D_{t_0}^{\natural}$.
An $\mathcal{M}(\mathbb{R})_{\leq 1}$-valued process $(\nu_t)_{t \in I}$
that satisfies \eqref{eq:EF_KLeq} is unique on $(0, T) \setminus N_0$.
\end{theorem}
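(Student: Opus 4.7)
The plan is to construct $\nu_t$ as the vague limit of the normalized probability measures $\nu^{(\delta)}_t:=\mu(\phi_{t+\delta,t-\delta};\mathord{\cdot})/(\lambda(t+\delta)-\lambda(t-\delta))$ and to read the Komatu--Loewner equation off Theorem~\ref{thm:int_rep}. For $t_0\in[0,T)$, $t\in(t_0,T)\setminus N_{t_0}$, and first for an interior point $p\in D_{t_0}$, the composition rule (EF.\ref{cond:EFcomp}) combined with Theorem~\ref{thm:int_rep} applied to $\phi_{t+\delta,t-\delta}\colon D_{t-\delta}\to D_{t+\delta}$ at $w=\phi_{t-\delta,t_0}(p)$, together with Lemma~\ref{lem:evol_family}\eqref{lem:EFhcap} to identify the total mass, yields
\[
\frac{\phi_{t+\delta,t_0}(p)-\phi_{t-\delta,t_0}(p)}{\lambda(t+\delta)-\lambda(t-\delta)}
=\pi\int_{\mathbb{R}}\Psi_{D_{t-\delta}}\bigl(\phi_{t-\delta,t_0}(p),\xi\bigr)\,\nu^{(\delta)}_t(d\xi).
\]
The left-hand side converges to $\tilde{\partial}^\lambda_t\phi_{t,t_0}(p)$ as $\delta\downarrow 0$ by the very definition of $N_{t_0}$ together with Lemma~\ref{lem:NullConsis}.

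To pass to the limit on the right-hand side, I use vague compactness of $\mathcal{M}_{\leq 1}(\mathbb{R})$ to extract, from any sequence $\delta_n\downarrow 0$, a subsequence along which $\nu^{(\delta_n)}_t\to\nu_t\in\mathcal{M}_{\leq 1}(\mathbb{R})$ vaguely. The integrand belongs to $C_\infty(\mathbb{R})$ by Lemma~\ref{lem:Poi_Koebe}, and by the continuity of $\bm{s}(t)$ (Lemma~\ref{lem:EFineq}) and the joint continuity of $\phi_{t,s}$ (Proposition~\ref{prop:JointCont}) plugged into Proposition~\ref{prop:Poi_cpt},
\[
\sup_{\xi\in\mathbb{R}}\bigl|\Psi_{D_{t-\delta_n}}(\phi_{t-\delta_n,t_0}(p),\xi)-\Psi_{D_t}(\phi_{t,t_0}(p),\xi)\bigr|\longrightarrow 0.
\]
Writing $\int f_n\,d\nu^{(\delta_n)}_t=\int(f_n-f)\,d\nu^{(\delta_n)}_t+\int f\,d\nu^{(\delta_n)}_t$ with $f(\xi):=\Psi_{D_t}(\phi_{t,t_0}(p),\xi)\in C_\infty(\mathbb{R})$, the first term is bounded by $\|f_n-f\|_\infty$ and the second converges to $\int f\,d\nu_t$ since vague convergence with uniformly bounded total mass passes against $C_\infty(\mathbb{R})$-functions. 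Thus \eqref{eq:EF_KLeq} holds for the sub-limit $\nu_t$ and every $p\in D_{t_0}$.

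Any two such sub-limits $\nu_t,\tilde{\nu}_t$ satisfy $\int_{\mathbb{R}}\Psi_{D_t}(z,\xi)(\nu_t-\tilde{\nu}_t)(d\xi)=0$ on the nonempty open set $\phi_{t,t_0}(D_{t_0})\subset D_t$, so Lemma~\ref{lem:CpBMDInt}\eqref{lem:cPoi_uniq} forces $\nu_t=\tilde{\nu}_t$. This simultaneously upgrades subsequential to full vague convergence as $\delta\downarrow 0$ and supplies the uniqueness statement. The extension of \eqref{eq:EF_KLeq} from $D_{t_0}$ to $D_{t_0}^\natural$ is then an identity-principle argument: the right-hand side is holomorphic in $p\in D_{t_0}^\natural$ by Lemma~\ref{lem:CpBMDInt}\eqref{lem:CpBMDhol} composed with the holomorphic extension $\phi_{t,t_0}^\natural$ described in Section~\ref{sec:continuation}, while the pointwise derivatives on the left-hand side exist throughout $D_{t_0}^\natural$ off $N_{t_0}$ and must coincide with this holomorphic function by the identity theorem on $D_{t_0}$.

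For the measurability of $t\mapsto\nu_t$, fix $\delta>0$ and $g\in C_c(\mathbb{R})$ with $\operatorname{supp} g\subset[-R,R]$. Theorem~\ref{thm:int_rep} gives $\int g\,d\mu(\phi_{t+\delta,t-\delta};\mathord{\cdot})=\pi^{-1}\int_{-R}^R g(\xi)\Im\phi_{t+\delta,t-\delta}(\xi)\,d\xi$, and the boundary value is the $y\downarrow 0$ limit of $\Im\phi_{t+\delta,t-\delta}(\xi+iy)$. For $y<\eta_{D_0}$, Lemma~\ref{lem:evol_family}\eqref{lem:EFincr} guarantees $\xi+iy\in D_{t-\delta}$ uniformly in $t$, so Proposition~\ref{prop:JointCont} makes $\int_{-R}^R g(\xi)\Im\phi_{t+\delta,t-\delta}(\xi+iy)\,d\xi$ continuous in $t$. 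Borel measurability of $t\mapsto\int g\,d\nu^{(\delta)}_t$ then follows from the $y\downarrow 0$ limit and division by the continuous function $\lambda(t+\delta)-\lambda(t-\delta)$. A further pointwise $\delta\downarrow 0$ limit off the $m_\lambda$-null set $N_0$ delivers the required $\mathcal{B}^{m_\lambda}(I)$-measurability. The main technical obstacle lies in the second paragraph: both the integrand and the measure move with $\delta$, so vague convergence of $\nu^{(\delta_n)}_t$ alone would be insufficient, and the resolution depends crucially on the $\xi$-uniform convergence provided by Proposition~\ref{prop:Poi_cpt}, the whole point of which was to enable exactly this passage to the limit.
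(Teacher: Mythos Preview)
Your proof is correct and follows essentially the same approach as the paper: both start from the integral identity \eqref{eq:EFdifference} coming from Theorem~\ref{thm:int_rep}, extract vague subsequential limits, pass to the limit via the $\xi$-uniform convergence of Proposition~\ref{prop:Poi_cpt}, identify sub-limits using Lemma~\ref{lem:CpBMDInt}\eqref{lem:cPoi_uniq}, and establish measurability by writing $\nu_t$ as an iterated pointwise limit of quantities continuous in $t$ thanks to Proposition~\ref{prop:JointCont}. The only cosmetic difference is that the paper checks measurability against bounded Borel sets rather than $C_c$-functions, but these generate the same Borel structure on $\mathcal{M}_{\leq 1}(\mathbb{R})$.
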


\begin{proof}
We fix $t_0 \in [0, T)$ and $t \in (t_0, T) \setminus N_0$ throughout this proof.
Let $(\delta_n)_{n \in \mathbb{N}}$ be an arbitrary sequence of positive numbers
converging to zero. 
By Proposition~\ref{prop:vague_cpt},
there exists a subsequence $(\delta'_n)_n$
such that the sequence of probability measures
\[
\mu^{\sharp}_n
:= \frac{\mu(\phi_{t+\delta'_n, t-\delta'_n}; \mathord{\cdot})}{\mu(\phi_{t+\delta'_n, t-\delta'_n}; \mathbb{R})}
\]
converges vaguely to $\mu^{\sharp}_{\infty}$ as $n \to \infty$.
We show that
\begin{equation} \label{eq:subseqKL}
\tilde{\partial}^{\lambda}_t \phi_{t, t_0}(p) = \pi \int_{\mathbb{R}} \Psi_{D_t}(\phi_{t, t_0}^{\natural}(p), \xi) \, \mu^{\sharp}_{\infty}(d\xi).
\end{equation}
Let $z \in D_{t_0}$ and $n$ be large enough.
From \eqref{eq:EFdifference} we get
\begin{align}
&\left\lvert
\frac{\phi_{t+\delta'_n, t_0}(z) - \phi_{t-\delta'_n, t_0}(z)}{\pi (\lambda(t+\delta'_n) - \lambda(t-\delta'_n))}
- \int_{\mathbb{R}} \Psi_{D_t}(\phi_{t, t_0}(z), \xi) \, \mu^{\sharp}_{\infty}(d\xi)
\right\rvert \notag \\
&=\left\lvert
\int_{\mathbb{R}} \Psi_{D_{t-\delta'_n}}(\phi_{t-\delta'_n, t_0}(z), \xi) \, \mu^{\sharp}_n(d\xi)
	- \int_{\mathbb{R}} \Psi_{D_t}(\phi_{t, t_0}(z), \xi) \, \mu^{\sharp}_{\infty}(d\xi)
\right\rvert \notag \\
&\leq \int_{\mathbb{R}} \lvert \Psi_{D_{t-\delta'_n}}(\phi_{t-\delta'_n, t_0}(z), \xi) - \Psi_{D_t}(\phi_{t, t_0}(z), \xi) \rvert \, \mu^{\sharp}_n(d\xi)
\label{eq:conv_right} \\
&\phantom{\leq} + \left\lvert
\int_{\mathbb{R}} \Psi_{D_t}(\phi_{t, t_0}(z), \xi) \, \mu^{\sharp}_n(d\xi)
- \int_{\mathbb{R}} \Psi_{D_t}(\phi_{t, t_0}(z), \xi) \, \mu^{\sharp}_{\infty}(d\xi)
\right\rvert.
\notag
\end{align}
In the rightmost side of \eqref{eq:conv_right},
the former integral vanishes as $n \to \infty$
by Proposition~\ref{prop:Poi_cpt} and
the continuity of $\phi_{v, u}$ and $\bm{s}(v)$
with respect to $v$
in Lemma~\ref{lem:EFineq}.
The remaining term in the rightmost side of \eqref{eq:conv_right}
also tends to zero
by Lemma~\ref{lem:Poi_Koebe} and
the vague convergence
of $(\mu^{\sharp}_n)_n$.
Thus, \eqref{eq:subseqKL} holds for $p = z \in D_{t_0}$.
The analytic continuation yields \eqref{eq:subseqKL} for all $p \in D_{t_0}^{\natural}$.

$\mu^{\sharp}_{\infty}$ is, in fact,
independent of the choice of the above subsequence $(\delta'_n)_n$.
To prove this, assume that we have another subsequence
$(\delta''_n)_n$ of $(\delta_n)_n$ such that
\[
\mu^{\flat}_n
:= \frac{\mu(\phi_{t+\delta''_n, t-\delta''_n}; \mathord{\cdot})}{\mu(\phi_{t+\delta''_n, t-\delta''_n}; \mathbb{R})}
\]
converges vaguely to $\mu^{\flat}_{\infty}$ as $n \to \infty$.
Then \eqref{eq:subseqKL}
with $\mu^{\sharp}_{\infty}$ replaced by $\mu^{\flat}_{\infty}$
holds by the same reasoning.
As a result,
\[
\int_{\mathbb{R}} \Psi_{D_t}(\phi_{t, t_0}(z), \xi) \, \mu^{\sharp}_{\infty}(d\xi)
= \int_{\mathbb{R}} \Psi_{D_t}(\phi_{t, t_0}(z), \xi) \, \mu^{\flat}_{\infty}(d\xi),
\quad z \in D_{t_0}.
\]
In particular, we have
\[
\int_{\mathbb{R}} \Psi_{D_t}(z, \xi) \, \mu^{\sharp}_{\infty}(d\xi)
= \int_{\mathbb{R}} \Psi_{D_t}(z, \xi) \, \mu^{\flat}_{\infty}(d\xi)
\]
for all $z \in \phi_{t, t_0}(D_{t_0})$.
Lemma~\ref{lem:CpBMDInt}~\eqref{lem:cPoi_uniq} now yields
$\mu^{\sharp}_{\infty} = \mu^{\flat}_{\infty}$,
which proves both the convergence of
$\mu(\phi_{t+\delta, t-\delta}; \mathord{\cdot})
/\mu(\phi_{t+\delta, t-\delta}; \mathbb{R})$
and the equation~\eqref{eq:EF_KLeq}.
Lemma~\ref{lem:CpBMDInt}~\eqref{lem:cPoi_uniq} also shows
the uniqueness of $\nu_t$.

Recall that
$\mu(\phi_{t-\delta, t+\delta}; d\xi)
= \pi^{-1} \Im \phi_{t+\delta, t-\delta}(\xi) \, d\xi$
holds in Theorem~\ref{thm:int_rep}.
For each $f\in C_c(\mathbb{R})$,
the vague limit
$\nu_t = \lim_{\delta \downarrow 0}
\mu(\phi_{t+\delta, t-\delta}; \mathord{\cdot})
/\mu(\phi_{t+\delta, t-\delta}; \mathbb{R})$
enjoys
\begin{align*}
\int_{\mathbb{R}} f(\xi) \,\nu_t(d\xi)
&= \frac{1}{\pi} \lim_{n \to \infty} \left(
\frac{1}{\lambda(t+1/n) - \lambda(t-1/n)}
\right. \\
&\phantom{= \frac{1}{\pi} \lim_{n \to \infty} \biggl( \biggr.} \left.
\times \lim_{m \to \infty}
\int_{\mathbb{R}} f(\xi) \Im \phi_{t + 1/n,\, t-1/n}\left(\xi + \frac{i}{m}\right) \, d\xi
\right).
\end{align*}
The right-hand side of this equality is a $\mathcal{B}^{m_\lambda}(I)/\mathcal{B}(\mathbb{R})$-measurable function of $t$,
and so is the composite of the two mappings
$t\mapsto\nu_t$ and
$\pi_f\colon\mathcal{M}_{\le 1}(\mathbb{R})\to\mathbb{R}, m\mapsto\int f \,dm$.
Since $\mathcal{B}(\mathcal{M}_{\le 1}(\mathbb{R}))=\sigma(\,\pi_f\mathrel{;} f\in C_c(\mathbb{R})\,)$ by Proposition~\ref{prop:vague_Borel},
the mapping $t\mapsto \nu_t$ is $\mathcal{B}^{m_\lambda}(I)/\mathcal{B}(\mathcal{M}_{\le 1}(\mathbb{R}))$-measurable.
\end{proof}

We notice that
the essence of the preceding proof
can be summarized in the following manner:

\begin{corollary}
\label{cor:EFdiff}
Let $t_0 \in [0, T)$ and $t \in [t_0, T)$.
For a sequence
$(s_n , u_n)_{n \in \mathbb{N}}$
in $[t_0, T)^2_{<}$
with $s_n \leq t \leq u_n$ and
$s_n, u_n \to t$,
the following are equivalent:
\begin{enumerate}
\item \label{item:FuncDiff_seq}
$\displaystyle
\frac{\phi_{u_n, t_0}(p) - \phi_{s_n, t_0}(p)}{\lambda(u_n) - \lambda(s_n)}$
converges as $n \to \infty$
for every $p \in D_{t_0}^{\natural}$;

\item \label{item:MeasDiff_seq}
$\displaystyle
\frac{\mu(\phi_{u_n, s_n}; \mathord{\cdot})}{\mu(\phi_{u_n, s_n}; \mathbb{R})}$
converges vaguely as $n \to \infty$.
\end{enumerate}
If either of the two is true,
then \eqref{eq:EF_KLeq} holds at $t$
with $\tilde{\partial}^{\lambda}_t \phi_{t, t_0}(p)$ and $\nu_t$
replaced by these limits.
\end{corollary}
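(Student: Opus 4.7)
The strategy is to distill the argument already carried out in the proof of Theorem~\ref{thm:EFmain} down to what it actually proves when no monotone symmetric shrinking $t\pm\delta$ is assumed. Dividing the identity~\eqref{eq:EFdifference} by $\pi(\lambda(u_n) - \lambda(s_n))$ yields
\[
\frac{\phi_{u_n, t_0}(p) - \phi_{s_n, t_0}(p)}{\pi(\lambda(u_n) - \lambda(s_n))} = \int_{\mathbb{R}} \Psi_{D_{s_n}}(\phi_{s_n, t_0}^{\natural}(p), \xi)\, \mu^{\sharp}_n(d\xi),
\]
where $\mu^{\sharp}_n := \mu(\phi_{u_n, s_n}; \cdot)/\mu(\phi_{u_n, s_n}; \mathbb{R}) \in \mathcal{M}_{\leq 1}(\mathbb{R})$. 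The plan is to read both implications from this single identity using the tools already developed in this section.

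For \eqref{item:MeasDiff_seq}$\Rightarrow$\eqref{item:FuncDiff_seq}, I would assume $\mu^{\sharp}_n \to \nu_t$ vaguely and reproduce the split in \eqref{eq:conv_right}. The integral of $\Psi_{D_{s_n}}(\phi_{s_n, t_0}^{\natural}(p),\xi) - \Psi_{D_t}(\phi_{t, t_0}^{\natural}(p),\xi)$ against $\mu^{\sharp}_n$ vanishes by the uniform-in-$\xi$ convergence supplied by Proposition~\ref{prop:Poi_cpt} together with the joint continuity of $(s,u) \mapsto \phi_{u,s}$ (Proposition~\ref{prop:JointCont}); the remaining term $\int \Psi_{D_t}(\phi_{t, t_0}^{\natural}(p),\xi)(\mu^{\sharp}_n - \nu_t)(d\xi)$ vanishes because the integrand belongs to $C_{\infty}(\mathbb{R})$ by Lemma~\ref{lem:Poi_Koebe} while $\mu^{\sharp}_n(\mathbb{R}) \le 1$ stays bounded. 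This simultaneously establishes \eqref{item:FuncDiff_seq} and the final claim that \eqref{eq:EF_KLeq} holds at $t$ with $\nu_t$ equal to the given vague limit.

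For the converse \eqref{item:FuncDiff_seq}$\Rightarrow$\eqref{item:MeasDiff_seq}, I would use vague compactness of $\mathcal{M}_{\leq 1}(\mathbb{R})$: every subsequence of $(\mu^{\sharp}_n)_n$ admits a further vaguely convergent subsequence, say with limit $\nu^{\sharp}$. The implication just proved then identifies the limit of the corresponding difference quotients as $\pi \int \Psi_{D_t}(\phi_{t, t_0}(z), \xi)\, \nu^{\sharp}(d\xi)$ for each $z \in D_{t_0}$. Since the hypothesis \eqref{item:FuncDiff_seq} forces this limit to be the same along every such sub-subsequence, any two vague subsequential limits $\nu^{\sharp}, \nu^{\flat}$ produce the same $\Psi_{D_t}$-integral at every point $w = \phi_{t, t_0}(z)$ in the open set $\phi_{t, t_0}(D_{t_0})$; Lemma~\ref{lem:CpBMDInt}\eqref{lem:cPoi_uniq} then forces $\nu^{\sharp} = \nu^{\flat}$, so the full sequence $(\mu^{\sharp}_n)$ converges vaguely.

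No single step is genuinely difficult, since this is essentially a repackaging of the proof of Theorem~\ref{thm:EFmain}. The only point to verify is that the locally uniform estimates from Propositions~\ref{prop:Poi_cpt} and \ref{prop:JointCont} remain applicable along arbitrary pairs $(s_n, u_n) \to (t, t)$ with $s_n \le t \le u_n$ rather than the symmetric pair $(t - \delta, t + \delta)$ used there, which is immediate from the statements of those propositions.
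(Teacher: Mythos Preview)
Your proposal is correct and matches the paper's treatment: the corollary is presented there precisely as a distillation of the proof of Theorem~\ref{thm:EFmain}, and your argument reproduces that proof's two ingredients (the split~\eqref{eq:conv_right} handled via Proposition~\ref{prop:Poi_cpt} and Lemma~\ref{lem:Poi_Koebe}, and the uniqueness step via vague compactness plus Lemma~\ref{lem:CpBMDInt}\eqref{lem:cPoi_uniq}) in the same order. The only cosmetic difference is that the paper first carries out \eqref{eq:conv_right} for $z \in D_{t_0}$ and then passes to $p \in D_{t_0}^{\natural}$ by analytic continuation, whereas you write $\phi_{s_n,t_0}^{\natural}(p)$ from the start; either route works.
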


In Theorem~\ref{thm:EFmain},
it is not proved that $\nu_t$ is a probability measure.
For probabilists
a familiar way to prove such a statement is to show that
the normalized measures $\mu_n^{\sharp}$ in the proof of Theorem~\ref{thm:EFmain} are tight (Definition~\ref{def:tight}),
but the author has no ideas about how to obtain the tightness.
Here we give a much different, simple argument,
following I.\ A.\ Aleksandrov, S.\ T.\ Aleksandrov and Sobolev~\cite{AAS83},
to show that $\nu_t(\mathbb{R})=1$ for $m_\lambda$-a.e.\ $t$.

\begin{theorem} \label{thm:nu_is_prob}
The driving process $\nu_t$ in Theorem~\ref{thm:EFmain} takes value in $\mathcal{P}(\mathbb{R})$,
the set of Borel probability measures on $\mathbb{R}$,
for $m_\lambda$-a.e.\ $t\in I$.
\end{theorem}

\begin{proof}
Let $t\in I$ be fixed.
Integrating \eqref{eq:EF_KLeq} we have
\[
\phi_{t,0}(z)-z
=\pi\int_{[0,t]}\int_{\mathbb{R}}\Psi_{D_u}(\phi_{u,0}(z),\xi)\,\nu_u(d\xi)\,m_\lambda(du),
\quad z\in D_0.
\]
Hence, taking the limit in the angular residue along the imaginary axis, we have
\begin{align}
\lambda(t)&=\lim_{y\uparrow+\infty}y(y-\Im\phi_{t,0}(iy)) \notag \\
&=\pi\lim_{y\uparrow+\infty}\int_{[0,t]}\int_{\mathbb{R}}y\Im\Psi_{D_u}(\phi_{u,0}(iy),\xi)\,\nu_u(d\xi)\,m_\lambda(du) \notag \\
&\le \pi\liminf_{y\uparrow+\infty}\int_{[0,t]}\int_{\mathbb{R}}\Im\phi_{u,0}(iy)\Im\Psi_{D_u}(\phi_{u,0}(iy),\xi)\,\nu_u(d\xi)\,m_\lambda(du) \notag \\
&\le \pi\limsup_{y\uparrow+\infty}\int_{[0,t]}\int_{\mathbb{R}}\left\lvert \phi_{u,0}(iy)\Psi_{D_u}(\phi_{u,0}(iy),\xi) \right\rvert\,\nu_u(d\xi)\,m_\lambda(du). \label{eq:nu_is_prob1}
\end{align}
Here, we have used Lemma~\ref{lem:evol_family}~\eqref{lem:EFincr}.
In the last integrand,
$\phi_{u,0}(iy)$ goes to infinity through some sectorial domain as $y\uparrow+\infty$.
Indeed, \eqref{eq:int_rep} and \eqref{eq:Poi_Koebe} imply that
\[
\lvert \phi_{u,0}(iy)-iy \rvert
\le \pi\int_{\mathbb{R}}\lvert \Psi_{D_u}(iy,\xi) \rvert\, \mu(\phi_{u,0};d\xi)
\le \frac{\pi\lambda(t)}{\eta_{D_0}}
\]
for all $u\in[0,t]$ and $y>\max_{u\in[0,t]}r^{\text{out}}_{D_u}$.
Thus, there exists an angle $\theta\in(0,\pi/2)$ such that $\phi_{u,0}(iy)\in\triangle_\theta$ for all such $u$ and $y$.
We apply Lemma~\ref{lem:boundedness} to a compact set
$\Gamma:=\{\,\bm{s}(u)\mathrel{;}u\in[0,t]\,\}\subset\textbf{Slit}$
and the angle $\theta$.
Then it follows from \eqref{eq:Poi_deriv_infty} and the dominated convergence theorem that
\begin{equation} \label{eq:nu_is_prob2}
\lim_{y\uparrow+\infty}\int_{[0,t]}\int_{\mathbb{R}}\left\lvert \phi_{u,0}(iy)\Psi_{D_u}(\phi_{u,0}(iy),\xi) \right\rvert\,\nu_u(d\xi)\,m_\lambda(du)
=\frac{1}{\pi}\int_{[0,t]}\nu_u(\mathbb{R})\, m_\lambda(du).
\end{equation}
By \eqref{eq:nu_is_prob1} and \eqref{eq:nu_is_prob2}, we have
\[
m_\lambda((0,t])=\lambda(t)\le \int_{[0,t]}\nu_u(\mathbb{R})\, m_\lambda(du).
\]
This is true only if $\nu_u(\mathbb{R})=1$ for $m_\lambda$-a.e.\ $u\in[0,t]$
since $\nu_u(\mathbb{R})\le 1$.
\end{proof}

Theorems~\ref{thm:EFmain} and \ref{thm:nu_is_prob} complete the proof of Theorem~\ref{thm:result_KL_EF}.

For later use, we add a remark to Theorem~\ref{thm:EFmain} (or \ref{thm:result_KL_EF}).
We have considered the differentiation only with respect to $\lambda(t)$,
but it is also reasonable
to consider the differentiation with respect to $t$.
To this end,
a natural manner of thinking is
to assume the absolute continuity of $\lambda(t)$ in $t$ or
to perform time-change.

In the former standpoint,
we suppose that $\lambda(t)$ is absolutely continuous in $t \in I$.
Then by $\mathrm{(Lip)}_{\lambda}$,
the function $t \mapsto \phi_{t, t_0}(p)$ is also absolutely continuous
and hence differentiable in a.e.\ $t$ in the usual sense.
Thus, \eqref{eq:EF_KLeq} reduces to
\begin{equation} \label{eq:KLeq_extrinsic}
\frac{\partial \phi_{t, t_0}(p)}{\partial t}
= \pi \dot{\lambda}(t)
	\int_{\mathbb{R}}
	\Psi_{D_t}(\phi_{t, t_0}^{\natural}(p), \xi)
	\, \nu_t(d\xi)
\end{equation}
for Lebesgue a.e.\ $t \in [t_0, T)$.

In the latter standpoint,
we suppose
that $\lambda(t)$ is (strictly) increasing and
that the condition~\eqref{item:FuncDiff_seq} or \eqref{item:MeasDiff_seq}
in Corollary~\ref{cor:EFdiff} holds
for every $t \in I$
and every choice of $(s_n, u_n)_{n \in \mathbb{N}}$.
For any increasing continuous function
$\theta$ on $I$,
we perform time-change as
$\tilde{\phi}_{t, s} := \phi_{\theta^{-1}(t), \theta^{-1}(s)}$,
$\tilde{D}_t := D_{\theta^{-1}(t)}$,
$\tilde{\lambda}(t) := \lambda(\theta^{-1}(t))$, and
$\tilde{\nu}_t := \nu_{\theta^{-1}(t)}$.
Then
\begin{equation} \label{eq:KLeq_time-change}
\frac{\partial \tilde{\phi}_{t, s}(p)}{\partial \tilde{\lambda}(t)}
:= \lim_{h \to 0}
	\frac{\tilde{\phi}_{t+h, s}(p) - \tilde{\phi}_{t, s}(p)}%
	{\tilde{\lambda}(t+h) - \tilde{\lambda}(t)}
= \pi \int_{\mathbb{R}}
	\Psi_{\tilde{D}_t}(\tilde{\phi}_{t, s}^{\natural}(p), \xi)
	\, \tilde{\nu}_t(d\xi).
\end{equation}
In particular,
choosing $\theta = \lambda/2$ gives
$\tilde{\lambda}(t) = 2t$ and
\begin{equation} \label{eq:KLeq_hcap_parametrized}
\frac{\partial \tilde{\phi}_{t, s}(p)}{\partial t}
= 2 \pi \int_{\mathbb{R}}
	\Psi_{\tilde{D}_t}(\tilde{\phi}_{t, s}^{\natural}(p), \xi)
	\, \tilde{\nu}_t(d\xi).
\end{equation}
In this case,
$(\tilde{\phi}_{t, s})$ is said
to be \emph{parametrized by half-plane capacity}%
\footnote{We have chosen the homeomorphism $\theta = \lambda/2$
so that $\tilde{\lambda}(t) = 2t$,
not $\tilde{\lambda}(t) = t$.
This coefficient two is just a convention in SLE theory and not essential.}
in the SLE context.
\eqref{eq:KLeq_hcap_parametrized}
as well as \eqref{eq:EF_KLeq}
provides a natural way
to regard $\lambda(t)$ as a canonical parameter.

\subsection{Proof of Corollary~\ref{cor:result_KL_LC}}
\label{sec:proof2_LC}

In this subsection,
we observe the correspondence
between evolution families and Loewner chains
to confirm Corollary~\ref{cor:result_KL_LC}.
For making our statement on this correspondence clear,
we give a preliminary result. 
This is an application of Theorem~\ref{thm:result_KL_EF}
(or \ref{thm:EFmain}) and
will be used also in Section~\ref{sec:proof3}.

\begin{proposition} \label{prop:prolong}
Let $0<T<\infty$ and $(\phi_{t, s})_{(s, t) \in [0, T)^2_{\leq}}$ be an evolution family
with $\lambda(t) = \mu(\phi_{t, 0}; \mathbb{R})$.
This family extends to a unique evolution family
$(\tilde{\phi}_{t, s})_{(s, t) \in [0, T]^2_{\leq}}$
in the sense that $\tilde{\phi}_{t, s} = \phi_{t, s}$ for all $(s, t) \in [0, T)^2_{\leq}$
if and only if $\sup_{0 \leq t < T} \lambda(t) < \infty$.
\end{proposition}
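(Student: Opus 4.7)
The plan is to treat the two directions separately. The \emph{only if} direction is immediate: if an extension $(\tilde\phi_{t, s})_{(s, t) \in [0, T]^2_{\leq}}$ exists, then by (EF.\ref{cond:EFcont}) its angular residue $\tilde\lambda$ is continuous on the compact interval $[0, T]$, and since $\tilde\lambda = \lambda$ on $[0, T)$, one has $\sup_{t<T}\lambda(t) = \tilde\lambda(T) < \infty$.

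For the \emph{if} direction, set $\Lambda := \sup_{t<T}\lambda(t) < \infty$; since $\lambda$ is non-decreasing by Lemma~\ref{lem:evol_family}~\eqref{lem:EFhcap}, it is Cauchy as $t \uparrow T$. By the Lipschitz estimate of Lemma~\ref{lem:EFineq}, for each fixed $s \in [0, T)$ the family $(\phi^{\natural}_{t, s})_{s \leq t < T}$ is locally uniformly Cauchy on $D_s^{\natural}$; I would define $\tilde\phi^{\natural}_{T, s}$ as its locally uniform limit, which is holomorphic by Weierstrass. Theorem~\ref{thm:int_rep} combined with $\mu(\phi_{t, s}; \mathbb{R}) = \lambda(t) - \lambda(s) \leq \Lambda$ and Lemma~\ref{lem:Poi_Koebe} shows that $\tilde\phi_{T, s}(z) - z$ is bounded on each half-plane $\mathbb{H}_{\eta}$, so Hurwitz's theorem yields univalence of $\tilde\phi_{T, s}$ on $D_s$ and of $\tilde\phi^{\natural}_{T, s}$ on $D_s^{\natural}$. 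Applying the same Cauchy argument at the slit endpoints of $D_0^{\natural}$ produces $\bm{s}(T) := \lim_{t \uparrow T}\bm{s}(t)$, and univalence of $\tilde\phi^{\natural}_{T, 0}$ prevents endpoints from coalescing, giving $\bm{s}(T) \in \mathbf{Slit}$; I would set $D_T := D(\bm{s}(T))$. Conditions (H.\ref{ass:hydro}) and (H.\ref{ass:res}) for $\tilde\phi_{T, s}$, with angular residue $\Lambda - \lambda(s)$, follow from Lemma~\ref{lem:boundedness} and Proposition~\ref{prop:Poi_deriv_infty} via dominated convergence applied to the integral representation.

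The main obstacle is showing that $\tilde\phi_{T, s}$ maps $D_s$ \emph{onto} $D_T$, i.e., that (H.\ref{ass:hull}) holds with the correct hull. The inclusion $\tilde\phi_{T, s}(D_s) \subset D_T$ is clear from boundary correspondence; for the reverse I plan to use a Carathéodory-type argument. Given $w \in D_T$, continuity of $\bm{s}(t)$ places $w \in D_t$ for $t$ sufficiently close to $T$, so $z_t := \phi_{t, s}^{-1}(w)$ is defined. The uniform bound on $\phi_{t, s}(z) - z$ together with the boundary correspondence confines $z_t$ to a fixed compact subset of $D_s$, yielding a subsequential limit $z^{\ast}$ satisfying $\tilde\phi_{T, s}(z^{\ast}) = w$. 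Once surjectivity is secured, (EF.\ref{cond:EFnorm})--(EF.\ref{cond:EFcomp}) pass to the limit pointwise, (EF.\ref{cond:EFcont}) follows from $\tilde\lambda(t) \to \Lambda$, and uniqueness of the extension is automatic because $\mathrm{(Lip)}_{\tilde\lambda}$ applied to any extension forces $\tilde\phi_{T, s}(p) = \lim_{t \uparrow T}\phi_{t, s}(p)$.
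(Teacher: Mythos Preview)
Your treatment of the ``only if'' direction and of the convergence/univalence steps is sound and close in spirit to the paper's argument; you obtain locally uniform convergence directly from the Cauchy property furnished by Lemma~\ref{lem:EFineq}, whereas the paper passes through the integrated Komatu--Loewner equation~\eqref{eq:IntKL} and Vitali's theorem. Either route works.

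There is, however, a genuine gap in your handling of (H.\ref{ass:hull}). You frame the main obstacle as showing that $\tilde\phi_{T,s}$ maps $D_s$ \emph{onto} $D_T$, but this is simply false for a general evolution family: the complement $D_T \setminus \tilde\phi_{T,s}(D_s)$ is typically a non-empty $\mathbb{H}$-hull, and (H.\ref{ass:hull}) only asks that it be one. Your Carath\'eodory-type argument then breaks at the first step: for an arbitrary $w \in D_T$, the point $w$ may well lie in the hull $D_t \setminus \phi_{t,s}(D_s)$, so $\phi_{t,s}^{-1}(w)$ need not exist. The claim that ``$z_t$ is confined to a fixed compact subset of $D_s$'' is therefore unsupported, and in fact the conclusion you are aiming for does not hold.

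The paper bypasses this entirely by working on the boundary rather than the interior. Since $(\phi_{t,t_0})_t$ converges \emph{uniformly} on the compact circle $C_j^{\natural}(\bm{s}(t_0)) \subset D_{t_0}^{\natural}$, and since $\phi_{t,t_0}(C_j^{\natural}(\bm{s}(t_0)))$ is the horizontal segment $C_j(\bm{s}(t))$ for each $t < T$, the limit image $\tilde\phi_{T,t_0}(C_j^{\natural}(\bm{s}(t_0)))$ is itself a horizontal segment, possibly degenerate. Non-degeneracy (and hence $\bm{s}(T) \in \mathbf{Slit}$) is then deduced from the general fact that a conformal map preserves the degree of connectivity and the non-degeneracy of boundary components. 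Once the inner boundary components of $\tilde\phi_{T,t_0}(D_{t_0})$ are identified with the slits of $D_T$, (H.\ref{ass:hull}) is immediate: the remaining complement is automatically an $\mathbb{H}$-hull by connectivity counting, with no surjectivity claim needed.
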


\begin{proof}
The ``only if'' part is trivial
because, if there exists $(\tilde{\phi}_{t, s})_{(s, t) \in [0, T]^2_{\leq}}$
with $\tilde{\lambda}(t) = \mu(\tilde{\phi}_{t, 0}; \mathbb{R})$
such that $\tilde{\phi}_{t, s} = \phi_{t, s}$ for all $(s, t) \in [0, T)^2_{\leq}$,
then
$\sup_{0 \leq t < T} \lambda(t)
= \sup_{0 \leq t < T} \tilde{\lambda}(t)
= \tilde{\lambda}(T) < \infty$.

The reminder of this proof is devoted to the ``if'' part.
Since the uniqueness of a desired extension
is trivial by Proposition~\ref{prop:JointCont},
we present its construction below.
Suppose that $\sup_{0 \leq t < T} \lambda(t) < \infty$.
This is equivalent to
$m_{\lambda}([0, T)) < \infty$.
For a spell,
$t_0 \in [0, T)$ is fixed.

$\phi_{t, t_0}(z)$ converges as $t \uparrow T$
for every $z \in D_{t_0}$,
as seen from the integral form of \eqref{eq:result_KL_EF}:
\begin{equation} \label{eq:IntKL}
\phi_{t, t_0}(z)
= z + \pi \int_I \int_{\mathbb{R}}
\Psi_{D_{s}}(\phi_{s, t_0}(z), \xi)
\, \nu_{s}(d\xi) \bm{1}_{[t_0, t)}(s) \, m_{\lambda}(ds).
\end{equation}
Since \eqref{eq:Poi_Koebe} and Lemma~\ref{lem:evol_family}~\eqref{lem:EFincr} yield
\[
\sup_{t_0 \leq s < T} \lvert \Psi_{D_{s}}(\phi_{s, t_0}(z), \xi) \rvert
\leq \frac{4}{\pi} \frac{1}{\Im z \wedge \eta_{D_0}},
\quad z \in D_{t_0},
\]
the dominated convergence theorem applies to \eqref{eq:IntKL}.

Vitali's theorem converts
the pointwise convergence of $(\phi_{t, t_0}(z))_{t \in [t_0, T)}$ above
into the locally uniform convergence
of $(\phi_{t, t_0})_{t \in [t_0, T)}$ on $D_{t_0}^{\natural}$.
Indeed,
the family $(\phi_{t, t_0})_{t \in [t_0, T)}$ is locally bounded on $D_{t_0}^{\natural}$,
for $\mu(\phi_{t, t_0}; \mathbb{R}) \leq m_{\lambda}([0, T))$ holds
in the identity
\begin{equation} \label{eq:EFint_rep}
\phi_{t, t_0}(p)
=\operatorname{pr}(p)+\pi\int_{\mathbb{R}}\Psi_{D_{t_0}}(p,\xi)\, \mu(\phi_{t, t_0}; d\xi),
\quad p\in D_{t_0}^\natural.
\end{equation}
Hurwitz's theorem%
\footnote{See, e.g., Theorem~A in Section~2, Chapter~7
of Rosemblum and Rovnyak~\cite{RR94}.}
guarantees that
$\tilde{\phi}_{T, t_0} := \lim_{t \to T} \phi_{t, t_0}$ is univalent on $D_{t_0}$.
In addition,
using the relative compactness
of $(\mu(\phi_{t, t_0}; \mathord{\cdot}))_{t \in [t_0, T)}$
in the vague topology (Proposition~\ref{prop:vague_cpt}),
we can choose a sequence $(t_n)_{n=1}^{\infty}$ converging to $T$
so that the limit
$\mu_{T, t_0} := \lim_{n \to \infty} \mu(\phi_{t_n, t_0}; \mathord{\cdot})$
exists.
Letting $t \to T$ along this sequence in \eqref{eq:EFint_rep} yields,
in particular for $z\in D_{t_0}$,
\begin{equation} \label{eq:EFlim_int}
\tilde{\phi}_{T, t_0}(z)
=z+\pi\int_{\mathbb{R}}\Psi_{D_{t_0}}(z,\xi)\, \mu_{T, t_0}(d\xi).
\end{equation}

We observe
that $\tilde{\phi}_{T, t_0}$ enjoys the assumption~(H.\ref{ass:hull}) as follows:
by the boundary correspondence
(see, e.g., Courant~\cite[Theorem~2.4]{Co50} and references therein),
the inner boundaries of $\tilde{\phi}_{T, t_0}(D_{t_0})$ are given by
the images $\tilde{\phi}_{T, t_0}(C^{\natural}_j(\bm{s}(t_0)))$, $j = 1, \ldots, N$.
These images must be parallel slits,
possibly degenerating into points,
because $(\phi_{t, t_0})_{t \in [t_0, T)}$ converges uniformly
on the compact set $C^{\natural}_j(\bm{s}(t_0))$.
In fact, these limit slits are non-degenerate
because a conformal mapping preserves
the degree of connectivity and the non-degeneracy%
\footnote{The degree of connectivity is preserved
because it is a topological property.
The non-degeneracy is preserved by the removable singularity theorem.
}.
Thus, writing
$\bm{s}(T) := \lim_{t \to T} \bm{s}(t)$,
we see that $\bm{s}(T) \in \mathbf{Slit}$.
In conclusion,
$\tilde{\phi}_{T, t_0} \colon D_{t_0} \to D_T := D(\bm{s}(T))$
is a univalent function into a parallel slit half-plane
and satisfies (H.\ref{ass:hull}) trivially.
Once (H.\ref{ass:hull}) is proved,
(H.\ref{ass:hydro}) and (H.\ref{ass:res}) are also proved
by Theorem~\ref{thm:int_rep} combined with \eqref{eq:EFlim_int}.

Up to this point,
we have constructed univalent functions
$\tilde{\phi}_{T, t_0} \colon D_{t_0} \to D_T$, $t_0 \in [0, T)$,
with (H.\ref{ass:hydro})--(H.\ref{ass:hull}).
We further define
$\tilde{\phi}_{t, s} := \phi_{t, s}$ for all $(s, t) \in [0, T)^2_{\leq}$ and
$\tilde{\phi}_{T, T} := \mathrm{id}_{D_T}$ (the identity mapping on $D_T$).
The family $(\tilde{\phi}_{t, s})_{(s, t) \in [0, T]^2_{\leq}}$
automatically satisfies (EF.\ref{cond:EFnorm}).
For $0 \leq s \leq t < T$, we have
\[
\tilde{\phi}_{T, s}(z) = \lim_{u \to T} \phi_{u, s}(z) = \lim_{u \to T} \phi_{u, t}(\phi_{t, s}(z)) = \tilde{\phi}_{T, t}(\tilde{\phi}_{t, s}(z)),
\]
which implies (EF.\ref{cond:EFcomp}).
Moreover,
$\tilde{\lambda}(t) := \mu(\tilde{\phi}_{t, 0}; \mathbb{R})$
is non-decreasing by the same proof
as that of Lemma~\ref{lem:evol_family}~\eqref{lem:EFhcap}.
We have
\begin{align*}
&\lim_{t\to T} \lambda(t)
=\lim_{t\to T}\tilde{\lambda}(t)
\le \tilde{\lambda}(T) \\
&=\mu_{T,0}(\mathbb{R})
\le \liminf_n \mu(\phi_{t_n,0};\mathbb{R})
= \lim_{t\to T} \lambda(t).
\end{align*}
Here, $(t_n)_{n=1}^\infty$ is the sequence chosen above,
and the last inequality is due to a basic property \eqref{eq:vague_Fatou} of vague convergence.
It is now clear that $\tilde{\lambda}(T) = \lim_{t \to T} \lambda(t)$,
which implies (EF.\ref{cond:EFcont}).
\end{proof}

The correspondence between Loewner chains and evolution families
is formulated as follows:

\begin{proposition} \label{prop:EF_LCrel}
\begin{enumerate}
\item \label{prop:LCtoEF}
Let $(f_t)_{t \in I}$ be a Loewner chain over $(D_t)_{t \in I}$ with any codomain.
The two-parameter family
\begin{equation} \label{eq:LCtoEF}
\phi_{t, s} := f_t^{-1} \circ f_s, \quad (s, t) \in I^2_{\leq},
\end{equation}
is an evolution family, and
$\lambda(t) = \mu(\phi_{t, 0}; \mathbb{R})$ is bounded on $I$.

\item \label{prop:EFtoLC}
Let $0<T<\infty$,
and suppose that $(\phi_{t,s})_{(s,t)\in [0,T)^2_{\le}}$ is an evolution family over $(D_t)_{t \in [0,T)}$
with $\lambda(t) = \mu(\phi_{t, 0}; \mathbb{R})$ bounded.
Its prolongation to $[0, T]^2_{\leq}$,
which is guaranteed by Proposition~\ref{prop:prolong},
is designated by the same symbol.
Then the family
\begin{equation} \label{eq:EFtoLC}
f_t := \phi_{T, t}, \quad t \in [0, T],
\end{equation}
is a Loewner chain over $(D_t)_{t \in [0, T]}$ with codomain $D_T$.
\end{enumerate}
\end{proposition}

\begin{proof}
\eqref{prop:LCtoEF}
The univalent functions
$\phi_{t, s} = f_t^{-1} \circ f_s \colon D_s \to D_t$
enjoy (H.\ref{ass:hydro})--(H.\ref{ass:hull})
by Propositions~\ref{prop:inverse_norm} and \ref{prop:composite_norm}.
The latter proposition also implies that $\lambda(t) = - \ell(t) + \ell(0)$,
which proves (EF.\ref{cond:EFcont}).
The conditions~(EF.\ref{cond:EFnorm}) and (EF.\ref{cond:EFcomp}) are trivial.
Finally, we have
\[
\lambda(t) \leq \ell(t) + \lambda(t) = \ell(0) < \infty.
\]

\noindent
\eqref{prop:EFtoLC}
For $(s, t) \in [0, T]^2_{\leq}$, we have
\[
f_s(z) = \phi_{T, s}(z) = \phi_{T, t}(\phi_{t, s}(z)) = f_t(\phi_{t, s}(z)).
\]
Hence $f_s(D_s) = f_t(\phi_{t, s}(D_s)) \subset f_t(D_t)$,
which implies (LC.\ref{cond:LCsub}).
Moreover, Proposition~\ref{prop:composite_norm} yields
\[
\mu(f_s; \mathbb{R}) = \mu(f_t; \mathbb{R}) + \lambda(t) - \lambda(s),
\]
which implies (LC.\ref{cond:LCcont}).
\end{proof}

Thanks to Proposition~\ref{prop:EF_LCrel}~\eqref{prop:LCtoEF},
Theorem~\ref{thm:result_KL_EF} (or \ref{thm:EFmain})
applies to a Loewner chains $(f_t)_{t \in I}$,
which yields Corollary~\ref{cor:result_KL_LC}.

\begin{remark}[Terminal conditions on Loewner chains]
\label{rem:terminal}
Given an evolution family $(\phi_{t,s})_{(s,t)\in [0,T]^2_{\le}}$,
the family \eqref{eq:EFtoLC} is not a unique Loewner chain that satisfies \eqref{eq:LCtoEF}.
Indeed, for any univalent function $h$ on $D_T$ with (H.\ref{ass:hydro})--(H.\ref{ass:hull}),
the family
\begin{equation} \label{eq:terminal}
h\circ \phi_{T,t},\quad t\in [0,T],
\end{equation}
is also a Loewner chain that satisfies \eqref{eq:LCtoEF}.
One way to formulate the uniqueness is to take the terminal condition into account.
Namely,
\emph{given an evolution family $(\phi_{t,s})_{(s,t)\in [0,T]^2_{\le}}$ over $(D_t)_{t\in [0,T]}$ and
a univalent function $h$ on $D_T$ with {\rm (H.\ref{ass:hydro})--(H.\ref{ass:hull})},
a Loewner chain $(f_t)_{t\in [0,T]}$ over $(D_t)_{t\in [0,T]}$ which enjoys \eqref{eq:LCtoEF} and $f_T=h$ is unique}.
To see this,
let $(\tilde{f}_t)_{t\in [0,T]}$ be another chain satisfying $\tilde{f}_t\circ \phi_{t,s}=\tilde{f}_s$ and $\tilde{f}_T=h$.
We consider the mapping $\psi_t:=\tilde{f}_t\circ f_t^{-1}$ from $f_t(D_t)$ onto $\tilde{f}_t(D_t)$ for each $t$.
By the terminal conditions we have $\psi_T=h\circ h^{-1}=\mathrm{id}$.
For a fixed $t\in[0,T]$ and $w\in f_t(D_t)$,
there exists $z\in D_t$ with $w=f_t(z)$, and
\begin{align*}
w&=\psi_T(w)=\tilde{f}_T\circ f_T^{-1}(f_t(z))
=\tilde{f}_T\circ f_T^{-1}(f_T\circ \phi_{T,t}(z)) \\
&=\tilde{f}_T(\phi_{T,t}(z))=\tilde{f}_t(z)
=\tilde{f}_t(f_t^{-1}(w))=\psi_t(w).
\end{align*}
This implies $\tilde{f}_t=f_t$.
In view of \eqref{eq:terminal} and this uniqueness statement,
the Loewner chain \eqref{eq:EFtoLC} may be regarded as a \emph{standard} one;
for comparison, see Definition~3.5 and Theorem~3.6 of Contreras, Diaz-Madrigal and Gumenyuk~\cite{CDMG10},
in which a general discussion is presented on the uniqueness of Loewner chains in the unit disk $\mathbb{D}$.
\end{remark}

\section{Proof of Theorems~\ref{thm:result_unbdd} and \ref{thm:result_bdd}}
\label{sec:proof3}

In this section,
we study the solutions
to the Komatu--Loewner equation~\eqref{eq:result_hcapKL}
with half-plane capacity parametrization
and show that they form an evolution family.

\subsection{Local solutions to Komatu--Loewner equation}
\label{sec:proof3_local}

When we consider solutions to the Komatu--Loewner equation,
we need a special care with the fact
that the right-hand side of \eqref{eq:result_hcapKL} depends on $D_t$.
Since $D_t$ is the codomain of $\phi_{t, t_0}$,
the dependence of the right-hand side of \eqref{eq:result_hcapKL}
on the unknown variable $\phi_{t, t_0}(z)$
is quite complicated.
Even the existence of a solution to \eqref{eq:result_hcapKL}
is unclear from the usual ODE theory.

Bauer and Friedrich~\cite{BF08} presented a way
to resolve the above-mentioned problem with regard to \eqref{eq:intro_1KL}.
(See also Chen and Fukushima~\cite{CF18}.)
They first derived the Komatu--Loewner equation
for the slit endpoints $z^{\ell}_j(t)$ and $z^r_j(t)$ from \eqref{eq:intro_1KL}.
Since the dependence of $D_t$ on these endpoints is simple,
one can prove the existence of a solution $z^{\ell}_j(t)$ and $z^r_j(t)$.
Once $z^{\ell}_j(t)$ and $z^r_j(t)$ (and thus $D_t$) are determined,
it is possible to prove the existence of a solution to \eqref{eq:intro_1KL}.

We follow the idea of Bauer and Friedrich.
In our case,
the Komatu--Loewner equation for slits is formulated as follows
(see Appendix~\ref{sec:KL_slit} for its derivation):
for $N \geq 1$, $\bm{s} \in \mathbf{Slit}$,
and $\nu \in \mathcal{P}(\mathbb{R})$,
let
\begin{align*}
\bm{b}(\nu, \bm{s})
&:= (b_k(\nu, \bm{s}))_{k=1}^{3N}, \\
b_k(\nu, \bm{s})
&:= \begin{cases}
\pi \int_{\mathbb{R}} \Im \Psi_{\bm{s}}(z^{\ell}_k, \xi) \, \nu(d\xi),
& 1 \leq k \leq N \\
\pi \int_{\mathbb{R}} \Re \Psi_{\bm{s}}(z^{\ell}_{k - N}, \xi) \, \nu(d\xi),
& N + 1 \leq k \leq 2N \\
\pi \int_{\mathbb{R}} \Re \Psi_{\bm{s}}(z^r_{k - 2N}, \xi) \, \nu(d\xi),
& 2N + 1 \leq k \leq 3N.
\end{cases}
\end{align*}
We fix a $\mathcal{P}(\mathbb{R})$-valued
Lebesgue-measurable process $(\nu_t)_{t \geq 0}$. 
The
Komatu--Loewner equation for the vector $\bm{s}(t)$ of slit endpoints
is written as
\begin{equation} \label{eq:KLforSlit_vec}
\frac{d \bm{s}(t)}{d t} = 2 \bm{b}(\nu_t, \bm{s}(t)).
\end{equation}
This is an ODE driven by $\nu_t$ with phase space $\mathbf{Slit}$.

We begin our argument with the existence of a local solution to \eqref{eq:KLforSlit_vec}.
Here, we mean by a solution to an ODE an absolutely continuous function which satisfies the ODE in the a.e.\ sense.
By \eqref{eq:Poi_Koebe}, the function
$[0,\infty)\times \mathbf{Slit}\ni (t, \bm{s})\mapsto \bm{b}(\nu_t, \bm{s})$
satisfies the \emph{Carath\'eodory condition};
see Theorem~1.1 in Section~1, Chapter~2 of Coddington and Levinson~\cite{CL55}
or Theorem~5.1 in Section~I.5 of Hale~\cite{Ha69}.
By the cited theorems the following holds:

\begin{proposition} \label{prop:LocSolSlit}
Let $t_0 \in [0, \infty)$.
For every initial condition
$\bm{s}(t_0) = \bm{s}_0 \in \mathbf{Slit}$,
the ODE~\eqref{eq:KLforSlit_vec} has a local solution $\bm{s}(t)$.
\end{proposition}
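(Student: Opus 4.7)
The plan is to verify the Carath\'eodory conditions for the vector field $(t, \bm{s}) \mapsto 2\bm{b}(\nu_t, \bm{s})$ on a cylinder of the form $[t_0, t_0 + \tau] \times \overline{B}_{\mathbf{Slit}}(\bm{s}_0, r)$, with $r > 0$ small enough that $\overline{B}_{\mathbf{Slit}}(\bm{s}_0, r) \subset \mathbf{Slit}$, and then invoke the cited existence theorem. This amounts to checking three items: (i) continuity of $\bm{s} \mapsto \bm{b}(\nu_t, \bm{s})$ for each fixed $t$; (ii) Lebesgue-measurability of $t \mapsto \bm{b}(\nu_t, \bm{s})$ for each fixed $\bm{s}$; and (iii) the existence of a locally integrable majorant dominating $|\bm{b}(\nu_t, \bm{s})|$ on the cylinder.

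For (i), each component $b_k(\nu_t, \bm{s})$ is an integral against $\nu_t$ of the real or imaginary part of $\Psi_{\bm{s}}(z^{\bullet}_k(\bm{s}), \xi)$, where $z^{\bullet}_k(\bm{s})$ stands for a slit endpoint and is viewed as an interior point of $D(\bm{s})^{\natural}$ through the construction of Section~\ref{sec:continuation}. Proposition~\ref{prop:Poi_cpt} supplies the convergence $\Psi_{\bm{s}}(z^{\bullet}_k(\bm{s}), \xi) \to \Psi_{\bm{s}_0}(z^{\bullet}_k(\bm{s}_0), \xi)$ uniformly in $\xi \in \mathbb{R}$ as $\bm{s} \to \bm{s}_0$, so dominated convergence yields continuity in $\bm{s}$. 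For (ii), Lemma~\ref{lem:Poi_Koebe} places $\xi \mapsto \Psi_{\bm{s}}(z^{\bullet}_k(\bm{s}), \xi)$ in $C_{\infty}(\mathbb{R})$, so that pairing with $\nu_t$ is a continuous operation in the vague topology; composing with the assumed Lebesgue-measurable map $t \mapsto \nu_t$ produces measurability of $t \mapsto b_k(\nu_t, \bm{s})$.

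For (iii), I would extract a uniform (in both $t$ and $\bm{s}$) majorant directly from \eqref{eq:Poi_Koebe}. Shrinking $r$ if necessary, one arranges $\eta_{D(\bm{s})} \geq \eta_0 := \eta_{D(\bm{s}_0)}/2 > 0$ for every $\bm{s} \in \overline{B}_{\mathbf{Slit}}(\bm{s}_0, r)$. Since each slit endpoint has imaginary part at least $\eta_{D(\bm{s})} \geq \eta_0$ and $\xi \in \mathbb{R}$, both $|z^{\bullet}_k(\bm{s}) - \xi|$ and $d^{\mathrm{Eucl}}(\xi, \mathbb{H} \setminus D(\bm{s}))$ are bounded below by $\eta_0$ on the ball. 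The Koebe-type bound~\eqref{eq:Poi_Koebe}, interpreted at the slit endpoint through the analytic continuation to $D(\bm{s})^{\natural}$ (that is, as the limit from the interior, for which the estimate is given), therefore produces $|\Psi_{\bm{s}}(z^{\bullet}_k(\bm{s}), \xi)| \leq 4/(\pi \eta_0)$ uniformly in $(\bm{s}, \xi)$. Because $\nu_t(\mathbb{R}) \leq 1$, a constant majorant $|\bm{b}(\nu_t, \bm{s})| \leq 12N/\eta_0$ follows.

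With (i)--(iii) in hand, the Carath\'eodory existence theorem (Theorem~I.5.1 of~\cite{Ha69}) delivers an absolutely continuous local solution $\bm{s} \colon [t_0, t_0 + \tau'] \to \mathbf{Slit}$ of~\eqref{eq:KLforSlit_vec} with $\bm{s}(t_0) = \bm{s}_0$; any $\tau' \leq r \eta_0/(24N)$ keeps the trajectory within the chosen ball. The main technical point is condition~(iii): the Koebe-type estimate must be applied at slit endpoints, which are boundary points of $D(\bm{s})$ in $\mathbb{C}$ but interior points of $D(\bm{s})^{\natural}$, so invoking \eqref{eq:Poi_Koebe} there requires the reflection framework of Section~\ref{sec:continuation} together with continuity of $\Psi_{\bm{s}}$ across the slits.
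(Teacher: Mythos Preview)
Your proof is correct and follows the paper's approach: verify the Carath\'eodory conditions for the vector field $(t,\bm{s})\mapsto 2\bm{b}(\nu_t,\bm{s})$ using the Koebe-type bound~\eqref{eq:Poi_Koebe} and then invoke the standard existence theorem (Hale, Theorem~I.5.1). The paper compresses this into the single sentence preceding the proposition, whereas you spell out conditions (i)--(iii) in detail; the one point requiring care---that \eqref{eq:Poi_Koebe} and Proposition~\ref{prop:Poi_cpt} are stated for $z\in D(\bm{s})$ while you apply them at slit endpoints---is handled exactly as you say, by passing to the limit through the analytic-continuation framework of Section~\ref{sec:continuation}.
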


Let $D$ be a parallel slit half-plane with $N$ slits
and $\bm{s}_0 \in \mathbf{Slit}$ be such that $D(\bm{s}_0) = D$.
Note that Proposition~\ref{prop:LocSolSlit} says nothing
about the uniqueness of a solution.
We choose a solution $\bm{s}(t)$ to \eqref{eq:KLforSlit_vec}
with $\bm{s}(0) = \bm{s}_0$ freely,
denote its maximal interval of existence by $[0, T)$, and
write $D_t := D(\bm{s}(t))$.
For this solution $\bm{s}(t)$, we consider the ODE
\begin{equation} \label{eq:KLforEF}
\frac{d z(t)}{d t}
= 2 \pi \int_{\mathbb{R}} \Psi_{\bm{s}(t)}(z(t), \xi) \, \nu_t(d\xi),
\end{equation}
which is the same as \eqref{eq:result_hcapKL}
except that some symbols are replaced.

\begin{proposition} \label{prop:KL_local_Lipschitz}
The function
$H(t, z) := \int_{\mathbb{R}} \Psi_{\bm{s}(t)}(z, \xi) \, \nu_t(d\xi)$
on the domain
$\bigcup_{t \in [0, T)} (\{t\} \times D_t) \subset [0, T) \times \mathbb{C}$
enjoys the local Lipschitz condition in $z$.
In particular,
\eqref{eq:KLforEF} has a unique local solution
$z(t) = z(t; s, z_0)$
with $z(s; s, z_0) = z_0 \in D_s$
for every $s \in [0, T)$ and $z_0 \in D_s$.
\end{proposition}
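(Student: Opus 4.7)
The plan is to derive the local Lipschitz bound for $H(t, \cdot)$ in $z$ by combining the $\xi$-uniform Koebe-type estimate of Lemma~\ref{lem:Poi_Koebe} with Cauchy's estimate and the continuity of $\bm{s}(t)$, and then to invoke a standard Picard--Lindel\"of-type theorem under the Carath\'eodory framework to obtain a unique local solution to \eqref{eq:KLforEF}.

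First I would fix $(t_0, z_0)$ with $z_0 \in D_{t_0}$ and choose $r > 0$ with $r < (\Im z_0)/2$ and $\overline{B}(z_0, 2r) \subset D_{t_0}$. Since $\bm{s}(t)$, being a solution to \eqref{eq:KLforSlit_vec}, is absolutely continuous on $[0, T)$, the slit data vary continuously in $t$ and $\eta_{D_t} = \min_j y_j(t)$ is continuous; hence there is $\delta > 0$ such that $\overline{B}(z_0, 2r) \subset D_t$ and $\eta_{D_t} \geq \eta_{D_{t_0}}/2$ for all $t \in [t_0 - \delta, t_0 + \delta]$. Because $|w - \xi| \geq \Im w \geq \Im z_0 / 2$ for every $w \in \overline{B}(z_0, 2r)$ and every $\xi \in \mathbb{R}$, Lemma~\ref{lem:Poi_Koebe} produces a finite constant $M$ with
\[
\sup_{|t - t_0| \leq \delta}\ \sup_{w \in \overline{B}(z_0, 2r)}\ \sup_{\xi \in \mathbb{R}} \lvert \Psi_{\bm{s}(t)}(w, \xi) \rvert \leq M.
\]
Applying Cauchy's estimate to the holomorphic map $w \mapsto \Psi_{\bm{s}(t)}(w, \xi)$ on the disk $B(z, r) \subset \overline{B}(z_0, 2r)$ yields $\lvert \partial_z \Psi_{\bm{s}(t)}(z, \xi) \rvert \leq M/r$ uniformly in $\xi \in \mathbb{R}$, $z \in \overline{B}(z_0, r)$, and $t \in [t_0 - \delta, t_0 + \delta]$. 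Integrating against $\nu_t$, whose total mass is at most $1$, gives $\lvert \partial_z H(t, z) \rvert \leq M/r$ on this product set, from which the desired local Lipschitz bound in $z$ (locally uniform in $t$) follows.

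Next I would verify the remaining Carath\'eodory conditions. Continuity of $z \mapsto H(t, z)$ at fixed $t$ is immediate from Lemma~\ref{lem:CpBMDInt}~\eqref{lem:CpBMDhol}. For Lebesgue measurability of $t \mapsto H(t, z)$ at fixed $z$, Proposition~\ref{prop:Poi_cpt} combined with the continuity of $\bm{s}(t)$ shows that $t \mapsto \Psi_{\bm{s}(t)}(z, \cdot)$ is continuous from $[0, T)$ into the Banach space $C_{\infty}(\mathbb{R})$, while $t \mapsto \nu_t$ is Lebesgue measurable into $\mathcal{M}_{\leq 1}(\mathbb{R})$ with its vague topology (which coincides on this uniformly mass-bounded set with the weak-$\ast$ topology induced by $C_{\infty}(\mathbb{R})$); the pairing of these two is therefore measurable in $t$.

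With the local Lipschitz condition in $z$ and the Carath\'eodory conditions both in hand, the classical existence--uniqueness theorem for ODEs with Carath\'eodory right-hand side (e.g., Coddington--Levinson~\cite{CL55} or Hale~\cite{Ha69}) produces a unique local solution $z(t; s, z_0)$ to \eqref{eq:KLforEF} through each initial point. I expect the main subtlety to be the \emph{$\xi$-uniformity} of the estimates: since the driver $\nu_t$ is a measure rather than a point, any pointwise-in-$\xi$ control on $\Psi$ would be useless, and it is precisely the $\xi$-uniform form of Lemma~\ref{lem:Poi_Koebe} (valid even as $\xi \to \infty$, thanks to the decay of $\Psi_D(z, \xi)$) that lets the mass bound $\nu_t(\mathbb{R}) \leq 1$ absorb the integration. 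Every other step is a routine application of complex analysis and Carath\'eodory ODE theory.
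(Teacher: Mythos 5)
Your proof is correct and takes essentially the same route as the paper's: the $\xi$-uniform Koebe-type bound of Lemma~\ref{lem:Poi_Koebe} on a slightly larger disk, Cauchy's estimate for $\partial_z \Psi_{\bm{s}(t)}(\cdot,\xi)$, absorption of the integration by the mass bound $\nu_t(\mathbb{R})\le 1$, and the Carath\'eodory existence--uniqueness theory (the paper controls $\eta_{D_t}$ by the monotonicity of the $y_j(t)$ rather than by continuity of $\bm{s}(t)$, and leaves the $t$-measurability of $H(t,z)$ implicit, both immaterial differences). One cosmetic fix: with $r<\Im z_0/2$ you only get $\Im w\ge \Im z_0-2r>0$ on $\overline{B}(z_0,2r)$, not $\Im w\ge \Im z_0/2$, so either take $r\le \Im z_0/4$ or use the former lower bound.
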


\begin{proof}
Let $s \in [0, T)$ and $r>0$ be such that $\bar{B}(z_0, 2r) \subset D_s$.
There exists $\delta>0$ such that
$\bar{B}(z_0, 2r) \subset D_t$
for all $(s - \delta)^{+} := \max \{(s - \delta), 0\} \leq t \leq s + \delta$.
It suffices to show that $H(t, z)$ is Lipschitz in $z$
on the set $[(s - \delta)^{+}, s + \delta] \times \bar{B}(z_0, r)$.

For any $t \in J := [(s - \delta)^{+}, s + \delta]$
and $z_1, z_2 \in \bar{B}(z_0, r)$,
\begin{align*}
H(t, z_2) - H(t, z_1)
&= \int_{\mathbb{R}}
	\left( \Psi_{\bm{s}(t)}(z_2, \xi) - \Psi_{\bm{s}(t)}(z_1, \xi) \right)
	\, \nu_t(d\xi) \\
&= \int_{\mathbb{R}}
	\int_{z_1}^{z_2} \frac{\partial}{\partial z} \Psi_{\bm{s}(t)}(z, \xi) \, dz
	\, \nu_t(d\xi).
\end{align*}
Since
$\bar{B}(z, r) \subset \bar{B}(z_0, 2r) \subset \bigcap_{u \in J} D_u$
for every $z \in \Bar{B}(z_0, r)$,
Cauchy's estimate yields
\[
\lvert H(t, z_2) - H(t, z_1) \rvert
\leq r^{-1} M \lvert z_2 - z_1 \rvert,
\quad M := \sup_{u \in J, z \in \bar{B}(z_0, r)} \lvert \Psi_{\bm{s}(u)}(z, \xi) \rvert.
\]
Finally, we see from \eqref{eq:Poi_Koebe} that $M$ is finite,
noting that
$\eta_{D_u} = \min_{1 \leq j \leq N} y_j(u)$
is non-decreasing in $u$
because the first $N$ entries of $\bm{b}(\nu_t, \bm{s}(t))$
in \eqref{eq:KLforSlit_vec}
are positive.
\end{proof}

For each $s \in [0, T)$ and $z_0 \in D_s$,
let $\tau_{s, z_0}$ be the right endpoint of the maximal interval
of existence of the solution $z(t; s, z_0)$ to \eqref{eq:KLforEF}.

\begin{proposition} \label{prop:MaxSol}
$\tau_{s, z_0} = T$ holds
for any $s \in [0, T)$ and $z_0 \in D_s$.
\end{proposition}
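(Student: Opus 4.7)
The plan is to argue by contradiction: suppose $\tau := \tau_{s, z_0} < T$, and show that $z_\tau := \lim_{t \to \tau^-} z(t)$ exists in $D_\tau$, so Proposition~\ref{prop:KL_local_Lipschitz} extends the solution past $\tau$, contradicting the maximality of $\tau$. First, taking imaginary parts of the ODE and using $\Im \Psi = K^{\ast} \geq 0$ yields
\[
\frac{d \Im z(t)}{dt}
= 2\pi \int_{\mathbb{R}} K^{\ast}_{\bm{s}(t)}(z(t), \xi) \, \nu_t(d\xi) \geq 0,
\]
so $\Im z(t) \geq \Im z_0 > 0$; the same reasoning applied to \eqref{eq:KLforSlit_vec} shows each $y_j(t)$ is non-decreasing, so $\eta_{D_t} \geq \eta_{D_s}$ on $[s, \tau)$. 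Combined with Lemma~\ref{lem:Poi_Koebe} and the estimate $\lvert z - \xi \rvert \geq \Im z$ for $\xi \in \mathbb{R}$, this forces the uniform bound $\lvert \dot z(t) \rvert \leq 8/(\Im z_0 \wedge \eta_{D_s})$. Hence $z(t)$ is Lipschitz, $z_\tau$ exists and lies in $\overline{D_\tau}$ (using continuity of $\bm{s}$ on $[0, T)$), and $z_\tau \notin \partial \mathbb{H}$ because $\Im z_\tau > 0$. The task reduces to excluding $z_\tau \in \bigcup_j C_j(\bm{s}(\tau))$.

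The crucial input is the BMD inner boundary condition: $K^{\ast}_{\bm{s}(t)}(\cdot, \xi)$ is \emph{constant} on each slit $C_j(\bm{s}(t))$, which follows at once from the defining formula of $K^{\ast}$ together with $G_D(z, \xi) = 0$ and $\Phi_D(z) = e_j$ for $z \in C_j$. Assume for contradiction that $z_\tau \in C_{j_0}^{\circ}(\bm{s}(\tau))$. By continuity, $\Re z(t) \in (x^{\ell}_{j_0}(t), x^r_{j_0}(t))$ for $t$ near $\tau$, so $\psi(t) := \Im z(t) - y_{j_0}(t)$ is continuous and nonvanishing on some subinterval $[s', \tau)$; without loss of generality $\psi > 0$. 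Setting $z'(t) := \Re z(t) + i\, y_{j_0}(t) \in C_{j_0}(\bm{s}(t))$ and using $K^{\ast}_{\bm{s}(t)}(z^{\ell}_{j_0}(t), \xi) = K^{\ast}_{\bm{s}(t)}(z'(t), \xi)$ gives
\[
\dot \psi(t)
= 2\pi \int_{\mathbb{R}} \bigl[
K^{\ast}_{\bm{s}(t)}(z(t), \xi) - K^{\ast}_{\bm{s}(t)}(z'(t), \xi)
\bigr] \, \nu_t(d\xi).
\]
The $\xi$-uniform modulus bound on $\Psi$ from Lemma~\ref{lem:Poi_Koebe}, combined with Cauchy's estimate on a neighborhood of $z_\tau$ in the Riemann surface $D_\tau^{\natural}$ (cf.\ Section~\ref{sec:continuation}), supplies a local Lipschitz bound $\lvert K^{\ast}_{\bm{s}(t)}(z(t), \xi) - K^{\ast}_{\bm{s}(t)}(z'(t), \xi) \rvert \leq L \lvert z(t) - z'(t) \rvert = L \psi(t)$ uniform in $\xi$. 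Hence $\lvert \dot \psi(t) \rvert \leq 2\pi L \psi(t)$, and Gronwall's inequality forces $\psi(t) \geq \psi(s') e^{-2 \pi L (\tau - s')} > 0$, contradicting $\psi(t) \to 0$.

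The case $z_\tau \in \{z^{\ell}_{j_0}(\tau), z^r_{j_0}(\tau)\}$ of a slit endpoint is handled by the same scheme in the local square-root coordinate $\operatorname{sq}^{\ell}_{j_0}$ or $\operatorname{sq}^r_{j_0}$ from Section~\ref{sec:continuation}, in which $\Psi$ extends analytically across the endpoint and an analogous Gronwall-type estimate rules out approach to this branch point. This exhausts $\partial D_\tau$, so $z_\tau \in D_\tau$ and the desired contradiction with maximality is obtained. The main difficulty lies in upgrading the qualitative constancy of $K^{\ast}$ on slits to the quantitative, $\xi$-uniform Lipschitz estimate underlying the Gronwall comparison; this is made possible by the uniform-in-$\xi$ asymptotics of the complex Poisson kernel developed in Section~\ref{sec:Poisson}.
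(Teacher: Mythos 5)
Your argument is correct, and it follows the paper's skeleton (assume $\tau_{s,z_0}<T$, show the limit exists, exclude $\partial\mathbb{H}$ and the extension case, reduce to a limit on a slit) but extracts the final contradiction by a genuinely different mechanism. The paper's proof sets up auxiliary ODEs for the kernel continued across the slit edges and, in the square-root coordinate, across the endpoints (Lemmas~\ref{lem:SolAboveSlit}--\ref{lem:SolNearEnd2}), observes that the unique solutions through points of $C_j^{\natural}(\bm{s}(t))$ remain on the slit, and then uses backward uniqueness to conclude that a trajectory limiting onto $C_j(\tau_{s,z_0})$ would already have had to lie on the slit, contradicting $z(t)\in D_t$. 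You instead run the quantitative version of the same idea: the constancy of $K^{\ast}_{\bm{s}(t)}(\cdot,\xi)$ on each slit lets you rewrite $\dot{y}_{j_0}(t)$ from \eqref{eq:KLforSlit_vec}, and a $\xi$-uniform Lipschitz bound on the reflected kernel gives $\lvert\dot\psi\rvert\le 2\pi L\psi$ for $\psi=\Im z-y_{j_0}$, so Gronwall prevents the distance to the slit from vanishing in finite time; at an endpoint the same comparison is applied to $\Im\operatorname{sq}^{\ell}_{j_0,t}(z(t))$, using that $H^{\ell,j_0}(t,\cdot)$ is real on the real axis. Both routes rest on identical analytic inputs---the Schwarz reflection of Section~\ref{sec:continuation}, the bound of Lemma~\ref{lem:Poi_Koebe}, Cauchy's estimate, and the slit equation---so your proof is in effect the Gronwall argument underlying the paper's uniqueness lemmas, run directly; it buys a shorter, more self-contained contradiction without formulating the four local-solvability lemmas, at the cost that you must verify explicitly that the constant $L$ can be taken uniform in $\xi$ and in $t\in[\tau_{s,z_0}-\delta,\tau_{s,z_0})$. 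That verification does go through: on a doubled disk around the limit point of radius controlled by $l_{D_t}$, the continued kernel is bounded uniformly in $\xi$ (the reflection only adds $2i\Im\Psi_{\bm{s}(t)}(z^{\ell}_{j_0}(t),\xi)$, which Lemma~\ref{lem:Poi_Koebe} controls), and the radius and lower bound on $\Im z$ are uniform for $t$ near $\tau_{s,z_0}$ by the continuity of $\bm{s}(t)$ and the monotonicity of the $y_j$'s---this is exactly the bookkeeping the paper's lemmas isolate once and for all.
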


We shall prove Proposition~\ref{prop:MaxSol}
along the line of Chen and Fukushima~\cite[Section~5.1]{CF18}.
Assume that $\tau_{s, z_0} < T$.
Since $\Im z(t; s, z_0)$ is non-decreasing in $t$,
$\Psi_{\bm{s}(t)}(z(t; s, z_0), \xi)$ is bounded by \eqref{eq:Poi_Koebe}.
The following limit exists by the dominated convergence theorem:
\begin{align*}
\tilde{z}
&:= \lim_{t \to \tau_{s, z_0}} z(t; s, z_0) \\
&= z_0 + 2 \pi \int_s^{\tau_{s, z_0}} \!\!\! \int_{\mathbb{R}}
 \Psi_{\bm{s}(u)}(z(u; s, z_0), \xi)
\, \nu_u(d\xi) \, du.
\end{align*}
We have $\tilde{z} \notin D_{\tau_{s, z_0}}$ because otherwise the solution $t \mapsto z(t; s, z_0)$ would extend beyond $\tau_{s, z_0}$.
Moreover, it does not lie on $\partial \mathbb{H}$
because $\Im z(t; s, z_0)$ is non-decreasing in $t$.
Thus, $\tilde{z} \in C_j(\tau_{s, z_0})$ for some $j$.
We shall deduce a contradiction from this,
using the uniqueness of a solution
to the Komatu--Loewner equation ``around $C_j^{\natural}(\bm{s}(t))$.''
The precise uniqueness statement will be divided into four lemmas below.

\begin{lemma} \label{lem:SolAboveSlit}
Let $t_0 \in [0, T)$ and $p_0 \in C_j^{+}(t_0)$.
Suppose that
an open neighborhood $U_{p_0}$ of $p_0$ in $D_{t_0}^{\natural}$
and $\delta > 0$
are such that
$V_{p_0} := \left. \operatorname{pr} \right\rvert_{U_{p_0}}(U_{p_0})$
satisfies
$V_{p_0} \subset R_j(t)$,
$\emptyset \neq V_{p_0} \cap C_j(t) \subset C_j^{\circ}(t)$,
and
$V_{p_0} \cap \bigcup_{k \neq j} C_k(t) = \emptyset$
for all $(t_0 - \delta)^{+} \leq t \leq t_0 + \delta$.
(By the continuity of $\bm{s}(t)$, such $U_{p_0}$ and $\delta$ exist.)
The ODE
\begin{equation} \label{eq:KLaboveSlit}
\frac{d \tilde{z}(t)}{dt}
= 2 \pi \int_{\mathbb{R}}
	\Psi_{\bm{s}(t)}(
	(\left. \operatorname{pr} \right\rvert_{R_j^{+}(t)})^{-1}(\tilde{z}(t)), \xi
	)
	\, \nu_t(d\xi)
\end{equation}
has a unique local solution
$\tilde{z}(t) = \tilde{z}(t; t_0, p_0) \in V_{p_0}$
with $\tilde{z}(t_0; t_0, p_0) = \operatorname{pr}(p_0)$.
Moreover,
$\tilde{z}(t; t_0, p_0) \in C_j^{\circ}(t)$
holds for every $t$ in some neighborhood of $t_0$.
\end{lemma}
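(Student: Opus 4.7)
The plan is to set up \eqref{eq:KLaboveSlit} as a Carath\'eodory ODE on $V_{p_0}$, extract a local Lipschitz estimate in $z$ via Cauchy's estimate applied to a holomorphic integrand, and then identify the unique local solution with an explicit candidate that lives on the slit.

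First, I would verify that for each fixed $(t, \xi) \in J \times \mathbb{R}$ (with $J := [(t_0 - \delta)^+, t_0 + \delta]$), the integrand
\[
h_{t, \xi}(z) := \Psi_{\bm{s}(t)}\bigl((\operatorname{pr}|_{R_j^+(t)})^{-1}(z), \xi\bigr)
\]
is holomorphic on $V_{p_0}$. On $V_{p_0} \cap \mathbb{H}_{y_j(t)}$ it coincides with $\Psi_{\bm{s}(t)}(z, \xi)$, and on the part with $\Im z \leq y_j(t)$ it is the Schwarz reflection of $\Psi_{\bm{s}(t)}(\mathord{\cdot}, \xi)$ across the slit $C_j(t)$; the reflection is legitimate since $\Im \Psi_{\bm{s}(t)}(\mathord{\cdot}, \xi) = K^{\ast}_{D_t}(\mathord{\cdot}, \xi)$ is constant on the inner boundary $C_j(t)$ of $D_t^{\ast}$ by BMD-harmonicity. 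Lemma~\ref{lem:Poi_Koebe} then yields a bound for $|h_{t, \xi}(z)|$ on every compact subset of $V_{p_0}$ which is uniform in $\xi \in \mathbb{R}$ and $t \in J$ (using $\eta_{D_t} \geq \eta_{D_0}$, which follows from the sign of the first $N$ entries of $\bm{b}$). Shrinking $V_{p_0}$ if necessary and applying Cauchy's estimate gives a uniform constant $L$ with $|h_{t, \xi}(z_1) - h_{t, \xi}(z_2)| \leq L |z_1 - z_2|$, while Proposition~\ref{prop:Poi_cpt} together with the continuity of $\bm{s}(t)$ furnishes joint continuity in $(t, z)$. Integration against $\nu_t \in \mathcal{M}_{\leq 1}(\mathbb{R})$ preserves these bounds, so the right-hand side of \eqref{eq:KLaboveSlit} satisfies the Carath\'eodory condition, and local existence and uniqueness of $\tilde{z}(t; t_0, p_0)$ follow.

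Finally, to show that the solution lies on $C_j^{\circ}(t)$, I would construct a candidate on the slit and invoke uniqueness. Read off $y_j(t)$ from the fixed solution $\bm{s}(t)$ to \eqref{eq:KLforSlit_vec}, and let $x(t)$ be the unique local solution of the scalar ODE
\[
\dot{x}(t) = 2\pi \int_{\mathbb{R}} \Re \Psi_{\bm{s}(t)}(x(t) + i y_j(t), \xi) \, \nu_t(d\xi), \qquad x(t_0) = \Re \operatorname{pr}(p_0),
\]
where $\Psi_{\bm{s}(t)}$ is evaluated at the upper-edge point in $C_j^+(t)$. Set $\tilde{z}(t) := x(t) + i y_j(t)$; by continuity and $x^{\ell}_j(t_0) < \Re \operatorname{pr}(p_0) < x^r_j(t_0)$, we have $\tilde{z}(t) \in V_{p_0} \cap C_j^{\circ}(t)$ for $t$ close to $t_0$. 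Since $K^{\ast}_{D_t}(\mathord{\cdot}, \xi)$ is constant on $C_j(t)$ with value $K^{\ast}_{D_t}(z^{\ell}_j(t), \xi)$, the $j$-th component of \eqref{eq:KLforSlit_vec} gives
\[
\dot{y}_j(t) = 2 b_j(\nu_t, \bm{s}(t)) = 2\pi \int_{\mathbb{R}} \Im \Psi_{\bm{s}(t)}(\tilde{z}(t), \xi) \, \nu_t(d\xi),
\]
so $\tilde{z}(t)$ solves \eqref{eq:KLaboveSlit}, and uniqueness forces $\tilde{z}(t; t_0, p_0) = \tilde{z}(t)$. The main obstacle will be ensuring that the Lipschitz estimate for $h_{t, \xi}$ is uniform in $\xi \in \mathbb{R}$, so that after integrating against $\nu_t$ the ODE is genuinely Lipschitz in $z$; this is where the uniform Koebe-type bound in Lemma~\ref{lem:Poi_Koebe} combined with the Schwarz-reflection argument becomes essential.
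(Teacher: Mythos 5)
Your proposal is correct and follows essentially the same route as the paper: the paper likewise obtains local existence and uniqueness by the local Lipschitz argument of Proposition~\ref{prop:KL_local_Lipschitz} (Koebe-type bound plus Cauchy's estimate, uniform in $\xi$), and then identifies the solution with the explicit slit candidate $\hat{x}(t) + i y_j(t)$, where $\hat{x}$ solves the real-part ODE and $y_j(t)$ automatically satisfies the imaginary-part equation because $\Im \Psi_{\bm{s}(t)}(\mathord{\cdot}, \xi)$ is constant along $C_j(t)$ and $\bm{s}(t)$ solves \eqref{eq:KLforSlit_vec}. Your extra care in justifying holomorphy of the reflected integrand is consistent with the gluing construction of Section~\ref{sec:continuation} that the paper relies on implicitly.
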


\begin{proof}
The unique existence of a local solution to \eqref{eq:KLaboveSlit}
is proved in the same way
as in
the proof of Proposition~\ref{prop:KL_local_Lipschitz}.
We construct this local solution in the following way:
the ODE
\begin{equation} \label{eq:KLslit_x-direction}
\frac{d\hat{x}(t)}{dt}
= 2 \pi \int_{\mathbb{R}} \Re \Psi_{\bm{s}(t)}(\hat{x}(t) + iy_j(t), \xi) \, \nu_t(d\xi).
\end{equation}
with $\hat{x}(t_0) = \Re \operatorname{pr}(p_0)$
has a unique local solution $\hat{x}(t)$.
We may assume that $x^{\ell}_j(t) < \hat{x}(t) < x^r_j(t)$.
Then $\hat{y}(t) := y_j(t)$ is a unique solution to the ODE
\begin{equation} \label{eq:KLslit_y-direction}
\frac{d\hat{y}(t)}{dt}
= 2 \pi \int_{\mathbb{R}} \Im \Psi_{\bm{s}(t)}(\hat{x}(t) + i \hat{y}(t), \xi) \, \nu_t(d\xi)
\end{equation}
with $\hat{y}(t_0) = \Im \operatorname{pr}(p_0) = y_j(t_0)$
because
$\Im \Psi_{\bm{s}(t)}(x + iy_j(t), \xi) = \Im \Psi_{\bm{s}(t)}(z^{\ell}_j(t), \xi)$
for any $x^{\ell}_j(t) < x < x^r_j(t)$.
By \eqref{eq:KLslit_x-direction} and \eqref{eq:KLslit_y-direction},
$\hat{z}(t) := \hat{x}(t) + iy_j(t)$
is a local solution to \eqref{eq:KLaboveSlit}
with initial condition $\hat{z}(t_0) = \operatorname{pr}(p_0)$,
and $\hat{z}(t) \in C_j^{\circ}(t)$
on some neighborhood of $t_0$.
\end{proof}

Since the proof of the next lemma is quite similar
to that of Lemma~\ref{lem:SolAboveSlit},
we omit its proof:

\begin{lemma} \label{lem:SolBelowSlit}
Let $t_0$, $\delta$, $U_{p_0}$, and $V_{p_0}$ be defined
as in Lemma~\ref{lem:SolAboveSlit}
with $p_0 \in C^{-}_j(t_0)$ instead of $p_0 \in C^{+}_j(t_0)$.
The ODE
\begin{equation} \label{eq:KLbelowSlit}
\frac{d \tilde{z}(t)}{dt}
= 2 \pi \int_{\mathbb{R}}
	\Psi_{\bm{s}(t)}(
	(\left. \operatorname{pr} \right\rvert_{R_j^{-}(t)})^{-1}(\tilde{z}(t)), \xi
	)
	\, \nu_t(d\xi)
\end{equation}
has a unique local solution
$\tilde{z}(t) = \tilde{z}(t; t_0, p_0)$
with
$\tilde{z}(t_0; t_0, p_0) = \operatorname{pr}(p_0)$.
Moreover,
$\tilde{z}(t; t_0, p_0) \in C_j^{\circ}(t)$
holds for every $t$ in some neighborhood of $t_0$.
\end{lemma}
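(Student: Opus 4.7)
The plan is to mirror the proof of Lemma~\ref{lem:SolAboveSlit} almost verbatim, replacing the restriction $\operatorname{pr}|_{R^+_j(t)}$ by $\operatorname{pr}|_{R^-_j(t)}$ throughout. The only ingredient that needs separate justification is the interpretation of the composition $\Psi_{\bm{s}(t)} \circ (\operatorname{pr}|_{R^-_j(t)})^{-1}(\cdot, \xi)$: on $\{z \in R_j(t) : \Im z \leq y_j(t)\}$ it is simply the original kernel, while on $\{z \in R_j(t) : \Im z > y_j(t)\}$, by the Schwarz reflection identity used in the proof of Lemma~\ref{lem:EFineq}, it equals
\[
\overline{\Psi_{\bm{s}(t)}(\Pi_{y_j(t)} z, \xi)} + 2i\, \Im \Psi_{\bm{s}(t)}(z^\ell_j(t), \xi).
\]
This is a holomorphic function of $z$ on all of $R_j(t)$ because $K^\ast_{\bm{s}(t)}(\cdot, \xi) = \Im \Psi_{\bm{s}(t)}(\cdot, \xi)$ takes the constant value $\Im \Psi_{\bm{s}(t)}(z^\ell_j(t), \xi)$ on the slit $C_j(t)$, which is exactly the matching condition required for Schwarz reflection.

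First I would establish local Lipschitz continuity of the right-hand side of \eqref{eq:KLbelowSlit} in $\tilde{z}$. On a small closed box $J \times \Bar{B}(\operatorname{pr}(p_0), r)$ contained in $[(t_0 - \delta)^+, t_0 + \delta] \times R_j(t_0)$, the reflected kernel displayed above is uniformly bounded in $(t, \xi) \in J \times \mathbb{R}$ by \eqref{eq:Poi_Koebe} together with the reflection formula. Cauchy's estimate then yields a uniform Lipschitz constant in $\tilde{z}$, and the Carath\'eodory existence--uniqueness theorem delivers a unique local solution, exactly as in Proposition~\ref{prop:KL_local_Lipschitz}.

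To exhibit a local solution explicitly I would run the same two-step reduction as in Lemma~\ref{lem:SolAboveSlit}: let $\hat{x}(t)$ be the unique solution of \eqref{eq:KLslit_x-direction} with $\hat{x}(t_0) = \Re \operatorname{pr}(p_0)$, shrink time so that $x^\ell_j(t) < \hat{x}(t) < x^r_j(t)$, and set $\hat{z}(t) := \hat{x}(t) + iy_j(t)$. The component $y_j(t)$ automatically solves \eqref{eq:KLslit_y-direction}, because the first $N$ entries of $\bm{b}(\nu_t, \bm{s}(t))$ in \eqref{eq:KLforSlit_vec} equal $\pi \int_{\mathbb{R}} \Im \Psi_{\bm{s}(t)}(z^\ell_j(t), \xi)\, \nu_t(d\xi)$, and by the constancy of $K^\ast$ on the slit this coincides with $\pi \int_{\mathbb{R}} \Im \Psi_{\bm{s}(t)}(\hat{x}(t) + iy_j(t), \xi)\, \nu_t(d\xi)$. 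At the slit point $\hat{z}(t) \in C^\circ_j(t)$, the reflection formula collapses to $\overline{\Psi_{\bm{s}(t)}(\hat{z}(t), \xi)} + 2i\, \Im \Psi_{\bm{s}(t)}(z^\ell_j(t), \xi)$, which has the same real part as $\Psi_{\bm{s}(t)}(\hat{z}(t), \xi)$ and (again by constancy of $K^\ast$) the same imaginary part; consequently \eqref{eq:KLbelowSlit} evaluated along $\hat{z}(t)$ reduces to \eqref{eq:KLslit_x-direction}, which holds by construction. Uniqueness from the first step then identifies $\tilde{z}(\cdot; t_0, p_0)$ with $\hat{z}(\cdot)$, and this curve lies on $C^\circ_j(t)$ for $t$ near $t_0$. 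The sole obstacle is the two-sheet bookkeeping: one must carefully track how the reflection matches the values of $\Psi_{\bm{s}(t)}$ across $C_j(t)$ so that the tangential dynamics on $C^+_j(t)$ and on $C^-_j(t)$ are both governed by the same one-dimensional ODE \eqref{eq:KLslit_x-direction}.
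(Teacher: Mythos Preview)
Your proposal is correct and follows exactly the approach the paper intends: the paper omits the proof entirely, stating only that it is ``quite similar to that of Lemma~\ref{lem:SolAboveSlit},'' and your argument is precisely that mirror. The additional care you take with the Schwarz reflection bookkeeping (identifying $\Psi_{\bm{s}(t)}\circ(\operatorname{pr}|_{R^-_j(t)})^{-1}$ explicitly and verifying it is holomorphic on $R_j(t)$) is a genuine fleshing-out of what the paper leaves implicit; the one small caveat is that when you invoke \eqref{eq:KLslit_x-direction} you should understand $\Re\Psi_{\bm{s}(t)}(\hat{x}(t)+iy_j(t),\xi)$ as the boundary value from \emph{below} the slit rather than from above, but since the imaginary part is constant on $C_j(t)$ this does not affect the $y$-component argument and the construction goes through unchanged.
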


Let us move to the third lemma of the four.
We shall slightly modify the notation for the local coordinate around $z^\ell_j(t)$ in $D_t^\natural$,
introduced in \eqref{eq:sq_ss5};
namely, in order to emphasize the dependence on $t$,
we will write $\operatorname{sq}^\ell_{j,t}$ in place of $\operatorname{sq}^\ell$.
Let
\begin{equation} \label{eq:Psi_norm}
\Psi^{\ell, j}_{D_t}[\nu_t](z)
:= \int_{\mathbb{R}} \left(
	\Psi_{D_t}((\operatorname{sq}^{\ell}_{j, t})^{-1}(z), \xi) - \Psi_{D_t}(z^{\ell}_j(t), \xi)
	\right) \, \nu_t(d\xi).
\end{equation}
Since $\Psi^{\ell, j}_{D_t}[\nu_t](z)$ has a zero at $z = 0$,
there exists a holomorphic function $H^{\ell, j}(t, z)$
such that
\begin{equation} \label{eq:1stZero_Psi}
\Psi^{\ell, j}_{D_t}[\nu_t](z) = z H^{\ell, j}(t, z).
\end{equation}
By Cauchy's integral formula we have,
for a fixed $0 < r < l_{D_t}$,
\begin{equation} \label{eq:Psi_deriv}
H^{\ell, j}(t, z)
= \frac{1}{2\pi i} \int_{\partial B(0, r)}
	\frac{\Psi^{\ell, j}_{D_t}[\nu_t](\zeta)}{\zeta (\zeta - z)}
	\, d\zeta,
\quad z \in B(0, r);
\end{equation}
see Eq.~(29) in Section~3.1, Chapter~4 of Ahlfors~\cite{Ah79}.
Here, $l_{D_t}$ is defined by \eqref{eq:ell_D_ss5}.

\begin{lemma} \label{lem:SolNearEnd}
\begin{enumerate}
\item \label{lem:rewrite_KL_end}
Suppose that $z(t)$ is a solution to \eqref{eq:KLforEF} and
that $z(t) \in B(z^{\ell}_j(t), l_{D_t}) \setminus C_j(t)$ on some interval.
Then $\tilde{z}(t) := \operatorname{sq}^{\ell}_{j, t}(z(t))$ enjoys
\begin{equation} \label{eq:KLnearEnd}
\frac{d\tilde{z}(t)}{dt}  = \pi H^{\ell, j}(t, \tilde{z}(t))
\end{equation}
a.e.\ on that interval.

\item \label{lem:LocSol_KLnearEnd}
Let $t_0 \in [0, T)$.
Suppose that $\delta, r > 0$ are such that
$l_{D_t} > 2r$
for $(t_0 - \delta)^{+} \leq t \leq t_0 + \delta$
(such $\delta$ and $r$ exist by the continuity of $\bm{s}(t)$).
For each fixed
$p_0 \in (B(z^{\ell}_j(t_0), r) \setminus C^{\circ}_j(t_0))
	\cup ((B(z^{\ell}_j(t_0), r) \setminus C_j(t_0)) \times \{j\})$,
the ODE~\eqref{eq:KLnearEnd}
has a unique local solution
$\tilde{z}(t) = \tilde{z}(t; t_0, p_0)$
with
$\tilde{z}(t_0; t_0, p_0) = \operatorname{sq}^{\ell}_{j, t_0}(p_0)$.
Moreover,
$\Im \tilde{z}(t; t_0, z^{\ell}_j(t_0)) = 0$
for every $t$ in some neighborhood of $t_0$.
\end{enumerate}
\end{lemma}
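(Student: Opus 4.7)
For part~\eqref{lem:rewrite_KL_end}, the plan is to differentiate the defining relation $\tilde{z}(t)^{2}=z(t)-z^{\ell}_{j}(t)$ in $t$. The left side produces $2\tilde{z}(t)\dot{\tilde{z}}(t)$ (valid since $z(t)$ and $z^{\ell}_{j}(t)$ are absolutely continuous and $\tilde{z}(t)\ne 0$ by the assumption $z(t)\notin C_{j}(t)$). On the right, $\dot{z}(t)$ is supplied by \eqref{eq:KLforEF}, while $\dot{z}^{\ell}_{j}(t)$ is read off from \eqref{eq:KLforSlit_vec}: the components $b_{j}$ and $b_{j+N}$ of $\bm{b}$ assemble into $\dot{z}^{\ell}_{j}(t)=2\pi\int_{\mathbb{R}}\Psi_{\bm{s}(t)}(z^{\ell}_{j}(t),\xi)\,\nu_{t}(d\xi)$. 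Combined with \eqref{eq:Psi_norm}--\eqref{eq:1stZero_Psi} this yields
\[
2\tilde{z}(t)\dot{\tilde{z}}(t)
=2\pi\,\Psi^{\ell,j}_{D_{t}}[\nu_{t}](\tilde{z}(t))
=2\pi\,\tilde{z}(t)\,H^{\ell,j}(t,\tilde{z}(t)),
\]
and cancelling $\tilde{z}(t)\ne 0$ gives \eqref{eq:KLnearEnd}.

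For the local existence and uniqueness in part~\eqref{lem:LocSol_KLnearEnd}, I would verify Carath\'eodory-type conditions for the vector field $(t,z)\mapsto\pi H^{\ell,j}(t,z)$ on $[(t_{0}-\delta)^{+},t_{0}+\delta]\times B(0,r)$. Lemma~\ref{lem:Poi_Koebe} together with Proposition~\ref{prop:Poi_cpt} and the continuity of $\bm{s}(t)$ provides a uniform bound on $|\Psi^{\ell,j}_{D_{t}}[\nu_{t}](\zeta)|$ for $\zeta\in\partial B(0,2r)$ and $t$ in that interval; substituting this into \eqref{eq:Psi_deriv} and into its $z$-derivative (via Cauchy's estimate) yields a uniform bound on $H^{\ell,j}$ and on $\partial_{z}H^{\ell,j}$, hence a uniform local Lipschitz constant in $z$. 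Measurability in $t$ is inherited from $\nu_{t}$. A standard Picard--Lindel\"of argument then produces the unique local solution $\tilde{z}(t;t_{0},p_{0})$.

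Finally, for $\Im\tilde{z}(t;t_{0},z^{\ell}_{j}(t_{0}))=0$, my plan is to use Schwarz reflection. Since $\Im\Psi_{D_{t}}(\cdot,\xi)=K^{\ast}_{D_{t}}(\cdot,\xi)$ takes a constant value $c_{j}(t,\xi)$ on each slit $C_{j}(t)$ (because $G_{D_{t}}(\cdot,\xi)\equiv 0$ and $\Phi_{D_{t}}(\cdot)=e_{j}$ on $C_{j}(t)$ in the definition of $K^{\ast}_{D_{t}}$), the reflection principle extends $\Psi_{D_{t}}(\cdot,\xi)$ holomorphically across $C_{j}(t)$ with $\Psi_{D_{t}}(\Pi_{y_{j}(t)}w,\xi)=\overline{\Psi_{D_{t}}(w,\xi)}+2ic_{j}(t,\xi)$. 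In the local chart $z=\operatorname{sq}^{\ell}_{j,t}(w)$, the elementary identity $\Pi_{y_{j}(t)}w-z^{\ell}_{j}(t)=\overline{w-z^{\ell}_{j}(t)}=\bar{z}^{2}$ together with the sheet-swapping sign convention of $\operatorname{sq}^{\ell}_{j,t}$ shows that this antiholomorphic involution becomes $z\mapsto\bar{z}$. Hence $g(z):=\Psi_{D_{t}}((\operatorname{sq}^{\ell}_{j,t})^{-1}(z),\xi)-ic_{j}(t,\xi)$ satisfies $g(\bar{z})=\overline{g(z)}$, and its Taylor coefficients at $z=0$ are real. Integrating $g(z)-g(0)$ against $\nu_{t}$ and dividing by $z$ shows $H^{\ell,j}(t,z)$ has real Taylor coefficients in $z$, so it maps $\mathbb{R}$ into $\mathbb{R}$. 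The real ODE $\dot{y}(t)=\pi H^{\ell,j}(t,y(t))$ with $y(t_{0})=0$ then admits a real-valued local solution, which by the complex uniqueness of part~\eqref{lem:LocSol_KLnearEnd} must coincide with $\tilde{z}(t;t_{0},z^{\ell}_{j}(t_{0}))$. The main obstacle is this last step: cleanly identifying the antiholomorphic involution in the chart $\operatorname{sq}^{\ell}_{j,t}$ and propagating the Schwarz symmetry through the integral against $\nu_{t}$ to obtain reality of $H^{\ell,j}$ on $\mathbb{R}$.
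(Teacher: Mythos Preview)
Your proposal is correct and follows essentially the same route as the paper. Part~\eqref{lem:rewrite_KL_end} is identical (differentiate $\tilde z^2=z-z^{\ell}_j$ and cancel $\tilde z\neq 0$), and for part~\eqref{lem:LocSol_KLnearEnd} both you and the paper obtain the local Lipschitz bound on $H^{\ell,j}$ from \eqref{eq:Psi_norm}--\eqref{eq:Psi_deriv} together with Cauchy's estimate (Proposition~\ref{prop:Poi_cpt} is not actually needed here; Lemma~\ref{lem:Poi_Koebe} alone suffices for boundedness).

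The only stylistic difference is the last assertion. You reach $\Im H^{\ell,j}(t,x)=0$ for real $x$ by running Schwarz reflection explicitly, checking that the involution $\Pi_{y_j(t)}$ becomes $z\mapsto\bar z$ in the chart and deducing real Taylor coefficients. The paper compresses this into one line: since $(\operatorname{sq}^{\ell}_{j,t})^{-1}$ sends the real axis of the chart into $C^{\natural}_j(t)$, and $\Im\Psi_{D_t}(\cdot,\xi)$ is constant on $C^{\natural}_j(t)$, the subtraction in \eqref{eq:Psi_norm} kills the imaginary part directly, so $\Psi^{\ell,j}_{D_t}[\nu_t](x)\in\mathbb{R}$ and hence $H^{\ell,j}(t,x)\in\mathbb{R}$. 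Your reflection argument is the justification behind that one line; there is no obstacle.
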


\begin{proof}
\eqref{lem:rewrite_KL_end}
Since
$\tilde{z}(t)
= \operatorname{sq}^{\ell}_{j, t}(z(t))
= \sqrt{z(t) - z^{\ell}_j(t)}$,
we have
\begin{align}
&\frac{d}{dt} \tilde{z}(t)
= \frac{1}{2 \tilde{z}(t)} \left(
\frac{dz(t)}{dt} - \frac{dz^{\ell}_j(t)}{dt}
\right) \notag \\
&= \frac{\pi}{\tilde{z}(t)}
\int_{\mathbb{R}} \left(
\Psi_{D_t}((\operatorname{sq}^{\ell}_{j, t})^{-1}(\tilde{z}(t)), \xi)
- \Psi_{D_t}(z^{\ell}_j(t), \xi)
\right) \, \nu_t(d\xi)
\label{eq:rewrite_KL_end0}
\end{align}
by \eqref{eq:KLforEF}.
Substituting \eqref{eq:1stZero_Psi} into \eqref{eq:rewrite_KL_end0},
we obtain \eqref{eq:KLnearEnd}.

\noindent
\eqref{lem:LocSol_KLnearEnd}
As in the proof of Proposition~\ref{prop:KL_local_Lipschitz},
we can conclude the unique existence of a local solution
from the local boundedness of $H^{\ell, j}(t, z)$ on $B(0, r)$.
This boundedness easily follows
from the expressions~\eqref{eq:Psi_norm} and \eqref{eq:Psi_deriv}
combined with Cauchy's estimate.

By the definition of $\Psi^{\ell, j}_{D_t}[\nu_t]$ and $\operatorname{sq}^{\ell}_{j, t}$,
we have
\[
\Im \Psi^{\ell, j}_{D_t}[\nu_t]((\operatorname{sq}^{\ell}_{j, t})^{-1}(x)) = 0,
\quad x \in B(0, r) \cap \mathbb{R},
\]
and thus $\Im H^{\ell, j}(x, t) = 0$ for
$x \in B(0, r) \cap \mathbb{R}$.
Hence, the solution to \eqref{eq:KLnearEnd}
belongs to $\mathbb{R}$
if the initial value $\operatorname{sq}^{\ell}_{j, t_0}(p_0)$ is real.
In particular,
since $\operatorname{sq}^{\ell}_{j, t_0}(z^{\ell}_j(t_0)) = 0$,
we have
$\Im \tilde{z}(t; t_0, z^{\ell}_j(t_0)) = 0$.
\end{proof}

By the same proof as that of Lemma~\ref{lem:SolNearEnd},
we have the fourth lemma:

\begin{lemma} \label{lem:SolNearEnd2}
All the results in Lemma~\ref{lem:SolNearEnd}
with the superscript $\ell$ replaced by $r$ hold true.
\end{lemma}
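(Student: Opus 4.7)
The plan is to follow the proof of Lemma~\ref{lem:SolNearEnd} essentially verbatim, substituting the right-endpoint coordinate $\operatorname{sq}^{r}_{j, t}$ (defined analogously to $\operatorname{sq}^{\ell}_{j, t}$ at the end of Section~\ref{sec:continuation}) for its left-endpoint counterpart throughout. First I would introduce the right-endpoint analogues of \eqref{eq:Psi_norm}--\eqref{eq:Psi_deriv}:
\[
\Psi^{r, j}_{D_t}[\nu_t](z)
:= \int_{\mathbb{R}} \left( \Psi_{D_t}((\operatorname{sq}^{r}_{j, t})^{-1}(z), \xi) - \Psi_{D_t}(z^{r}_j(t), \xi) \right) \nu_t(d\xi),
\]
which vanishes at $z = 0$, together with $H^{r, j}(t, z)$ defined by the factorization $\Psi^{r, j}_{D_t}[\nu_t](z) = z H^{r, j}(t, z)$. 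Cauchy's integral formula then gives, for each fixed $\rho \in (0, l_{D_t})$, the representation of $H^{r,j}(t, z)$ on $B(0, \rho)$ parallel to \eqref{eq:Psi_deriv}.

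For part \eqref{lem:rewrite_KL_end}, the chain rule applied to $\tilde{z}(t) = \operatorname{sq}^{r}_{j, t}(z(t)) = \sqrt{z(t) - z^{r}_j(t)}$ gives
\[
\frac{d \tilde{z}(t)}{dt} = \frac{1}{2 \tilde{z}(t)} \left( \frac{d z(t)}{dt} - \frac{d z^{r}_j(t)}{dt} \right).
\]
Substituting \eqref{eq:KLforEF} and the identity
\[
\frac{d z^{r}_j(t)}{dt} = 2\pi \int_{\mathbb{R}} \Psi_{\bm{s}(t)}(z^{r}_j(t), \xi) \, \nu_t(d\xi),
\]
which is obtained by reading off the $(2N+j)$-th and $j$-th components of \eqref{eq:KLforSlit_vec} and using that $\Im \Psi_{\bm{s}(t)}(\cdot, \xi)$ (the BMD Poisson kernel) is constant along each slit and hence takes the same value at $z^{r}_j(t)$ as at $z^{\ell}_j(t)$, yields the right-endpoint version of \eqref{eq:KLnearEnd} with $H^{r, j}$ in place of $H^{\ell, j}$.

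For part \eqref{lem:LocSol_KLnearEnd}, the Cauchy estimate applied to the integral representation of $H^{r, j}$ produces its local boundedness, hence local Lipschitz continuity in $z$, from which unique local existence of $\tilde{z}(t; t_0, p_0)$ follows exactly as in Lemma~\ref{lem:SolNearEnd}\eqref{lem:LocSol_KLnearEnd}. For the real-axis invariance of the solution starting at $z^{r}_j(t_0)$, I would note that for real $x$ in the coordinate domain the point $(\operatorname{sq}^{r}_{j, t})^{-1}(x)$ lies on $C_j^{\natural}$, along which $\Im \Psi_{D_t}(\cdot, \xi)$ is constant and equal to $\Im \Psi_{D_t}(z^{r}_j(t), \xi)$; hence $\Im \Psi^{r, j}_{D_t}[\nu_t](x) = 0$ and therefore $\Im H^{r, j}(t, x) = 0$, so the real initial value $\operatorname{sq}^{r}_{j, t_0}(z^{r}_j(t_0)) = 0$ produces a real solution, which corresponds to a point of $C_j^{\circ}(t)$ for $t$ near $t_0$. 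Since this is a verbatim transcription of the left-endpoint proof, I do not anticipate any substantive obstacle; the only point demanding minor care is that the argument conventions used to define $\operatorname{sq}^{r}_{j, t}$ make it a homeomorphism of the doubled disk around $z^{r}_j(t)$ onto a disk in $\mathbb{C}$ sending the slit to the real axis, which is all the proof uses.
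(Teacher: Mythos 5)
Your proposal is correct and takes essentially the same route as the paper, which disposes of this lemma simply by invoking the proof of Lemma~\ref{lem:SolNearEnd} with $\ell$ replaced by $r$. You also correctly isolate the only two substitution-sensitive points: that $dz^r_j(t)/dt = 2\pi\int_{\mathbb{R}}\Psi_{\bm{s}(t)}(z^r_j(t),\xi)\,\nu_t(d\xi)$ follows from \eqref{eq:KLforSlit_vec} because $\Im\Psi_{\bm{s}(t)}(\cdot,\xi)$ is constant along $C_j(t)$ (the $y$-component there is written with $z^{\ell}_j$), and that the convention defining $\operatorname{sq}^{r}_{j,t}$ must send the doubled slit to the real axis (so, strictly, one should take $\sqrt{z^r_j(t)-z(t)}$ or an equivalent rotated branch, which only changes $H^{r,j}$ by harmless bookkeeping).
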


Using the four lemmas above,
we now give a proof of Proposition~\ref{prop:MaxSol}.

\begin{proof}[Proof of Proposition~\ref{prop:MaxSol}]
Assuming that $\tau_{s, z_0} < T$,
we have already shown that
$\tilde{z} := \lim_{t \to \tau_{s, z_0}} z(t; s, z_0)
\in C_j(\tau_{s, z_0})$ for some $j$.

Suppose that $\tilde{z} = z^{\ell}_j(\tau_{s, z_0})$.
By Lemma~\ref{lem:SolNearEnd}~\eqref{lem:rewrite_KL_end},
$\tilde{z}(t) := \operatorname{sq}^{\ell}_{j, t}(z(t; s, z_0))$
is a local solution to \eqref{eq:KLnearEnd}
with $\tilde{z}(\tau_{s, z_0}) = z^{\ell}_j(\tau_{s, z_0})$.
However,
Lemma~\ref{lem:SolNearEnd}~\eqref{lem:LocSol_KLnearEnd} implies
that $z(t; s, z_0) \in C_j(t)$ for some $t < \tau_{s, z_0}$,
which contradicts the definition of $\tau_{s, z_0}$.
Thus, $\tilde{z} \neq z^{\ell}_j(\tau_{s, z_0})$.
In the same way, Lemma~\ref{lem:SolNearEnd2} yields
$\tilde{z}(\tau_{s, z_0}) \neq z^r_j(\tau_{s, z_0})$.

The remaining case is that
$\tilde{z} \in C^{\circ}_j(\tau_{s, z_0})$.
Since $\tilde{z}$ does not coincide with the endpoints of $C_j(\tau_{s, z_0})$,
we can take $\delta > 0$ so that
$\Im z(t; s, z_0) - y_j(t)$ takes a constant sign
on $[\tau_{s, z_0} - \delta, \tau_{s, z_0})$.
Suppose that $\Im z(t; s, z_0) > y_j(t)$ on this interval.
Then
$\tilde{z}(t) = z(t; s, z_0)$
is a local solution to \eqref{eq:KLaboveSlit}
with $\tilde{z}(\tau_{s, z_0}) \in C^{+}_j(\tau_{s, z_0})$.
However, this contradicts
Lemma~\ref{lem:SolAboveSlit}
because $z(t; s, z_0) \notin C_j(t)$ for $t < \tau_{s, z_0}$.
Similarly, the case $\Im z(t; s, z_0) < y_j(t)$ does not occur
by Lemma~\ref{lem:SolBelowSlit}.

We have seen
that the assumption $\tau_{s, z_0} < T$ leads to a contradiction,
and hence $\tau_{s, z_0} = T$.
\end{proof}

\subsection{Evolution family formed by the solutions}
\label{sec:proof3_global}

By Proposition~\ref{prop:MaxSol},
we can define a mapping
$\phi_{t, s} \colon D_s \ni z_0 \mapsto z(t; s, z_0) \in D_t$
for any $(s, t) \in [0, T)^2_{\leq}$.
The backward equation of \eqref{eq:KLforEF} in the next lemma
will be used to show that $\phi_{t, s}$ enjoys (H.\ref{ass:hull}).

\begin{lemma} \label{lem:EFback}
\begin{enumerate}
\item \label{lem:EFback_sol}
For a fixed $t_0 \in (0, T)$ and $z_0 \in D_{t_0}$,
the backward equation
\begin{equation} \label{eq:EFback}
\frac{dw(t)}{dt}
= - 2 \pi \int_{\mathbb{R}} \Psi_{\bm{s}(t_0 - t)}(w(t), \xi) \, \nu_{t_0 - t}(d\xi)
\end{equation}
has a unique local solution $w(t) = w(t; t_0, z_0)$
with $w(0) = z_0$.

\item \label{lem:EFback_time}
Let $\tilde{\tau}_{t_0, z_0}$ be the right endpoint of
the maximal interval of existence
of the solution $w(t; t_0, z_0)$.
If $\tilde{\tau}_{t_0, z_0} < t_0$, then
$\lim_{t \to \tilde{\tau}_{t_0, z_0}} \Im w(t; t_0, z_0) = 0$.

\item \label{lem:EFback_inf}
There exists a constant $\delta_0 > 0$ such that
\[
\tilde{\tau}_{t_0, z_0} \geq (2\delta_0) \wedge t_0
\]
for any $t_0 \in (0, T)$ and $z_0 \in D_{t_0} \cap \mathbb{H}_{\eta_{D_0}/2}$.
Here,
we define
$\eta_{D_0} := \min \{\, \Im z \mathrel{;} z \in \mathbb{H} \setminus D_0 \,\}$ 
as in Section~\ref{sec:notation}.
\end{enumerate}
\end{lemma}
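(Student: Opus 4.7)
The plan is to handle the three parts in order: (i) by a direct application of the Carath\'eodory existence theorem, (ii) by reducing to a slit-avoidance argument in the style of Proposition~\ref{prop:MaxSol} via a time-reversal substitution, and (iii) by combining (ii) with an elementary differential inequality on $\Im w$.

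For (i), the vector field $(t, w) \mapsto -2\pi \int_{\mathbb{R}} \Psi_{\bm{s}(t_0-t)}(w, \xi)\, \nu_{t_0-t}(d\xi)$ is Lebesgue-measurable in $t$ (since $\bm{s}$ is continuous and $(\nu_u)$ is measurable) and, by the same Cauchy-estimate computation as in Proposition~\ref{prop:KL_local_Lipschitz} combined with the uniform bound of Lemma~\ref{lem:Poi_Koebe}, is locally Lipschitz in $w$ uniformly in $t$ on compact subsets of $\bigcup_t (\{t\} \times D_{t_0-t})$. This yields a unique local solution.

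The main body of work is (ii). The key observation is that $v(u) := w(t_0 - u; t_0, z_0)$ satisfies the forward equation~\eqref{eq:KLforEF} with $v(t_0) = z_0$, and its maximal interval of backward extension has left endpoint $u_* := t_0 - \tilde{\tau}_{t_0, z_0}$. Taking imaginary parts in~\eqref{eq:EFback} and using $K^{\ast}_{D_{t_0-t}} \geq 0$ shows $\Im w(t)$ is non-increasing, so $\alpha := \lim_{t \nearrow \tilde{\tau}_{t_0, z_0}} \Im w(t)$ exists in $[0, \Im z_0]$. Assume $\tilde{\tau}_{t_0, z_0} < t_0$ and, for contradiction, $\alpha > 0$. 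Lemma~\ref{lem:Poi_Koebe} together with the monotonicity $\eta_{D_{t_0-t}} \geq \eta_{D_0}$ gives a uniform bound $|\Psi_{\bm{s}(u)}(v(u), \xi)| \leq (4/\pi)/(\alpha \wedge \eta_{D_0})$, so $dv/du$ is bounded on $(u_*, t_0]$ and the limit $\tilde{w} := \lim_{u \searrow u_*} v(u)$ exists with $\Im \tilde{w} = \alpha > 0$. By Proposition~\ref{prop:KL_local_Lipschitz}, $\tilde{w} \in D_{u_*}$ would allow a forward continuation past $u_*$, violating the maximality of $u_*$; hence $\tilde{w}$ lies on some slit $C_j(u_*)$. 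The three sub-cases $\tilde{w} = z^{\ell}_j(u_*)$, $\tilde{w} = z^r_j(u_*)$, and $\tilde{w} \in C^{\circ}_j(u_*)$ are then eliminated in turn, exactly as in the proof of Proposition~\ref{prop:MaxSol}: Lemmas~\ref{lem:SolNearEnd}, \ref{lem:SolNearEnd2}, and \ref{lem:SolAboveSlit}/\ref{lem:SolBelowSlit} respectively produce a unique forward continuation from $\tilde{w}$ that is forced onto the slit for $u$ slightly greater than $u_*$, whereas $v(u) \in D_u$ lies off the slit---a contradiction.

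For (iii), I set $\delta_0 := \eta_{D_0}^2/256$ (the precise constant is inessential). Lemma~\ref{lem:Poi_Koebe} together with $\eta_{D_{t_0-t}} \geq \eta_{D_0}$ yields $|dw(t)/dt| \leq 32/\eta_{D_0}$ as long as $\Im w(t) > \eta_{D_0}/4$, and since $\Im z_0 > \eta_{D_0}/2$ a standard continuity bootstrap shows this lower bound persists on $[0, (2\delta_0) \wedge \tilde{\tau}_{t_0, z_0})$. If $\tilde{\tau}_{t_0, z_0} < (2\delta_0) \wedge t_0 \leq t_0$, part (ii) forces $\Im w(t) \to 0$ as $t \nearrow \tilde{\tau}_{t_0, z_0}$, contradicting $\Im w(t) > \eta_{D_0}/4$; hence $\tilde{\tau}_{t_0, z_0} \geq (2\delta_0) \wedge t_0$. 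The principal obstacle is the three-case analysis in (ii); the time-reversal substitution is essential because it lets one directly invoke the forward-direction uniqueness lemmas without re-deriving backward analogues, and it makes precise the sense in which Proposition~\ref{prop:MaxSol}'s argument applies to the left endpoint of the maximal interval---noting that here the conclusion $\Im \tilde{w} = 0$ is the goal, not automatic, since $\Im$-monotonicity in the backward direction does not preclude $\tilde{w} \in \partial \mathbb{H}$.
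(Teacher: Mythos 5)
Your proposal is correct and follows essentially the same route as the paper, whose proof of this lemma is just a pointer: parts (\ref{lem:EFback_sol}) and (\ref{lem:EFback_time}) are done exactly as in Propositions~\ref{prop:KL_local_Lipschitz} and \ref{prop:MaxSol} (your time-reversal $v(u)=w(t_0-u)$ is the natural way to reuse the forward slit-uniqueness lemmas, and your monotonicity/boundedness reductions before the three-case slit analysis are the intended ones), while part (\ref{lem:EFback_inf}) rests, as in the paper, on the uniform bound from \eqref{eq:Poi_Koebe} for $w \in D_{t_0-t}\cap\mathbb{H}_{\eta_{D_0}/4}$ combined with part (\ref{lem:EFback_time}).
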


\begin{proof}
\eqref{lem:EFback_sol} and \eqref{lem:EFback_time}
are proved in the same ways
as in Propositions~\ref{prop:KL_local_Lipschitz} and \ref{prop:MaxSol},
respectively.
\eqref{lem:EFback_inf} follows from the fact
that $\sup_{\xi \in \mathbb{R}} \Im \Psi_{\bm{s}(t_0 - t)}(w, \xi)$
is bounded by a constant
for any $t \in [0, t_0]$ and $w \in D_{t_0 - t} \cap \mathbb{H}_{\eta_{D_0}/4}$
by \eqref{eq:Poi_Koebe}.
\end{proof}

We now show that the solutions to \eqref{eq:KLforEF} form an evolution family.

\begin{theorem} \label{thm:SolEF}
Let $z(t) = z(t; s, z)$ be the solution to \eqref{eq:KLforEF}
with $z(s) = z$
for each $s \in [0, T)$ and $z \in D_s$.
Then the family of the mappings
\[
\phi_{t, s} \colon D_s \to D_t,\ z \mapsto z(t; s, z)
\]
parametrized by $(s, t) \in [0, T)^2_{\leq}$
is a chordal evolution family over $(D_t)_{t\in[0,T)}$
with angular residues $\lambda(t)=2t$.
\end{theorem}

\begin{proof}
$\phi_{t, s}$ is holomorphic
by the general theory of ODEs;
see, e.g., Theorem~8.4 in Chapter~1 of Coddington and Levinson~\cite{CL55}
or Exercise~3.3 in Chapter~I of Hale~\cite{Ha69}.
Although they treat vector fields which are jointly continuous
with respect to time and spatial variables,
suitable modifications are possible.
We can also rely on the results on weak holomorphic vector fields
given by Contreras, Diaz-Madrigal and Gumenyuk~\cite[Theorem~2.3]{CDMG13} instead.
The uniqueness of a solution to \eqref{eq:KLforEF}
implies the univalence and property~(EF.\ref{cond:EFcomp}) of $\phi_{t, s}$.
(EF.\ref{cond:EFnorm}) is trivial.

Let $\delta_0$ be the constant in Lemma~\ref{lem:EFback}~\eqref{lem:EFback_inf}.
From the relation between the solutions
to \eqref{eq:KLforEF} and to \eqref{eq:EFback},
we see that,
if $(s, t) \in [0, T)^2_{\leq}$ satisfies $t - s \leq \delta_0$,
then $\phi_{s, t}(D_s)$ contains
$D_s \cap \mathbb{H}_{\eta_{D_0}/2}$.
Since conformal mappings preserve
the degree of connectivity of domains,
this inclusion implies that
$D_t \setminus \phi_{t, s}(D_s)$
is an $\mathbb{H}$-hull.
In fact, the restriction $t - s \leq \delta_0$ is unnecessary
because, for any $(s, t) \in [0, T)^2_{\leq}$,
there exists a finite sequence
$s = t_0 \leq t_1 \leq \cdots \leq t_n = t$
such that $t_k - t_{k-1} \leq \delta_0$ for $k=1, \ldots, n$.
From the decomposition
\[
\phi_{t, s} = \phi_{t_n, t_{n-1}} \circ \cdots \circ \phi_{t_2, t_1} \circ \phi_{t_1, t_0},
\]
we can conclude that
$D_t \setminus \phi_{t, s}(D_s)$
is an $\mathbb{H}$-hull,
i.e., $\phi_{t,s}$ satisfies (H.\ref{ass:hull}) in the general case.

It remains to prove (H.\ref{ass:hydro}) and (H.\ref{ass:res}) for $\phi_{t,s}$.
Integrating \eqref{eq:KLforEF} and replacing $z(t)$ with $\phi_{t,s}(z)$, we have
\begin{equation} \label{eq:Int_KLforEF}
\phi_{t, s}(z)-z
=2\pi\int_s^t \int_{\mathbb{R}}\Psi_{\bm{s}(u)}(\phi_{u,s}(z), \xi)\, \nu_u(d\xi) \, du
\end{equation}
for all $(s,t)\in [0,T)^2_{\le}$ and $z \in D_s$.
Since $\Im \phi_{u,s}(z)$ is non-decreasing in $u$, the integrand satisfies
$\lvert \Psi_{\bm{s}(u)}(\phi_{u,s}(z), \xi) \rvert \leq 4(\pi \eta_{D_0})^{-1}$
for $z\in D_s\cap\mathbb{H}_{\eta_{D_0}}$ and $\xi\in\mathbb{R}$ by \eqref{eq:Poi_Koebe}.
Thus we have
\begin{equation} \label{eq:bdd_from_KLeq}
\lvert \phi_{t,s}(z)-z \rvert \le \frac{2t}{\eta_{D_0}},
\qquad (s,t)\in [0,T)^2_{\le},\ z\in D_s\cap\mathbb{H}_{\eta_{D_0}}.
\end{equation}
In particular,
$\phi_{t, s}(z) \to \infty$ as $z$ goes to $\infty$ through $D_s\cap\mathbb{H}_{\eta_{D_0}}$.
Hence it follows from \eqref{eq:Int_KLforEF} and the dominated convergence theorem that
\begin{align*}
&\lim_{\substack{z \to \infty \\ \Im z > \eta_0}} (\phi_{t, s}(z) - z) \\
&= 2 \pi \int_s^t \int_{\mathbb{R}}
\lim_{\substack{z \to \infty \\ \Im z > \eta_{D_0}}}
\Psi_{\bm{s}(u)}(\phi_{u, s}(z), \xi)
\, \nu_u(d\xi) \, du =0,
\end{align*}
which proves (H.\ref{ass:hydro}) for $\phi_{t, s}$.

To establish (H.\ref{ass:res}), fix $(s,t)\in [0,T)^2_{<}$ and $\theta\in (0,\pi/2)$.
Let $L>(2t/\eta_{D_0})+\max_{u\in [0,t]}r^{\text{out}}_{D_u}$.
By \eqref{eq:bdd_from_KLeq} there exists an angle $\theta^\prime\in (\theta, \pi/2)$ such that,
if $z\in \triangle_\theta \setminus \Bar{B}(0,L)$ and $s\le u\le t$,
then $\phi_{u,s}(z)\in \triangle_{\theta^\prime} \setminus \Bar{B}(0, L+2t/\eta_{D_0})$.
We now apply Lemma~\ref{lem:boundedness}
to a compact set $\Gamma:=\{\, \bm{s}(u)\mathrel{;} s\le u \le t \,\}\subset \mathbf{Slit}$ and the angle $\theta^\prime$.
Then by \eqref{eq:Int_KLforEF}, \eqref{eq:bdd_from_KLeq}, \eqref{eq:Poi_deriv_infty} and the dominated convergence theorem,
we have
\begin{align*}
&\lim_{\substack{z\to\infty \\ z\in\triangle_\theta}} z(\phi_{t,s}(z)-z)
= 2\pi \lim_{\substack{z\to\infty \\ z\in\triangle_\theta}} \int_s^t \int_{\mathbb{R}} z\Psi_{\bm{s}(u)}(\phi_{u,s}(z), \xi)\, \nu_u(d\xi) \, du \\
&=2\pi \lim_{\substack{z\to\infty \\ z\in\triangle_\theta}} \int_s^t \int_{\mathbb{R}}\bigl\{\phi_{u,s}(z)\Psi_{\bm{s}(u)}(\phi_{u,s}(z), \xi) \\
&\phantom{=2\pi \lim_{\substack{z\to\infty \\ z\in\triangle_\theta}} \int_s^t \int_{\mathbb{R}}\{ } - (\phi_{u,s}(z)-z)\Psi_{\bm{s}(u)}(\phi_{u,s}(z), \xi)\bigr\}\, \nu_u(d\xi) \, du \\
&=2\pi(t-s).
\end{align*}
This proves (H.\ref{ass:res}) with $\lambda(t)=2t$.
\end{proof}

Once $\phi_{t,s}$ above turns out to be an evolution family,
we can easily show that the Komatu--Loewner equations~\eqref{eq:KLforSlit_vec} and \eqref{eq:KLforEF} always have a global solution.
Namely the following holds:

\begin{corollary} \label{cor:global_solution}
For every initial value $\bm{s}(0)=\bm{s}_0\in\mathbf{Slit}$,
a local solution $\bm{s}(t)$ to the ODE~\eqref{eq:KLforSlit_vec} can be continued to the whole time interval $[0,\infty)$.
\end{corollary}

\begin{proof}
Assume that the lifetime $T$ of $\bm{s}(t)$ is finite.
By Theorem~\ref{thm:SolEF} there exists a unique evolution family $(\phi_{t,s})_{(s,t)\in [0,T)^2_{\le}}$ over $(D(\bm{s}(t)))_{t\in [0,T)}$ with $\lambda(t)=2t$.
Since $\sup_{0\le t<T}\lambda(t)=2T<\infty$,
it extends to an evolution family whose index set is $[0,T]^2_{\le}$ by Proposition~\ref{prop:prolong}.
Hence the limit $\bm{s}(T)=\lim_{t\to T}\bm{s}(t)$ exists in $\mathbf{Slit}$,
which contradicts the definition of $T$.
\end{proof}

By Theorem~\ref{thm:SolEF} and Corollary~\ref{cor:global_solution} we have arrived at Theorem~\ref{thm:result_unbdd}.
Theorem~\ref{thm:result_bdd} is now just a consequence of the following uniqueness of solutions:

\begin{proposition} \label{prop:UniSolSlit}
Let $J \subset [0, \infty)$ be a bounded interval and
$(\nu_t)_{t \in J}$ be a $\mathcal{P}(\mathbb{R})$-valued Lebesgue-measurable process.
If $\bigcup_{t \in J} \operatorname{supp} \nu_t$ is bounded,
then for each $t_0 \in J$ and $\bm{s}_0 \in \mathbf{Slit}$,
a solution $\bm{s}(t)$ to \eqref{eq:KLforSlit_vec} with $\bm{s}(t_0) = \bm{s}_0$ is unique on $J$.
\end{proposition}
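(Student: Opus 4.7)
Fix $t_0 \in J$, $\bm{s}_0 \in \mathbf{Slit}$, and a bounded interval $J_0 \subset \mathbb{R}$ containing $\bigcup_{t \in J} \operatorname{supp} \nu_t$. For uniqueness on any common interval of existence, the plan is to apply Gronwall's inequality to the difference of two solutions, after establishing that $\bm{s} \mapsto \bm{b}(\nu, \bm{s})$ is locally Lipschitz with a constant uniform in $\nu \in \mathcal{M}_{\leq 1}(\mathbb{R})$ with $\operatorname{supp} \nu \subset J_0$. The components of $\bm{b}$ evaluate $\Psi_{\bm{s}}$ at the slit endpoints $z^{\ell}_k(\bm{s}), z^r_k(\bm{s})$, which lie on $\partial D(\bm{s})$; via the Schwarz-reflection continuation of Section~\ref{sec:continuation}, $\Psi_{\bm{s}}$ extends holomorphically across the slits, and the same argument that proved Proposition~\ref{prop:locLip} yields an analogous Lipschitz bound on a neighborhood of each endpoint in the Riemann surface $D(\bm{s})^{\natural}$, with constants uniform in $\xi \in J_0$. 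Decomposing $\Psi_{\bm{s}}(z^{\ell}_k(\bm{s}), \xi) - \Psi_{\bm{s}'}(z^{\ell}_k(\bm{s}'), \xi)$ into a domain-variation piece (handled by this extended form of Proposition~\ref{prop:locLip}) and a spatial piece (handled by Cauchy's estimate together with Lemma~\ref{lem:Poi_Koebe}), and using the trivial Lipschitz dependence of $z^{\ell}_k(\bm{s})$ on $\bm{s}$, gives the required uniform Lipschitz control.

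For extendability to all of $J$, let $[t_0, T_{+}) \subset J$ be the maximal right-extension of a local solution produced by Proposition~\ref{prop:LocSolSlit}, and suppose toward contradiction that $T_{+} < \sup J$. Two a priori facts follow directly. First, since $\Im \Psi_{\bm{s}}(z^{\ell}_j, \xi) = K^{*}_{D(\bm{s})}(z^{\ell}_j, \xi) \geq 0$, the $y$-components of $\bm{b}$ are nonnegative, whence $y_j(t) \geq y_j(t_0) \geq \eta_{D(\bm{s}_0)} > 0$ on $[t_0, T_{+})$. Second, Lemma~\ref{lem:Poi_Koebe} combined with $|z^{\ell}_k(t) - \xi| \geq y_k(t) \geq \eta_{D(\bm{s}_0)}$ and $d^{\mathrm{Eucl}}(\xi, \mathbb{H} \setminus D(\bm{s}(t))) \geq \eta_{D(\bm{s}_0)}$ shows $|\Psi_{\bm{s}(t)}(z^{\ell}_k(t), \xi)| \leq 4/(\pi \eta_{D(\bm{s}_0)})$ uniformly in $t$ and $\xi$, so $|\bm{b}(\nu_t, \bm{s}(t))|$ is bounded by a constant $C = C(\bm{s}_0)$. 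Therefore $\bm{s}(t)$ is Lipschitz in $t$ and admits a limit $\bm{s}_{*}$ in $\mathbb{R}^{3N}$ as $t \uparrow T_{+}$.

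The main obstacle is to rule out $\bm{s}_{*} \in \partial \mathbf{Slit}$; if $\bm{s}_{*} \in \mathbf{Slit}$, another application of Proposition~\ref{prop:LocSolSlit} at $\bm{s}_{*}$ contradicts the maximality of $T_{+}$. Monotonicity of $y_j$ excludes $y_j \to 0$, leaving slit collapse ($x^{\ell}_j = x^r_j$) and collision of two slits at a common height to exclude. For these I would invoke the conformal mappings $\phi_{t, t_0} \colon D(\bm{s}_0) \to D(\bm{s}(t))$ from Theorem~\ref{thm:SolEF}, defined on $[t_0, T_{+})$. The integral form of \eqref{eq:KLforEF} together with the Koebe bound of Lemma~\ref{lem:Poi_Koebe} show that $(\phi_{t, t_0})_{t \in [t_0, T_{+})}$ is locally uniformly bounded on $D(\bm{s}_0)^{\natural}$, so Vitali's theorem produces a locally uniform limit $\tilde{\phi}_{T_{+}, t_0}$, univalent by Hurwitz's theorem. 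By the boundary correspondence, $\tilde{\phi}_{T_{+}, t_0}(D(\bm{s}_0))$ is a parallel slit half-plane whose slits arise as the limits of $\phi_{t, t_0}(C^{\natural}_j(\bm{s}_0))$; the preservation of connectivity under conformal maps together with the removable-singularity theorem rule out point components, so these limit slits are non-degenerate. Hence $\bm{s}_{*} \in \mathbf{Slit}$, the desired contradiction, so $T_{+} = \sup J$.
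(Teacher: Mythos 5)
The paper's own proof of this proposition is a single sentence: the local Lipschitz property of $\bm{s} \mapsto \bm{b}(\nu,\bm{s})$, uniform over $\nu \in \mathcal{M}_{\leq 1}(\mathbb{R})$ with support in a fixed bounded interval, is imported from Chen and Fukushima~\cite[Lemma~4.1]{CF18}, after which uniqueness and existence follow from standard Carath\'eodory theory. Your second paragraph (a priori bound on $\bm{b}$, monotonicity of the $y_j$) is fine, and your third paragraph --- ruling out degeneration of the limit configuration via the flow of Theorem~\ref{thm:SolEF}, Vitali/Hurwitz, and boundary correspondence --- is essentially a transplant of Proposition~\ref{prop:prolong} and Theorem~\ref{thm:GlobalSol}, so it is consistent with the paper's architecture and not circular. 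Note, however, that you only extend forward from $t_0$: since the statement allows an arbitrary $t_0 \in J$, existence to the left of $t_0$ would also be needed, and there the $y_j$ decrease, so neither your monotonicity nor the prolongation argument applies (the paper's one-line proof does not address this either; in all of its applications $t_0 = \inf J$).

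The genuine gap is in your first paragraph, which is exactly the step the paper outsources. Proposition~\ref{prop:locLip} controls $\lvert \Psi_{\bm{s}}(z,\xi)-\Psi_{\bm{s}_0}(z,\xi)\rvert$ only for $z$ in a compact subset of $D(\bm{s}_0)$, whereas the components of $\bm{b}$ evaluate the kernel at the endpoints $z^{\ell}_k(\bm{s})$, $z^{r}_k(\bm{s})$, which are boundary points that move with $\bm{s}$. Your claim that ``the same argument'' yields an analogous bound near the tips on $D(\bm{s})^{\natural}$, uniformly in $\xi$ in a bounded interval, is precisely the nontrivial content of \cite[Theorem~9.1]{CFR16} and \cite[Lemma~4.1]{CF18}: the two kernels being compared are attached to two different slit configurations, so even formulating the comparison near the tips requires a consistent choice of sheets, and it does not follow from the quoted statement of Proposition~\ref{prop:locLip}. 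Moreover, your ``spatial piece'' cannot be handled by Cauchy's estimate in the planar variable: the reflected kernel has a square-root branch point at each tip, so it is holomorphic only in the coordinates $\operatorname{sq}^{\ell}_j$, $\operatorname{sq}^{r}_j$, and one must invoke the second-order-zero structure of Lemma~\ref{lem:order_preim} there to recover a genuine Lipschitz (rather than H\"older-$\tfrac12$) bound in the planar increment. The decomposition can be repaired along these lines, but as written the endpoint Lipschitz estimate --- the crux of the proposition, which the paper establishes by citation --- is asserted rather than proved.
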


\begin{proof}
The Lipschitz condition of the vector field $(t,\bm{s}) \mapsto \bm{b}(\nu_t, \bm{s})$
follows in the same way as in Chen and Fukushima~\cite[Lemma~4.1]{CF18}
if $\bigcup_{t \in J} \operatorname{supp} \nu_t$ is bounded.
\end{proof}

\section{Application}
\label{sec:application}

In this section, we consider two applications,
which will illustrate how a driving process $(\nu_t)_{t \in I}$
in the Komatu--Loewner equation~\eqref{eq:result_KL_LC} reflects the ``geometry'' and ``continuity'' of the corresponding Loewner chain.

\subsection{Bounded $\mathbb{H}$-hulls with local growth}
\label{sec:local_growth}

In the fourth paragraph of Section~\ref{sec:intro},
we have mentioned the Komatu--Loewner equation \eqref{eq:intro_1KL} corresponding to slit-mappings.
There, the driving process is reduced to a family of point masses $\delta_{\xi(t)}$
with $\xi(t)$ being a real-valued continuous function of $t$.
In this and the next subsections,
we study necessary and sufficient conditions for such reduction.

Below we consider the following setting:
let
$(F_t)_{t \in [0, T]}$
be a family of \emph{bounded} $\mathbb{H}$-hulls
growing in a parallel slit half-plane
$D = \mathbb{H} \setminus \bigcup_{j=1}^N C_j$.
Here, ``growing'' means strictly increasing:
$s < t$ implies $F_s \subsetneq F_t$.
A slit $\gamma$ of $D$
is an example of such a family of hulls.
For each $t \in [0, T]$, let
$g_t \colon D \setminus F_t \to D_t$
be the mapping-out function of $F_t$,
that is, a unique conformal mapping
onto a parallel slit half-plane $D_t$
with $\lim_{z \to \infty} (g_t(z) - z) = 0$%
\footnote{Since $F_t$ is bounded,
$g_t(z)$ has the Laurent expansion around $\infty$
after the Schwarz reflection across $\partial\mathbb{H}\setminus \overline{F_t}$.
We then see that
(H.\ref{ass:hydro}) and (H.\ref{ass:res}) can be combined into the simple normalization
$\lim_{z \to \infty} (g_t(z) - z) = 0$.}.
The BMD half-plane capacity
$\ell(t) := \operatorname{hcap}^D(F_t)$
is strictly increasing in $t$
as in \cite[Eq.~(8)]{BF08}, \cite[Lemma~5.15 (iii)]{CF18}.
Thus, except that $\ell(t)$ is not continuous,
$(g^{-1}_t)_{t \in [0, T]}$ is a \emph{reversed} Loewner chain, i.e.,
$(g^{-1}_{T-t})_{t \in [0, T]}$ is a Loewner chain.

Provided that $\ell(t)$ is continuous,
the Komatu--Loewner equation \eqref{eq:result_KL_LC}
with $f_t=g_{T-t}^{-1}$ holds in the setting above.
Our question is under what condition on $(F_t)_{t \in [0, T]}$ the measure-valued driving process $\nu_t$ is reduced to a real-valued continuous function.
One condition is given by the next definition.

\begin{definition}
\label{def:LocalGrowth}
\begin{enumerate}
\item \label{item:def_cross-cut}
A \emph{cross-cut} $C$ of a domain $E\subset \mathbb{C}$ is the trace of a simple curve
$c\colon [0, 1] \to \overline{E}$
with $c(0), c(1) \in \partial E$ and $c(0, 1) \subset E$.

\item \label{item:def_LGP}
A family of growing $\mathbb{H}$-hulls $(F_t)_{t \in [0, T]}$ in $D$ is said to have the \emph{local growth property}
if, for $\varepsilon > 0$, there exists a constant $\delta \in (0, T)$ with the following property:
for each $t \in [0, T - \delta]$,
some cross-cut $C$ of $D \setminus F_t$
with $\operatorname{diam}(C) < \varepsilon$
separates the increment
$F_{t + \delta} \setminus F_t$
from the point at infinity
in $D \setminus F_t$.
\end{enumerate}
\end{definition}

As B\"ohm~\cite[p.95]{Bo15} pointed out,
we can always assume in Definition~\ref{def:LocalGrowth} \eqref{item:def_LGP} that the endpoints of the cross-cut $C$ lie on the outer boundary $\partial (\mathbb{H} \setminus F_t)$ of $D \setminus F_t$.
In other words, $C$ is a cross-cut of $\mathbb{H}\setminus F_t$.
In this case, the cross-cut $C$ separates the domain $\mathbb{H}\setminus F_t$ into exactly two components:
one is bounded and the other is unbounded.
We shall designate the former component as $\operatorname{ins}C$ below.

In Definition~\ref{def:LocalGrowth},
$(F_t)_{t \in [0, T]}$ has the ``uniform continuity''
in terms of the diameter of cross-cuts.
This will be clearer
if we rephrase the local growth property as follows:
\emph{for any $\varepsilon > 0$,
there exists $\delta > 0$ such that,
if $0 \leq t - s \leq \delta$,
then some cross-cut $C$ of $D \setminus F_s$
with $\operatorname{diam}(C) < \varepsilon$
separates $F_t \setminus F_s$ from $\infty$ in $D \setminus F_s$.}
Indeed, even if $s > T - \delta$,
the difference $F_t \setminus F_s (\subset F_T \setminus F_{T - \delta})$
is separated from $\infty$ in $D \setminus F_{T - \delta}$
by a cross-cut $C$ of $D \setminus F_{T - \delta}$.
By definition, $C$ does not intersect
$F_s \setminus F_{T - \delta} (\subset F_T \setminus F_{T - \delta})$
except at its endpoints.
Thus, it is also a cross-cut of
$D \setminus F_s
= (D \setminus F_{T - \delta}) \setminus (F_s \setminus F_{T - \delta})$.

For the radial Loewner equation in $\mathbb{D}$,
Pommerenke~\cite{Po66} proved that
the local growth property holds if and only if
the driving process is reduced to a real-valued continuous function.
In the SLE context,
Lawler, Schramm and Werner~\cite[Theorem~2.6]{LSW01}
proved this equivalence for the chordal Loewner equation~\eqref{eq:intro_Loewner},
and Zhan~\cite[Proposition~2.1]{Zh04} mentioned the annulus case.
B\"ohm~\cite[Theorem~5.1]{Bo15} proved this fact for the radial Komatu--Loewner equation in circularly slit disks.

The next theorem shows one direction of the above-mentioned equivalence for the chordal Komatu--Loewner equation.

\begin{theorem} \label{thm:bdd_H-hull}
In the above-mentioned setting,
if
\begin{enumerate}
\renewcommand{\theenumi}{\arabic{enumi}}
\renewcommand{\labelenumi}{{\rm (P.\theenumi)}}

\item \label{item:LGP}
$(F_t)_{t \in [0, T]}$ has the local growth property,
\end{enumerate}
then
\begin{enumerate}
\setcounter{enumi}{1}
\renewcommand{\theenumi}{\arabic{enumi}}
\renewcommand{\labelenumi}{{\rm (P.\theenumi)}}

\item \label{item:sinKL}
$\ell(t)$ is continuous, and
there exists a continuous function $\xi(t)$ on $[0, T]$ such that
\begin{equation}
\label{eq:app_sinKLeq}
\frac{\partial g_t(z)}{\partial \ell(t)}
:= \lim_{h \to 0}
	\frac{g_{t+h}(z) - g_t(z)}{\ell(t+h) - \ell(t)}
= -\pi \Psi_{D_t}(g_t(z), \xi(t))
\end{equation}
for every $z \in D \setminus F_t$ and $t \in [0, T]$.
\end{enumerate}
\end{theorem}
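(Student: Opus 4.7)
The plan is to apply Theorem~\ref{thm:int_rep} directly to the composites $\phi_{t,s} := g_s \circ g_t^{-1}$ for $s \le t$ (and its analogue with the roles of $s$ and $t$ swapped), bypassing Corollary~\ref{cor:result_KL_LC} since a \emph{pointwise} (not almost-everywhere) statement is wanted. For $s \le t$, $\phi_{t,s}$ is a conformal mapping from $D_t$ into $D_s$ whose image complement is the $\mathbb{H}$-hull $F_{t,s} := g_s(F_t \setminus F_s) \subset D_s$; Propositions~\ref{prop:inverse_norm} and \ref{prop:composite_norm} show that $\phi_{t,s}$ satisfies {\rm (H.\ref{ass:hydro})--(H.\ref{ass:hull})} with angular residue $\ell(t) - \ell(s)$. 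Theorem~\ref{thm:int_rep} therefore produces a finite Borel measure $\mu_{t,s}$ on $\mathbb{R}$ of total mass $\ell(t) - \ell(s)$, satisfying $\mu_{t,s}(d\xi) = \pi^{-1} \Im \phi_{t,s}(\xi)\, d\xi$, such that for every $z \in D \setminus F_t$,
\begin{equation}\label{eq:plan_IR}
g_s(z) - g_t(z) = \pi \int_{\mathbb{R}} \Psi_{D_t}(g_t(z),\xi)\, \mu_{t,s}(d\xi).
\end{equation}
The analogous identity for $s \ge t$ carries the kernel $\Psi_{D_s}(g_s(z),\xi)$ and a measure $\mu_{s,t}$ of total mass $\ell(s) - \ell(t)$.

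The first substantive step is the continuity of $\ell$. Given $\varepsilon > 0$, local growth produces $\delta > 0$ such that, for $0 \le t - s \le \delta$, some cross-cut $C$ of $D \setminus F_s$ of Euclidean diameter less than $\varepsilon$, whose endpoints we may place on $\partial\mathbb{H} \cup \partial F_s$, separates $F_t \setminus F_s$ from infinity. Since $F_T$ is bounded, $\ell(s) \le \ell(T) < \infty$ uniformly in $s$, and combining the integral representation of $g_s^{-1}$ with Lemma~\ref{lem:Poi_Koebe} and Cauchy's estimate provides a locally uniform bound on $(g_s^{-1})'$; a distortion argument based on this yields $\operatorname{diam} g_s(C) = O(\varepsilon)$ uniformly in $s$, and hence the same for the enclosed hull $F_{t,s}$. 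The boundary formula $\mu_{t,s}(d\xi) = \pi^{-1} \Im \phi_{t,s}(\xi)\, d\xi$ together with $\Im \phi_{t,s}(\xi) \le \operatorname{diam}(F_{t,s})$ on a supporting interval of length $O(\operatorname{diam}(F_{t,s}))$ then gives $\ell(t) - \ell(s) = O(\varepsilon^2)$. Continuity of $\ell$, plugged back into \eqref{eq:plan_IR} together with the $\xi$-uniform bound of Lemma~\ref{lem:Poi_Koebe}, forces the continuity of $t \mapsto g_t(z)$ locally uniformly in $z$, and—applied to the slit endpoints via the analytic continuation of Section~\ref{sec:continuation}—the continuity of the slit vector $\bm{s}(t)$.

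I next introduce the driving function $\xi(t)$ as the common limit of the shrinking supports. For $h > 0$ the support of $\mu_{t+h,t}$ lies in an interval of $\partial\mathbb{H}$ of length $O(\operatorname{diam} g_t(F_{t+h}\setminus F_t))$, which collapses as $h \downarrow 0$ to the $g_t$-image of the unique prime end of $D \setminus F_t$ at which the shrinking cross-cuts accumulate; denote this limit point by $\xi(t) \in \partial\mathbb{H}$. For $h < 0$ the support of $\mu_{t,t+h}$ lies in an analogous interval in $\partial\mathbb{H} \cap \partial D_{t+h}$, and the locally uniform convergence $g_{t+h} \to g_t$ already established forces the same limit $\xi(t)$. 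Continuity of $\xi$ on $[0, T]$ follows by combining local growth at neighbouring times with the joint continuity of $(t, z) \mapsto g_t(z)$: a cross-cut witnessing local growth at time $t$ can be perturbed slightly to witness it at a neighbouring time $t'$, and its $g_{t'}$-image then shadows its $g_t$-image.

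Finally, the probability measures $\tilde{\mu}_{t,t+h} := \mu_{t,t+h}/(\ell(t) - \ell(t+h))$ (for $h < 0$) and $\tilde{\mu}_{t+h,t} := \mu_{t+h,t}/(\ell(t+h) - \ell(t))$ (for $h > 0$) converge vaguely to $\delta_{\xi(t)}$ as $h \to 0$. Since Lemma~\ref{lem:Poi_Koebe} places $\Psi_{D_t}(g_t(z),\cdot)$ in $C_\infty(\mathbb{R})$ and Proposition~\ref{prop:Poi_cpt} gives the uniform convergence $\Psi_{D_{t+h}}(g_{t+h}(z),\cdot) \to \Psi_{D_t}(g_t(z),\cdot)$ on $\mathbb{R}$ as $h \to 0$, passing to the limit in \eqref{eq:plan_IR} and in its $s \ge t$ analogue produces the two-sided derivative \eqref{eq:app_sinKLeq} for every $z \in D \setminus F_t$ and every $t \in [0, T]$. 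The principal obstacle I anticipate is the uniform distortion estimate used in the second paragraph: quantifying how a small Euclidean cross-cut in $D \setminus F_s$ is transformed into a small hull by $g_s$ uniformly in $s \in [0, T]$ requires control of $g_s$ near the slits of $D$, for which I would rely on the local coordinates $\operatorname{sq}^{\ell}_j$ and $\operatorname{sq}^{r}_j$ of Section~\ref{sec:continuation} together with uniform Koebe-type bounds analogous to those used in the proof of Lemma~\ref{lem:Poi_Koebe}.
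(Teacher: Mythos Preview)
Your overall strategy---applying Theorem~\ref{thm:int_rep} to $\phi_{t,s} = g_s \circ g_t^{-1}$, showing that the normalized measures $\mu_{t,s}/\mu_{t,s}(\mathbb{R})$ converge weakly to $\delta_{\xi(t)}$, and passing to the limit in the integral representation via Lemma~\ref{lem:Poi_Koebe} and Proposition~\ref{prop:Poi_cpt}---is exactly the route the paper takes, packaged as Corollary~\ref{cor:EFdiff} and Proposition~\ref{prop:app_Driver}. The entire weight of the argument rests on the uniform diameter estimate $\operatorname{diam} g_s(F_t \setminus F_s) \to 0$ as $t-s \to 0$, and here your proposal has a genuine gap.

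You plan to deduce $\operatorname{diam} g_s(C) = O(\varepsilon)$ from a locally uniform bound on $(g_s^{-1})'$. This fails for two reasons. First, a bound on $\lvert (g_s^{-1})' \rvert$ from above translates, via the inverse-function relation, into a bound on $\lvert g_s' \rvert$ from \emph{below}, which is the wrong direction: to show that $g_s$ sends a small cross-cut to a small set you need $\lvert g_s' \rvert$ bounded \emph{above} along $C$. Second, and more fundamentally, the endpoints of $C$ lie on $\partial F_s$ (or $\partial\mathbb{H}$), and near a tip of the hull $g_s$ behaves like a square root, so $\lvert g_s'(z) \rvert \sim \lvert z - \text{tip} \rvert^{-1/2}$ blows up; no derivative bound on compact subsets of $D \setminus F_s$ can control $g_s(C)$. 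Your anticipated obstacle---behaviour of $g_s$ near the parallel slits $C_j$ of $D$---is not the real one: since $F_T$ lies at positive distance from $\bigcup_j C_j$, the cross-cut $C$ stays away from those slits for small $\varepsilon$, and the coordinates $\operatorname{sq}^{\ell}_j$, $\operatorname{sq}^r_j$ are irrelevant here. The paper, following Pommerenke and Lawler--Schramm--Werner, bypasses derivative estimates entirely and uses the conformal invariance of extremal length: the small cross-cut forces the extremal length of the family of curves separating $F_t \setminus F_s$ from $\partial B(0,L+2)$ to be at most $4\pi/\log(1/\varepsilon)$; transporting this by $g_s$ into a region of uniformly bounded area (via the hydrodynamic normalization, \cite[Lemma~3.9]{Mu19spa}) yields $\operatorname{diam} g_s(F_t \setminus F_s) \le c/\sqrt{\log(1/\varepsilon)}$. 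Once this estimate is in place, the rest of your argument is correct and essentially coincides with the paper's.
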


We shall establish the converse
$\mathrm{(P.\ref{item:sinKL})} \Rightarrow \mathrm{(P.\ref{item:LGP})}$
in the next subsection,
adding one more condition equivalent to (P.\ref{item:sinKL}) from another viewpoint.
In the rest of this subsection,
we focus on the proof of Theorem~\ref{thm:bdd_H-hull}.

Our proof goes along the line of Lawler, Schramm and Werner~\cite[Theorem~2.6]{LSW01} with suitable modification.
(See also Pommerenke's original argument~\cite{Po66}.)
Before proof,
we recall the definition of extremal length.
Let $\Gamma$ be a path family in a planar domain,
that is,
a set consisting of rectifiable paths%
\footnote{
In general, the definition of path family allows an element
$\gamma \in \Gamma$
to be a countable union of curves,
but in what follows, we treat only connected paths.}.
The \emph{extremal length} of $\Gamma$ is defined by
\[
\operatorname{EL}(\Gamma)
:= \sup_{\rho}
	\frac{\left(\inf_{\gamma \in \Gamma} \int_{\gamma} \rho(z) \, \lvert dz \rvert\right)^2 }%
	{ \int_U \rho(z)^2 \,dx \, dy },
\quad z = x + iy.
\]
Here, we fix some domain $U$ containing all paths of $\Gamma$,
and the supremum is taken over all non-negative Borel measurable functions $\rho$ on $U$
with $0 < \int_U \rho^2 \,dx \, dy < \infty$.
$\operatorname{EL}(\Gamma)$ is independent of the choice of $U$
and moreover conformally invariant.
We refer the reader to
Chapter~4 of Ahlfors~\cite{Ah73}
or
Chapter~IV of Garnett and Marshall~\cite{GM05}
for the properties of extremal length.

We now begin the proof of Theorem~\ref{thm:bdd_H-hull}.
Until the end of this subsection,
suppose that $(F_t)_{t\in[0, T]}$ has the local growth property.
Let
$L:=\sup\{\, \lvert z \rvert \mathrel{;} z\in F_T\cup (\mathbb{H}\setminus D) \,\}$
and
$\varepsilon_0:=d^{\text{Eucl}}(F_T, \mathbb{H}\setminus D)^2/4$.
For each $(s, t) \in [0, T]^2_{<}$,
we denote by $\Gamma_{t,s}$ the set of rectifiable cross-cuts of $\mathbb{H}\setminus F_s$
which separate $F_t \setminus F_s$ from $B(0, L+2)^c$ in $D \setminus F_s$.
We note that $\Gamma_{u,s}\subset \Gamma_{u,t}\cap \Gamma_{t,s}$ holds for $s<t<u$.

\begin{lemma} \label{lem:s8_diam_bound}
For each $\varepsilon\in (0,\varepsilon_0)$
there exists a constant $\delta=\delta(\varepsilon)>0$ such that,
if $0\le s\le t\le u\le T$ with $u-s<\delta$ then
\begin{equation} \label{eq:s8_diam_bound}
\inf_{\gamma\in g_t(\Gamma_{u,s})} \operatorname{diam}(\operatorname{ins}\gamma)
\le r(\varepsilon):=\frac{4\sqrt{2}\pi(L+2)}{\sqrt{\log(1/\varepsilon)}}.
\end{equation}
\end{lemma}

\begin{proof}
Let $\varepsilon\in (0,\varepsilon_0)$.
By the local growth property
there exists $\delta$ such that,
for $(s,u)\in[0,T]^2_{<}$ with $u-s<\delta$,
some cross-cut $C$ with $\operatorname{diam}(C)<\varepsilon$ separates $F_u\setminus F_s$ from $\infty$ in $D\setminus F_s$.
For a fixed $z_0 \in C$,
let $\Gamma^{\prime}$ be the set of rectifiable paths
separating the inner and outer boundaries of the annulus
$\mathbb{A}(z_0; \varepsilon, \sqrt{\varepsilon})
= \{\, z \in \mathbb{C} \mathrel{;} \varepsilon < \lvert z - z_0 \rvert < \sqrt{\varepsilon} \,\}$.
Since any $\gamma^{\prime} \in \Gamma^{\prime}$
contains some path $\gamma \in \Gamma_{u,s}$
in the sense that $\gamma \subset \gamma^{\prime}$,
the extension rule
(\cite[Theorem~4.1]{Ah73}, \cite[Eq.~(3.2)]{GM05})
implies that
$\operatorname{EL}(\Gamma_{u,s})
\leq \operatorname{EL}(\Gamma^{\prime})
= 4\pi/\log (1/\varepsilon)$.
Here, see Section~1, Chapter~IV of \cite{GM05}
for getting the value of $\operatorname{EL}(\Gamma^{\prime})$.
Moreover, by the conformal invariance of extremal length,
\begin{equation} \label{eq:EL_bound}
\operatorname{EL}(g_t(\Gamma_{u,s}))
= \operatorname{EL}(\Gamma_{u,s})
\leq \frac{4 \pi}{\log (1/\varepsilon)},
\quad s\le t\le u.
\end{equation}

$g_t(B(0, L+2)\cap (D \setminus F_t))$ is bounded.
Indeed, a consequence \cite[Lemma~3.9]{Mu19spa} from the hydrodynamic normalization implies that
\[
\operatorname{diam} (g_t(B(0, L+2) \cap (D \setminus F_t))) \leq 4 (L + 2).
\]
Thus, by \eqref{eq:EL_bound} and the definition of extremal length,
\[
\inf_{\gamma \in g_t(\Gamma_{u,s})}
\left( \int_{\gamma} \, \lvert dz \rvert \right)^2
\leq \frac{4\pi}{\log (1/\varepsilon)}
\int_{g_t(B(0, L+2) \cap (D \setminus F_t))} \, dx \, dy
\leq \frac{32 \pi^2 (L + 2)^2}{\log (1/\varepsilon)}.
\]
Hence the conclusion \eqref{eq:s8_diam_bound} follows.
\end{proof}

Since $g_s(F_t\setminus F_s)\subset \operatorname{ins}\gamma$
for each $\gamma\in g_s(\Gamma_{t,s})$
and $\lim_{\varepsilon\to 0}r(\varepsilon)=0$,
we have the following by \eqref{eq:s8_diam_bound} with $u=t$:

\begin{corollary} \label{cor:s8_driver}
For each $t\in [0,T)$, there exists a point $\xi(t)\in \partial\mathbb{H}$ such that
\begin{equation} \label{eq:shrink}
\bigcap_{\Delta t>0}\overline{g_t(F_{t+\Delta t}\setminus F_t)}=\{\xi(t)\},
\quad t\in[0, T).
\end{equation}
\end{corollary}

\begin{proposition} \label{prop:s8_hcap_conti}
The function
$\ell(t):=\operatorname{hcap}^D(F_t)$
is continuous on $[0,T]$.
\end{proposition}

\begin{proof}
Let $\varepsilon\in (0,\varepsilon_0)$,
$(s,t)\in [0,T]^2_{<}$ with $t-s<\delta(\varepsilon)$,
and $g_{t,s}:=g_s\circ g_t^{-1}$.
The function $g_{t,s}$ maps $D_t=g_t(D\setminus F_t)$ conformally onto $D_s\setminus g_s(F_t\setminus F_s)$.
On account of the boundary correspondence,
for any $\gamma\in g_t(\Gamma_{t,s})$
\begin{itemize}
\item $\Im g_{t,s}(\xi)=0$
for every $\xi\in \partial\mathbb{H}\setminus \overline{\operatorname{ins}\gamma}$;
\item $\Im g_{t,s}(\xi)\le \sup_{z\in g_s(F_t\setminus F_s)}\Im z \le r(\varepsilon)$
for $\mathbf{Leb}$-a.e.\ $\xi\in \overline{\operatorname{ins}\gamma}\cap \partial\mathbb{H}$.
\end{itemize}
In particular,
as $\mu(g_{t,s};d\xi)=\pi^{-1}\Im g_{t,s}(\xi)\, d\xi$,
we have
\begin{equation} \label{eq:s8_supp_mu}
\operatorname{supp}[\mu(g_{t,s}; \mathord{\cdot})]
\subset \overline{\operatorname{ins}\gamma}\cap \partial\mathbb{H},
\quad \gamma\in g_t(\Gamma_{t,s}).
\end{equation}
Since
\[
\operatorname{hcap}^{D_s}(g_s(F_t \setminus F_s))
= \frac{1}{\pi}\int_{\operatorname{supp}[\mu(g_{t,s}; \mathord{\cdot})]}\Im g_{t,s}(\xi)\, d\xi,
\]
it follows from \eqref{eq:s8_diam_bound} with $u=t$ that
\begin{equation} \label{eq:s8_ell_uc}
\ell(t) - \ell(s)
= \operatorname{hcap}^{D_s}(g_s(F_t \setminus F_s))
\leq r(\varepsilon)^2/\pi.
\end{equation}
This proves the continuity of $\ell$.
\end{proof}

\begin{proposition} \label{prop:s8_xi_uc}
The points $\xi(t)$, $t\in [0,T)$, in Corollary~\ref{cor:s8_driver} form a uniformly continuous function on $[0,T)$.
\end{proposition}

\begin{proof}
Suppose that $\varepsilon\in (0,\varepsilon_0)$, $(s,t)\in [0,T]^2_{<}$ with $t-s<\delta(\varepsilon)$, and $u\in (t,T)$ with $u-s<\delta(\varepsilon)$.
Let $r^\prime\in (0,\eta_{D_T})$.
By Lemma~\ref{lem:s8_diam_bound} we can take a disk $U$ centered at a point of $\partial\mathbb{H}$ with radius less than $2r(\varepsilon)+r^\prime$
so that, for some $\gamma\in g_t(\Gamma_{u,s})$,
it holds that
$g_t(F_u\setminus F_t)\subset \operatorname{ins}\gamma\subset U$.
Now by \eqref{eq:int_rep}
\begin{equation} \label{eq:int_rep_LGP}
g_{t,s}(z)
= z+\pi \int_{\operatorname{supp}[\mu(g_{t,s}; \mathord{\cdot})]}\Psi_{D_t}(z,\xi)\cdot \pi^{-1}\Im g_{t,s}(\xi)\, d\xi,
\end{equation}
and thus it follows from \eqref{eq:Poi_Koebe} and \eqref{eq:s8_ell_uc} that
\begin{equation} \label{eq:s8_g_conti}
\lvert g_{t,s}(z)-z \rvert
\leq \frac{4}{r^{\prime}}(\ell(t)-\ell(s))
\le \frac{4r(\varepsilon)^2}{\pi r^\prime},
\quad z \in D_t\setminus \overline{U}.
\end{equation}
Since a cross-cut of $g_t(\Gamma_{u,s})$, which separates $g_t(F_u\setminus F_t)$ from $\infty$ in $D_t$, is mapped to a cross-cut of $g_s(\Gamma_{u,s})$ by the univalent mapping $g_{t,s}$,
a geometric consideration combined with \eqref{eq:s8_g_conti} shows
\[
\sup_{\substack{z\in g_s(F_u\setminus F_s) \\ w\in g_t(F_u\setminus F_t)}}\lvert z-w \rvert
\le 2(2r(\varepsilon)+r^\prime)+\frac{4r(\varepsilon)^2}{\pi r^\prime}.
\]
Hence, in view of Corollary~\ref{cor:s8_driver} we have
\[
\lvert \xi(s)-\xi(t) \rvert
\leq 2(2r(\varepsilon)+r^{\prime})+\frac{4r(\varepsilon)^2}{\pi r^\prime}.
\]
Letting $\delta\to 0$ and then $r^\prime\to 0$ yields
\[
\limsup_{\delta\to 0}\sup_{\substack{(s,t)\in [0,T)^2_{<} \\ 0<t-s<\delta}} \lvert \xi(s)-\xi(t) \rvert = 0,
\]
as desired.
\end{proof}

By Proposition~\ref{prop:s8_xi_uc},
$\xi(t)$ is extended to a continuous function on $[0,T]$.
Finally we give a proof of Theorem~\ref{thm:bdd_H-hull}.

\begin{proof}[Proof of Theorem~\ref{thm:bdd_H-hull}]
Only in this proof, we use a convention $t^\prime:=T-t$ for each $t\in [0,T]$.
To apply our general theory, we put
\begin{align*}
\phi_{u,s}&:=g_{s^\prime,u^\prime}=g_{u^\prime}\circ g_{s^\prime}^{-1},
&(s,u) \in [0,T]^2_{\le}, \\
\lambda(t)&:=\ell(T)-\ell(t^\prime),
&t \in [0,T].
\end{align*}

Fix $0\le t_0< t\le T$.
Let $\varepsilon\in (0,\varepsilon_0)$ and
$t_0<s\le t\le u\le T$ with $u-s<\delta(\varepsilon)$.
We can take $\alpha\in (s^\prime,T)$ so that $\alpha-u^\prime<\delta(\varepsilon)$.
Then by \eqref{eq:shrink} and \eqref{eq:s8_supp_mu},
\begin{equation} \label{eq:s8_xi_and_supp}
\{\xi(s^\prime)\}\cup \operatorname{supp}[\mu(g_{s^\prime,u^\prime}; \mathord{\cdot})]\subset \overline{\operatorname{ins}\gamma}\cap \partial\mathbb{H}
\quad \text{for}\ \gamma\in g_{s^\prime}(\Gamma_{\alpha,u^\prime}).
\end{equation}
Hence by \eqref{eq:s8_diam_bound} we have
\[
\operatorname{supp}[\mu(\phi_{u,s}; \mathord{\cdot})]\subset [\xi(s^\prime)-r(\varepsilon), \xi(s^\prime)+r(\varepsilon)].
\]
Now it is not difficult to see that
the normalized measure
$\mu(\phi_{u,s}; \mathord{\cdot})/\mu(\phi_{u,s}; \mathbb{R})$
converges weakly to
$\delta_{\xi(t^\prime)}(\mathord{\cdot})$
as $s,u\to t$.
By Corollary~\ref{cor:EFdiff},
we have
\begin{equation} \label{eq:s8_rev_EF}
\begin{split}
\frac{\partial \phi_{t,t_0}(z)}{\partial \lambda(t)}
&= \pi \int_{\mathbb{R}}
\Psi_{D_{t^\prime}}(\phi_{t,t_0}(z), \tilde{\xi}) \, \delta_{\xi(t^\prime)}(d\tilde{\xi}) \\
&=\pi \Psi_{D_{t^\prime}}(\phi_{t,t_0}(z), \xi(t^\prime)).
\end{split}
\end{equation}
Substituting $g_{t_0^\prime}(z)$ into the $z$ in this equation and taking time-reversal,
we obtain \eqref{eq:app_sinKLeq} for $t\in [0,T)$.

In the above, the case $t=T$ in \eqref{eq:app_sinKLeq} is excluded since we have taken an extra parameter $\alpha\in (s^\prime,T)$ in \eqref{eq:s8_xi_and_supp}.
Nevertheless, \eqref{eq:app_sinKLeq} holds also for $t=T$,
since $g_t(z)$ and the right-hand side of \eqref{eq:app_sinKLeq} are continuous at $t=T$.
This is just a basic calculus once we apply such a reparametrization as in \eqref{eq:KLeq_hcap_parametrized},
which is possible because $\ell(t)$ is strictly increasing and continuous.
\end{proof}

\subsection{Continuity of $\mathbb{H}$-hulls in Carath\'eodory's sense}
\label{sec:RSLC}

In this subsection,
we introduce another condition~(P.\ref{item:RSLC})
and prove that (P.\ref{item:LGP})--(P.\ref{item:RSLC})
are mutually equivalent.

We first define the left continuity of $(F_t)_{t \in [0, T]}$.
We use a classical concept,
Carath\'eodory's kernel convergence of domains,
following Section~5, Chapter~V of Goluzin~\cite{Go69}.

\begin{definition}
\label{def:kernel_convergence}

\begin{enumerate}

\item \label{item:kernel}
Let $G_n$, $n \in \mathbb{N}$, be domains in $\mathbb{C}$
and $a \in \mathbb{C}$.
The \emph{kernel $\ker_a (G_n)_{n \in \mathbb{N}}$ with respect to $a$}
is defined as
the connected component of the set
$\{\, z \in \mathbb{C} \mathrel{;}
B(z; r) \subset \bigcap_{n \geq N} G_n
\ \text{for some} \ r > 0 \ \text{and some} \ N \in \mathbb{N} \,\}$
containing $a$.

\item \label{item:kernel_convergence}
Let $I$ be an interval, $t_0 \in I$, and $a \in \mathbb{C}$.
Let $G_t$, $t \in I$, be domains in $\mathbb{C}$.
We say that
$G_t$ \emph{converges} to $G_{t_0}$ as $t \to t_0$
\emph{in the sense of kernel} (or \emph{in Carath\'eodory's sense})
with respect to $a$
if $\ker_a (G_{s_n})_{n \in \mathbb{N}} = G_{t_0}$ for every sequence
$(s_n)_{n \in \mathbb{N}}$ of $I$ with $s_n \to t_0$.

\item \label{item:left_continuity}

$(F_t)_{t \in [0, T]}$ is said to be \emph{left continuous} at $t_0 \in (0, T]$
(in the sense of kernel convergence or in Carath\'eodory's sense)
if the domain $D \setminus F_t$ converges to $D \setminus F_{t_0}$
as $t$ increases to $t_0$
in the sense of kernel with respect to some
(indeed, any) $a \in D \setminus F_T$.

\end{enumerate}

\end{definition}

Since $(F_t)_{t \in [0, T]}$ is increasing,
it automatically holds that
$D \setminus F_{t_0} \subset \ker_a (D \setminus F_{s_n})_{n \in \mathbb{N}}$
for any sequence $(s_n)_{n \in \mathbb{N}}$
with $s_n \uparrow t_0$.
Our left continuity requires
that this inclusion should be equality.
The right continuity of $(F_t)_{t \in [0, T]}$ is defined in the same manner.
This right continuity follows from the property~\eqref{eq:shrink},
which is also called the ``right continuity with limit $\xi(t)$''
in Section~1, Chapter~4 of Lawler~\cite{La05}.

\begin{lemma}[{Murayama~\cite[Lemma~4.4]{Mu19spa}}]
\label{lem:RCLC}
If $(F_t)_{t \in [0, T]}$ is continuous
(in the sense of kernel convergence),
then $\ell(t) = \operatorname{hcap}^D(F_t)$ is continuous.
\end{lemma}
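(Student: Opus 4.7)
The plan is to reduce continuity of $\ell(t) = \operatorname{hcap}^D(F_t)$ to convergence of the $z^{-1}$-coefficient in the Laurent expansion of the mapping-out function $g_t$ at infinity, and then to derive that convergence from the Carath\'eodory kernel convergence of $D \setminus F_t$ via a normal-families argument combined with the uniqueness statement of Corollary~\ref{cor:uniqueness}.

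First I would exploit the boundedness of $F_T$: choose $R > 0$ so large that $\{|z| > R\} \cap \mathbb{H} \subset D \setminus F_t$ for every $t \in [0, T]$. Since $g_t$ sends real points $x$ with $|x| > R$ into $\mathbb{R}$, Schwarz reflection extends $g_t$ holomorphically to $\{|z| > R\}$ with
\[
g_t(z) = z + \frac{\ell(t)}{z} + O(z^{-2}),
\]
so that for any $R' > R$,
\[
\ell(t) = \frac{1}{2\pi i}\oint_{|z|=R'} (g_t(z) - z)\,dz.
\]
It therefore suffices to show that whenever $s_n \to t_0$ one has $g_{s_n} \to g_{t_0}$ uniformly on $\{|z| = R'\}$.

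For that uniform convergence I would first establish normality. A Cauchy-type estimate using the Laurent expansion above and the bound $\ell(t) \leq \ell(T)$ yields $\sup_t \sup_{|z| \geq R''} |g_t(z) - z| < \infty$ for each $R'' > R$, while on any compact $K \subset D \setminus F_{t_0}$ the kernel convergence places $K \subset D \setminus F_{s_n}$ eventually and the bound $\Im g_{s_n}(z) \leq \Im z$ (a consequence of Theorem~\ref{thm:int_rep} applied to $g_t^{-1}$) keeps $\{g_{s_n}|_K\}$ uniformly bounded. From any subsequence I extract a further subsequence $g_{s_{n_k}} \to g_{\ast}$ locally uniformly on $D \setminus F_{t_0}$; Hurwitz's theorem forces $g_{\ast}$ to be univalent (it is nonconstant because $g_{\ast}(z) \sim z$ at $\infty$), the hydrodynamic expansion passes to the limit to give (H.\ref{ass:hydro}) and (H.\ref{ass:res}), and boundary correspondence presents the image as a parallel-slit half-plane. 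Corollary~\ref{cor:uniqueness} then forces $g_{\ast} = g_{t_0}$, and uniqueness of subsequential limits combined with the contour-integral formula for $\ell$ closes the argument.

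The hard part will be the identification step: ruling out degeneration of the slits of $D_{s_{n_k}}$ as $k \to \infty$, since a priori two slits might merge or one could collapse to a point, producing a limit image of lower connectivity for which Corollary~\ref{cor:uniqueness} would not apply. I would rule this out by combining the monotonicity of imaginary parts with the Carath\'eodory convergence on the domain side to obtain uniform a priori control of slit positions and lengths, applied via the homeomorphic boundary correspondence between $D \setminus F_t$ and $D_t$. Once nondegeneracy of the limit slits is secured, the remaining identifications are routine and the convergence of the Laurent coefficient follows immediately.
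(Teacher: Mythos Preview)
The paper does not prove this lemma in the present text; it is imported verbatim from the author's earlier work \cite[Lemma~4.4]{Mu19spa}, so there is no in-paper argument to compare against line by line. That said, your normal-families strategy is exactly the kind of argument one expects (and the paper itself invokes the companion ``kernel theorem'' \cite[Theorem~3.8]{Mu19spa} in the proof of Theorem~\ref{thm:LGP_RSLC} to obtain $g_t \to g_T$ from kernel convergence), so your route is aligned with the intended one.

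One remark on the step you flag as hard. Your proposed fix for slit non-degeneracy---monotonicity of imaginary parts plus boundary correspondence---is vaguer than it needs to be and does not obviously deliver the conclusion. The cleaner argument, which the paper uses in the analogous situation in the proof of Proposition~\ref{prop:prolong}, is purely topological: the subsequential limit $g_\ast$ is univalent on the $(N+1)$-connected non-degenerate domain $D \setminus F_{t_0}$, and a conformal map preserves both the connectivity degree and the non-degeneracy of boundary components. Hence $g_\ast(D \setminus F_{t_0})$ is automatically an $(N+1)$-connected domain whose inner boundary components (the limits of the slits of $D_{s_{n_k}}$) are non-degenerate horizontal segments. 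This places you squarely in the setting of Corollary~\ref{cor:uniqueness} without any separate a~priori control on slit lengths. With that substitution your outline goes through.
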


The author proved in the previous paper~\cite{Mu19spa}
that the property~\eqref{eq:shrink} and left continuity hold
if and only if the chordal Komatu--Loewner equation~\eqref{eq:intro_1KL} holds.
Although some care is required
on the difference between the settings
in the previous and present papers,
we can prove the following theorem:

\begin{theorem} \label{thm:LGP_RSLC}
In Theorem~\ref{thm:bdd_H-hull},
{\rm (P.\ref{item:LGP})}, {\rm (P.\ref{item:sinKL})}, and
the following {\rm (P.\ref{item:RSLC})} are mutually equivalent:
\begin{enumerate}
\setcounter{enumi}{2}
\renewcommand{\theenumi}{\arabic{enumi}}
\renewcommand{\labelenumi}{{\rm (P.\theenumi)}}

\item \label{item:RSLC}
the property~\eqref{eq:shrink} holds
for some continuous function $\xi(t)$ on $[0, T]$,
and $(F_t)_{t \in [0, T]}$ is left continuous on $(0, T]$
in the sense of Definition~\ref{def:kernel_convergence}~\eqref{item:left_continuity}.
\end{enumerate}
\end{theorem}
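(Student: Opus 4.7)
The plan is to close the chain of implications
(P.\ref{item:LGP}) $\Rightarrow$ (P.\ref{item:sinKL}) $\Rightarrow$ (P.\ref{item:RSLC}) $\Rightarrow$ (P.\ref{item:LGP}).
The first implication is already Theorem~\ref{thm:bdd_H-hull} combined with Proposition~\ref{prop:app_Driver}. The second is a direct transcription of the author's previous work \cite{Mu19spa}: after reparametrizing $t \mapsto \ell(t)$, equation~\eqref{eq:app_sinKLeq} becomes the classical chordal Komatu--Loewner equation, and the paragraph preceding the theorem already notes the equivalence established there between that equation and \eqref{eq:shrink} together with left continuity. The only genuinely new work is therefore the implication (P.\ref{item:RSLC}) $\Rightarrow$ (P.\ref{item:LGP}).

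Assume (P.\ref{item:RSLC}). Combining \eqref{eq:shrink} (which encodes right continuity) with the assumed left continuity yields full Carath\'eodory continuity $t \mapsto D \setminus F_t$ on $[0, T]$; Carath\'eodory's kernel theorem then gives locally uniform convergence of $g_t^{-1}$ as $t$ varies, and together with the continuous dependence of the slit endpoints of $D_t$ on $t$ produces a jointly continuous family $(t, w) \mapsto g_t^{-1}(w)$ on the domain $\{(t, w) \mathrel{;} t \in [0, T],\ w \in D_t\}$. Fix $\varepsilon > 0$. For each $t_0 \in [0, T]$ I would first choose $r_{t_0} > 0$ so small that $\partial B(\xi(t_0), r_{t_0}) \cap D_{t_0}$ is a half-circle avoiding every parallel slit of $D_{t_0}$ and whose preimage under $g_{t_0}^{-1}$ has diameter below $\varepsilon/2$, then upgrade this to a single $r > 0$ uniform in $t_0 \in [0, T]$ by a compactness argument using the above joint continuity.

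The main obstacle is to locate, for this uniform $r$, a single $\delta > 0$ with $g_t(F_{t+\delta} \setminus F_t) \subset B(\xi(t), r) \cap \overline{\mathbb{H}}$ for all $t \in [0, T - \delta]$; this is the uniform-in-$t$ strengthening of the pointwise property~\eqref{eq:shrink} and, once available, the cross-cut $C_t := g_t^{-1}(\partial B(\xi(t), r) \cap D_t)$ of $D \setminus F_t$ has diameter below $\varepsilon$ and separates $F_{t+\delta} \setminus F_t$ from infinity, proving the LGP. I would argue by contradiction: if no such $\delta$ existed, there would be sequences $t_n \in [0, T]$, $\delta_n \downarrow 0$, and $z_n \in F_{t_n + \delta_n} \setminus F_{t_n}$ with $\lvert g_{t_n}(z_n) - \xi(t_n) \rvert \ge r$. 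After extracting $t_n \to t_\ast$, the uniform continuity of $\xi$ and the joint continuity of $(t, w) \mapsto g_t^{-1}(w)$ would let me identify any accumulation point of $(g_{t_n}(z_n))$ as a point of $\bigcap_{\delta > 0} \overline{g_{t_\ast}(F_{t_\ast + \delta} \setminus F_{t_\ast})}$ (or its left-sided analogue on $D \setminus F_{t_\ast-}$) lying at distance at least $r$ from $\xi(t_\ast)$, contradicting~\eqref{eq:shrink}. The subcases $t_n \uparrow t_\ast$ and $t_n \downarrow t_\ast$ must be treated separately: the former uses the assumed left continuity of $(F_t)$, the latter the right continuity built into~\eqref{eq:shrink}. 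The delicate point is controlling possible accumulation of the $z_n$ on the slits $\bigcup_j C_j$, which I would handle using the monotonicity of $(F_t)$ together with the boundary correspondence to ensure that limits fall in regions where $g_{t_\ast}$ is well defined.
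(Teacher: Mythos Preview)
Your route differs from the paper's. The paper never attempts a direct proof of (P.\ref{item:RSLC}) $\Rightarrow$ (P.\ref{item:LGP}); its key observation is that (P.\ref{item:LGP}) and (P.\ref{item:RSLC}) are \emph{insensitive to the parallel slits}: each holds for $(F_t)$ as hulls in $D$ if and only if it holds for $(F_t)$ as hulls in $\mathbb{H}$. With that reduction the whole equivalence collapses to two citations---(P.\ref{item:LGP}) $\Leftrightarrow$ (P.\ref{item:sinKL}) in $\mathbb{H}$ is \cite[Theorem~2.6]{LSW01}, and (P.\ref{item:sinKL}) $\Leftrightarrow$ (P.\ref{item:RSLC}) is \cite[Theorem~4.6]{Mu19spa}---plus a short extension of the latter from $[0,T)$ to $[0,T]$ handled via Lemma~\ref{lem:EFineq} and dominated convergence. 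You miss this slit-independence reduction and instead attempt a self-contained compactness proof, which is considerably more work.

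Your direct argument for (P.\ref{item:RSLC}) $\Rightarrow$ (P.\ref{item:LGP}) has the right shape, but the ``delicate point'' you flag is a real gap, not a formality. In the subcase $t_n \uparrow t_\ast$ the points $z_n \in F_{t_n+\delta_n}\setminus F_{t_n}$ may well lie in $F_{t_\ast}$ (indeed they do whenever $t_n+\delta_n \le t_\ast$), so $g_{t_\ast}(z_n)$ is undefined and your identification of accumulation points of $g_{t_n}(z_n)$ with a subset of $\bigcap_{\delta}\overline{g_{t_\ast}(F_{t_\ast+\delta}\setminus F_{t_\ast})}$ fails as written; the ``left-sided analogue'' you allude to is never formulated. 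A repair would compare $g_{t_n}$ to $g_{t_m}$ via the transition maps $g_{t_n}\circ g_{t_m}^{-1}$ and use that their associated hulls shrink to $\{\xi(t_\ast)\}$, but that is substantially more than your sketch contains. You also pass over the $t=T$ endpoint in (P.\ref{item:sinKL}) $\Rightarrow$ (P.\ref{item:RSLC}): \cite{Mu19spa} only covers $[0,T)$, and the paper treats the endpoint explicitly.
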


\begin{proof}
By Proposition~\ref{prop:s8_hcap_conti} and Lemma~\ref{lem:RCLC},
the function $\ell(t)$ is increasing and continuous
if one of (P.\ref{item:LGP})--(P.\ref{item:RSLC}) holds.
In particular,
if we take any increasing and continuous function
$\theta(t)$ on $[0, T]$
and perform time-change as
$\tilde{F}_t := F_{\theta^{-1}(t)}$,
$\tilde{g}_t := g_{\theta^{-1}(t)}$,
$\tilde{D}_t := D_{\theta^{-1}(t)}$, and
$\tilde{\xi}(t) := \xi(\theta^{-1}(t))$,
then the conditions~(P.\ref{item:sinKL}) and (P.\ref{item:RSLC})
on $(F_t)_{t \in [0, T]}$
are equivalent to
those on $(\tilde{F}_t)_{t \in [0, \theta(T)]}$,
respectively.
(See also \eqref{eq:KLeq_time-change}.)
We can easily prove,
using the uniform continuity of $\theta$ on $[0, T]$,
that this invariance under reparametrization is the case also for (P.\ref{item:LGP}).
Therefore,
we may reparametrize $(F_t)_{t \in [0, T]}$
whenever it is necessary
to make our setting consistent to those in relevant studies.

In addition to the invariance under reparametrization,
we note that the conditions~(P.\ref{item:LGP}) and (P.\ref{item:RSLC})
are independent of whether parallel slits exist or not.
To be precise,
if $(F_t)_{t \in [0, T]}$ enjoys (P.\ref{item:LGP}) or (P.\ref{item:RSLC})
as a family of $\mathbb{H}$-hulls \emph{in $D$},
then so does it, respectively,
as a family of $\mathbb{H}$-hulls \emph{in $\mathbb{H}$},
and vice versa.
This is clear from definition (see \cite[Proposition~4.7]{Mu19spa} for example).

Lawler, Schramm and Werner~\cite[Theorem~2.6]{LSW01} showed
that (P.\ref{item:LGP}) is equivalent to (P.\ref{item:sinKL})
as long as $(F_t)_{t \in [0, T]}$ is regarded
as a family of hulls in $\mathbb{H}$.
We also know
from Murayama~\cite[Theorem~4.6]{Mu19spa}
that (P.\ref{item:sinKL}) and (P.\ref{item:RSLC})
are equivalent both in $\mathbb{H}$ and in $D$,
but this result applies to a right-open interval $[0, T)$ only.
It remains to extend it to $t = T$.

The implication
$\mathrm{(P.\ref{item:sinKL})} \Rightarrow \mathrm{(P.\ref{item:RSLC})}$
at $t = T$ is trivial,
because Lemma~\ref{lem:EFineq} shows
the continuity of the corresponding vector $\bm{s}(t)$ of slit endpoints.
We observe
$\mathrm{(P.\ref{item:RSLC})} \Rightarrow \mathrm{(P.\ref{item:sinKL})}$
at $t = T$.
The (left) continuity of $\ell(t)$ at $t = T$
follows from the left continuity of $(F_t)_{t \in [0, T]}$
\cite[Lemma~4.4~(i)]{Mu19spa}.
Also the left continuity of $(F_t)_{t \in [0, T]}$
and the kernel theorem~\cite[Theorem~3.8]{Mu19spa}
imply that
$\bm{s}(t) \to \bm{s}(T)$
and
$g_t(z) \to g_T(z)$, $z \in D \setminus F_T$, as $t \uparrow T$.
Now \eqref{eq:app_sinKLeq} at $t=T$ holds in the same way as in the last paragraph of the proof of Theorem~\ref{thm:bdd_H-hull}.
\end{proof}

\subsection{Multiple slits from outer boundary}
\label{sec:multiple_slits}

Let $D$ be a parallel slit half-plane and
$\gamma_k \colon [0, T] \to \overline{D}$, $k = 1, \ldots, n$,
be $n$ disjoint simple curves with
$\gamma_k(0) \in \partial \mathbb{H}$
and
$\gamma_k(0, T] \subset D$.
We put
$F_t := \bigcup_{k=1}^n \gamma_k(0, t]$
and consider the mapping-out function
$g_t \colon D \setminus F_t \to D_t$.
For each $k$ and $t$,
there exists a unique point $\xi_k(t) \in \partial \mathbb{H}$ such that
$\lim_{z \to \xi_k(t)} g_t(z) = \gamma_k(t)$
by the boundary correspondence.

\begin{proposition} \label{prop:app_multiple-path}

\begin{enumerate}

\item \label{prop:contMultiDriver}
$\ell(t) := \operatorname{hcap}^D(F_t)$ and
$\xi_k(t)$, $k=1, \ldots, n$,
are continuous in $t$.

\item \label{prop:app_multiKLeq}
There exist an $m_{\ell}$-null set $N \subset [0, T]$ and
weights $c_1(t), \ldots, c_n(t) \geq 0$ with $\sum_{k=1}^n c_k(t) = 1$ such that
\begin{equation} \label{eq:app_multiKLeq}
\tilde{\partial}^{\ell}_t g_t(z)
= -\pi \sum_{k=1}^n c_k(t) \Psi_{D_t}(g_t(z), \xi_k(t))
\end{equation}
holds for every $t \in [0, T] \setminus N$ and $z \in D \setminus F_t$.
Here, each weight $c_k(t)$ can be chosen to be $\mathcal{B}[0,T]^{m_\ell}/\mathcal{B}[0,1]$-measurable and is unique for $m_\ell$-a.e.\ $t$.
\end{enumerate}

\end{proposition}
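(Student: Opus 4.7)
For part (i), I would first verify that the family $(F_t)_{t \in [0, T]}$ is both left- and right-continuous at every $t_0$ in the sense of the Carathéodory kernel convergence of Definition~\ref{def:kernel_convergence}: left continuity is immediate from the continuity of each $\gamma_k$ and the monotonicity of $(F_t)$, while right continuity follows by enclosing each incremental arc $\gamma_k(t_0, t_0+\delta]$ in an arbitrarily small cross-cut of $D \setminus F_{t_0}$, using that the disjoint simple arcs $\gamma_k$ have positive mutual Euclidean distance on compact sub-intervals. Lemma~\ref{lem:RCLC} then yields the continuity of $\ell(t)$, and the kernel convergence together with a local cross-cut/extremal-length estimate around each tip $\gamma_k(t)$, patterned on the proof of Proposition~\ref{prop:app_Driver}~\eqref{prop:ExistDriver}, gives continuity of each $\xi_k(t)$.

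For part (ii) I reduce to the general theory by time reversal. Define $f_t := g_{T-t}^{-1}$ for $t \in [0, T]$; since $(F_t)$ is strictly increasing, $(f_t)$ is a chordal Loewner chain over $(D_{T-t})_{t \in [0, T]}$ with codomain $D$, whose angular residue at $t$ equals $\operatorname{hcap}^D(F_{T-t})$. Corollary~\ref{cor:result_KL_LC} then supplies an exceptional null set and a measurable driving process $(\nu_t)$ satisfying
\[
\tilde{\partial}^{\ell}_t g_{T-t}(z)
= -\pi \int_{\mathbb{R}} \Psi_{D_{T-t}}(g_{T-t}(z), \xi) \, \nu_t(d\xi).
\]
A change of variable $u = T - t$ converts the left-hand side to $\tilde{\partial}^{\ell}_u g_u(z)$ with $\ell(u) = \operatorname{hcap}^D(F_u)$, so \eqref{eq:app_multiKLeq} will follow once I show that $\nu_t = \sum_{k=1}^n c_k(T-t) \delta_{\xi_k(T-t)}$ for non-negative weights summing to one.

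The main step is therefore to pin down the support of $\nu_t$. By Theorem~\ref{thm:EFmain} (applied to the evolution family $\phi_{u, s} = g_{T-u} \circ g_{T-s}^{-1}$), $\nu_t$ is the vague limit as $\delta \downarrow 0$ of $\mu(\phi_{t+\delta, t-\delta}; \cdot) / \mu(\phi_{t+\delta, t-\delta}; \mathbb{R})$, and Theorem~\ref{thm:int_rep} identifies the numerator with $\pi^{-1} \Im \phi_{t+\delta, t-\delta}(\xi) \, d\xi$. Its support is the base on $\partial \mathbb{H}$ of the incremental hull $g_{T-t-\delta}(F_{T-t+\delta} \setminus F_{T-t-\delta})$. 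For $\delta$ sufficiently small the $n$ curve increments $\gamma_k(T-t-\delta, T-t+\delta]$ are pairwise at positive Euclidean distance, so the incremental hull splits into $n$ disjoint connected components, one per curve. Applying the extremal-length/cross-cut estimate of Proposition~\ref{prop:app_Driver} separately to each component—enclosed by cross-cuts that isolate it from infinity \emph{and} from the other $n-1$ components—produces a diameter bound of order $1/\sqrt{\log(1/\varepsilon)}$ on each image $g_{T-t-\delta}(\gamma_k(T-t-\delta, T-t+\delta])$ and hence on its base, which, combined with the already-proved continuity of $\xi_k$, forces the $k$-th base to cluster around $\xi_k(T-t)$. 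Consequently every vague accumulation point of the normalized measures has the form $\sum_{k=1}^n c_k(t) \delta_{\xi_k(T-t)}$ with non-negative weights summing to one. The principal obstacle is this uniform-in-$\delta$ geometric separation of the $n$ pieces: the single-cross-cut construction of Proposition~\ref{prop:app_Driver} must be enlarged to enclose each $\gamma_k$-increment by a cross-cut that isolates it both from infinity and from the other $n-1$ curves, which is exactly where the mutual disjointness of the $\gamma_k$ on the compact interval $[0, T]$ is fully exploited.
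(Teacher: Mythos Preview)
Your proposal is correct and follows essentially the same route as the paper, which merely sketches the argument as ``similar to Theorem~\ref{thm:bdd_H-hull}'' and points out the two key differences you also identified: one must invoke Theorem~\ref{thm:EFmain} rather than Corollary~\ref{cor:EFdiff} (since the masses near the individual $\xi_k$ may oscillate, so one only gets convergence of the normalized measure for $m_\ell$-a.e.\ $t$, not for every $t$), and the extremal-length/cross-cut estimate of Proposition~\ref{prop:app_Driver} must be applied separately around each of the $n$ tips. One small point worth making explicit in your write-up: the conclusion $\sum_k c_k(t)=1$ (rather than $\le 1$) requires that no mass escapes to infinity in the vague limit, which holds because the supports of the approximating measures lie in a fixed compact set by the continuity of the $\xi_k$ on $[0,T]$.
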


\begin{proof}
We omit the detail because the proof of this proposition is similar to that of Theorem~\ref{thm:bdd_H-hull}.
We just make some comments.
Firstly, even if the support
$\operatorname{supp}[\mu(\phi_{u, s}; \mathord{\cdot})]$
for $\phi_{u, s} = g_{T-u} \circ g_{T-s}$
shrinks to the $n$-point set
$\{ \xi_1(T-t), \ldots, \xi_n(T-t) \}$
as $s, u \to t$,
the normalized measure
$\mu(\phi_{u, s}; \mathord{\cdot})/\mu(\phi_{u, s}; \mathbb{R})$
does not necessarily converge weakly.
The mass on a neighborhood of each $\xi_k(T-t)$ may oscillate.
For this reason, Theorem~\ref{thm:EFmain} should be employed in place of Corollary~\ref{cor:EFdiff} in the present case.
The second comment is that, apart from our method, we can also obtain \eqref{prop:contMultiDriver} from Lemmas~2.38 and 2.43 of B\"ohm~\cite{Bo15}.
Lastly, the measurability and uniqueness of $c_k(t)$ follows from the $\mathcal{B}[0,T]^{m_\ell}/\mathcal{B}(\mathcal{P}(\mathbb{R}))$-measurability and uniqueness of $\nu_t=\sum_k c_k(t)\delta_{\xi_k(t)}$.
Indeed, the measurability of $\nu_t$ implies that, for each $f\in C_c(\mathbb{R})$, the function
$t\mapsto \sum_k c_k(t)f(\xi_k(t))$ is $\mathcal{B}[0,T]^{m_\ell}/\mathcal{B}(\mathbb{R})$-measurable.
A suitable choice of $f$ yields the measurability of $c_k(t)$.
\end{proof}

We make some remarks on \eqref{eq:app_multiKLeq}.
In contrast to the equation~\eqref{eq:app_sinKLeq} for ``single-slit mappings,''
one has not formulated so far
any explicit condition on $(F_t)_{t \in [0, T]}$
that is equivalent to
the equation~\eqref{eq:app_multiKLeq} for ``multiple-slit mappings.''
For example,
we can replace disjoint paths in Proposition~\ref{prop:app_multiple-path}
by disjoint hulls of local growth.
See Starnes~\cite{St19}.
However,
this replacement does not give
a necessary condition for \eqref{eq:app_multiKLeq}.
We also have to consider the more complicated situation
in which one path or hull touches other one.
In fact,
B\"ohm and Schlei{\ss}inger~\cite{BS16} studied
the $t$-differentiability of the mapping-out function $g_t(z)$
for the union of two paths $\gamma_1(0, t]$ and $\gamma_2(0, t]$
such that $\gamma_1(0) = \gamma_2(0)$.
They gave a condition on $\gamma_1$ and $\gamma_2$
sufficient for $g_t(z)$ to be (right-)differentiable
at $t=0$ \cite[Theorem~1.5]{BS16},
while constructing an example of a pair $(\gamma_1, \gamma_2)$
for which $t \mapsto g_t(z)$ is not differentiable at $t=0$.

In the above case of B\"ohm and Schle{\ss}inger,
the two paths $\gamma_1$ and $\gamma_2$ touch each other at a single time $t=0$.
A touch at distinct times, i.e., $\gamma_1(s)=\gamma_2(t)$ for some $s\neq t$ should also be taken into account.
For example, even if we assume that the driving functions enjoy $\xi_1(t)\neq \xi_2(t)$ for all $t$,
the boundary correspondence does not rule out the possibility of $\gamma_2(t)$ lying on $\gamma_1(0,t)$.
See Schleissinger~\cite[Theorem~1.2]{Sc12} to find a sufficient condition (on $\xi_k$'s) for the family of $\mathbb{H}$-hulls generated by \eqref{eq:app_multiKLeq} to split into $n$ disjoint slits
in the case of chordal Loewner equation in $\mathbb{H}$.

\section{Concluding remarks}
\label{sec:conclusion}

In this section,
we conclude this paper
with some remarks on the relation of our study to previous and future works.

\subsection{Remaining problems}
\label{sec:problems}

We recall from Section~\ref{sec:proof3}
that a solution to the Komatu--Loewner equation for slits~\eqref{eq:KLforSlit_vec}
is not proved to be unique in Proposition~\ref{prop:LocSolSlit}.
However,
as we believe that a driving process $\nu_t$ has
all the information of the corresponding evolution family,
\emph{the uniqueness of slit motion is plausible}.
For proof of this uniqueness,
a closer study will be required
on the local Lipschitz continuity of the function
$\mathbf{Slit} \ni \bm{s} \mapsto \Psi_{\bm{s}}(z, \xi)$~\cite[Theorem~9.1]{CFR16}.
If \emph{the Lipschitz constant turns out to be independent of $\xi \in \mathbb{R}$},
then we can drop the assumption
that $\bigcup_{t \in J} \operatorname{supp} \nu_t$ is bounded
in Proposition~\ref{prop:UniSolSlit}.
A possible way to show this independence is to improve
the interior variation method developed in Section~12
of Chen, Fukushima and Rohde~\cite{CFR16}.

In this paper, we have considered the chordal case on parallel slit half-planes.
It is a natural problem
to construct analogous theories of Loewner chains and evolution families
on circularly slit disk (radial case) and
on circularly slit annuli (bilateral case).
See Komatu~\cite{Ko50},
Bauer and Friedrich~\cite{BF04, BF06, BF08},
Fukushima and Kaneko~\cite{FK14},
B\"ohm and Lauf~\cite{BL14},
and B\"ohm~\cite{Bo15}
for relevant studies.

\subsection{Reversed Loewner chains and SLE}
\label{sec:SLE}

In Sections~\ref{sec:intro} and \ref{sec:application},
we have considered reversed evolution families and reversed Loewner chains.
As is illustrated in Section~\ref{sec:application},
we can \emph{derive} a differential equation for reversed families
without any additional effort.
However, matters are different in regard to \emph{solving} the equation.
We can find this difference
in the corresponding Komatu--Loewner equation for slits
$\dot{\bm{s}}(t) = - 2\bm{b}(\nu_t, \bm{s}(t))$.
The $y$-coordinates $y_j(t)$ ($1 \leq j \leq N$) are decreasing in $t$
for this backward equation
whereas they are increasing
for the forward equation~\eqref{eq:KLforSlit_vec}.
Thus,
even if $\operatorname{supp} \nu_t \subset [-a, a]$ for some $a > 0$,
the vector $\bm{s}(t)$ of slit endpoints may not be defined globally.
If it is not defined globally,
then there exists some $\zeta \in (0, \infty)$ such that
$\lim_{t \uparrow \zeta} \min_{1 \leq j \leq N} y_j(t) = 0$.
In this case,
the slits of $D_t$ may be absorbed
by the outer boundary $\partial \mathbb{H}$ at $t = \zeta$.
For a reversed Loewner chain
$(f_t)_{t \in [0, T]}$ with $f_0(D_0) = D$,
such absorption is closely related to the phenomenon
that the hulls $F_t = D \setminus f_t(D_t)$ ``swallow''
some part of the slits of $D$.

The author studied the case $\zeta < \infty$
with regard to \eqref{eq:intro_1KL}
in the previous paper~\cite{Mu19jeeq}.
That paper presents certain results
on the behavior of $\bm{s}(t)$ around $t = \zeta$.
On the other hand,
we can say little about the behavior of $F_t$ around $t = \zeta$.
In particular,
it remains to be discussed
how the ``limit'' of $F_t$ as $t \uparrow \zeta$ is constructed.
It is reasonable to believe the following:
we can define the limit hull $F_{\zeta}$
in such a way that
$\lim_{t \uparrow \zeta} y_j(t) = 0$
if and only if
$C_j \cap F_{\zeta} \neq \emptyset$.
Here, note that,
even if $(g_t)_{t \in [0, \zeta)}$ obeys \eqref{eq:app_sinKLeq},
the local growth property cannot be expected
at $t = \zeta$ anymore.
We cannot exclude the possibility
that the driving function $\xi(t)$ diverges as $t \uparrow \zeta$.
The author hopes
that the present work will help
to treat such a subtle situation.

The study on reversed Loewner chains above
plays a role in defining and analyzing extensions of SLE
to multiply connected domains.
As $\mathrm{SLE}_{\kappa}$ ($\kappa > 0$) on $\mathbb{H}$ is defined
as a reversed Loewner chain $(g_t)_{t \geq 0}$
generated by \eqref{eq:intro_Loewner}
with $\xi(t) = \sqrt{\kappa} B_t$
($B_t$ is the one-dimensional standard Brownian motion),
the \emph{stochastic Komatu--Loewner evolution} (SKLE for short) on $D$
is defined as a reversed chain $(g_t)_{0 \leq t < \zeta}$
generated by \eqref{eq:intro_1KL}
with $\xi(t)$ determined by a certain stochastic differential equation.
See Bauer and Friedrich~\cite{BF04, BF06, BF08}
and Chen and Fukushima~\cite{CF18}.
Zhan~\cite{Zh04PhD} defined \emph{harmonic random Loewner chains}
in a different way to extend SLE to finite Riemann surfaces.
Although he did not use \eqref{eq:intro_1KL} in his definition,
\eqref{eq:intro_1KL} also appeared as a byproduct of his results.
Lawler~\cite{La06} and Jahangoshahi and Lawler~\cite{JL18+}
studied further different ways to extend SLE, respectively,
without using \eqref{eq:intro_1KL}.
From this context,
the following question arises naturally:
\emph{how are these different extensions of SLE
related to each other?}
Answering this question will make the Komatu--Loewner equation applicable
to problems that have been studied by other methods.
Such a relation of our theory to SLE is yet to be investigated.

\section*{Acknowledgments}

The author wishes to express his gratitude
to Professor Zhen-Qing Chen
for his valuable comments, part of which motivates Remark~\ref{rem:Doob},
and to Professor Ikkei Hotta
for giving me the opportunity to talk in International Workshop on Complex Dynamics and Loewner Theory in 2018.
The communication with experts in this workshop led me to this study.
In addition, the author gratefully acknowledges helpful comments of Dr.\ Roland Friedrich on the first draft of this paper.
Finally, the author is greatly indebted to the anonymous referee for his careful reading and useful suggestions;
in particular, the use of Schwarz formula in the proof of Lemma~\ref{lem:boundedness} and the idea of \cite{AAS83}, both of which were taught to the author by the referee, enabled us to improve Theorems~\ref{thm:result_KL_EF} and \ref{thm:result_unbdd} a lot.
This research was supported by JSPS KAKENHI Grant Number JP19J13031.

\appendix

\section{Komatu--Loewner equation for slits}
\label{sec:KL_slit}

In this appendix,
we derive the Komatu--Loewner equation for slits~\eqref{eq:KLforSlit_vec}
from the equation~\eqref{eq:EF_KLeq},
following Bauer and Friedrich~\cite[Section~4.1]{BF08}
and Chen and Fukushima~\cite[Section~2]{CF18}.

As a preliminary, we prove a lemma,
which counts the order of zeros of conformal mappings
extended as in Section~\ref{sec:continuation}:

\begin{lemma} \label{lem:order_preim}
Suppose that $D_1 = E_1 \setminus \bigcup_{j=1}^N C_{1, j}$ and
$D_2 = E_2 \setminus \bigcup_{j=1}^N C_{2, j}$ are
parallel slit domains with $N$ slits and
that $f \colon D_1 \to D_2$ is a conformal mapping
which associates the slits $C_{1, j}$ with $C_{2, j}$, $j = 1, \ldots, N$, respectively.
Let $p_2 \in D_2^{\natural}$.
For the preimage $p_1 := (f^{\natural})^{-1}(p_2)$,
let
$\psi \colon D_1^{\natural} \supset U_{p_1} \to V_{p_1} \subset \mathbb{C}$
be a local coordinate around $p_1$.
Then the function
$h := f \circ \psi^{-1} \colon V_{p_1} \to \operatorname{pr}(D_2^{\natural})$
satisfies the following:
\begin{enumerate}
\item \label{lem:order_notEnd}
If $p_2 \notin \bigcup_{j=1}^N \{z^{\ell}_{2, j}, z^r_{2, j}\}$,
then 
$h - \operatorname{pr}(p_2)$
has a zero of the first order at $\psi(p_1)$.

\item \label{lem:order_End}
If $p_2 \in \bigcup_{j=1}^N \{z^{\ell}_{2, j}, z^r_{2, j}\}$,
then 
$h - \operatorname{pr}(p_2)$
has a zero of the second order at $\psi(p_1)$.
\end{enumerate}
\end{lemma}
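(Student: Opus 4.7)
The strategy is to compute $h = f \circ \psi^{-1}$ directly in local coordinates by exploiting the factorization $f = \operatorname{pr} \circ f^{\natural}$. Since $f^{\natural} \colon D_1^{\natural} \to D_2^{\natural}$ is a biholomorphism of Riemann surfaces, it is a local biholomorphism in every pair of local charts, so what distinguishes the two cases is purely the behavior of the projection $\operatorname{pr}$: it is a local biholomorphism away from slit endpoints, but a genuine two-to-one branched covering at the endpoints. The lemma therefore reduces to a direct chart computation.

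The key preliminary observation I would record first is that $p_2$ is a slit endpoint of $D_2$ if and only if $p_1 = (f^{\natural})^{-1}(p_2)$ is a slit endpoint of $D_1$. This is because the endpoints $z^{\ell}_j, z^r_j$ are precisely the ramification points of $\operatorname{pr} \colon D^{\natural} \to \mathbb{C}$ (the unique points at which $\operatorname{pr}$ fails to be locally injective), and any biholomorphism between the glued surfaces must preserve this intrinsic property.

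For case~\eqref{lem:order_notEnd}, both projections (restricted to the appropriate sheets) are local biholomorphisms at $p_1$ and at $p_2$, so $h = \operatorname{pr} \circ f^{\natural} \circ \psi^{-1}$ is a composition of local biholomorphisms, and hence $h(w) - h(\psi(p_1))$ has a simple zero at $\psi(p_1)$. For case~\eqref{lem:order_End}, say $p_2 = z^{\ell}_{2,j}$ so that the natural coordinate around $p_2$ is $\operatorname{sq}^{\ell}_{2,j}$, satisfying the defining identity $\operatorname{pr}(q) = z^{\ell}_{2,j} + \operatorname{sq}^{\ell}_{2,j}(q)^2$ near $p_2$. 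Setting $F := \operatorname{sq}^{\ell}_{2,j} \circ f^{\natural} \circ \psi^{-1}$, which is a biholomorphism with $F(\psi(p_1)) = 0$, one obtains
\begin{equation*}
h(w) - h(\psi(p_1)) = \operatorname{pr}(f^{\natural}(\psi^{-1}(w))) - z^{\ell}_{2,j} = F(w)^2,
\end{equation*}
which has a zero of order exactly two; the endpoint $p_2 = z^r_{2,j}$ is handled identically with $\operatorname{sq}^r_{2,j}$ in place of $\operatorname{sq}^{\ell}_{2,j}$. There is no substantial obstacle in the argument; the only delicate point is the preliminary observation above, and once that is in place both assertions follow by unwinding the definitions of the local coordinates constructed at the end of Section~\ref{sec:continuation}.
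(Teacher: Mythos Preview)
Your argument is correct but proceeds differently from the paper. The paper invokes the local mapping theorem: if $h - h(\psi(p_1))$ vanishes to order $m$, then $h(z) = w$ has exactly $m$ solutions near $\psi(p_1)$ for each nearby $w \neq \operatorname{pr}(p_2)$; it then determines $m$ by counting these solutions --- one in case~\eqref{lem:order_notEnd} since $h$ is univalent, two in case~\eqref{lem:order_End} since each nearby $w \in C^{\circ}_{2,j}$ has one $f^{\natural}$-preimage on $C^{+}_{2,j}$ and one on $C^{-}_{2,j}$. You instead factor $h = \operatorname{pr} \circ f^{\natural} \circ \psi^{-1}$ and read off the order from the explicit identity $\operatorname{pr}(q) - z^{\ell}_{2,j} = \operatorname{sq}^{\ell}_{2,j}(q)^2$, which is arguably more direct once the charts of Section~\ref{sec:continuation} are in hand. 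One minor remark: your ``preliminary observation'' that $p_1$ is an endpoint if and only if $p_2$ is one is not actually needed --- in both cases only the projection on the $D_2^{\natural}$ side enters the factorisation, since $\psi$ already furnishes the chart on the $D_1^{\natural}$ side --- and its justification via ``intrinsic'' ramification is slightly loose (the ramification locus is a feature of the map $\operatorname{pr}$, not of the abstract surface); but this does not affect correctness.
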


\begin{proof}
We assume that 
$h - \operatorname{pr}(p_2)$
has a zero of order $m \geq 1$ at $\psi(p_1)$.
By Theorem~11 in Section~3.3, Chapter~3 of Ahlfors~\cite{Ah79},
there exist a neighborhood
$W_{\operatorname{pr}(p_2)} \subset \operatorname{pr}(D_2^{\natural})$
of $\operatorname{pr}(p_2)$
and a neighborhood
$\tilde{V}_{p_1} \subset V_{p_1}$
of $\psi(p_1)$ such that
$h(z) - w = 0$ has exactly $m$ distinct roots in $\tilde{V}_{p_1}$
for any $w \in W_{\operatorname{pr}(p_2)}\setminus \{\operatorname{pr}(p_2)\}$.

\noindent
\eqref{lem:order_notEnd}
Suppose $p_2 \notin \bigcup_{j=1}^N \{z^{\ell}_{2, j}, z^r_{2, j}\}$.
Then $h$ is univalent near $\psi(p_1)$ by definition.
Hence $m = 1$.

\noindent
\eqref{lem:order_End}
Suppose $p_2 \in \{z^{\ell}_{2, j}, z^r_{2, j}\}$ for some $j = 1, \ldots, N$.
Let $w \in C^{\circ}_{2, j} \cap W_{\operatorname{pr}(p_2)}$.
Then the equation $h(z) - w = 0$ has exactly two roots
$\tilde{z}^{+}$ and $\tilde{z}^{-}$
that satisfy
\[
f^{\natural}(\psi^{-1}(\tilde{z}^{+})) = w \in C^{+}_{2, j}
\quad \text{and} \quad
f^{\natural}(\psi^{-1}(\tilde{z}^{-})) = (w, j) \in C^{-}_{2, j}.
\]
Hence $m = 2$.
\end{proof}

Let $(\phi_{t, s})_{(s, t) \in I^2_{\leq}}$ be an evolution family over $(D_t)_{t \in I}$.
As in Section~\ref{sec:proof2},
the vectors
$\bm{s}(t) \in \mathbf{Slit}$, $t \in I$,
with $D(\bm{s}(t)) = D_t$
are determined uniquely,
provided that the order of the initial slits
$C_j(\bm{s}(0))$, $j=1, \ldots, N$,
is given.
The left and right endpoints of $C_j(t) := C_j(\bm{s}(t))$ are denoted by
$z^{\ell}_j(t) = x^{\ell}_j(t) + i y_j(t)$
and by
$z^r_j(t) = x^r_j(t) + i y_j(t)$,
respectively.
These endpoints are continuous in $t$ by Lemma~\ref{lem:EFineq}.
We put
$p^{\ell}_j(t) := (\phi_{t, 0}^{\natural})^{-1}(z^{\ell}_j(t))$
and
$p^r_j(t) := (\phi_{t, 0}^{\natural})^{-1}(z^r_j(t))$,
both of which are points on $C_j^{\natural}(0) \subset D_0^{\natural}$.

\begin{lemma} \label{lem:ac_preim}
Let $t_0 \in [0, T)$ and
$\psi \colon U_{p^{\ell}_j(t_0)} \to V_{p^{\ell}_j(t_0)}$
be a local coordinate of $p^{\ell}_j(t_0)$.
Then there exist $\delta, L > 0$ with the following properties:
$p^{\ell}_j(t) \in U_{p^{\ell}_j(t_0)}$
for every $t \in J := [(t_0 - \delta)^{+}, t_0 + \delta]$, and
$\tilde{z}^{\ell}_j(t) := \psi(p^{\ell}_j(t))$ satisfies
\begin{equation} \label{eq:ac_preim}
\lvert \tilde{z}^{\ell}_j(t) - \tilde{z}^{\ell}_j(s) \rvert
\leq L (\lambda(t) - \lambda(s)),
\quad (s, t) \in J^2_{\leq}.
\end{equation}
In addition,
the same assertion with the superscript $\ell$ replaced by $r$ holds.
\end{lemma}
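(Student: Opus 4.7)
The plan is to show that $t \mapsto p^{\ell}_j(t)$ is in fact constant on $I$, so that both conclusions of the lemma follow at once.

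First I would argue that $\phi^{\natural}_{t, 0}(z^{\ell}_j(0)) \in \{z^{\ell}_j(t), z^r_j(t)\}$ for every $t \in I$. By the labeling convention $\phi^{\natural}_{t, s}(C^{\natural}_j(\bm{s}(s))) = C^{\natural}_j(\bm{s}(t))$ set up at the start of Section~\ref{sec:proof2_EF}, the conformal map $\phi_{t, 0}$ carries the line segment $C_j(0) \subset \mathbb{C}$ bijectively onto $C_j(t) \subset \mathbb{C}$; boundary continuity together with the injectivity of $\phi_{t,0}$ then forces the two endpoints $\{z^{\ell}_j(0), z^r_j(0)\}$ to be mapped onto the two endpoints $\{z^{\ell}_j(t), z^r_j(t)\}$.

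To pin down which endpoint goes where, I would invoke continuity. Lemma~\ref{lem:EFineq} applies at $z^{\ell}_j(0)$ because $z^{\ell}_j(0) \in (D_0 \cap \mathbb{H}_\eta)^{\natural}$ for any $\eta \in (0, \eta_{D_0})$ (valid since $\eta_{D_0} \leq y_j(0)$), giving the Lipschitz-in-$\lambda$ continuity of $t \mapsto \phi^{\natural}_{t, 0}(z^{\ell}_j(0))$. The endpoint functions $z^{\ell}_j(t)$ and $z^r_j(t)$ are themselves continuous in $t$ (again by Lemma~\ref{lem:EFineq}) and never coincide because $\bm{s}(t) \in \mathbf{Slit}$ throughout $I$. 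Starting from $\phi^{\natural}_{0, 0}(z^{\ell}_j(0)) = z^{\ell}_j(0)$, connectedness then fixes $\phi^{\natural}_{t, 0}(z^{\ell}_j(0)) = z^{\ell}_j(t)$ for every $t \in I$. Applying $(\phi^{\natural}_{t, 0})^{-1}$ yields $p^{\ell}_j(t) = z^{\ell}_j(0)$, independent of $t$. Hence $p^{\ell}_j(t) = p^{\ell}_j(t_0) \in U_{p^{\ell}_j(t_0)}$ for any $\delta > 0$, and $\tilde{z}^{\ell}_j(t) = \psi(z^{\ell}_j(0))$ is constant in $t$, so \eqref{eq:ac_preim} holds with any $L > 0$. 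The assertion for $p^r_j(t) = z^r_j(0)$ follows identically after exchanging left and right endpoints.

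The main obstacle -- essentially the only non-formal step -- is the connectedness argument that prevents the curve $\phi^{\natural}_{t, 0}(z^{\ell}_j(0))$ from hopping between the two endpoints of $C^{\natural}_j(t)$. It rests on the continuity provided by Lemma~\ref{lem:EFineq} applied at the slit endpoint itself, which is permissible because $y_j(0) \geq \eta_{D_0} > 0$ keeps $z^{\ell}_j(0)$ at positive distance from $\partial \mathbb{H}$; without this uniform separation the Lipschitz constant in \eqref{eq:EFineq} would blow up and the continuity step would fail.
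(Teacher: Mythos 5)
Your argument breaks down at its very first step, and the gap is fatal: it is not true in general that $\phi^{\natural}_{t,0}$ maps the endpoints of $C_j(0)$ to the endpoints of $C_j(t)$, so $p^{\ell}_j(t)$ is \emph{not} constant in $t$. On the doubled surface of Section~\ref{sec:continuation}, $\phi^{\natural}_{t,0}$ restricts to a homeomorphism of the circle $C^{\natural}_j(0)$ onto the circle $C^{\natural}_j(t)$; the slit endpoints are not distinguished points of these circles as subsets of the Riemann surfaces (in the $\operatorname{sq}$-coordinates the circles are analytic arcs through them), but are merely the branch points of the projection $\operatorname{pr}$. ``Injectivity plus boundary continuity'' therefore gives you a circle homeomorphism, not a map of segments with marked endpoints, and there is no reason the branch points of the source projection should go to the branch points of the target projection: the position of $\phi^{\natural}_{t,0}(z^{\ell}_j(0))$ on $C^{\natural}_j(t)$ depends on the geometry of the hull $D_t\setminus\phi_{t,0}(D_0)$ and generically drifts along the image circle as $t$ increases. (If your claim were true, then by Lemma~\ref{lem:order_preim}~(ii) the point $z^{\ell}_j(0)$ would be a critical point of $\phi_{t,0}$ in the local coordinate for \emph{every} $t$, and the delicate chain-rule computation in the proof of Theorem~\ref{thm:KLforSlit} --- where the motion of $\tilde{z}^{\ell}_j(t)$ is nontrivial and its contribution is killed only because $(\phi_{t,0}\circ\psi^{-1})'(\tilde{z}^{\ell}_j(t))=0$ --- would be pointless. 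The whole reason the slits obey their own Komatu--Loewner equation, rather than simply being pushed forward pointwise by $\phi_{t,0}$, is that these preimages move.) Your connectedness argument only rules out hopping between the two endpoints \emph{assuming} the image is always an endpoint; that assumption is exactly what fails.

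The correct route, and the one the paper takes, is to characterize $p^{\ell}_j(t)$ analytically rather than set-theoretically: by Lemma~\ref{lem:order_preim}, $\tilde{z}^{\ell}_j(t)=\psi(p^{\ell}_j(t))$ is the unique nearby point at which $h_t:=\phi_{t,0}\circ\psi^{-1}$ has a zero of its derivative with $h_t''\neq 0$. The family $(h_t)$ satisfies $\mathrm{(Lip)}_{\lambda}$ by Lemma~\ref{lem:EFineq}, hence so does $(h_t')$ by Proposition~\ref{prop:derivative}, and the Lipschitz implicit function theorem (Proposition~\ref{prop:implicit_thm}) applied to $h_t'$ produces a function $\hat{z}(t)$ with $h_t'(\hat{z}(t))=0$, $h_t''(\hat{z}(t))\neq 0$, and $\lvert\hat{z}(t)-\hat{z}(s)\rvert\leq L(\lambda(t)-\lambda(s))$; continuity in $t$ then identifies $\hat{z}(t)$ with $\tilde{z}^{\ell}_j(t)$. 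That implicit-function step is the missing idea in your proposal.
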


\begin{proof}
We define
$h_t := \phi_{t, 0} \circ \psi^{-1}
\colon V_{p^{\ell}_j(t_0)} \to \operatorname{pr}(D_t^{\natural})$.
By Lemma~\ref{lem:order_preim}~\eqref{lem:order_End},
we have
\[
h_{t_0}^{\prime}(\tilde{z}_j(t_0)) = 0
\quad \text{and} \quad
h_{t_0}^{\prime \prime}(\tilde{z}_j(t_0)) \neq 0.
\]
In addition,
$(h_t)_{t \in I}$ satisfies $\mathrm{(Lip)}_{\lambda}$ on $V_{p^{\ell}_j(t_0)}$.
By Proposition~\ref{prop:implicit_thm},
there exist some neighborhood $J$ of $t_0$,
neighborhood
$\tilde{V} \subset V_{p^{\ell}_j(t_0)}$ of $\psi(p^{\ell}_j(t_0))$,
function $\hat{z} \colon J \to \tilde{V}$, and
constant $L > 0$ such that
\begin{equation} \label{eq:preim_implicit}
h_t^{\prime}(\hat{z}(t)) = 0 \quad \text{and} \quad
h_t^{\prime \prime}(\hat{z}(t)) \neq 0
\quad \text{for}\ t \in J
\end{equation}
and
\[
\lvert \hat{z}(t) - \hat{z}(s) \rvert \leq L (\lambda(t) - \lambda(s))
\quad \text{for}\ (s, t) \in J^2_{\leq}
\]
are satisfied.
\eqref{eq:preim_implicit} combined with Lemma~\ref{lem:order_preim}
implies that $h_t(\hat{z}(t)) \in \bigcup_{k=1}^N \{z^{\ell}_k(t), z^r_k(t)\}$.
By the continuity with respect to $t$,
we see that $h_t(\hat{z}(t))$ must coincide with $z^{\ell}_j(t)$.
In other words, $\hat{z}(t) = \tilde{z}^{\ell}_j(t)$.
The proof is now complete.
(Replacing the superscript $\ell$ by $r$ is trivial.)
\end{proof}

\begin{theorem} \label{thm:KLforSlit}
For each $j = 1, \ldots, N$,
the endpoints $z^{\ell}_j(t)$ and $z^r_j(t)$ of $C_j(t)$ enjoy
the Komatu--Loewner equation for the slits
\begin{align}
\tilde{\partial}^{\lambda}_t z^{\ell}_j(t)
&= \pi \int_{\mathbb{R}}
\Psi_{\bm{s}(t)}(z^{\ell}_j(t), \xi)
\, \nu_t(d\xi),
\label{eq:KLforLeft} \\
\tilde{\partial}^{\lambda}_t z^r_j(t)
&= \pi \int_{\mathbb{R}}
\Psi_{\bm{s}(t)}(z^r_j(t), \xi)
\, \nu_t(d\xi)
\label{eq:KLforRight}
\end{align}
for $m_{\lambda}$-a.e.\ $t \in I$.
\end{theorem}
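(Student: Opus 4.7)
The plan is to reduce \eqref{eq:KLforLeft} (and, by the same argument, \eqref{eq:KLforRight}) to the already-established Komatu--Loewner equation \eqref{eq:EF_KLeq} for the evolution family, using the local coordinate and Lipschitz estimates from Lemma~\ref{lem:ac_preim}. Fix $t_0 \in (0,T) \setminus N_0$, where $N_0$ is as in Theorem~\ref{thm:EFmain}, and work with the left endpoint; the argument for the right endpoint is identical. Pick a local coordinate $\psi \colon U_{p^{\ell}_j(t_0)} \to V_{p^{\ell}_j(t_0)}$ as in Lemma~\ref{lem:ac_preim}, and set $h_t := \phi_{t,0} \circ \psi^{-1}$ (the $\mathbb{C}$-valued reflection extension). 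Then, on the interval $J = [(t_0-\delta)^+, t_0+\delta]$ supplied by that lemma, we have $z^{\ell}_j(t) = h_t(\tilde{z}^{\ell}_j(t))$ with $\tilde{z}^{\ell}_j(t) = \psi(p^{\ell}_j(t))$ and, crucially from the proof of Lemma~\ref{lem:ac_preim}, $h_t'(\tilde{z}^{\ell}_j(t)) = 0$ for every $t \in J$.

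The main step is to evaluate, for small $\delta > 0$, the symmetric difference quotient by writing
\[
z^{\ell}_j(t_0+\delta) - z^{\ell}_j(t_0-\delta) = A_\delta + B_\delta,
\]
where
\begin{align*}
A_\delta &:= h_{t_0+\delta}(\tilde{z}^{\ell}_j(t_0+\delta)) - h_{t_0+\delta}(\tilde{z}^{\ell}_j(t_0-\delta)), \\
B_\delta &:= h_{t_0+\delta}(\tilde{z}^{\ell}_j(t_0-\delta)) - h_{t_0-\delta}(\tilde{z}^{\ell}_j(t_0-\delta)).
\end{align*}
For $A_\delta$, the vanishing $h_{t_0+\delta}'(\tilde{z}^{\ell}_j(t_0+\delta)) = 0$ together with a Taylor expansion around $\tilde{z}^{\ell}_j(t_0+\delta)$ gives $|A_\delta| \leq C \,|\tilde{z}^{\ell}_j(t_0+\delta) - \tilde{z}^{\ell}_j(t_0-\delta)|^2$, where $C$ is a local bound on $|h_t''|$ that is uniform in $t \in J$ by Cauchy's estimate applied to the family $(h_t)_{t \in J}$, which is locally bounded on $V_{p^{\ell}_j(t_0)}$. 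Combined with the Lipschitz estimate \eqref{eq:ac_preim}, this yields $|A_\delta| = O\!\bigl((\lambda(t_0+\delta) - \lambda(t_0-\delta))^2\bigr) = o\!\bigl(\lambda(t_0+\delta) - \lambda(t_0-\delta)\bigr)$. Thus the motion of the preimage contributes only at second order, reflecting the branch-point nature of $\phi^{\natural}_{t,0}$ at $p^{\ell}_j(t)$.

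For $B_\delta$, note that $B_\delta = \phi_{t_0+\delta,0}(p^{\ell}_j(t_0-\delta)) - \phi_{t_0-\delta,0}(p^{\ell}_j(t_0-\delta))$; by (EF.\ref{cond:EFcomp}) and the integral representation \eqref{eq:EFdifference} applied on the reflected Riemann surface (Remark~\ref{rem:continuation}),
\[
B_\delta = \pi\bigl(\lambda(t_0+\delta) - \lambda(t_0-\delta)\bigr) \int_{\mathbb{R}} \Psi_{D_{t_0-\delta}}\bigl(z^{\ell}_j(t_0-\delta), \xi\bigr) \, \frac{\mu(\phi_{t_0+\delta, t_0-\delta}; d\xi)}{\mu(\phi_{t_0+\delta, t_0-\delta}; \mathbb{R})},
\]
where we used $\phi^{\natural}_{t_0-\delta,0}(p^{\ell}_j(t_0-\delta)) = z^{\ell}_j(t_0-\delta)$. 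Passing $\delta \downarrow 0$, Theorem~\ref{thm:EFmain} gives vague convergence of the normalized measures to $\nu_{t_0}$, while Proposition~\ref{prop:Poi_cpt} combined with the continuity of $\bm{s}(\cdot)$ and of $z^{\ell}_j(\cdot)$ (Lemma~\ref{lem:EFineq}) yields uniform convergence of $\Psi_{D_{t_0-\delta}}(z^{\ell}_j(t_0-\delta), \xi)$ to $\Psi_{D_{t_0}}(z^{\ell}_j(t_0), \xi)$ on $\mathbb{R} \cup \{\infty\}$ (using Lemma~\ref{lem:Poi_Koebe} to secure membership in $C_{\infty}(\mathbb{R})$). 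Dividing by $\lambda(t_0+\delta) - \lambda(t_0-\delta)$ and combining the two brackets yields \eqref{eq:KLforLeft} at $t_0$, hence $m_\lambda$-almost everywhere.

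The main obstacle is the first-bracket analysis: one must identify that the time-dependent preimage $\tilde{z}^{\ell}_j(t)$ sits at a critical point of $h_t$ for every $t \in J$, not merely at $t = t_0$, in order to upgrade the naive chain-rule contribution of order $\lambda(t) - \lambda(s)$ to the required $o\bigl(\lambda(t) - \lambda(s)\bigr)$. This is precisely the content provided by the implicit-function-style construction in the proof of Lemma~\ref{lem:ac_preim}, which was set up in anticipation of this step. Once the first bracket is absorbed, the passage to the limit in the second bracket is a routine repetition of the proof of Theorem~\ref{thm:EFmain}.
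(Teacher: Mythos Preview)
Your approach shares the paper's core strategy: use Lemma~\ref{lem:ac_preim} to identify the branch-point structure of $h_t$ at $\tilde z^\ell_j(t)$, then reduce to the evolution-family equation~\eqref{eq:EF_KLeq}. The decompositions differ only in which index is frozen in each bracket, and your Taylor bound for $A_\delta$ is in fact slightly cleaner than the paper's chain-rule argument: the paper writes the space bracket as $(\phi_{t,0}\circ\psi^{-1})'(\tilde z^\ell_j(t))\cdot \tilde\partial^\lambda_t \tilde z^\ell_j(t)$, which requires the $m_\lambda$-a.e.\ existence of $\tilde\partial^\lambda_t \tilde z^\ell_j(t)$ and hence introduces an extra null set $\tilde N$, whereas your quadratic bound via $h_{t_0+\delta}'(\tilde z^\ell_j(t_0+\delta))=0$ and \eqref{eq:ac_preim} dispenses with that.

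There is, however, a soft spot in your $B_\delta$ limit. You invoke Proposition~\ref{prop:Poi_cpt} for the uniform-in-$\xi$ convergence $\Psi_{D_{t_0-\delta}}(z^\ell_j(t_0-\delta),\xi)\to\Psi_{D_{t_0}}(z^\ell_j(t_0),\xi)$, but that proposition is stated only for $z_0\in D(\bm s_0)$; the slit endpoint $z^\ell_j(t_0)$ is not an interior point of $D_{t_0}$, and the proof of Proposition~\ref{prop:Poi_cpt} uses Proposition~\ref{prop:locLip} with $K$ a compact subset of the open domain. Extending this to slit endpoints would require reworking both results in the square-root coordinate of Section~\ref{sec:continuation}---plausible, but not already on the shelf. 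The paper avoids this entirely: rather than re-running the limiting argument at the boundary point, it observes that \eqref{eq:EF_KLeq} has \emph{already} been analytically continued to all of $D^\natural_0$ in Theorem~\ref{thm:EFmain}, and uses the locally uniform convergence of the difference quotients on $D^\natural_0$ from Proposition~\ref{prop:ae_diff}~\eqref{prop:AC_aeDiff} to evaluate the time bracket at the moving point $p^\ell_j(t\pm\delta)$. You can close your argument the same way by replacing the direct integral-representation analysis of $B_\delta$ with that citation.
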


\begin{proof}
We prove only \eqref{eq:KLforLeft}.
\eqref{eq:KLforRight} is then obtained
just by replacing the superscript $\ell$ with $r$
in the proof of \eqref{eq:KLforLeft}.
Let $N_0 \subset [0, T)$ be the exceptional set
defined by \eqref{eq:exception} with $t_0 = 0$.

We choose $t_0 \in [0, T)$ freely
and apply Lemma~\ref{lem:ac_preim} to this $t_0$.
Let
$J := [(t_0 - \delta)^{+}, t_0 + \delta]$
with $\delta$ as in Lemma~\ref{lem:ac_preim}.
By \eqref{eq:ac_preim},
there is a Lebesgue null set $\tilde{N} \subset J$
such that $\tilde{\partial}^{\lambda}_t \tilde{z}_j(t)$
exists for every $t \in J \setminus \tilde{N}$.
For $t \in J \setminus (N_0 \cup \tilde{N})$, we have
\begin{align}
\tilde{\partial}^{\lambda}_t z^{\ell}_j(t)
&= \tilde{\partial}^{\lambda}_t \left( \phi_{t, 0}(p^{\ell}_j(t)) \right)
\notag \\
&= \lim_{h \to +0} \frac{
\phi_{t+h, 0}(p^{\ell}_j(t+h)) - \phi_{t-h, 0}(p^{\ell}_j(t-h))
}
{
\lambda(t+h) - \lambda(t-h)
} \notag \\
&= \lim_{h \to +0} \frac{
\phi_{t + h, 0}(p^{\ell}_j(t + h))
- \phi_{t - h, 0}(p^{\ell}_j(t + h))
}{
\lambda(t + h) - \lambda(t - h)
} \label{eq:ChainRule} \\
& \phantom{=} + \lim_{h \to +0} \frac{
(\phi_{t - h, 0} \circ \psi^{-1})(\tilde{z}^{\ell}_j(t + h))
- (\phi_{t - h, 0} \circ \psi^{-1})(\tilde{z}^{\ell}_j(t - h))
}{
\lambda(t + h) - \lambda(t - h)
}. \notag
\end{align}
We note that
$p^{\ell}_j(\mathord{\cdot}) \colon J \to C_j^{\natural}(0)$
is continuous.
From the locally uniform convergence
in Lemma~\ref{prop:ae_diff}~\eqref{prop:AC_aeDiff},
we can see that
\begin{align*}
&\frac{
\phi_{t + h, 0}(p^{\ell}_j(t + h))
- \phi_{t - h, 0}(p^{\ell}_j(t + h))
}{
\lambda(t + h) - \lambda(t - h)
}
- (\tilde{\partial}^{\lambda}_t \phi_{t, 0})(p^{\ell}_j(t)) \\
&= \left( \frac{
\phi_{t + h, 0}(p^{\ell}_j(t + h))
- \phi_{t - h, 0}(p^{\ell}_j(t + h))
}{
\lambda(t + h) - \lambda(t - h)
}
- (\tilde{\partial}^{\lambda}_t \phi_{t, 0})(p^{\ell}_j(t+h))
\right) \\
&\phantom{=} + \left(
(\tilde{\partial}^{\lambda}_t \phi_{t, 0})(p^{\ell}_j(t+h))
- (\tilde{\partial}^{\lambda}_t \phi_{t, 0})(p^{\ell}_j(t))
\right) \\
&\to 0 \quad \text{as}\ h \to +0.
\end{align*}
Hence, the first limit in the rightmost side of \eqref{eq:ChainRule} is equal to
$(\tilde{\partial}^{\lambda}_t \phi_{t, 0})(p^{\ell}_j(t))$.
Also, we see that the second limit is equal to
$(\phi_{t, 0} \circ \psi^{-1})^{\prime}(\tilde{z}^{\ell}_j(t))
\cdot \tilde{\partial}^{\lambda}_{t} \tilde{z}^{\ell}_j(t)$.
However, since
$\phi_{t, 0} \circ \psi^{-1} - z^\ell_j(t)$
has a zero
of the second order at $\tilde{z}^{\ell}_j(t)$
by Lemma~\ref{lem:order_preim},
$(\phi_{t, 0} \circ \psi^{-1})^{\prime}(\tilde{z}^{\ell}_j(t)) = 0$.
Thus, by \eqref{eq:ChainRule} and \eqref{eq:EF_KLeq} we have
\begin{align*}
\tilde{\partial}^{\lambda}_t z^{\ell}_j(t)
&= (\tilde{\partial}^{\lambda}_{t} \phi_{t, 0})(p^{\ell}_j(t))
= \pi \int_{\mathbb{R}}
\Psi_{D_{t}}(\phi_{t, 0}(p^{\ell}_j(t)), \xi)
\, \nu_t(d\xi) \\
&= \pi \int_{\mathbb{R}}
\Psi_{D_{t}}(z^{\ell}_j(t), \xi)
\, \nu_t(d\xi).
\qedhere
\end{align*}
\end{proof}

We can rewrite \eqref{eq:KLforLeft} and \eqref{eq:KLforRight}
in the vector form
\begin{equation} \label{eq:KL_slit}
\tilde{\partial}^{\lambda}_t \bm{s}(t)
= \bm{b}(\nu_t, \bm{s}(t)).
\end{equation}
If $\lambda(t) = 2t$,
then \eqref{eq:KL_slit} coincides with \eqref{eq:KLforSlit_vec}
in Section~\ref{sec:proof3}.

\begin{remark}[SKLE and moduli diffusion]
\label{rem:moduli}

Since the slits $C_j(t)$ determine
the conformal equivalence class of
$D_t = \mathbb{H} \setminus \bigcup_{j=1}^N C_j(t)$,
Bauer and Friedrich~\cite{BF04, BF06, BF08} regarded
the system~\eqref{eq:KLforLeft} and \eqref{eq:KLforRight}
with $\nu_t = \delta_{\xi(t)}$
as a differential equation on the ``moduli space''
of $(N+1)$-connected planar domains
with one marked point $\xi(t)$ on boundary.
In the context of SKLE (see Section~\ref{sec:SLE}),
one combines these equations 
with the stochastic differential equation
\begin{equation} \label{eq:SKLE}
d\xi(t) = \alpha(\xi(t), D_t) \, dB_t + b(\xi(t), D_t) \, dt.
\end{equation}
The system of equations
\eqref{eq:KLforLeft}, \eqref{eq:KLforRight}, and \eqref{eq:SKLE}
(with $\nu_t = \delta_{\xi(t)}$)
thus determines the ``moduli diffusion''
$(\xi(t), z^{\ell}_j(t), z^r_j(t))$.
In fact,
Friedrich and Kalkkinen~\cite{FK04}
and
Kontsevich~\cite{Ko03}
studied conformally invariant probability measures
on the space of paths on Riemann surfaces,
which extends SLE,
by means of \emph{conformal field theory} and differential geometry.
Compared with their algebraic and geometric way,
the moduli diffusion $(\xi(t), z^{\ell}_j(t), z^r_j(t))$ here expresses
the random motion of moduli in an analytic, coordinate-based manner.
\end{remark}

\begin{remark}[Komatu--Loewner equation on annuli]
\label{rem:annuli}

Contreras, Diaz-Madrigal and Gumenyuk~\cite{CDMG11, CDMG13}
constructed Loewner theory on annuli.
In their theory,
the moduli, i.e., the ratios $r(t)$ of the outer and inner radii
of the underlying annuli
$\mathbb{A}_{r(t)} = \{\, z \mathrel{;} r(t) < \lvert z \rvert < 1 \,\}$
form a monotone function of $t$,
which is used as a new time-parameter.
Since $r(t)$ itself play the role of time,
Loewner theory on annuli does not involve any evolution equation for moduli.
This is a reason why
we have said that the case $N=1$ is special
in Section~\ref{sec:intro}.
See also Komatu~\cite{Ko43}, Zhan~\cite{Zh04},
and Fukushima and Kaneko~\cite{FK14}.
\end{remark}

\section{On the assumptions~(H.\ref{ass:hydro}) and (H.\ref{ass:res})}
\label{sec:at_infinity}

In this appendix,
we confirm that
the assumptions~(H.\ref{ass:hydro}) and (H.\ref{ass:res})
are preserved by taking the inverse and composite of functions.

\begin{proposition} \label{prop:inverse_norm}
Let $f \colon D \to \mathbb{C}$ be a univalent function
with {\rm (H.\ref{ass:hydro})}.
Then so is the inverse $f^{-1} \colon f(D) \to D$.
If, moreover, $f$ enjoys {\rm (H.\ref{ass:res})} with angular residue $c$,
then so does $f^{-1}$ with angular residue $-c$.
\end{proposition}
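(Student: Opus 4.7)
My plan is to exploit the fact that (H.\ref{ass:hydro}) forces $f$ to be a small perturbation of the identity near infinity. The core technical step is a Rouch\'e-type argument showing that $f$ is almost a ``translation by a vanishing amount'' on high enough half-planes, and that its inverse inherits the same property.

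First I would establish that $f^{-1}$ satisfies (H.\ref{ass:hydro}). Choose $\eta \ge \eta_0$ so that $\mathbb{H}_\eta \subset D$ and, given $\varepsilon \in (0,1)$, pick $R_\varepsilon$ with $|f(z) - z| < \varepsilon$ for $z \in \mathbb{H}_\eta$, $|z| > R_\varepsilon$. For any $w_0 \in \mathbb{H}_{\eta + 1}$ with $|w_0| > R_\varepsilon + 1$, apply Rouch\'e's theorem to the pair $(z - w_0, f(z) - z)$ on the circle $|z - w_0| = 1$: on this circle $|f(z) - z| < \varepsilon < 1 = |z - w_0|$, so $f(z) - w_0 = (z - w_0) + (f(z) - z)$ has the same unique zero as $z - w_0$ inside the disk, necessarily at some $z^\ast \in B(w_0,\varepsilon) \subset \mathbb{H}_\eta$. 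Hence $\mathbb{H}_{\eta+1} \cap \{|w|>R_\varepsilon+1\} \subset f(\mathbb{H}_\eta)$ and $|f^{-1}(w_0) - w_0| < \varepsilon$. Letting $\varepsilon \downarrow 0$ yields $f^{-1}(w) - w \to 0$ as $w \to \infty$ in $\mathbb{H}_{\eta+1}$, which is (H.\ref{ass:hydro}) for $f^{-1}$ (with $\eta_0$ replaced by $\eta+1$).

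Next I would deduce (H.\ref{ass:res}) for $f^{-1}$ by a purely algebraic change of variable. Write $w = f(z)$ and $z = f^{-1}(w)$. A direct computation gives
\[
w\,(f^{-1}(w) - w) = f(z)\,(z - f(z)) = -\frac{f(z)}{z}\,\cdot\,z\,(f(z) - z).
\]
Fix $\theta \in (0,\pi/2)$. By the first part, $z = f^{-1}(w) = w + o(1)$ as $w \to \infty$, so $z/w \to 1$, which forces $\arg z \to \arg w$ and in particular $z \in \triangle_{\theta/2}$ whenever $w \in \triangle_\theta$ has modulus large enough; simultaneously $f(z)/z \to 1$. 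Using (H.\ref{ass:res}) for $f$ along the sector $\triangle_{\theta/2}$, the right-hand side converges to $-1 \cdot (-c) = c$. Since $\theta \in (0,\pi/2)$ was arbitrary, this establishes (H.\ref{ass:res}) for $f^{-1}$ with angular residue $-(-c) = c$ for the inverse mapping... wait, let me recompute: the defining limit is $\lim z(f(z)-z) = -c$, so $\lim w(f^{-1}(w)-w) = -\lim (f(z)/z)\,z(f(z)-z) = -(1)(-c) = c$, meaning the angular residue of $f^{-1}$ is $-c$ as claimed in the proposition.

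The main (and only mildly delicate) obstacle is the Rouch\'e step: one must verify that the perturbation $f(z) - z$ is small enough on a full circle, not just pointwise, to apply the theorem, which is why I use a disk of radius $1$ and the uniform smallness of $|f(z)-z|$ on $\mathbb{H}_\eta$ away from a compact set. Everything after that reduces to elementary asymptotics.
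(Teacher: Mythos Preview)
Your proof is correct and follows essentially the same route as the paper's: both first show $f^{-1}(w)-w\to 0$ by exploiting that $f$ is a small perturbation of the identity on a high half-plane (you phrase this via Rouch\'e, the paper via an image-containment step followed by the substitution $|f^{-1}(w)-w|=|f^{-1}(w)-f(f^{-1}(w))|$), and both then compute the angular residue through the same algebraic identity---the paper writes it as $w(f^{-1}(w)-w)=-z(f(z)-z)-(z-w)^2$ rather than your $-\tfrac{f(z)}{z}\,z(f(z)-z)$---combined with the observation that $f^{-1}$ maps $\triangle_\theta$ into a slightly wider sector for large $|w|$. The only cosmetic differences are your explicit invocation of Rouch\'e and your choice of $\triangle_{\theta/2}$ where the paper uses $\theta'$ with $4\sin\theta'=\sin\theta$.
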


\begin{proof}
We can take $\eta, L > 0$ so that
\[
\lvert f(z) - z \rvert < 1,
\quad z \in \mathbb{H}_{\eta} \setminus \bar{B}(0, L)
\subset \mathbb{H}_{\eta+L}.
\]
Clearly $\mathbb{H}_{\eta+L+1} \subset f(\mathbb{H}_{\eta+L})$.
Similarly, let $\varepsilon > 0$
and take $L' \geq 0$ so that
$\lvert f(z) - z \rvert < \varepsilon$
holds for $z \in \mathbb{H}_{\eta} \setminus \bar{B}(0, L')$.
We have
\[
\lvert f^{-1}(w) - w \rvert = \lvert f^{-1}(w) - f(f^{-1}(w)) \rvert < \varepsilon,
\quad w \in \mathbb{H}_{\eta+L+1} \setminus \bar{B}(0, L'+1),
\]
because $f^{-1}(w) \in \mathbb{H}_{\eta+L} \setminus \bar{B}(0, L')$
for such $w$.
Thus, $f^{-1}$ enjoys (H.\ref{ass:hydro}).

Now, suppose that $f$ has the finite angular residue $c$ at infinity.
Let $\theta \in (0, \pi/2)$.
For $w \in \triangle_{\theta} \setminus \bar{B}(0, (\eta+L+2)/\sin \theta)$,
we have
$\Im w \geq 2$, $\lvert w \rvert \geq 1$, and
$\lvert f^{-1}(w) - w \rvert < 1$
because $w \in \mathbb{H}_{\eta+L+2}$.
These inequalities yield
\[
\frac{\lvert f^{-1}(w) \rvert}{\Im f^{-1}(w)}
< \frac{\lvert w \rvert + 1}{\Im w - 1}
< \frac{2\lvert w \rvert}{\Im w / 2}
< \frac{4}{\sin \theta}.
\]
Thus,
$f^{-1}(\triangle_{\theta} \setminus \bar{B}(0, (\eta+L+2)/\sin \theta))
\subset \triangle_{\theta'}$ holds
with $\theta'$ given by $4 \sin \theta' = \sin \theta$.
From the identity
\[
w(f^{-1}(w) - w) = - f^{-1}(w) (f(f^{-1}(w)) - f^{-1}(w)) - (f^{-1}(w) - w)^2,
\]
we get
\[
\lim_{\substack{w \to \infty \\ w \in \triangle_{\theta}}} w (f^{-1}(w) - w)
= - \lim_{\substack{z \to \infty \\ z \in \triangle_{\theta'}}} z (f(z) -  z)
= - (-c).
\]
Hence the angular residue of $f^{-1}$ is $-c$.
\end{proof}

The proof of the next proposition is quite similar, and we omit it.
(The same idea can be seen
in the proof of Theorem~1 of Goryainov and Ba~\cite{GB92}.)

\begin{proposition} \label{prop:composite_norm}
Let $f \colon D \to \mathbb{C}$ and $g \colon D' \to \mathbb{C}$ be
univalent functions with {\rm (H.\ref{ass:hydro})}.
Then so is the composite
$\left. g \right\rvert_{D' \cap f(D)} \circ \left. f \right\rvert_{f^{-1}(D')}$.
If, moreover, they enjoy {\rm (H.\ref{ass:res})}
with angular residues $c_f$ and $c_g$, respectively,
then so does $g \circ f$ with angular residue $c_f + c_g$.
\end{proposition}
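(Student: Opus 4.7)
\medskip

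The plan is to imitate the argument just used for Proposition~\ref{prop:inverse_norm}: verify (H.\ref{ass:hydro}) first by a direct half-plane chase, then leverage (H.\ref{ass:hydro}) to control the arguments of $f(z)$ so that (H.\ref{ass:res}) for $g$ can be invoked at the point $f(z)$, and finally read off additivity of residues from a simple splitting.

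For (H.\ref{ass:hydro}), pick $\eta,L$ witnessing (H.\ref{ass:hydro}) for $f$ so that $\lvert f(z)-z\rvert < 1$ on $\mathbb{H}_{\eta}\setminus\Bar{B}(0,L)$, hence $f(\mathbb{H}_{\eta+L})\supset\mathbb{H}_{\eta+L+1}$ as in the proof of Proposition~\ref{prop:inverse_norm}. Similarly, choose $\eta',L'$ for $g$. For any $\varepsilon>0$ I would then choose $L''$ so large that $\lvert g(w)-w\rvert<\varepsilon/2$ for $w\in\mathbb{H}_{\eta'\vee(\eta+L+1)}\setminus\Bar{B}(0,L'')$ and $\lvert f(z)-z\rvert<\varepsilon/2$ together with $\lvert f(z)\rvert>L''$ for $z\in\mathbb{H}_{\eta_0}\setminus\Bar{B}(0,L_0)$ with $\eta_0,L_0$ large enough. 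The triangle inequality gives $\lvert g(f(z))-z\rvert\leq\lvert g(f(z))-f(z)\rvert+\lvert f(z)-z\rvert<\varepsilon$, which yields (H.\ref{ass:hydro}) for $g\circ f$.

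For (H.\ref{ass:res}), the algebraic identity
\[
z(g(f(z))-z) \;=\; \frac{z}{f(z)}\cdot f(z)(g(f(z))-f(z)) \;+\; z(f(z)-z)
\]
reduces everything to two limits. The second summand converges to $-c_f$ along $\triangle_{\theta}$ by hypothesis. For the first, the key step is to show a sector-in/sector-out property: there exists $\theta'\in(0,\pi/2)$ (depending on $\theta$) such that $f\bigl(\triangle_{\theta}\setminus\Bar{B}(0,R)\bigr)\subset\triangle_{\theta'}$ for all sufficiently large $R$. This follows from (H.\ref{ass:hydro}): if $z\in\triangle_{\theta}$ with $\lvert z\rvert$ large then $\Im z\geq\lvert z\rvert\sin\theta$, and once $\lvert f(z)-z\rvert<1$ we have $\Im f(z)\geq\lvert z\rvert\sin\theta-1$ and $\lvert f(z)\rvert\leq\lvert z\rvert+1$, so that the ratio $\Im f(z)/\lvert f(z)\rvert$ is bounded below by a positive constant; this determines $\theta'$. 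Combined with $z/f(z)\to 1$ (a direct consequence of (H.\ref{ass:hydro})) and (H.\ref{ass:res}) for $g$ applied along $\triangle_{\theta'}$, the first summand tends to $1\cdot(-c_g)=-c_g$. Adding the two limits yields $\lim_{z\to\infty,\ z\in\triangle_{\theta}} z(g(f(z))-z) = -(c_f+c_g)$, which is (H.\ref{ass:res}) for $g\circ f$ with angular residue $c_f+c_g$.

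The only genuine obstacle is the sector-in/sector-out step, and even there the computation is elementary once one observes that (H.\ref{ass:hydro}) forces $f(z)=z+o(1)$ in a suitable half-plane and therefore cannot move $\arg z$ by more than an arbitrarily small amount for $\lvert z\rvert$ large. All the remaining bookkeeping is the same kind of triangle-inequality chase carried out in Proposition~\ref{prop:inverse_norm}, so I would organize the write-up as two short paragraphs mirroring that proof.
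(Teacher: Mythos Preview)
Your proposal is correct and follows precisely the approach the paper indicates: the paper omits the proof of Proposition~\ref{prop:composite_norm} entirely, stating only that it is ``quite similar'' to that of Proposition~\ref{prop:inverse_norm} (with a pointer to Goryainov and Ba~\cite{GB92}), and your write-up is exactly such a parallel argument---triangle inequality for (H.\ref{ass:hydro}), then the sector-in/sector-out step combined with an elementary algebraic splitting for (H.\ref{ass:res}). The decomposition $z(g(f(z))-z)=\tfrac{z}{f(z)}\cdot f(z)(g(f(z))-f(z))+z(f(z)-z)$ and the bound $\Im f(z)/\lvert f(z)\rvert\geq(\lvert z\rvert\sin\theta-1)/(\lvert z\rvert+1)$ are the natural analogues of the corresponding steps in the proof of Proposition~\ref{prop:inverse_norm}, so nothing is missing.
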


\section{On the assumption~$\mathrm{(Lip)}_{\cdot}$}
\label{sec:parameter}

The contents of this appendix are analogous to
the classical arguments on a.e.\ differentiability
in the proof of Pommerenke~\cite[Theorem~6.2]{Po75} and
Goryainov and Ba~\cite[Theorem~3]{GB92}.
Since more general results are required in this paper,
we provide a self-contained proof
of each statement for the sake of completeness.

\subsection{Absolute continuity and almost everywhere differentiability}
\label{sec:ac}

In Section~\ref{sec:assumptions},
we have introduced the property~$\mathrm{(Lip)}_F$
for a non-decreasing continuous function $F(t)$.
Since only does the corresponding measure $m_F$ play a role
in the subsequent argument,
we change the notation slightly.
Let $I$ be an interval equipped with a non-atomic Radon measure $\mu$ and
$f_t$ be a holomorphic function on a Riemann surface $X$ for each $t \in I$.
We consider the following properties:
\begin{itemize}
\item[$\mathrm{(AC)}_{\mu}$]
For any compact subset $K$ of $X$,
there exists a measure $\nu_K$ on $I$
which is absolutely continuous with respect to $\mu$ and satisfies
\[
\sup_{p \in K} \lvert f_t(p) - f_s(p) \rvert \leq \nu_K((s, t])
\quad \text{for}\ (s, t) \in I^2_{\leq}.
\]

\item[$\mathrm{(Lip)}_{\mu}$]
For any compact subset $K$ of $X$,
there exists a constant $L_K$ such that
\[
\sup_{p \in K} \lvert f_t(p) - f_s(p) \rvert \leq L_K \mu((s, t])
\quad \text{for}\ (s, t) \in I^2_{\leq}.
\]
\end{itemize}
Obviously $\mathrm{(Lip)}_{\mu}$ implies $\mathrm{(AC)}_{\mu}$,
and if $\mathrm{(AC)}_{\mu}$ holds,
then $t \mapsto f_t$ is continuous
in $\operatorname{Hol}(X; \mathbb{C})$,
the space of holomorphic functions on $X$
equipped with the topology of locally uniform convergence.
$\mathrm{(AC)}_{\mu}$ also implies that, for each $p \in X$,
the set function $\kappa_p((s, t]) := f_t(p) - f_s(p)$
on the set of left half-open intervals
extends to a complex measure on every compact subinterval of $I$
which is absolutely continuous with respect to $\mu$.
By the generalized Lebesgue's differentiation theorem~\cite[Theorem~5.8.8]{Bo07},
the limit
\[
\tilde{\partial}^{\mu}_t f_t(p)
:= \lim_{\delta \downarrow 0} \frac{f_{t+\delta}(p) - f_{t-\delta}(p)}%
{\mu((t-\delta, t+\delta))}
\]
exists for a.e.\ $t \in I$ and
is a version of the Radon--Nikodym derivative $d\kappa_p/d\mu$.
If $\mu$ is associated with a continuous non-decreasing function $F$ on $I$
by the relation $\mu((s ,t]) = F(t) - F(s)$ (i.e., $\mu = m_F$),
then we designate the properties $\mathrm{(AC)}_{\mu}$ and $\mathrm{(Lip)}_{\mu}$
as $\mathrm{(AC)}_F$ and as $\mathrm{(Lip)}_F$, respectively, and
the derivative $\tilde{\partial}^{\mu}_t f_t(p)$ as $\tilde{\partial}^F_t f_t(p)$.
This notation is consistent with that in Section~\ref{sec:assumptions}.

In general, the $\mu$-null set
on which $\tilde{\partial}^{\mu}_t f_t(p)$ does not exist
depends on $p$.
However, $\mathrm{(AC)}_{\mu}$ enables us
to choose this exceptional set $N$ independently of $p$,
as shown in the following proposition:

\begin{proposition} \label{prop:ae_diff}
Suppose that a family $(f_t)_{t \in I}$ of holomorphic functions
on a Riemann surface $X$ satisfies $\mathrm{(AC)}_{\mu}$.
\begin{enumerate}
\item \label{prop:AC_aeDiff}
There exists a $\mu$-null set $N \subset I$ such that,
for each $t \in I \setminus N$,
the convergence
\[
\frac{f_{t+\delta}(p) - f_{t+\delta}(p)}{\mu((t-\delta, t+\delta))}
\to \tilde{\partial}^{\mu}_t f_t(p)
\quad \text{as}\ \delta \to +0
\]
occurs locally uniformly in $p \in X$,
and hence $\tilde{\partial}^{\mu}_t f_t$ is a holomorphic function on $X$.

\item \label{prop:Lip_aeDiff}
If $(f_t)_{t \in I}$ further satisfies $\mathrm{(Lip)}_{\mu}$,
then we can choose the null-set $N$ in \eqref{prop:AC_aeDiff}
by fixing a countable set $A \subset X$ with an accumulation point in $X$
and setting
\[
N:=
\bigcup_{p\in A} \{\,
	t \in I \mathrel{;} \text{$\tilde{\partial}^{\mu}_t f_t(p)$ does not exists}
	\,\}.
\]
\end{enumerate}
\end{proposition}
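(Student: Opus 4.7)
The plan is to apply a Vitali--Montel type argument: a locally uniformly bounded family of holomorphic functions that converges pointwise on a subset with an accumulation point converges locally uniformly on the whole domain, and the limit is holomorphic. The difference quotients $q_\delta(p) := (f_{t+\delta}(p) - f_{t-\delta}(p))/\mu((t-\delta, t+\delta))$ are holomorphic in $p$, and I want to verify both ingredients (local uniform boundedness and pointwise convergence on a dense subset) for $t$ outside a suitable $\mu$-null set $N$.

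For part \eqref{prop:AC_aeDiff}, I would first exhaust $X$ by a countable increasing sequence of compact sets $K_n$, and for each $K_n$ take the control measure $\nu_{K_n} \ll \mu$ from $\mathrm{(AC)}_\mu$. By the generalized Lebesgue--Besicovitch differentiation theorem for non-atomic Radon measures on $I$, there is a $\mu$-null set $N'$ off which every ratio $\nu_{K_n}((t-\delta,t+\delta))/\mu((t-\delta,t+\delta))$ converges as $\delta \downarrow 0$. Next, pick any countable dense subset $\{p_k\} \subset X$; for each $p_k$ the complex measure $\kappa_{p_k}$ is absolutely continuous with respect to $\mu$, hence differentiable off a $\mu$-null set $N_k$. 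Set $N := N' \cup \bigcup_k N_k$. For $t \in I \setminus N$ and each compact $K_n$, the bound $|q_\delta(p)| \leq \nu_{K_n}((t-\delta,t+\delta))/\mu((t-\delta,t+\delta))$ gives local uniform boundedness of $(q_\delta)_{\delta>0}$, while pointwise convergence holds on the dense set $\{p_k\}$. Vitali's theorem then yields locally uniform convergence of $q_\delta$ to a holomorphic function, which must agree with $\tilde{\partial}^\mu_t f_t(p)$ at every $p \in X$.

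For part \eqref{prop:Lip_aeDiff}, the situation is cleaner because $\mathrm{(Lip)}_\mu$ supplies the uniform bound $|q_\delta(p)| \leq L_K$ on every compact $K$, valid for all $t$ and all $\delta > 0$. Hence no auxiliary differentiation of control measures is needed, and the only exceptional behavior one must exclude is the failure of the pointwise limit. Define $N$ as in the statement, using the countable set $A$ with an accumulation point in $X$. For $t \in I \setminus N$ and any sequence $\delta_n \downarrow 0$, Montel's theorem extracts a locally uniformly convergent subsequence of $(q_{\delta_n})$ whose holomorphic limit agrees with $\tilde{\partial}^\mu_t f_t(p)$ on $A$. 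Since $A$ accumulates in $X$, the identity principle pins down this limit uniquely, so the whole sequence $q_{\delta_n}$ converges locally uniformly, which simultaneously proves the existence of $\tilde{\partial}^\mu_t f_t(p)$ everywhere on $X$ and the local uniformity of convergence.

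The main obstacle is locating an exceptional set $N$ that is independent of $p$; naively the a.e.\ differentiation theorem produces a different null set for each $p$, and an uncountable union of such sets need not be null. In part \eqref{prop:AC_aeDiff} this is overcome by coupling the Lebesgue--Besicovitch theorem applied to $\nu_{K_n}$ (which yields local uniform boundedness a.e.) with pointwise convergence on a countable dense subset, so that only countably many null sets are combined; the identity principle and Vitali's theorem then propagate convergence from the dense set to all of $X$. In part \eqref{prop:Lip_aeDiff} the stronger bound from $\mathrm{(Lip)}_\mu$ eliminates the need for the Besicovitch step altogether, leaving the identity principle as the sole mechanism that links pointwise differentiability on $A$ to differentiability everywhere.
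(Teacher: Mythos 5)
Your proposal is correct and follows essentially the same route as the paper: exhaust $X$ by (relatively) compact sets, differentiate the control measures $\nu_K$ with respect to $\mu$ to get local uniform boundedness of the difference quotients off a single null set, secure pointwise convergence on a countable set, and conclude by Vitali's theorem; in part (ii) the Lipschitz bound removes the need for the measure-differentiation step, exactly as in the paper. Your substitution of a countable dense subset for a set with an accumulation point, and your unwinding of Vitali into Montel plus the identity principle, are only cosmetic variations.
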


\begin{proof}
\eqref{prop:AC_aeDiff}
We take an exhaustion sequence $(X_n)_{n \in \mathbb{N}}$ of $X$;
that is, all $X_n$'s are relatively compact subdomains of $X$
with $\bigcup_{n=1}^{\infty} X_n = X$.
It suffices to show that, for each $n \in \mathbb{N}$, there exists a $\mu$-null set $N_n\subset I$ such that $\tilde{\partial}^{\mu}_t f_t(p)$ exists and is holomorphic on $X_n$ for each $t \in I \setminus N_n$.
Indeed, we can conclude from this auxiliary assertion that $\tilde{\partial}^{\mu}_t f_t(p)$ exists and is holomorphic on $X$ for each $t \in I \setminus N$ with $N := \bigcup_{n \in \mathbb{N}} N_n$.
Therefore, we fix $n \in \mathbb{N}$ and prove the proposition on $X_n$.

$\overline{X_n}$ is a compact subset of $X$, and hence there exists a measure $\nu_n \ll \mu$ on $I$ such that $\lvert f_t(p) - f_s(p) \rvert \leq \nu_n((s, t])$ for any $p \in X_n$ and $(s, t) \in I^2_{\leq}$.
Let $A \subset X_n$ be a countable set having an accumulation point in $X_n$.
Since $\tilde{\partial}^{\mu}_t f_t(p)$ exists at $\mu$-a.e.\ $t$ for each fixed $p \in A$, there exists a null set $N_n \subset I$ such that
\[
\tilde{\partial}^{\mu}_t f_t(p) \ (p \in A) \quad \text{and} \quad D_{\mu} \nu_n(t) :=\lim_{\delta \downarrow 0} \frac{\nu_n((t-\delta, t+\delta))}{\mu((t-\delta, t+\delta))}
\]
all exist at every $t \in I \setminus N_n$.
We fix such $t$.
By $\mathrm{(AC)}_{\mu}$ we have
\begin{equation} \label{eq:AC_DiffBound}
\frac{\lvert f_{t-\delta}(p) - f_{t+\delta}(p) \rvert}{\mu((t-\delta, t+\delta))} \leq \frac{\nu_n((t-\delta, t+\delta))}{\mu((t-\delta, t+\delta))}
\qquad \text{for all}\ p \in X_n.
\end{equation}
The left-hand side in this inequality
is bounded in $p \in X_n$ and $\delta > 0$
because the right-hand side converges to $D_{\mu} \nu_n(t)$
as $\delta \downarrow 0$.
Moreover, $(f_{t-\delta}(p) - f_{t+\delta}(p))/\mu((t-\delta, t+\delta))$
converges to $\tilde{\partial}^{\mu}_t f_t(p)$ as $\delta \downarrow 0$ for each $p \in A$.
Thus, this divided difference converges as $\delta \downarrow 0$
locally uniformly on $X_n$ by Vitali's convergence theorem,
which implies that $\tilde{\partial}^{\mu}_t f_t(p)$ exists and is holomorphic on $X_n$.

\noindent
\eqref{prop:Lip_aeDiff}
Let $A$ and $N$ be as in the statement of \eqref{prop:Lip_aeDiff}.
Then the left-hand side of \eqref{eq:AC_DiffBound} is bounded by $L_K$ on every compact subset $K$.
Hence it is locally uniformly bounded on $X$.
Vitali's theorem thus implies that $\tilde{\partial}^{\mu}_t f_t(p)$ exists for every $t \in I \setminus N$ and $p \in X$.
Note that we do not need to take an exhaustion sequence $(X_n)_n$ in this case.
\end{proof}

\begin{remark}[Another ``absolute continuity'']
In the case where $\mu$ coincides with
the Lebesgue measure $\mathbf{Leb}$ on $I$,
Bracci, Contreras and Diaz-Madrigal~\cite{BCDM12}
and Contreras, Diaz-Madrigal and Gumenyuk~\cite{CDMG10}
considered a condition
broader than $\mathrm{(AC)}_{\mathbf{Leb}}$ and $\mathrm{(Lip)}_{\mathbf{Leb}}$.
Roughly speaking,
they say that a family $(f_t)_{t \in I}$ is of order $d \in [1, \infty]$
if, for every compact subset $K$,
there exists a function $k_K \in L^d(I)$ such that
\[
\sup_{p \in K} \lvert f_t(p) - f_s(p) \rvert \leq \int_s^t k_K(u) \, du,
\quad (s, t) \in I^2_{\leq}.
\]
According to this definition,
$(f_t)_{t \in I}$ satisfies $\mathrm{(AC)}_{\mathbf{Leb}}$
if it is of order $d$ for some $d \in [1, \infty]$,
and in particular,
$\mathrm{(Lip)}_{\mathbf{Leb}}$ holds if and only if $d = \infty$.
From this viewpoint,
Lemma~\ref{lem:EFineq} shows that,
given an evolution family $(\phi_{t, s})$,
we can always assume $d = \infty$
if we replace $\mathbf{Leb}$ by the measure $m_{\lambda}$
associated with $(\phi_{t, s})$.
This fact makes our argument easier,
for example, in Lemma~\ref{lem:NullConsis} and
Proposition~\ref{prop:ae_diff}~\eqref{prop:Lip_aeDiff}.
\end{remark}

\subsection{Descent to spatial derivatives and inverse functions}
\label{sec:deriv_inverse}

In this and next subsections,
we discuss only the case in which $X$ is a planar domain $D \subset \mathbb{C}$.

\begin{proposition} \label{prop:derivative}
Let $(f_t)_{t\in I}$ be a family of holomorphic functions on a planar domain $D$.
\begin{enumerate}
\item \label{prop:derivative_conti}
If $(f_t)_{t \in I}$ is continuous in $\operatorname{Hol}(D; \mathbb{C})$,
then so is the family $(f^{(n)}_t)_{t \in I}$
of the $n$-th order $z$-derivatives
for any $n \in \mathbb{N}$.

\item \label{prop:derivative_Lip}
If $(f_t)_{t \in I}$ satisfies $\mathrm{(Lip)}_{\mu}$,
then so is $(f^{(n)}_t)_{t \in I}$ for any $n$.
\end{enumerate}
\end{proposition}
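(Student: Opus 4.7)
The plan is to reduce both statements to Cauchy's integral formula (or equivalently Cauchy's estimates), which converts uniform control on a holomorphic function to uniform control on its derivatives on a slightly smaller set.

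First I fix a compact subset $K$ of $D$ and choose $r>0$ so small that the closed $r$-neighborhood $K_r := \{\, z \in \mathbb{C} \mathrel{;} d^{\mathrm{Eucl}}(z, K) \leq r \,\}$ is still contained in $D$. For any $z \in K$, the circle $\partial B(z, r)$ lies in $K_r \subset D$, so Cauchy's integral formula gives
\[
f_t^{(n)}(z) - f_s^{(n)}(z)
= \frac{n!}{2\pi i} \int_{\partial B(z, r)}
	\frac{f_t(\zeta) - f_s(\zeta)}{(\zeta - z)^{n+1}} \, d\zeta,
\]
and estimating the integrand in absolute value yields
\[
\sup_{z \in K} \lvert f_t^{(n)}(z) - f_s^{(n)}(z) \rvert
\leq \frac{n!}{r^n} \sup_{\zeta \in K_r} \lvert f_t(\zeta) - f_s(\zeta) \rvert.
\]

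With this single inequality, both parts follow immediately. For (i), if $(f_t)_{t \in I}$ is continuous in $\operatorname{Hol}(D; \mathbb{C})$, then the right-hand side tends to $0$ as $s \to t$, which gives the locally uniform convergence $f_s^{(n)} \to f_t^{(n)}$; since $K$ was arbitrary, this proves continuity of $(f_t^{(n)})_{t \in I}$ in $\operatorname{Hol}(D; \mathbb{C})$. For (ii), assuming $\mathrm{(Lip)}_\mu$ applied to the compact set $K_r$, there is a constant $L_{K_r}$ with $\sup_{\zeta \in K_r} \lvert f_t(\zeta) - f_s(\zeta) \rvert \leq L_{K_r} \mu((s, t])$, and substituting gives
\[
\sup_{z \in K} \lvert f_t^{(n)}(z) - f_s^{(n)}(z) \rvert
\leq \frac{n! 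L_{K_r}}{r^n} \mu((s, t]),
\]
so $(f_t^{(n)})_{t \in I}$ satisfies $\mathrm{(Lip)}_\mu$ on $K$ with constant $L_K^{(n)} := n! L_{K_r}/r^n$.

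There is no essential obstacle here; both assertions reduce to a single Cauchy estimate, and the only mild point is the choice of a slightly enlarged compact set $K_r$ on which one invokes the hypothesis. The argument gives the general $n$ directly, so there is no need to iterate on $n$.
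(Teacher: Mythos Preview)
Your proof is correct and follows the same route as the paper: both arguments use Cauchy's integral formula to convert the uniform bound $\sup_{\zeta} \lvert f_t(\zeta) - f_s(\zeta) \rvert$ on a slightly larger compact set into a bound on $\sup_z \lvert f_t^{(n)}(z) - f_s^{(n)}(z) \rvert$. Your presentation is in fact a bit cleaner, since you work directly with a general compact $K$ and its $r$-neighborhood $K_r$, avoiding the paper's reduction to the unit disk.
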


\begin{proof}
\eqref{prop:derivative_conti} is just a standard fact in complex analysis.
We prove \eqref{prop:derivative_Lip} here.

Assume that $(f_t)_{t \in I}$ satisfies $\mathrm{(Lip)}_{\mu}$.
Without loss of generality, we may and do assume that $D = \mathbb{D}$.
Let $r$ and $\delta$ be two arbitrary positive numbers such that $r + \delta < 1$.
We take the constant $L_K$ in $\mathrm{(Lip)}_{\mu}$ with $K := \partial B(0, r + \delta)$.
Using Cauchy's integral formula, we have
\begin{align*}
\sup_{\lvert z \rvert \leq r} \lvert f^{(n)}_t(z) - f^{(n)}_s(z) \rvert &\leq \frac{1}{2 \pi} \sup_{\lvert z \rvert \leq r} \int_{\lvert \zeta \rvert = r + \delta} \frac{\lvert f_t(\zeta) - f_s(\zeta) \rvert}{\lvert \zeta - z \rvert^{n+1}} \, \lvert d\zeta \rvert \\
&\leq \frac{r + \delta}{\delta^{n+1}} L_K \mu((s, t])
\end{align*}
for any $(s, t) \in I^2_{\leq}$,
which yields the property~$\mathrm{(Lip)}_{\mu}$ of $(f^{(n)}_t)_{t \in I}$.
\end{proof}

If $f_t$'s are univalent and satisfy $\mathrm{(Lip)}_{\mu}$,
then their inverse functions satisfy the same property locally in time and space,
which is a conclusion from the following Lagrange inversion formula:

\begin{lemma} \label{lem:Lagrange}
Let $f$ be a univalent function on a planar domain $D$, $w$ be a point of $f(D)$ and $C$ be a simple closed curve in $D$ surrounding $f^{-1}(w)$ such that $\operatorname{ins} C \subset D$.
Then the equality
\[
f^{-1}(w) = \frac{1}{2 \pi i} \int_C \frac{\zeta f'(\zeta)}{f(\zeta) - w} \, d\zeta
\]
holds.
\end{lemma}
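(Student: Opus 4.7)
The plan is to prove this via a direct application of the residue theorem to the integrand $\zeta f'(\zeta)/(f(\zeta)-w)$, exploiting the univalence of $f$ to control the zeros of $f(\zeta)-w$ inside $C$.

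First I would set $z_0 := f^{-1}(w)$ and $g(\zeta) := f(\zeta) - w$. By hypothesis $z_0 \in \operatorname{ins} C \subset D$, and since $f$ is univalent on $D$, the equation $f(\zeta) = w$ has the unique solution $\zeta = z_0$ in $D$; in particular $g$ has $z_0$ as its only zero inside $C$, and $g$ is zero-free on $C$ itself (again by univalence and $w \notin f(C)$ for a curve surrounding the unique preimage). Moreover, a univalent holomorphic function has nowhere-vanishing derivative (this is a standard consequence of the open mapping theorem applied to the local expansion $f(\zeta) - f(z_0) = a_k(\zeta - z_0)^k + \cdots$ with $k \geq 2$ giving $k$-to-one behavior). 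Hence $g'(z_0) = f'(z_0) \neq 0$, so $z_0$ is a simple zero of $g$.

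Next I would compute the residue of $h(\zeta) := \zeta f'(\zeta)/(f(\zeta) - w)$ at $z_0$. Because $g$ has a simple zero at $z_0$ with $g'(z_0) = f'(z_0)$, the function $h$ has a simple pole at $z_0$ with
\[
\operatorname{Res}_{\zeta = z_0} h(\zeta)
= \frac{z_0 f'(z_0)}{g'(z_0)}
= \frac{z_0 f'(z_0)}{f'(z_0)}
= z_0 = f^{-1}(w).
\]
Since $h$ is holomorphic on $D \setminus \{z_0\}$ and $\operatorname{ins} C \cup C \subset D$ with $z_0$ the only singularity inside $C$, the residue theorem yields
\[
\frac{1}{2\pi i} \int_C \frac{\zeta f'(\zeta)}{f(\zeta) - w} \, d\zeta = f^{-1}(w),
\]
provided $C$ is positively oriented (which is the implicit convention when one speaks of $\operatorname{ins} C$ as the bounded complementary component).

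There is essentially no hard step here: the proof is a two-line residue computation, and the only point requiring care is the verification that $f(\zeta) - w$ has a \emph{simple} zero at $z_0$ and no other zero inside $C$, both of which follow immediately from the univalence of $f$. Thus I do not anticipate any real obstacle; the lemma is a standard packaging of the argument principle tailored so that it can be fed into a subsequent $\mathrm{(Lip)}_\mu$-type estimate for inverse functions, presumably by differentiating the integrand in $w$ and combining with Proposition~\ref{prop:derivative}.
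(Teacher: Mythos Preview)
Your proof is correct and is essentially the same as the paper's: both identify $f^{-1}(w)$ as the unique (simple) pole of $\zeta f'(\zeta)/(f(\zeta)-w)$ inside $C$, compute its residue to be $f^{-1}(w)$, and invoke the residue theorem. Your write-up is slightly more explicit about why univalence forces the zero of $f(\zeta)-w$ to be simple and unique, but the argument is identical in substance.
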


\begin{proof}
The function $z f(z)/(f(z) - w)$ of $z$ has a pole of the first order
at $z = f^{-1}(w)$, and its residue is
\[
\lim_{z \to f^{-1}(w)} (z - f^{-1}(w)) \frac{z f'(z)}{f(z) - w} = f^{-1}(w).
\]
Hence the conclusion follows from the residue theorem.
\end{proof}

\begin{proposition} \label{prop:inverse}
Suppose that
a family $(f_t)_{t\in I}$ of univalent functions
is continuous in $\operatorname{Hol}(D; \mathbb{C})$.
Let $t_0 \in I$ and $U$ be a bounded domain with
$\overline{U} \subset f_{t_0}(D)$.
Then there exists a neighborhood $J$ of $t_0$ in $I$ such that
$\overline{U} \subset \bigcap_{t \in J} f_t(D)$.
For any such pair $(J, U)$,
the family $(f^{-1}_t)_{t \in J}$ of the inverse functions
is continuous in $\operatorname{Hol}(U; \mathbb{C})$.
If $(f_t)_{t\in I}$ further satisfies $\mathrm{(Lip)}_{\mu}$ on $D$,
then so does $(f^{-1}_t)_{t \in J}$ on $U$.
\end{proposition}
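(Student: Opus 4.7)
The plan is to establish the three assertions of Proposition~\ref{prop:inverse} in order, using an argument-principle (Rouch\'e) step to produce $J$ and Lemma~\ref{lem:Lagrange} to convert estimates on $f_t$ into estimates on $f_t^{-1}$ along a \emph{fixed} contour. For the first assertion: for each $w_0 \in \overline{U}$, set $z_0 := f_{t_0}^{-1}(w_0)$ and pick $\rho_0 > 0$ with $\overline{B(z_0, \rho_0)} \subset D$. By univalence, $\delta_0 := \inf_{\partial B(z_0, \rho_0)} |f_{t_0}(\zeta) - w_0| > 0$, so the continuity of $t \mapsto f_t$ in $\operatorname{Hol}(D; \mathbb{C})$ keeps $|f_t(\zeta) - w| \geq \delta_0 / 2$ on $\partial B(z_0, \rho_0)$ for $(t, w)$ in a neighborhood of $(t_0, w_0)$. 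The argument principle then makes
\[
N(t, w) := \frac{1}{2\pi i} \oint_{\partial B(z_0, \rho_0)} \frac{f_t'(\zeta)}{f_t(\zeta) - w} \, d\zeta
\]
a continuous $\mathbb{Z}$-valued function, hence constantly equal to $N(t_0, w_0) = 1$; covering the compact set $\overline{U}$ by finitely many such neighborhoods and intersecting the corresponding parameter neighborhoods yields the desired $J$.

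For the second assertion, I would fix any $s_0 \in J$ and $w_0 \in U$; put $z_0 := f_{s_0}^{-1}(w_0)$ and choose $\rho > 0$ with $\overline{B(z_0, \rho)} \subset D$. A further iteration of the above Rouch\'e argument produces a subneighborhood $J_0 \subset J$ of $s_0$ and constants $r, \delta > 0$ such that $f_t^{-1}(\overline{B(w_0, r)}) \subset B(z_0, \rho/2)$ and $|f_t(\zeta) - w| \geq \delta$ on $\partial B(z_0, \rho) \times \overline{B(w_0, r)}$ for all $t \in J_0$. Lemma~\ref{lem:Lagrange} then gives
\[
f_t^{-1}(w) = \frac{1}{2\pi i} \oint_{\partial B(z_0, \rho)} \frac{\zeta f_t'(\zeta)}{f_t(\zeta) - w} \, d\zeta
\]
uniformly in $(t, w) \in J_0 \times \overline{B(w_0, r)}$. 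Proposition~\ref{prop:derivative}~\eqref{prop:derivative_conti} provides $f_t' \to f_{s_0}'$ uniformly on $\partial B(z_0, \rho)$, so combined with the lower bound on $|f_t(\zeta) - w|$ the integrand converges uniformly, whence $f_t^{-1} \to f_{s_0}^{-1}$ uniformly on $\overline{B(w_0, r)}$.

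For the third assertion, assume $(\mathrm{Lip})_\mu$ for $(f_t)$; by Proposition~\ref{prop:derivative}~\eqref{prop:derivative_Lip}, $(f_t')$ satisfies the same. Subtracting the two Lagrange integrals and using the decomposition
\[
\frac{f_t'}{f_t - w} - \frac{f_s'}{f_s - w} = \frac{f_t' - f_s'}{f_t - w} + \frac{f_s' (f_s - f_t)}{(f_t - w)(f_s - w)},
\]
together with the uniform lower bound on $|f_t - w|, |f_s - w|$ and a uniform upper bound on $|f_s'|$ over the compact contour, yield a local Lipschitz estimate of the form $\sup_{w \in \overline{B(w_0, r)}} |f_t^{-1}(w) - f_s^{-1}(w)| \leq L_{w_0, s_0} \, \mu((s, t])$ for $s, t \in J_0$ with $s \leq t$. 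To globalize to an arbitrary compact $K' \subset U$ and arbitrary $s, t \in J$ with $s \leq t$, I would cover $K'$ by finitely many such balls $\overline{B(w_i, r_i)}$ and cover the compact interval $[s, t]$ by finitely many parameter-neighborhoods $J_{0, j}$, then chain the local estimates by the triangle inequality along a subdivision $s = u_0 < \cdots < u_n = t$ adapted to the cover, producing a single constant $L_{K'}$. The main obstacle — and really the only one — is maintaining, uniformly in the parameter $t$, the lower bound on $|f_t(\zeta) - w|$ along a \emph{fixed} contour of integration; this is exactly what the Rouch\'e step delivers, after which the algebra of the Lagrange integrand is routine.
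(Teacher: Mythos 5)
Your proposal is correct and follows essentially the same route as the paper's proof: a Rouch\'e/argument-principle step to obtain the time-neighborhood and the containment $\overline{U}\subset\bigcap_{t\in J}f_t(D)$, then Lemma~\ref{lem:Lagrange} on a fixed contour together with Proposition~\ref{prop:derivative} and the same decomposition of the difference of integrands (with uniform lower bounds on $\lvert f_t(\zeta)-w\rvert$ along the contour) to transfer continuity and $\mathrm{(Lip)}_{\mu}$ to the inverses. Your explicit covering-and-chaining step in $(t,w)$ only spells out the compactness reduction that the paper leaves implicit, so there is no substantive difference.
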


\begin{proof}
Owing to the compactness of $\overline{U}$,
it suffices to prove that
for any fixed $w_0 \in f_{t_0}(D)$,
the proposition holds
with $U$ replaced by a sufficiently small disk $B(w_0, r_0)$.
We assume
$f_{t_0}^{-1}(w_0) = 0 \in D$
for the simplicity of notation.

We choose such a small $r_0$ that
$f_{t_0}^{-1}(\overline{B(w_0, r_0)})
\subset B(0, r)
\subset \overline{B(0, r)}
\subset D$
holds for some $r > 0$.
Set
$\epsilon
:= d^{\mathrm{Eucl}}(f_{t_0}(\partial B(0, r)), \partial B(w_0, r_0))
> 0$.
Since $(f_t)_{t\in I}$ is continuous
in the topology of locally uniform convergence,
there exists a closed neighborhood $J = [\alpha, \beta]$ of $t_0$ such that
$\lvert f_t(z) - f_{t_0}(z) \rvert < \epsilon/2$
holds for $z \in \overline{B(0, r)}$ and $t \in J$.
This inequality implies that
$\overline{B(w_0, r_0)}
\subset \bigcap_{t \in J} f_t(B(0, r))
\subset \bigcap_{t \in J} f_t(D)$.

Next, we show that
$(f^{-1}_t)_{t \in J}$ satisfies $\mathrm{(Lip)}_{\mu}$ on $U$,
assuming that $(f_t)_{t \in I}$ satisfies $\mathrm{(Lip)}_{\mu}$ on $D$.
Since the continuity of $(f^{-1}_t)_{t \in J}$
in $\operatorname{Hol}(U; \mathbb{C})$
is proved in a similar way,
we omit it.
By Proposition~\ref{prop:derivative}~\eqref{prop:derivative_Lip},
we can take two constants $L_0$ and $L_1$ such that
$\sup_{\lvert z \rvert \leq r} \lvert f^{(n)}_t(z) - f^{(n)}_s(z)\rvert
\leq L_n \mu ((s, t])$,
$n=0, 1$,
holds for any $(s, t) \in I^2_{\leq}$.
In particular, we have
\begin{align*}
M_n
&:= \max
\{\, \lvert f^{(n)}_t(z) \rvert \mathrel{;} \lvert z \rvert = r,\ t \in J \,\}
\\
&\leq \max_{\lvert z \rvert = r}
\lvert f^{(n)}_{t_0}(z) \rvert + L_n \mu((\alpha, \beta])
< \infty,
\quad n=0, 1.
\end{align*}
Now, using Lemma~\ref{lem:Lagrange} we have
\begin{align*}
&f^{-1}_t(w) - f^{-1}_s(w) \\
&= \frac{1}{2\pi i}
\int_{\lvert \zeta \rvert=r} \left(
\frac{\zeta f_t'(\zeta)}{f_t(\zeta) - w}
- \frac{\zeta f_s'(\zeta)}{f_s(\zeta) - w}
\right)
\, d\zeta
\\
&= \frac{1}{2\pi i}
\int_{\lvert \zeta \rvert=r}
\frac{\zeta\{f_t'(\zeta)(f_s(\zeta)-w)- f_s'(\zeta)(f_t(\zeta)-w)\}}{(f_t(\zeta)-w)(f_s(\zeta)-w)}
\, d\zeta
\end{align*}
for $w \in \overline{B(w_0, r_0)}$ and $(s, t) \in J^2_{\leq}$.
Hence
\begin{align*}
&\lvert f^{-1}_t(w) - f^{-1}_s(w) \rvert
\\
&\leq \frac{2r}{\pi \epsilon^2}
\int_{\lvert \zeta \rvert=r} \left(
\lvert f_t(\zeta) \rvert \lvert f_t'(\zeta) - f'_s(\zeta) \rvert
+ \lvert f_t(\zeta) - f_s(\zeta) \rvert \lvert f'_s(\zeta) \rvert
\right)
\, \lvert d\zeta \rvert
\\
&\leq 4 r^2 \frac{(M_0+\lvert w_0 \rvert+r_0) \vee M_1}{\epsilon^2}
(L_0 + L_1) \mu((s, t]),
\end{align*}
which yields the property~$\mathrm{(Lip)}_{\mu}$
of $(f^{-1}_t)_{t \in J}$ on $B(w_0, r_0)$.
\end{proof}

\subsection{Implicit function theorem}
\label{sec:implicit_thm}

\begin{proposition}
\label{prop:implicit_thm}

Let $(f_t)_{t\in I}$ be a family of holomorphic functions
that satisfies $\mathrm{(Lip)}_{\mu}$ on a planar domain $D$.
Suppose that a point $(t_0, z_0) \in I \times D$ enjoys the conditions
\[
f_{t_0}(z_0) = 0 \quad \text{and} \quad f'_{t_0}(z_0) \neq 0.
\]
Then there exist some neighborhood $J$ of $t_0$ in $I$,
neighborhood $U$ of $z_0$ in $D$
and function $\hat{z} \colon J \to U$ such that
\begin{itemize}
\item $z = \hat{z}(t)$ is a unique zero of the holomorphic function $f_t$ in $U$,
which is of the first order, for any $t\in J$;

\item $\hat{z}(t)$ is Lipschitz continuous with respect to $\mu$ in the sense that
\[
\lvert \hat{z}(t) - \hat{z}(s) \rvert
\leq \hat{L} \mu((s, t]),
\quad (s, t) \in J^2_{\leq},
\]
holds for some constant $\hat{L}$.
In particular, the complex measure $\hat{\kappa}$ induced from $\hat{z}(t)$
on every compact subinterval of $I$
is absolutely continuous with respect to $\mu$.
\end{itemize}
\end{proposition}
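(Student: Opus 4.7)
The plan is to reduce the claim to a contour-integral formula for the zero of $f_t$, then estimate the difference of such integrals using $\mathrm{(Lip)}_\mu$ for both $(f_t)$ and $(f_t')$.

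First I would choose $r > 0$ small enough that $\overline{B(z_0, r)} \subset D$, that $f_{t_0}$ has $z_0$ as its only zero on $\overline{B(z_0, r)}$, and that $f_{t_0}'$ is nonvanishing on $\overline{B(z_0, r)}$. Set $\delta := \min_{|\zeta - z_0| = r} |f_{t_0}(\zeta)| > 0$ and let $L_0$ (resp.\ $L_1$) be the Lipschitz constant in $\mathrm{(Lip)}_\mu$ for $(f_t)$ (resp.\ $(f_t')$, obtained from Proposition~\ref{prop:derivative}~\eqref{prop:derivative_Lip}) on $\overline{B(z_0, r)}$. Pick a neighborhood $J = [\alpha, \beta]$ of $t_0$ in $I$ such that $L_0\,\mu((\alpha, \beta]) < \delta/2$ and such that $f_t'$ stays uniformly nonzero on $\overline{B(z_0, r/2)}$ for $t \in J$; the latter is possible because $(f_t')_{t \in J}$ converges uniformly to $f_{t_0}'$ on compacts.

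Next, for each $t \in J$, Rouch\'e's theorem applied on $\partial B(z_0, r)$ with $|f_t - f_{t_0}| < \delta/2 < |f_{t_0}|$ shows that $f_t$ has exactly one zero $\hat{z}(t)$ in $B(z_0, r)$, and after shrinking $J$ if necessary we may arrange $\hat{z}(t) \in B(z_0, r/2)$ so that $f_t'(\hat{z}(t)) \neq 0$; the zero is thus of first order. The argument principle then gives the representation
\begin{equation*}
\hat{z}(t)
= \frac{1}{2\pi i} \int_{\partial B(z_0, r)}
\zeta \, \frac{f_t'(\zeta)}{f_t(\zeta)} \, d\zeta,
\qquad t \in J.
\end{equation*}
Subtracting and rewriting the integrand via
\begin{equation*}
\frac{f_t'}{f_t} - \frac{f_s'}{f_s}
= \frac{f_t'(f_s - f_t) + f_t(f_t' - f_s')}{f_t f_s},
\end{equation*}
I would bound the denominator from below by $(\delta/2)^2$ (since $|f_t|, |f_s| \geq \delta/2$ on $\partial B(z_0, r)$), bound $|f_t|$ and $|f_t'|$ from above by constants $M_0, M_1$ uniform in $t \in J$, and finally apply $\mathrm{(Lip)}_\mu$ to each factor $|f_s - f_t|$ and $|f_t' - f_s'|$. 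This yields
\begin{equation*}
\lvert \hat{z}(t) - \hat{z}(s) \rvert
\leq \frac{r^2 (M_1 L_0 + M_0 L_1)}{(\delta/2)^2} \, \mu((s, t]),
\qquad (s, t) \in J^2_{\leq},
\end{equation*}
which provides the desired constant $\tilde{L}$.

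The main obstacle is a bookkeeping one rather than a conceptual one: arranging that all the uniform bounds ($\delta$-lower bound on $|f_t|$ along the contour, upper bounds on $|f_t|$ and $|f_t'|$, and the Lipschitz constants for both $f_t$ and $f_t'$) hold simultaneously on a common neighborhood $J$ of $t_0$ and with constants independent of $s, t \in J$. Proposition~\ref{prop:derivative}~\eqref{prop:derivative_Lip} is what makes the estimate for $f_t' - f_s'$ tractable; without it, one would only have uniform convergence rather than the Lipschitz-in-$\mu$ control needed to conclude absolute continuity of the induced measure $\tilde{\kappa}$ with respect to $\mu$.
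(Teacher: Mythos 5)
Your proof is correct, and it reaches the conclusion by a slightly different route than the paper. The paper first upgrades the Rouch\'e argument to show that $f_t$ is \emph{univalent} on a small disk $B(z_0,r)$ for $t$ near $t_0$ (by comparing $f_t-w$ with $f_{t_0}-w$ for all $w$ in a disk around $0$), and then simply quotes Proposition~\ref{prop:inverse}, setting $\hat z(t)=f_t^{-1}(0)$; the Lipschitz estimate is inherited from that proposition, whose own proof rests on the Lagrange inversion formula of Lemma~\ref{lem:Lagrange}. You instead stop at the weaker statement that $f_t$ has a unique zero (of total multiplicity one) in $B(z_0,r)$ and represent it directly by the argument-principle integral $\frac{1}{2\pi i}\int_{\partial B(z_0,r)}\zeta f_t'(\zeta)/f_t(\zeta)\,d\zeta$, which is exactly the Lagrange formula at $w=0$; the difference estimate you carry out (lower bound $\delta/2$ for $|f_t|$ on the contour, uniform bounds $M_0,M_1$, and $\mathrm{(Lip)}_\mu$ for both $(f_t)$ and $(f_t')$ via Proposition~\ref{prop:derivative}) is the same computation the paper performs inside Proposition~\ref{prop:inverse}, specialized to $w=0$. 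So your argument is self-contained and avoids both the univalence step and the appeal to Proposition~\ref{prop:inverse}, at the cost of redoing the contour estimate inline; the paper's version buys reusability of the inverse-function statement. Two cosmetic points: since Rouch\'e counts zeros with multiplicity, the unique zero is automatically simple, so your extra step of forcing $\hat z(t)\in B(z_0,r/2)$ where $f_t'\neq 0$ is not needed (and if you keep it, it should be justified by a second Rouch\'e argument on the circle $\lvert\zeta-z_0\rvert=r/2$); and in the final constant the factor $r^2$ should read $r(\lvert z_0\rvert+r)$, since $\lvert\zeta\rvert\leq\lvert z_0\rvert+r$ on the contour --- harmless, as any finite $\tilde L$ suffices.
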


\begin{proof}
As $f'_{t_0}(z_0) \neq 0$,
there exists $r_0 > 0$ such that $f_{t_0}$ is univalent on $B(z_0, r_0)$.
We take $r_1 \in (0, r_0)$ and set
$m := \min_{\lvert z - z_0 \rvert = r_1} \lvert f_{t_0}(z) \rvert > 0$.
As $f_t \to f_{t_0}$ ($t \to t_0$) uniformly on $\bar{B}(z_0, r_1)$,
there exists $\delta>0$ such that
\begin{equation} \label{eq:appendix_C2}
\max_{z \in \bar{B}(z_0, r_1)} \lvert f_t(z) - f_{t_0}(z) \rvert < \frac{m}{4}
\qquad \text{if}\ \lvert t - t_0 \rvert < \delta.
\end{equation}
In this setting, $f_t \rvert_{B(z_0, r_1)}$ takes each value $w \in B(0, m/2)$ at most once, counting multiplicities, if $\lvert t - t_0 \rvert < \delta$.
Indeed, for $w \in B(0, m/2)$, $z \in \partial B(z_0, r_1)$ and $\lvert t - t_0 \rvert < \delta$, we have
\begin{align}
\lvert (f_t(z) - w)-(f_{t_0}(z) - w) \rvert
&= \lvert f_t(z) - f_{t_0}(z) \rvert < \frac{m}{4} \notag \\
&< \frac{m}{2} \leq \lvert f_{t_0}(z) \rvert -\frac{m}{2} \notag \\
&\leq \lvert f_{t_0}(z) - w \rvert. \label{eq:appendix_C3}
\end{align}
Since $f_{t_0} \rvert_{B(z_0, r_1)}$ is univalent,
it takes each value $w \in B(0,m/2)$ at most once,
counting multiplicities, 
and so does $f_t \rvert_{B(z_0, r_1)}$ by \eqref{eq:appendix_C3} and Rouch\'e's theorem.

We take $r \in (0, r_1)$ so that $\sup_{z \in B(z_0, r)} \lvert f_{t_0}(z) \rvert < m/4$.
Then \eqref{eq:appendix_C2} implies that $f_t(B(z_0, r)) \subset B(0, m/2)$ for $\lvert t - t_0 \rvert < \delta$.
Thus, by the preceding paragraph, $f_t$ is univalent%
\footnote{To prove the univalence of $f_t$, the restriction to a disk $B(z_0, r)$ smaller than $B(z_0, r_1)$ is necessary because the inequality \eqref{eq:appendix_C3} and Rouch\'e's theorem do not imply that $f_t$ takes a value $w \notin B(0,m/2)$ at most once.}
on $B(z_0, r)$ if $\lvert t - t_0 \rvert < \delta$.
It is now clear from Proposition~\ref{prop:inverse} that
a desired triplet $(J, U, \hat{z}(t))$ is given by
$J = (t_0 - \delta, t_0 + \delta) \cap I$, $U = B(z_0, r)$
and $\hat{z}(t) = f_t^{-1}(0)$.
\end{proof}

\begin{remark}[``Continuous differentiability'']
Let us refer to one of the following two conditions,
which one can prove to be equivalent to each other, as {\rm (CD)}:
\begin{itemize}
\item For each $t \in I$,
$(f_{t + h} - f_t) / h$ converges in $\operatorname{Hol}(X; \mathbb{C})$, and
the family of the limits $\dot{f}_t$, $t \in I$, is also continuous
in $\operatorname{Hol}(X; \mathbb{C})$.

\item For each $p \in X$,
the function $t \mapsto f_t(p)$ is $C^1$, and
the family of the $t$-derivatives $\dot{f}_t$, $t \in I$, is locally bounded on $X$.
\end{itemize}
Then Propositions~\ref{prop:derivative} and \ref{prop:inverse} are valid with $\mathrm{(Lip)}_{\mu}$ replaced by {\rm (CD)}.
Proposition~\ref{prop:implicit_thm} also holds with the following replacement:
$\mathrm{(Lip)}_{\mu}$ in the assumption is replaced by {\rm (CD)},
and the Lipschitz continuity of $\hat{z}(t)$ is replaced by the continuous differentiability of $\hat{z}(t)$ in $t$.
Although these facts are not employed in this paper,
one can see that such considerations make the argument
in Section~2 of Chen and Fukushima~\cite{CF18}
slightly simpler.
\end{remark}

\section{Vague and weak topologies}
\label{sec:top_on_meas}

In this appendix, we recall the definition of vague and weak topologies
on the set $\mathcal{M}(\mathbb{R})$ of finite Borel measures on $\mathbb{R}$
and present their properties needed in this paper.

We denote by $C_c(\mathbb{R})=C_c(\mathbb{R};\mathbb{R})$ the set of real-valued continuous functions on $\mathbb{R}$ with compact support.
Let $(m_\alpha)_{\alpha\in A}$ be a net 
in $\mathcal{M}(\mathbb{R})$.
We say that $(m_\alpha)_{\alpha\in A}$ \emph{converges vaguely} 
to $m\in \mathcal{M}(\mathbb{R})$ if
\begin{equation} \label{eq:def_vague}
\lim_\alpha \int_{\mathbb{R}}f\, dm_\alpha = \int_{\mathbb{R}}f\, dm
\end{equation}
holds for every $f\in C_c(\mathbb{R})$.
We write $m_\alpha\stackrel{v}{\to}m$ to indicate this convergence.
The vague convergence defines a topology 
on $\mathcal{M}(\mathbb{R})$,
which we call the \emph{vague topology}.

As for the total masses of vaguely convergent measures,
we have the following basic property:
suppose $m_\alpha\stackrel{v}{\to}m$, and
let $f_k\in C_c(\mathbb{R})$, $k\in\mathbb{N}$, be a sequence such that $0\le f_k\uparrow 1$ pointwise as $k\to\infty$.
Then
\begin{align}
m(\mathbb{R})&=\lim_{k\to\infty}\int_{\mathbb{R}}f_k\, dm
=\lim_{k\to\infty}\lim_\alpha \int_{\mathbb{R}}f_k\, dm_\alpha \notag \\
&\le \liminf_\alpha m_\alpha(\mathbb{R}). \label{eq:vague_Fatou}
\end{align}
By this inequality, the set $\mathcal{M}_{\le 1}(\mathbb{R})$ of Borel measures on $\mathbb{R}$ with total mass less than or equal to one is closed in $\mathcal{M}(\mathbb{R})$.

We make more detailed observations in view of functional analysis.
Let $C_\infty(\mathbb{R})$ be the set of continuous functions on $\mathbb{R}$ vanishing at infinity.
Since $C_\infty(\mathbb{R})$ is the completion of $C_c(\mathbb{R})$
with respect to the supremum norm $\lVert\mathord{\cdot}\rVert_\infty$,
it is easy to see
\footnote{It is valid to replace $C_c(\mathbb{R})$ with $C_\infty(\mathbb{R})$ only when $(m_\alpha)_\alpha$ is bounded;
for example, $(1+n^2) \,\delta_{\{n\}}\stackrel{v}{\to} 0$ follows from definition, but $\int_{\mathbb{R}}(1+x^2)^{-1}(1+n^2) \,\delta_{\{n\}}(dx)=1$ for all $n\in \mathbb{N}$.
In passing, we notice that our definition of vague convergence is thus slightly different from the one based on $C_\infty(\mathbb{R})$ in Folland's book cited here \cite[p.223]{Fo99}.}
that $m_\alpha\stackrel{v}{\to}m$ 
in $\mathcal{M}_{\le 1}(\mathbb{R})$
if and only if \eqref{eq:def_vague} holds for every $f\in C_\infty(\mathbb{R})$.
Moreover, any finite signed Borel measure $m$ on $\mathbb{R}$ corresponds to
a bounded linear functional $f\mapsto\int f\, dm$ on $C_\infty(\mathbb{R})$
in one-to-one manner
by the Riesz--Markov--Kakutani representation theorem
(see Folland~\cite[Theorem~7.17]{Fo99} for instance).
Thus, the vague topology 
of $\mathcal{M}_{\le 1}(\mathbb{R})$
is the weak${}^\ast$ topology of the dual $C_\infty(\mathbb{R})^\ast$ 
restricted to $\mathcal{M}_{\le 1}(\mathbb{R})$
through this identification.

The operator norm of the above-mentioned functional $f\mapsto \int f\, dm$ equals the total variation of $m$, that is,
$\sup\{\,\lvert\int f\, dm\rvert\mathrel{;} f\in C_\infty(\mathbb{R}),\ \lVert f\rVert_\infty=1\,\}=\lvert m\rvert(\mathbb{R})$. 
Hence
the Banach--Alaoglu theorem (see, e.g.,  Rudin~\cite[Theorems~3.15 and 3.16]{Ru91}) yields the following:
\begin{proposition} \label{prop:vague_cpt}
$\mathcal{M}_{\le 1}(\mathbb{R})$ is a compact metrizable space under the vague topology.
\end{proposition}

We now introduce another topology.
We denote by $C_b(\mathbb{R})$ the set of bounded continuous functions on $\mathbb{R}$.
Let $(m_\alpha)_{\alpha\in A}$ be a net 
in $\mathcal{M}(\mathbb{R})$.
We say that $(m_\alpha)_{\alpha\in A}$ \emph{converges weakly} 
to $m\in \mathcal{M}(\mathbb{R})$
if \eqref{eq:def_vague} holds for every $f\in C_b(\mathbb{R})$.
The weak convergence defines a topology 
on $\mathcal{M}(\mathbb{R})$,
which we call the \emph{weak topology}.
Actually, this is the weak${}^\ast$ topology of the dual $C_b(\mathbb{R})^\ast$ 
restricted to $\mathcal{M}(\mathbb{R})$.

As $C_b(\mathbb{R})^\ast$ is strictly larger than the set of signed measures,
$\mathcal{M}_{\le 1}(\mathbb{R})$ is not compact in the weak topology.
For proof of the next proposition, we refer the reader to Stroock~\cite[Theorems~9.1.5 and 9.1.11]{Str11} or Kallenberg~\cite[Lemma~4.5]{Ka17}.

\begin{proposition} \label{prop:weak_Pol}
$\mathcal{M}_{\le 1}(\mathbb{R})$ is a Polish space under the weak topology.
\end{proposition}

We notice that the set $\mathcal{P}(\mathbb{R})$ of Borel probability measures on $\mathbb{R}$ is not compact in either the vague or weak topology.
The sequence of point masses $\delta_n$ at $n\in\mathbb{N}$ is a typical example: $\delta_n\stackrel{v}{\to}0\notin\mathcal{P}(\mathbb{R})$ as $n\to\infty$.
Here we give a well-known condition which is equivalent to weak relative compactness,
although it is not utilized in this paper.

\begin{definition} \label{def:tight}
A subset $\mathcal{Q}\subset\mathcal{P}(\mathbb{R})$ is said to be \emph{tight}
if, for every $\varepsilon>0$, there exists a compact set $K\subset\mathbb{R}$ such that $\sup_{m\in\mathcal{Q}}m(\mathbb{R}\setminus K)<\varepsilon$.
\end{definition}

The Prokhorov theorem, which the reader can find in standard textbooks of probability theory,
asserts that $\mathcal{Q}\subset\mathcal{P}(\mathbb{R})$ is tight if and only if $\mathcal{Q}$ is relatively compact in the weak topology of $\mathcal{P}(\mathbb{R})$.

In the above, $\mathcal{M}_{\le 1}(\mathbb{R})$ is equipped with two topologies.
We denote by $\mathcal{B}_v$ (resp.\ $\mathcal{B}_w$)
the Borel $\sigma$-algebra on $\mathcal{M}_{\le 1}(\mathbb{R})$
generated by the vague (resp.\ weak) topology.
We express these $\sigma$-algebras in terms of evaluation maps.
For a bounded Borel measurable function $f$ on $\mathbb{R}$,
we denote the evaluation map $m\mapsto\int f\, dm$
by $\pi_f\colon\mathcal{M}_{\le 1}(\mathbb{R})\to\mathbb{R}$.
For a Borel set $B\in\mathcal{B}(\mathbb{R})$, we set $\pi_B:=\pi_{\bm{1}_B}$.

\begin{proposition} \label{prop:vague_Borel}
$\mathcal{B}_v$ coincides with each of the following three $\sigma$-algebras:
\begin{align*}
\Sigma_1&:=\sigma(\,\pi_f\mathrel{;}f\in C_c(\mathbb{R})\,), \\
\Sigma_2&:=\sigma(\,\pi_f\mathrel{;}f\in C_\infty(\mathbb{R})\,), \\
\Sigma_3&:=\sigma(\,\pi_B\mathrel{;}B\in\mathcal{B}(\mathbb{R})\,).
\end{align*}
Here, the symbol $\sigma(\,\pi_f\mathrel{;}f\in C_c(\mathbb{R})\,)$ stands for
the smallest $\sigma$-algebra $\Sigma$ on $\mathcal{M}_{\le 1}(\mathbb{R})$
that makes the map $\pi_f$ $\Sigma/\mathcal{B}(\mathbb{R})$-measurable for every $f\in C_c(\mathbb{R})$.
The other two are understood in the same way.
\end{proposition}

Although this proposition can be found in Kallenberg's textbooks~\cite[Lemma~A2.3]{Ka02}, \cite[Lemma~4.7]{Ka17},
we provide a proof here so that the reader can easily observe that Proposition~\ref{prop:weak_Borel} below is proved by a similar reasoning.

\begin{proof}
By definition, the vague topology on $\mathcal{M}_{\le 1}(\mathbb{R})$ is
the coarsest topology that makes $\pi_f$ continuous for every $f\in C_c(\mathbb{R})$.
This means that the subsets $\pi_f^{-1}(U)$ for open $U\subset\mathbb{R}$ and $f\in C_c(\mathbb{R})$ form a subbase of the vague topology.
Moreover, this topology is second countable by Proposition~\ref{prop:vague_cpt}.
Hence $\mathcal{B}_v=\Sigma_1$ follows from the definition of $\mathcal{B}_v$.
Similarly we have $\mathcal{B}_v=\Sigma_2$.

We next show $\Sigma_2\subset\Sigma_3$.
Let $f\in C_\infty(\mathbb{R})$.
There exists a sequence of simple functions $f_n$, $n\in\mathbb{N}$, such that
\[
\pi_f(m)=\int f\, dm
=\lim_{n\to\infty}\int f_n\, dm=\lim_{n\to\infty}\pi_{f_n}(m),
\quad m\in\mathcal{M}_{\le 1}(\mathbb{R}).
\]
Since each $\pi_{f_n}$ is $\Sigma_3/\mathcal{B}(\mathbb{R})$-measurable,
so is $\pi_f$.
Hence $\Sigma_2\subset\Sigma_3$.

It remains to prove $\Sigma_3\subset\Sigma_1$.
Let $\mathcal{D}$ be the collection of Borel sets $B$ such that $\pi_B$ is $\Sigma_1/\mathcal{B}(\mathbb{R})$-measurable.
To obtain $\Sigma_3\subset\Sigma_1$,
it suffices to show $\mathcal{D}\supset\mathcal{B}(\mathbb{R})$.

Let $U\subset\mathbb{R}$ be an open set.
Since $U$ is the increasing limit of some compact sets $K_n$, $n\in\mathbb{N}$,
we can choose a sequence $f_n\in C_c(\mathbb{R})$ such that $f_n\uparrow \bm{1}_U$ pointwise by Urysohn's lemma.
Hence
$\pi_U=\lim_{n\to\infty}\pi_{f_n}$
by the monotone convergence theorem,
which yields $U\in\mathcal{D}$.
Now the collection of open sets is a $\pi$-system; i.e., if $U_1, U_2$ are open, then so is $U_1 \cap U_2$.
On the other hand, it is easy to see that $\mathcal{D}$ is a $\lambda$-system;
$\mathbb{R}$ is in $\mathcal{D}$, $A, B\in\mathcal{D}$ with $A\subset B$ implies $B\setminus A\in\mathcal{D}$,
and $\mathcal{D}\ni A_n\uparrow A$ implies $A\in\mathcal{D}$.
Thus $\mathcal{D}\supset\sigma(\, U \mathrel{;} U\subset \mathbb{R}\ \text{is open}\,)=\mathcal{B}(\mathbb{R})$ by the Dynkin $\pi$-$\lambda$ theorem~\cite[Theorem~1.1]{Ka02}, \cite[Lemma~1.2]{Ka17} as desired.
\end{proof}

Using Proposition~\ref{prop:weak_Pol} in place of Proposition~\ref{prop:vague_cpt},
we obtain the following:

\begin{proposition} \label{prop:weak_Borel}
The following identities hold:
\[
\mathcal{B}_w=\sigma(\,\pi_f\mathrel{;}f\in C_b(\mathbb{R})\,)
=\sigma(\,\pi_B\mathrel{;}B\in\mathcal{B}(\mathbb{R})\,).
\]
\end{proposition}

\begin{corollary} \label{cor:Borel_on_M}
The identity $\mathcal{B}_v=\mathcal{B}_w$ holds.
\end{corollary}

In view of the the last corollary, we use the symbol $\mathcal{B}(\mathcal{M}_{\le 1}(\mathbb{R}))$ to denote the Borel $\sigma$-algebra $\mathcal{B}_v=\mathcal{B}_w$.

\end{document}